\newcommand{\bp}{{\bowtie}}
\newcommand{\implies}{{\Longrightarrow}}
\newcommand{\edge}{\rightarrow}
\newcommand{\dpath}{\rightsquigarrow}
\newcommand{\xset}{\mathbb{X}}
\newcommand{\yset}{\mathbb{Y}}
\newcommand{\permut}{\sigma}
\newcommand{\type}[1]{\mathcal{T}_\mathcal{#1}}
\newcommand{\tset}{T}
\theoremstyle{definition}
\newtheorem{defi}{Definition}[section]
\crefname{defi}{Definition}{Definitions}
\newtheorem{exmp}[defi]{Example}
\crefname{exmp}{Example}{Examples}
\theoremstyle{remark}
\newtheorem{remark}[defi]{Remark}
\crefname{remark}{Remark}{Remarks}
\theoremstyle{plain}
\newtheorem{theorem}[defi]{Theorem}
\crefname{theorem}{Theorem}{Theorems}
\newtheorem{lemma}[defi]{Lemma}
\crefname{lemma}{Lemma}{Lemmas}
\newtheorem{corollary}[defi]{Corollary}
\crefname{corollary}{Corollary}{Corollaries}
\newcommand\DoubleLine[7][4pt]{%
	\path(#2)--(#3)coordinate[at start](h1)coordinate[at end](h2);
	\draw[#4]($(h1)!#1!90:(h2)$)-- node [auto=left] {#5} ($(h2)!#1!-90:(h1)$); 
	\draw[#6]($(h1)!#1!-90:(h2)$)-- node [auto=right] {#7} ($(h2)!#1!90:(h1)$);
}
\DeclareRobustCommand{\text}{%
	\ifmmode\expandafter\text@\else\expandafter\mbox\fi}
\let\nfss@text\text
\def\text@#1{{\mathchoice
		{\textdef@\displaystyle\f@size{#1}}%
		{\textdef@\textstyle\f@size{#1}}%
		{\textdef@\textstyle\sf@size{#1}}%
		{\textdef@\textstyle \ssf@size{#1}}%
		\check@mathfonts
	}%
}
\def\textdef@#1#2#3{\hbox{{%
			\everymath{#1}%
			\let\f@size#2\selectfont
			#3}}}
\begin{document}
	\title[An in-reachability based classification of invariant synchrony patterns]{An in-reachability based classification of invariant synchrony patterns in weighted coupled cell networks}
	
	\author{P M Sequeira$^1$, J P Hespanha$^2$ and	A P Aguiar$^1$}
	
	\address{$^1$ SYSTEC-ARISE \& Department of Electrical and Computer Engineering, Faculdade de Engenharia, Universidade do Porto, 
		Rua Dr. Roberto Frias, 4200-465 Porto, Portugal}
	\address{$^2$ Department of Electrical and Computer Engineering, 
		University of California, Santa Barbara, CA, USA}

	\eads{\mailto{pedro.sequeira@fe.up.pt}, \mailto{hespanha@ece.ucsb.edu}, \mailto{pedro.aguiar@fe.up.pt}}

	\begin{abstract}
		This paper presents an in-reachability based classification of invariant synchrony patterns in Coupled Cell Networks (CCNs). These patterns are encoded through partitions on the set of cells, whose subsets of synchronized cells are called colors.\\ 
		We study the influence of the structure of the network in the qualitative behavior of invariant synchrony sets, in particular, with respect to the different types of (cumulative) in-neighborhoods and the in-reachability sets.
		This motivates the proposed approach to classify the partitions into the categories of strong, rooted and weak, according to how their colors are related with respect to the connectivity structure of the network.\\
		Furthermore, we show how this classification system acts under the partition join ($ \vee $) operation, which gives us the synchrony pattern that corresponds to the intersection of synchrony sets.
	\end{abstract}
	\ams{34A34, 34C45, 05C20, 05C40}
	\noindent{\it Keywords\/}: coupled cell networks, synchrony, connectivity\\
	\submitto{\NL at 17 of February of 2023. Revised at 20 of October of 2023.}
	\maketitle
\section{Introduction}
\textit{Networks} are structures that describe systems with multiple components, called \textit{cells}. These cells can be connected through \textit{edges}, which encode how one cell affects another. In general, these edges can be directed or undirected and they can have weights in order to parameterize their interaction.\\
Networks are ubiquitous structures, both in the natural world and in engineering applications. Some examples are for instance the brain, the internet, the electric grid and electronic circuits in general, food webs and the spread of a virus in a pandemic.\\
In order to study these types of systems, the theory of \textit{coupled cell networks} (CCN) was first formalized in \cite{stewart2003symmetry,golubitsky2005patterns,golubitsky2006nonlinear}. In \cite{sequeira2021commutative}, this was generalized for networks with weighted connections and with arbitrary edges and edge types, and it is the formalism that we adopt in this work.\\ 
In theory of CCNs the concept of \textit{admissible function} is defined such that a function $ f $ is admissible in a network if it satisfies certain minimal properties that allow it to be a valid modeling of some (discrete-time/continuous-time) dynamical system $ \mathbf{x}^{+}/\dot{\mathbf{x}}=f(\mathbf{x}) $ on that network. \\
In this work we study general equality-based invariant synchrony patterns, which are represented through partitions on the set of cells of a network. 
Much work has been done regarding balanced partitions, which represent patterns of synchrony that are invariant under any admissible function on the network of interest. Although balanced partitions represent a very important subclass of invariant synchrony patterns with strong properties, it is possible for other invariant patterns to be present in a network.\\
Consider for instance the subset of admissible functions such that a cell becomes insensitive to cells that are on the same state. Such a system is, consequently, always insensitive to self-loops. 
This happens, for instance, in the Kuramoto model \cite{arenas2008synchronization,dorfler2014synchronization,rodrigues2016kuramoto}. This property leads to the study of exo-balanced partitions \cite{aguiar2018synchronization,neuberger2020invariant,aguiar2021synchrony}, which is a larger class of partitions than the balanced ones.\\
For this reason, we consider arbitrary subsets of admissible functions $ F$ and show that the set of partitions $ L_{F} $ that describe synchrony patterns that are invariant under $ F $ always form lattices. Furthermore, we show that these lattices have similar properties to the lattices of balanced partitions $ \Lambda_{\mathcal{G}} $ \cite{stewart2007lattice}. In particular, these lattices share the same join operation $ \vee $ and have a $ cir_F $ function associated with them.
For a computational approach to the study of lattices of invariant synchrony through the eigenvalues of network adjacency matrices refer to \cite{aguiar2014lattice,moreira2015special,kamei2021reduced}.
\\
The coarsest invariant refinement ($ cir $), was first developed in \cite{aldis2008polynomial} as a polynomial-time algorithm that finds the maximal element of the lattice balanced partitions. In \cite{neuberger2020invariant} it was noted that this algorithm does more than just finding the maximal balanced partition. In fact, given any input partition, it outputs the greatest balanced partition that is finer ($ \leq $) than the input one. Therefore the maximal balanced partition is given by $ cir(\type{})$, where $ \type{} $ is the partition of cell types.\\
In this work, we show that the concept of $ cir $, as a function, is not specific to balanced partitions and that every $ F $-invariant lattice $ L_F $ has an associated $ cir_F $ function.\\
In summary, we show that the fundamental properties of the lattices of synchrony $ L_F $ are having the trivial partition $ \bot $ as their minimal element and their join operation match the partition join $ \vee $, which corresponds to the intersection of synchrony sets. The lattice of balanced partitions $ \Lambda_{\mathcal{G}} $ is seen as a particular case of a lattice of synchrony and consequently has this structure. Furthermore, for networks $ \mathcal{G} $ and $\mathcal{Q} $ related by the quotient $ \mathcal{Q} = \mathcal{G}/\bp $ with $ \bp \in \Lambda_{\mathcal{G}} $, we can similarly relate their associated subsets of admissible functions $ F_{\mathcal{Q}} = F_{\mathcal{G}}/\bp $. Their responding lattices $ L_{F_{\mathcal{G}}}, L_{F_{\mathcal{Q}}} $ are also related by $ L_{F_{\mathcal{Q}}} = L_{F_{\mathcal{G}}}/ \bp $ in a completely analogous manner to the known result $ \Lambda_{\mathcal{Q}} = \Lambda_{\mathcal{G}}/\bp $.\\
The main goal of this work is to study the influence that the connectivity structure of a network has on its synchrony patterns. In particular, we focus on the different types of (cumulative) in-neighborhoods and the in-reachability sets. For that purpose, we explore how the connectivity structure of a CCN affects an admissible dynamical system in a network and show that differences in this structure can lead to qualitatively different behaviors of general equality-based invariant synchrony patterns.
This motivates the classification of partitions into the categories of strong, rooted and weak, according to their relation to the \textit{strongly connected components} (SCC) of the network.\\
We study how this classification scheme acts under the join $ (\vee) $ operation and the relation between the classes of partitions $ \mathcal{A} $ and $ \mathcal{A}/\bp $ on $ \mathcal{G} $ and $ \mathcal{Q} $, respectively. 
Under certain assumptions these results are greatly simplified. The tamest such assumptions that we found were the concepts of $ \mathcal{R}^{-} $-matching and $ \mathcal{R}^{-} $-invariance, which also relate a partition to the in-reachability sets of their associated networks.
\\
In \cref{sec:new_formalism} we summarize the formalism for general weighted CCNs.\\
In \cref{sec:invariant_synchrony} we provide the necessary background regarding general equality-based invariant synchrony patterns in CCNs.\\
In \cref{sec:net_connect} we clarify how the connectivity structure of a network affects its dynamics. This motivates the study of the network according to its in-reachability sets.\\
In \cref{sec:part_classif}, motivated by the previous observations regarding the role of connectivity, we define a classification of partitions of invariant synchrony into strong, rooted and weak types.
\section{Weighted multi-edge formalism} \label{sec:new_formalism}
We start by describing the formalism for general networks with arbitrary, weighted connections.
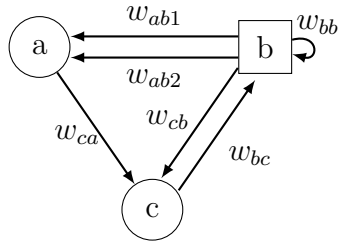
\begin{figure}[h]
	\centering
	\begin{tikzpicture}[
node1/.style = {circle,minimum size=23,draw},
node2/.style = {circle,minimum size=23,draw,fill=white!75!black},
node3/.style = {circle,minimum size=23,draw,fill=white!50!black},
noderect/.style = {rectangle,minimum size=20,draw},
edge1/.style = {>=latex,thick},
edge2/.style = {>=latex,thick,blue},
edge3/.style = {>=latex,thick,red}
]
\node[node1] at (0,{2.5*sqrt(3)/2})(n1){a};
\node[noderect] at (3,{2.5*sqrt(3)/2})(n2){b};
\node[node1] at (1.5,0)(n3){c};

\DoubleLine{n1}{n2}{<-,edge1}{$ w_{ab1} $}{<-,edge1}{$ w_{ab2} $}
\DoubleLine{n2}{n3}{<-,edge1}{$ w_{bc} $}{->,edge1}{}

\node (weightcb) at ($(n2)!0.4!(n3) + (-0.7,-0.1)$) {$ w_{cb} $};

\draw [->,edge1](n1) -- (n3);

\node (weightca) at ($(n1)!0.4!(n3) + (-0.1,-0.3)$) {$ w_{ca} $};

%\DoubleLine{n1}{n3}{<-,edge1}{}{->,edge1}{}

%\draw [->,edge1](n1) -- (n2);

%\draw [->,edge1] (n1) edge[loop left,looseness=5] (n1);
\draw [->,edge1] (n2) edge[loop right,looseness=5] (n2);

\node (nbnb) at ($ (n2) + (0.7,0.3)$) {$ w_{bb} $};

%\draw [->,edge1] (n3) edge[loop left,looseness=5] (n3);

%\node (minus123) at ($(n1)!0.4!(n2) + (0,0.3)$) {(-)};

%\DoubleLine{n1}{n2}{<-,edge1}{}{->,edge1}{}
%\DoubleLine{n1}{n3}{<-,edge1}{}{->,edge1}{}
%\DoubleLine{n2}{n3}{<-,edge1}{}{->,edge1}{}

\end{tikzpicture} 
	\caption{A weighted coupled cell network.}
	\label{fig:general_net_showcase}
\end{figure}
The schematic in \cref{fig:general_net_showcase} illustrates the object of study. It represents a network containing three cells $ \mathcal{C} = \{a,b,c\}$. Furthermore, cells $ a $ and $ c $ are both represented with a circle, meaning that they are objects of the same type, while cell $ b $ is of a different type. We usually index the set of cell types with integers, such as $ \tset = \{1,2\} $ (e.g., identify ``circle'' with $ 1 $ and ``square'' with $ 2 $). Our goal is to study the global dynamical system associated with a network in which its cells are also dynamical systems in their own right. This means that for each cell type $ i\in \tset $ we have an associated state set $ \xset_{i} $ and output set $ \yset_i $, which we use to build the domains and co-domains of the associated dynamical systems. A cell can influence the dynamical evolution of other cells in the network, which we represent by a directed edge, where the receiving cell is the affected one. Each interaction can be arbitrarily parameterized by associating a weight/label on the corresponding directed edge. It is possible for a cell to affect another in multiple ways, as seen in \cref{fig:general_net_showcase}, where cell $ b $ affects cell $ a $ through an interaction parameterized by $ w_{ab1} $  and another one parameterized by $ w_{ab2} $. Using the algebraic structure of the commutative monoid described in the following subsection, we can represent this by a single interaction parameterized by $  w_{ab1}\|w_{ab2} $, which is a common notation used in electrical circuit theory.
\subsection{Commutative monoids}
\label{subsec:commutative_monoids}
A commutative monoid is a set equipped with a binary operation (usually denoted $ + $) such that it is commutative and associative. Furthermore, it has one identity element (usually denoted $ 0 $). This is the simplest algebraic structure that can be used to describe arbitrary finite parallels of edges. Note that associativity and commutativity, together, are equivalent to the invariance to permutations property.\\
In this work, the ``sum'' operation is denoted by $ \| $, with the meaning of ``adding in parallel''. In this context, the zero element of a monoid should be interpreted as ``no edge''.
We do not require the existence of inverse elements. That is, given an edge, there does not need to exist another one such that the two in parallel act as ``no edge". This is the reason for the use of monoids instead of the algebraic structure of groups.
\subsection{Multi-indexes}
\label{subsec:multi_indexes}
A multi-index is an ordered $ n $-tuple of non-negative integers (indexes). That is, an element of $ \mathbb{N}_0^n $. In particular, $ \mathbf{0}_n $ represents the tuple of $ n $ zeros. 
We denote the multi-indexes with the same notation we use for vectors, using bold, as in $ \mathbf{k} = \left[k_1, \ldots. k_n\right]^{\top}$. 
We often specify the tupleness $ n $ of a multi-index $ \mathbf{k} $ indirectly, by using $ \mathbf{k}\geq \mathbf{0}_n  $ in order to denote $ \mathbf{k}\in\mathbb{N}_0^n $.
\subsection{CCN formalism} \label{subsec:CCN_formalism}
According to \cite{sequeira2021commutative}, a general weighted coupled cell network is given by the following definition.
\begin{defi}[Definition 2.1. in \cite{sequeira2021commutative}]]\label{defi:CCN}
	A network $ \mathcal{G} $ consists of a set of cells $ \mathcal{C}_{\mathcal{G}} $, where each cell has a type, given by a set $ \tset $ according to $ \type{G}\colon\ \mathcal{C}_{\mathcal{G}}\to\tset $ and has an $ \vert\mathcal{C}_{\mathcal{G}}\vert\times\vert\mathcal{C}_{\mathcal{G}}\vert $ in-adjacency matrix $ M_{\mathcal{G}} $. The entries of $ M_{\mathcal{G}} $ are elements of a family of commutative monoids $ \{\mathcal{M}_{ij}\}_{i,j\in\tset} $ such that $ \left[M_{\mathcal{G}}\right]_{cd} = m_{cd}\in\mathcal{M}_{ij} $, for any cells $ c,d \in \mathcal{C}_{\mathcal{G}} $ with types $ i =\type{G}(c) $, $ j =\type{G}(d) $. 
	\hfill$ \square $
\end{defi}
We often have the set of cell types be of the form $ \tset=\{1, \ldots, \vert\tset\vert\} $ so that it is simple to index.
For each commutative monoid $ \mathcal{M}_{ij} $ we denote its ``zero'' element as $ 0_{ij} $. 
\begin{remark}
	The subscripts $ _\mathcal{G} $ are omitted when the network of interest is clear from context. 
	\hfill$ \square $
\end{remark}
\subsection{Admissibility}\label{subsec:Gadmissibility}
A function $ f\colon\xset\to\yset $ is said to be \textit{admissible} on a network if it respects the minimal properties that we expect from it in order to be a plausible modeling of the dynamics $ \dot{\mathbf{x}}/\mathbf{x}^{+} = f(\mathbf{x}) $ or some measurement function $ \mathbf{y} = f(\mathbf{x}) $ on the network. In particular, it has to describe some \textit{first-order property}. That is, it models something that, when evaluated at cell, depends on the state of that cell and its in-neighbors. This does not mean that everything on a network has to (or can) be defined by such a function. For instance, the second derivative or the two-step evolution of a dynamical system on a network are not of this form. Those functions are second-order in the sense that, when evaluated on a cell, they depend on the states of that cell, together with the states of the cells in its first and second in-neighborhoods (neighbor of neighbor). Such second-order functions are, however, fully defined from the original first-order functions.\\
We construct admissible functions by using mathematical objects called \textit{oracle components}, first introduced in \cite{sequeira2021commutative} and then simplified in \cite{sequeira2022decomposition}.
An oracle component is a mathematical object that describes how cells of a given type respond to arbitrary finite in-neighborhoods. It completely separates the modeling of the behavior of cells from the particular network that contains the cells of interest.\\
Consider the simple network of \fref{fig:edge_merginga}, (which could be part of a larger network) consisting of cell $ c $ and its in-neighborhood. 
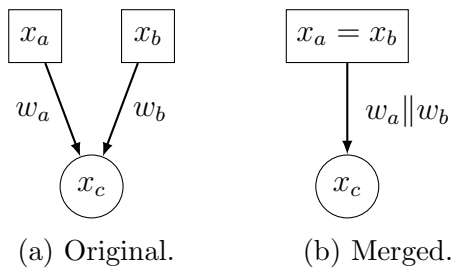
\begin{figure}[h]
	\centering
	\begin{subfigure}[t]{0.23\textwidth}
		\centering
		\begin{tikzpicture}[
node1/.style = {circle,minimum size=23,draw},
node2/.style = {circle,minimum size=23,draw,fill=white!75!black},
node3/.style = {circle,minimum size=23,draw,fill=white!50!black},
noderect/.style = {rectangle,minimum size=20,draw},
edge1/.style = {>=latex,thick},
edgedash/.style = {>=latex,thick,dashed},
edge2/.style = {>=latex,thick,blue},
edge3/.style = {>=latex,thick,red}
]
\node[noderect] at (-0.75,2)(n1){$ x_a $};
\node[noderect] at (+0.75,2)(n2){$ x_b $};
\node[node1] at (0,0)(n3){$ x_c $};

\draw [->,edge1](n1) -- (n3);
\draw [->,edge1](n2) -- (n3);

\node (w13) at ($(n1)!0.5!(n3) + (-0.4,-0.0)$) {$ w_a $};
\node (w23) at ($(n2)!0.5!(n3) + (+0.4,-0.0)$) {$ w_b $};

%\DoubleLine{n1}{n2}{<-,edge1}{}{->,edge1}{}
%\DoubleLine{n1}{n3}{<-,edge1}{}{->,edge1}{}
%\DoubleLine{n2}{n3}{<-,edge1}{}{->,edge1}{}

\end{tikzpicture} 
		\caption{Original.}
		\label{fig:edge_merginga}
	\end{subfigure}
	\begin{subfigure}[t]{0.23\textwidth}
		\centering
		\begin{tikzpicture}[
node1/.style = {circle,minimum size=23,draw},
node2/.style = {circle,minimum size=23,draw,fill=white!75!black},
node3/.style = {circle,minimum size=23,draw,fill=white!50!black},
noderect/.style = {rectangle,minimum size=20,draw},
edge1/.style = {>=latex,thick},
edgedash/.style = {>=latex,thick,dashed},
edge2/.style = {>=latex,thick,blue},
edge3/.style = {>=latex,thick,red}
]

\node[noderect] at (0,2)(n3){$ x_a=x_b $};
\node[node1] at (0,0)(n12){$ x_c $};

\draw [->,edge1](n3) -- (n12);
\node (w13) at ($(n3)!0.5!(n12) + (+0.8,-0.0)$) {$ w_a \| w_b $};

%\DoubleLine{n1}{n2}{<-,edge1}{}{->,edge1}{}
%\DoubleLine{n1}{n3}{<-,edge1}{}{->,edge1}{}
%\DoubleLine{n2}{n3}{<-,edge1}{}{->,edge1}{}

\end{tikzpicture} 
		\caption{Merged.}
		\label{fig:edge_mergingb}
	\end{subfigure}
	\centering
	\caption{Edge merging.}
	\label{fig:edge_merging}
\end{figure}
We have cell types $ \tset =\{1,2\} $ which represent ``circle" and ``square" cells, respectively. In order to define functions on the cells we associate with them the state sets $ \xset_{1},\xset_{2} $ and the output sets $ \yset_{1},\yset_{2} $ according to their respective type.\\
We consider that the input received by a cell is independent of how we draw the network, that is, from the point of view of cell $ c $, there would be no difference if cell $ b $ was at the left of cell $ a $. Then for a function $ \hat{f}_1 $ acting on cells of type $ 1 $, we would expect that
\begin{eqnarray*}
\hat{f}_1\left(
x_c;
\left[\begin{array}{c} w_a\\ w_b \end{array} \right]
,
\left[\begin{array}{c} x_a\\ x_b \end{array} \right]
\right) 
=
\hat{f}_1\left(
x_c;
\left[\begin{array}{c} w_b\\ w_a \end{array} \right]
,
\left[\begin{array}{c} x_b\\ x_a \end{array} \right]
\right)
, 
\end{eqnarray*}
for $ x_c\in\xset_{1} $, $ x_a,x_b\in\xset_{2} $ and $ w_a,w_b\in \mathcal{M}_{12}$. 
Moreover, since cells $ a $ and $ b $ are of the same cell type (square) ($ \type{}(a) = \type{}(b) = 2 $), we expect that when they are in the same state ($ x_a=x_b =x_{ab}$), the total input received by cell $ c $ at that instant, is the same as if both edges originated from a single ``square" cell with that state, as in \fref{fig:edge_mergingb}. That is,
\begin{eqnarray*}
\hat{f}_1\left(
x_c;
\left[\begin{array}{c} w_a\\ w_b \end{array} \right]
,
\left[\begin{array}{c} x_{ab} \\ x_{ab} \end{array} \right]
\right) 
=
\hat{f}_1\left(
x_c;
w_a \| w_b, x_{ab}
\right). 
\end{eqnarray*}
Although this might look inconsistent since the domains look mismatched, the following definition formalizes it in a rigorous way. 
Finally, when $ \hat{f}_1 $ is evaluated at a cell it should depend only on the in-neighborhood of that cell. Therefore, if $ w_a = 0_{12} $, cell $ c $ should not be directly influenced by cell $ a $. That is,
\begin{eqnarray*}
\hat{f}_1\left(
x_c;
\left[\begin{array}{c} 0_{12}\\ w_b \end{array} \right]
,
\left[\begin{array}{c} x_a\\ x_b \end{array} \right]
\right) 
=
\hat{f}_1\left(x_c; w_b,
x_b\right). 
\end{eqnarray*}
These ideas are now formalized in the following definition.
\begin{defi}[Definition 2.2. in \cite{sequeira2022decomposition}] \label{defi:oracle}
	Consider the set of cell types $ \tset $, and some related sets $ \{\xset_j, \yset_j\}_{j\in\tset} $ together with a family of commutative monoids $ \{\mathcal{M}_{ij}\}_{j\in\tset} $, for a given fixed $ i\in\tset $. Take a function $ \hat{f}_i $ defined on
	\begin{eqnarray}
	\hat{f}_i\colon \xset_i\times\bigcup^\circ_{\mathbf{k} 
		\geq \mathbf{0}_{\vert\tset\vert}} \left( \mathcal{M}_i^{\mathbf{k}} \times \xset^{\mathbf{k}} \right)  \to \yset_i,
	\label{eq:oracle_domain_func}
	\end{eqnarray}
	where $ \bigcup\limits^\circ $ denotes the disjoint union and for multi-index $ \mathbf{k} $ we define $ \xset^{\mathbf{k}} := \xset_1^{k_1} \times \ldots \times \xset_{\vert\tset\vert}^{k_{\vert\tset\vert}} $ and $ \mathcal{M}_i^{\mathbf{k}} := \mathcal{M}_{i1}^{k_1} \times \ldots \times \mathcal{M}_{i\vert\tset\vert}^{k_{\vert\tset\vert}} $.\\
	The function $ \hat{f}_i $ is called an \textit{oracle component of type i}, if it has the following properties:
	\begin{enumerate}
		\item \label{thm:oracle_permut}
		If $ \permut $ is a \textit{permutation} matrix (of appropriate dimension), then
		\begin{eqnarray}
		\label{eq:oracle_permut}
		\hat{f}_i(x;\mathbf{w},\mathbf{x})
		=
		\hat{f}_i(x;\permut{\mathbf{w}},\permut{\mathbf{x}}),
		\end{eqnarray}
		where we assume, without loss of generality, that one can keep track of the cell types of each element of $ \permut{\mathbf{w}} $ and $ \permut{\mathbf{x}} $.
		\item
		\label{thm:oracle_merge}
		If the indexes $ j_1 $, $j_2 $ and $ j_{12} $ denote cells of type $ j \in\tset$, then
		\begin{eqnarray}
		\label{eq:oracle_merge}
		\hat{f}_i
		\left(x;
		\left[\begin{array}{c} w_{j_1} \| w_{j_2} \\ \mathbf{w} \end{array} \right]
		,
		\left[\begin{array}{c} x_{j_{12}} \\ \mathbf{x} \end{array} \right]
		\right)
		=
		\hat{f}_i
		\left(x; 
		\left[\begin{array}{c} w_{j_1} \\ w_{j_2} \\ \mathbf{w} \end{array} \right]
		,
		\left[\begin{array}{c} x_{j_{12}} \\ x_{j_{12}} \\ \mathbf{x} \end{array} \right]
		\right).
		\end{eqnarray}
		\item
		\label{thm:oracle_0_equal}
		If the index $ j $ denotes a cell of type $ j\in\tset $, then 
		\begin{eqnarray}
		\hat{f}_i
		\left(x;
		\left[\begin{array}{c} 0_{ij} \\ \mathbf{w} \end{array} \right]
		,
		\left[\begin{array}{c} x_{j}  \\ \mathbf{x} \end{array} \right]
		\right)
		=
		\hat{f}_i
		\left(x; 
		\mathbf{w},
		\mathbf{x}
		\right).
		\label{eq:oracle_0_equal}
		\end{eqnarray}
	\end{enumerate}
	\hfill$ \square $
\end{defi}
The disjoint union allows us to distinguish neighborhoods of different types. That is, even in the particular case of $ \xset_1 = \xset_2 $ and $ \mathcal{M}_{i1} = \mathcal{M}_{i2} $, we are able to differentiate the part of the domain associated with $ \mathcal{M}_{i1}^2\times\xset_1^2 $ from the one associated with $ \mathcal{M}_{i1}\times\mathcal{M}_{i2}\times \xset_1\times \xset_{2} $. A non-disjoint union, on the other hand, would merge these sets together.
\begin{remark}
	Note that the way the domain of $\hat{f}_i$ was defined allows us to deal with variable input set configurations. This is an equivalent, but cleaner, way of defining a family of functions, each on a different domain based on its specific input set, such that they are all connected by the self-consistency rules that we expect from them. This way, we can use a single function to describe what really matters to us, that is, describing how a cell is affected by its in-neighbors.
	\hfill$ \square $
\end{remark}
\begin{remark}
	As stated in \cref{thm:oracle_permut} of \cref{defi:oracle}, it is always assumed that given any weight $ w_c $ or state $ x_c $, we always know the cell type of the corresponding cell $ c $. Note that we can always do enough bookkeeping in order to ensure this. For instance, we can extend $ \hat{f}_i(x;\mathbf{w},\mathbf{x}) $ into $ \hat{f}_i(x;\mathbf{t},\mathbf{w},\mathbf{x}) $, where $ \mathbf{t} $ would be a vector that encodes the cell types associated with $ \mathbf{w},\mathbf{x} $. Then we would have $ \hat{f}_i(x;\mathbf{t},\mathbf{w},\mathbf{x})=\hat{f}_i(x;\permut{\mathbf{t}},\permut{\mathbf{w}},\permut{\mathbf{x}}) $ instead.\\
	This implicit bookkeeping means that we do not have to constrain $ \permut $ to preserve cell typing. That is, if we assume some canonical order of the cell types in the part of the domain $ \mathcal{M}_i^{\mathbf{k}} \times \xset^{\mathbf{k}} $ in \eref{eq:oracle_domain_func}, then we know the correct $ \mathbf{k}	\geq \mathbf{0}_{\vert\tset\vert} $ and can reorder the rows of $ \mathbf{w} $ and $ \mathbf{x} $ in $ \hat{f}_i(x;\mathbf{w},\mathbf{x}) $ appropriately.\\
	By considering invariance under general permutations, and not having to worry about preserving cell types or respecting some canonical ordering of cell types, we are always able to shift the cells of major interest to the top of the vectors, as in \eref{eq:oracle_merge},\eref{eq:oracle_0_equal}, regardless of the types of other cells.
	\hfill$ \square $
\end{remark}
We consider the function $ \mathcal{K} $ such that for a set of cells $ \mathbf{s} $, we have that $ \mathbf{k} = \mathcal{K}(\mathbf{s}) $ is the $ \vert \tset\vert $-tuple such that $ k_i $ is the number of cells in $ \mathbf{s} $ that are of type $ i\in\tset $. This allows us to pick the proper $ \mathbf{k} \geq \mathbf{0}_{\vert\tset\vert} $ in \eref{eq:oracle_domain_func} when we want to evaluate oracle components at a cell and its in-neighbors.\\
The \textit{oracle set} is the set of all $ \vert \tset\vert $-tuples of oracle components, such that each element of the tuple represents one of the types in $ \tset $. It is denoted as
\begin{eqnarray*}
\hat{\mathcal{F}}_{\tset} = \prod_{i\in \tset}\hat{\mathcal{F}}_{i}, 
\end{eqnarray*}
where $ \hat{\mathcal{F}}_{i} $ is the set of all oracle components of type $ i $. We are always implicitly assuming sets $ \{\xset_i, \yset_i\}_{i\in\tset} $ and commutative monoids $ \{\mathcal{M}_{ij}\}_{i,j\in\tset} $.
Modeling some aspect of a network that follows our assumptions is effectively choosing one of the elements of $ \hat{\mathcal{F}}_{\tset} $, which we call \textit{oracle functions}.
\begin{defi}[Definition 2.13. in \cite{sequeira2021commutative}]\label{defi:F_G_admissibility}
	Consider a network $ \mathcal{G} $ on a cell set $ \mathcal{C} $ with cell types in $ \tset $ according to the cell type partition $ \type{} $, and an in-adjacency matrix $ M $. Assume without loss of generality that the cells are ordered according to the cell types such that we can associate with the network a state $ \xset := \xset^{\mathbf{k}} $ and output $ \yset := \yset^{\mathbf{k}} $ sets, with $ \mathbf{k} = \mathcal{K}(\mathcal{C}) $.\\
	A function \mbox{$ f\colon\xset\to\yset $}, given as
	\begin{eqnarray*}
	f = (f_c)_{c\in\mathcal{C}},\qquad \text{with } f_c\colon\xset\to\yset_{i}, \qquad i = \type{}(c),
	\end{eqnarray*}
	is said to be \mbox{$ \mathcal{G} $\textit{-admissible}} if there is some oracle function \mbox{$ \hat{f}\in\hat{\mathcal{F}}_{\tset}$, $ \hat{f} =  (\hat{f}_i)_{i\in\tset}$} such that
	\begin{eqnarray}
	f_c(\mathbf{x}) = \hat{f}_{i}\left(x_c; \mathbf{m}_c^{\top}, \mathbf{x} \right),
	\label{eq:dynamics_dependence}
	\end{eqnarray}
	for $ \mathbf{x}\in\xset $, where $ x_c $ is the $ c^{th} $ coordinate of $ \mathbf{x} $ and $ \mathbf{m}_c $ is the $ c^{th} $ row of matrix $ M $. In this case we write $ f = \left.\hat{f}\right|_{\mathcal{G}} $.
	\hfill$ \square $
\end{defi}
The set of all \mbox{$ \mathcal{G} $-admissible} functions is denoted by $ \mathcal{F}_{\mathcal{G}} $. It can be thought of as the result of evaluating $ \hat{\mathcal{F}}_{\tset} $ at $ \mathcal{G} $, which can be written as $ \left.\hat{\mathcal{F}}_{\tset}\right|_{\mathcal{G}} $. The process of evaluating oracle functions at a network is not necessarily injective. There might be oracle functions $ \hat{f},\hat{g}\in\hat{\mathcal{F}}_{T} $ with $ \hat{f} \neq \hat{g} $ such that $ \left.\hat{f}\right|_{\mathcal{G}} = \left.\hat{g}\right|_{\mathcal{G}} $.\\
The next example makes explicit the relation between the connectivity graph of a network and how that constrains any possible admissible function that acts on it.
\begin{exmp}\label{exmp:admiss_funcs}	
	\Fref{fig:Gadmissibility} shows an example of a \textit{CCN} of three cells. We have cell types $ \tset = \{1,2\} $ which represent ``circle" and and ``square" cells, respectively.
	\begin{figure}[h]
		\centering
		\begin{tikzpicture}[
node1/.style = {circle,minimum size=23,draw},
node2/.style = {circle,minimum size=23,draw,fill=white!75!black},
node3/.style = {circle,minimum size=23,draw,fill=white!50!black},
noderect/.style = {rectangle,minimum size=20,draw},
edge1/.style = {>=latex,thick},
edge2/.style = {>=latex,thick,blue},
edge3/.style = {>=latex,thick,red}
]
\node[node1] at (0,0)(n1){1};
\node[node1] at (2,0)(n2){2};
\node[noderect] at (1,{sqrt(2)})(n3){3};

\DoubleLine{n1}{n3}{<-,edge1}{}{->,edge1}{}
\draw [->,edge1](n1) -- (n2);
\DoubleLine{n2}{n3}{<-,edge1}{}{->,edge1}{}

\draw [->,edge1] (n1) edge[loop left,looseness=5] (n1);
\draw [->,edge1] (n3) edge[loop left,looseness=5] (n3);

%\node (minus123) at ($(n1)!0.4!(n2) + (0,0.3)$) {(-)};

%\DoubleLine{n1}{n2}{<-,edge1}{}{->,edge1}{}
%\DoubleLine{n1}{n3}{<-,edge1}{}{->,edge1}{}
%\DoubleLine{n2}{n3}{<-,edge1}{}{->,edge1}{}

\end{tikzpicture} 
		\caption{Simple network with admissible functions that have the structure given by \eref{eq:func_match1},\eref{eq:func_match2},\eref{eq:func_match3}.}
		\label{fig:Gadmissibility}
	\end{figure}	
	This \textit{CCN} can be described by the in-adjacency matrix $ M $ 
	\begin{eqnarray} M = 
	\left[\begin{array}{ccc} 1 & 0 & 1 \\ 1 & 0 & 1 \\ 1 & 1 & 1 \end{array} \right]
	,
	\end{eqnarray}
	together with the cell type partition \mbox{$ \type{} = \{\{1,2\},\{3\}\} $.} This means that a suitable $ f\in\mathcal{F}_{\mathcal{G}} $ should have the following structure
	\begin{eqnarray}
	f_1(\mathbf{x}) &= \hat{f}_1(
	x_1;
	\left[\begin{array}{ccc} 1 & 0 & 1 \end{array} \right]^{\top}
	,
	\mathbf{x})\label{eq:func_match1}
	,\\
	f_2(\mathbf{x}) &= \hat{f}_1(
	x_2;
	\left[\begin{array}{ccc} 1 & 0 & 1 \end{array} \right]^{\top}
	,
	\mathbf{x})\label{eq:func_match2}
	,\\
	f_3(\mathbf{x}) &= \hat{f}_2(
	x_3;
	\left[\begin{array}{ccc} 1 & 1 & 1 \end{array} \right]^{\top}
	,
	\mathbf{x})\label{eq:func_match3},
	\end{eqnarray}
	for some $ \hat{f}\in\hat{\mathcal{F}}_{T}  $.
	\hfill$ \square $
\end{exmp}
\subsection{Comparison to the groupoid formalism}
This formalism differs from the one based on a groupoid of bijections in ways other than just extending it to allow arbitrary sets of edges in parallel with arbitrary weights.\\
A minor difference is that it does not assume any prior structure on the cell domains $ \xset_{i} $ and codomains $ \yset_{i} $ or any smoothness requirement in order for a function to be admissible. While this is quite usual and reasonable for continuous-time systems, it is not so for discrete-time systems. Such assumptions are application-dependent and therefore, should not be built in the general definitions. We can always particularize by adding such restrictions at a later point. Furthermore, we do not require the networks to obey the ``consistency condition'', which would force the different monoids $ \{\mathcal{M}_{ij}\}_{i,j\in\tset} $ to be pairwise disjoint. We only operate $ (\|) $ and compare elements within the same monoid $ \mathcal{M}_{ij} $, therefore, such condition is unnecessary.\\
The most important change is the definition of oracle components. The crucial difference from the previous formalism is that the notion of in-neighborhood equivalence is imposed on all (finite) in-neighborhoods of a cell, and not just the ones that might appear in our particular network of interest, according to its size. In this sense, it is more restrictive that the original formalism, however, we argue that this is a very reasonable restriction. In particular, from a physical point of view, the existence of function that describes how a cell behaves under arbitrary (finite) in-neighborhoods is not too much to ask for. No matter how large, complicated, or even nonsensical is the way that we connect a physical system, there is always an expression that describes its evolution, regardless of how dramatic the result is (e.g., the whole system crashing or burning). That is, the underlying oracle components always ``exists''.\\
The idea of specifying how a cell behaves under arbitrary in-neighborhoods is not new. This is something that in practice is done everywhere, through specific expressions according to the particular application. This is done, for instance, with expressions such as $ \dot{x}_i = f(x_i) + \sum_{j=1}^{N} g(x_i;w_{ij},x_{j})$, where $ N $ does not refer to a specific fixed integer, but instead, allows the expression to describe arbitrary in-neighborhoods.\\ 
Oracle components formalize the idea underlying such expressions as a mathematical object in its own right. Furthermore, we do not assume any particular structure such as in the aforementioned expression, but instead, only require them to satisfy the notion of in-neighborhood equivalence established in the original groupoid formalism. In that formalism, this notion is enforced by a set of equality constraints through a pullback map on a groupoid of bijections. Here, \cref{thm:oracle_permut,thm:oracle_merge,thm:oracle_0_equal} of \cref{defi:oracle} act as generators of that set of equalities, extended to arbitrary in-neighborhoods.\\
This approach has the important advantage that given an oracle function, the admissible function associated with every network (with finite neighborhoods) is automatically (and uniquely) defined. On the other hand, by constructing admissible functions directly while only enforcing cross-compatibility between the particular types of in-neighborhoods that we are interested in, does not gives us any guarantees that such functions could then be further extended for other types of in-neighborhoods. In such a case, we would have assigned an admissible function to a network while claiming that some other networks living in the same universe and evolving according to the same laws would simply not have an admissible function that specifies its dynamical evolution. The approach used here guarantees that such non-physical scenario can never arise.
\\
In summary, the formalism used here generalizes the groupoid formalism in multiple aspects, but restricts it in one particular detail. We require the existence of a self-consistent, underlying law, that fully defines the behavior of a cell under any circumstance. The construction of admissible functions is then completely analogous to the modeling of real physical systems. That is, we take the appropriate physical law (oracle function) and evaluate it on the physical system of interest (network).
\\
The structure of commutative monoids is quite general and we introduced it to the formalism motivated by real world examples often used in practice (e.g., in electrical circuit theory, resistors in parallel form a commutative monoid such that $ 30\|15 = 10 $). However, this work is completely agnostic to the particular choice of monoids so we keep all examples as simple possible in order to illustrate the corresponding concepts. In fact, in this work the main concern is with whenever an element of a monoid is zero or non-zero. Therefore we keep it simple by having in all examples the monoids be integers under the usual sum. The results being valid for arbitrary monoids does not increase the complexity of their proofs, so generality comes for free.
\section{Equality-based synchronism}\label{sec:invariant_synchrony}
In this section, we concern ourselves with patterns of synchronism defined by equalities between the states of cells. Such a set of equalities establishes an equivalence relation, which we encode through the use of partitions on the set of cells. For an overview of equivalence relations, partitions and the related concepts in \cref{subsec:partition_representation} refer to \cite{enderton1977elements}.\\
We generalize the known results in \cite{stewart2007lattice} regarding lattices of balanced partitions $ \Lambda_{\mathcal{G}} $ to the case of invariance under some subset of admissible functions $ F\subseteq\mathcal{F}_{\mathcal{G}}$. In particular, we show that the set of partitions $ L_{F} $ that are invariant under $ F $ always form lattices. Furthermore, these lattices share with $ \Lambda_{\mathcal{G}} $ the very particular properties of always containing the trivial partition $ (\bot) $ and being closed under the standard partition join $ ( \vee) $. For general lattice theory refer to \cite{davey_priestley_2002}.\\
We study with some detail the particular case of lattices $ L $ such that $ \vee_{L} = \vee $ and $ \bot_L = \bot $ and then show that all the lattices regarding invariant synchrony patterns are of this type. Furthermore, such lattices always have an associated $ cir_L $ function. 
\subsection{Partitions and their representations}\label{subsec:partition_representation}
A partition $ \mathcal{A} $ on a set of cells $ \mathcal{C} $ is a set of non-empty subsets of $ \mathcal{C} $ such that they are pairwise disjoint and their union is equal to $ \mathcal{C} $. We often refer to each element of a given partition (corresponding to a subset of cells) by the term \textit{color}. The number of colors in a partition is called its \textit{rank}.\\
We construct the \textit{quotient set} $ \mathcal{C}/\mathcal{A} $ by taking the elements of $ \mathcal{C} $ and merging them together according to $ \mathcal{A} $, such that each color of $ \mathcal{A} $ is associated with an element of $ \mathcal{C}/\mathcal{A} $. We can now think of $ \mathcal{A} $ as a function from $ \mathcal{C} $ to $ \mathcal{C}/\mathcal{A} $, which we illustrate in the following example.
\begin{exmp}
	Consider the set of cells $ \mathcal{C} = \{a,b,c,d,e\} $. Then $ \mathcal{A} = \{\{a,b\},\{c\},\{d,e\}\} $ is a partition on $ \mathcal{C} $ with three colors $(rank(\mathcal{A}) =3)$. We denote the quotient set as $ \mathcal{C}/\mathcal{A}=\{ab,c,de\} $, which contains three elements. Then $ \mathcal{A} $ acts as function in $ \mathcal{C}\to \mathcal{C}/\mathcal{A} $, and we write $ \mathcal{A}(a)=\mathcal{A}(b) = ab $, $ \mathcal{A}(c) = c$ and $ \mathcal{A}(d) =\mathcal{A}(e) = de $.
	\hfill$ \square $
\end{exmp}
\begin{remark}
	In the example above it might look more canonical to think of the elements of $ \mathcal{C}/\mathcal{A} $ as $ ab := \{a,b\}$, $ c := \{c\} $ and $  de := \{d,e\} $. That is, each of its elements is a color according to partition $ \mathcal{A} $. 
	However, using this notation, $ \mathcal{A} $ and $ \mathcal{C}/\mathcal{A} $ would look indistinguishable. We want to think of these objects as semantically different. While we think of a partition $ \mathcal{A} $ as a set of sets of elements (cells), we think of $ \mathcal{C}/\mathcal{A} $ as just a set of elements, (which are colors, and therefore end up being sets themselves). In order to make this clear we use this shorthand notation. This becomes more important when we compose partitions (e.g., we apply a partition on the set $ \mathcal{C}/\mathcal{A} $) and define the concept of partition quotients.
	\hfill$ \square $
\end{remark}
Interpreting partitions as functions allows us to say that two cells $ c,d\in\mathcal{C} $ are of the same color, according to $ \mathcal{A} $, if and only if $ \mathcal{A}(c) = \mathcal{A}(d) $. Furthermore, they are surjective functions by construction and each color is given by the preimage of each element of $ \mathcal{C}/\mathcal{A} $. Conversely, every surjective function establishes a partition on its domain through its level sets.\\
Given two partitions $ \mathcal{A} $, $ \mathcal{B} $ on a set of cells $ \mathcal{C} $, we say that $ \mathcal{A} $ is \textit{finer} than $ \mathcal{B} $, denoted as $ \mathcal{A} \leq \mathcal{B}$, if 
\begin{equation}
\mathcal{A}(c) = \mathcal{A}(d)
\implies
\mathcal{B}(c) = \mathcal{B}(d)
\label{eq:refinement_def}
\end{equation}
for all $ c,d\in\mathcal{C} $. Conversely, $ \mathcal{B} $ is said to be \textit{coarser} than $ \mathcal{A} $. Roughly speaking, \eref{eq:refinement_def} means that if any pair of cells have the same color according to partition $ \mathcal{A} $, then they also have the same color according to $ \mathcal{B} $. In other words, if we merge some of the colors of $ \mathcal{A} $ together, we can obtain $ \mathcal{B} $. Conversely, we can obtain $ \mathcal{A} $ by starting with $ \mathcal{B} $ and splitting some of its colors into smaller ones. The \textit{trivial} partition, in which each color consists of a single cell, is the \textit{finest} and its rank is $ \vert\mathcal{C}\vert $. We now show that if $ \mathcal{A} \leq \mathcal{B} $ we can define a \textit{quotient partition} $ \mathcal{B}/\mathcal{A} $.
\begin{lemma}
	\label{lemma:quotient_partition}
	Consider a set of cells $ \mathcal{C} $ and the partitions $ \mathcal{A}:\mathcal{C}\to \mathcal{C}/\mathcal{A} $ and $ \mathcal{B}:\mathcal{C}\to \mathcal{C}/\mathcal{B} $. Then $ \mathcal{A} \leq \mathcal{B}$ if and only if there is some $ \mathcal{B}/\mathcal{A} : \mathcal{C}/\mathcal{A} \to \mathcal{C}/\mathcal{B}$ such that $ \mathcal{B}/\mathcal{A}\circ\mathcal{A} = \mathcal{B} $.
	\hfill$ \square $
\end{lemma}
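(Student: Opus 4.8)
The plan is to recognize this as the standard universal property of a surjection. Since $\mathcal{A}$ and $\mathcal{B}$ are surjective functions whose fibers (level sets) are precisely the colors, the condition $\mathcal{A}\leq\mathcal{B}$ given by \eref{eq:refinement_def} says exactly that $\mathcal{B}$ is constant on each fiber of $\mathcal{A}$, which is the precise requirement for $\mathcal{B}$ to factor through $\mathcal{A}$. I would prove the two implications separately, constructing the map explicitly in one direction and performing a direct substitution in the other.

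For the forward direction, I assume $\mathcal{A}\leq\mathcal{B}$ and build the candidate map directly. Because $\mathcal{A}$ is surjective, every element of $\mathcal{C}/\mathcal{A}$ has the form $\mathcal{A}(c)$ for some $c\in\mathcal{C}$, so I set $\mathcal{B}/\mathcal{A}(\mathcal{A}(c)):=\mathcal{B}(c)$. The only nontrivial point, and the main obstacle, is to verify that this rule is well defined, i.e.\ that the assigned value does not depend on the choice of representative $c$. This is exactly where the hypothesis is used: if $\mathcal{A}(c)=\mathcal{A}(d)$, then \eref{eq:refinement_def} yields $\mathcal{B}(c)=\mathcal{B}(d)$, so the value at $\mathcal{A}(c)$ is unambiguous. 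Once well-definedness is secured, the identity $\mathcal{B}/\mathcal{A}\circ\mathcal{A}=\mathcal{B}$ holds by construction, since $(\mathcal{B}/\mathcal{A}\circ\mathcal{A})(c)=\mathcal{B}/\mathcal{A}(\mathcal{A}(c))=\mathcal{B}(c)$ for every $c\in\mathcal{C}$.

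For the converse, I assume such a map $\mathcal{B}/\mathcal{A}$ exists with $\mathcal{B}/\mathcal{A}\circ\mathcal{A}=\mathcal{B}$ and verify \eref{eq:refinement_def}. Taking any $c,d\in\mathcal{C}$ with $\mathcal{A}(c)=\mathcal{A}(d)$ and applying $\mathcal{B}/\mathcal{A}$ to both sides, the factorization gives $\mathcal{B}(c)=\mathcal{B}/\mathcal{A}(\mathcal{A}(c))=\mathcal{B}/\mathcal{A}(\mathcal{A}(d))=\mathcal{B}(d)$, which is precisely $\mathcal{A}\leq\mathcal{B}$. This direction is a pure substitution and requires no well-definedness argument.

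I would close with the observation that the factoring map is in fact forced: any $\mathcal{B}/\mathcal{A}$ satisfying $\mathcal{B}/\mathcal{A}\circ\mathcal{A}=\mathcal{B}$ must send $\mathcal{A}(c)\mapsto\mathcal{B}(c)$, and since $\mathcal{A}$ is surjective this determines it on all of $\mathcal{C}/\mathcal{A}$. Hence $\mathcal{B}/\mathcal{A}$ is unique. This uniqueness is not demanded by the statement, but it justifies treating $\mathcal{B}/\mathcal{A}$ as an unambiguous piece of notation in the sequel.
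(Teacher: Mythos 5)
Your proof is correct and follows essentially the same route as the paper: both define the candidate map by $\mathcal{A}(c)\mapsto\mathcal{B}(c)$, use surjectivity of $\mathcal{A}$ to cover the whole domain, and observe that well-definedness of this assignment is exactly the condition $\mathcal{A}\leq\mathcal{B}$. Your explicit split into two implications and the closing uniqueness remark merely make explicit what the paper's condensed argument (and its follow-up comment that $\mathcal{B}/\mathcal{A}$ is uniquely defined) already contains.
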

\begin{proof}
	Note that $ \mathcal{B}/\mathcal{A}\circ\mathcal{A} = \mathcal{B} $ means that $ \mathcal{B}/\mathcal{A}(\mathcal{A}(c)) = \mathcal{B}(c) $ for all $ c\in \mathcal{C} $. This means that $ \mathcal{B}/\mathcal{A} $ is the function that maps $ \mathcal{A}(c)\in \mathcal{C}/\mathcal{A} $ into $ \mathcal{B}(c) \in \mathcal{C}/\mathcal{B}$ for all $ c\in\mathcal{C} $. Note that this is enough to define $ \mathcal{B}/\mathcal{A} $ on its whole domain since $ \mathcal{A} $ is surjective. That is, for every element $ k\in\mathcal{C}/\mathcal{A} $ there is some $ c\in\mathcal{C} $ such that $ \mathcal{A}(c) = k $. Finally, $ \mathcal{B}/\mathcal{A} $ exists if and only if such a function is well-defined. That is, for every $ k\in\mathcal{C}/\mathcal{A} $, the mapping of $ k=\mathcal{A}(c) $ into $ \mathcal{B}(c) $ has to be completely independent of the particular choice of $ c\in\mathcal{C} $, which is equivalent to $ \mathcal{A}\leq \mathcal{B} $.
\end{proof}
In particular, if $ \mathcal{A}\leq\mathcal{B} $, the partition $ \mathcal{B}/\mathcal{A} $ describes how to merge the colors of $ \mathcal{A} $ into the colors of $ \mathcal{B} $. Furthermore, note that $ \mathcal{B}/\mathcal{A} $ is uniquely defined and is also surjective. If we consider the particular case $ \mathcal{B}=\mathcal{A} $, then $ \mathcal{A}/\mathcal{A} : \mathcal{C}/\mathcal{A} \to \mathcal{C}/\mathcal{A}$ is such that $ \mathcal{A}/\mathcal{A}\circ\mathcal{A} = \mathcal{A} $. That is, $ \mathcal{A}/\mathcal{A} $ acts as the identity map in the set $ \mathcal{C}/\mathcal{A} $ and it is the trivial partition in that set.
\begin{exmp}
	\label{exmp:partition_quotient_example}
	Consider the set of cells $ \mathcal{C} = \{a,b,c,d,e\} $ on which we define the partitions $ \mathcal{A} = \{\{a,b\},\{c\},\{d,e\}\} $  $\mathcal{B} = \{\{a,b\},\{c,d,e\}\} $. We denote the quotient sets as $ \mathcal{C}/\mathcal{A}=\{ab,c,de\} $ and $ \mathcal{C}/\mathcal{B} = \{ab,cde\} $. Consider that the mappings $ \mathcal{A}:\mathcal{C}\to \mathcal{C}/\mathcal{A} $ and $ \mathcal{B}:\mathcal{C}\to \mathcal{C}/\mathcal{B} $ are defined in the expected way. Then since $ \mathcal{A} \leq \mathcal{B}$, the quotient partition $ \mathcal{B}/\mathcal{A} : \mathcal{C}/\mathcal{A} \to \mathcal{C}/\mathcal{B} $ is such that $ \mathcal{B}/\mathcal{A}(ab) = ab $ and $ \mathcal{B}/\mathcal{A}(c) = \mathcal{B}/\mathcal{A}(de) = cde$.\\
	Using the set of colors notation, we can write $ \mathcal{B}/\mathcal{A} = \{\{ab\},\{c,de\}\} $. Finally, note that $ rank(\mathcal{B}/\mathcal{A})=rank(\mathcal{B}) =2 $.
	\hfill$ \square $
\end{exmp}
It should be clear that $ rank(\mathcal{B}/\mathcal{A})=rank(\mathcal{B}) $ is true in general, since it always corresponds to the size of their common image set $ \mathcal{C}/\mathcal{B} $.
\begin{lemma}
	\label{lemma:partition_quotient_preserves_partial_order}
	The partition quotient preserves the partial order relation $ \leq $. That is, for all partitions $ \mathcal{A},\mathcal{B}_1,\mathcal{B}_2 $ on $ \mathcal{C} $ such that $ \mathcal{A} \leq \mathcal{B}_1,\mathcal{B}_2  $, we have $ \mathcal{B}_1 \leq \mathcal{B}_2 $ if and only if $ \mathcal{B}_1/\mathcal{A} \leq \mathcal{B}_2/\mathcal{A}$.
	\hfill$ \square $
\end{lemma}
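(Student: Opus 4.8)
The plan is to work entirely with the function interpretation of partitions together with the defining identity of the quotient partition supplied by \cref{lemma:quotient_partition}. The key observation is that $\mathcal{B}_1/\mathcal{A}$ and $\mathcal{B}_2/\mathcal{A}$ are themselves partitions (that is, surjective functions) on the set $\mathcal{C}/\mathcal{A}$, so the relation $\mathcal{B}_1/\mathcal{A} \leq \mathcal{B}_2/\mathcal{A}$ is nothing but \eref{eq:refinement_def} read on the ground set $\mathcal{C}/\mathcal{A}$: for all $k,l\in\mathcal{C}/\mathcal{A}$, the equality $(\mathcal{B}_1/\mathcal{A})(k) = (\mathcal{B}_1/\mathcal{A})(l)$ implies $(\mathcal{B}_2/\mathcal{A})(k) = (\mathcal{B}_2/\mathcal{A})(l)$. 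The two facts I would use repeatedly are the defining identity $(\mathcal{B}_i/\mathcal{A})(\mathcal{A}(c)) = \mathcal{B}_i(c)$ for $i\in\{1,2\}$ and the surjectivity of the map $\mathcal{A}\colon\mathcal{C}\to\mathcal{C}/\mathcal{A}$.

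For the direction $\mathcal{B}_1\leq\mathcal{B}_2 \implies \mathcal{B}_1/\mathcal{A}\leq\mathcal{B}_2/\mathcal{A}$, I would take arbitrary $k,l\in\mathcal{C}/\mathcal{A}$ with $(\mathcal{B}_1/\mathcal{A})(k) = (\mathcal{B}_1/\mathcal{A})(l)$, invoke surjectivity of $\mathcal{A}$ to write $k=\mathcal{A}(c)$ and $l=\mathcal{A}(d)$ for some $c,d\in\mathcal{C}$, and then rewrite the hypothesis through the defining identity as $\mathcal{B}_1(c)=\mathcal{B}_1(d)$. Applying $\mathcal{B}_1\leq\mathcal{B}_2$ gives $\mathcal{B}_2(c)=\mathcal{B}_2(d)$, which translates back to $(\mathcal{B}_2/\mathcal{A})(k) = (\mathcal{B}_2/\mathcal{A})(l)$, as required. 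For the converse, I would start from $c,d\in\mathcal{C}$ with $\mathcal{B}_1(c)=\mathcal{B}_1(d)$, push them through $\mathcal{A}$ and use the defining identity to obtain $(\mathcal{B}_1/\mathcal{A})(\mathcal{A}(c)) = (\mathcal{B}_1/\mathcal{A})(\mathcal{A}(d))$; the assumption $\mathcal{B}_1/\mathcal{A}\leq\mathcal{B}_2/\mathcal{A}$ then yields $(\mathcal{B}_2/\mathcal{A})(\mathcal{A}(c)) = (\mathcal{B}_2/\mathcal{A})(\mathcal{A}(d))$, which is exactly $\mathcal{B}_2(c)=\mathcal{B}_2(d)$.

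This is essentially the entire argument, and I do not expect a genuine obstacle beyond careful bookkeeping between the two ground sets $\mathcal{C}$ and $\mathcal{C}/\mathcal{A}$. The one point that deserves explicit attention is the role of surjectivity of $\mathcal{A}$ in the forward direction: it is precisely surjectivity that lets every pair $k,l\in\mathcal{C}/\mathcal{A}$ be realized as $\mathcal{A}(c),\mathcal{A}(d)$, so that a statement about the quotient maps on all of $\mathcal{C}/\mathcal{A}$ can be pulled back to a statement about cells of $\mathcal{C}$. Since $\mathcal{A}$ is surjective by construction, this gap never arises, and both implications close symmetrically.
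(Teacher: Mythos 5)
Your proposal is correct and follows essentially the same route as the paper's own proof: both arguments rewrite $\mathcal{B}_i$ as $(\mathcal{B}_i/\mathcal{A})\circ\mathcal{A}$ via the defining identity of \cref{lemma:quotient_partition} and then pass between statements quantified over $\mathcal{C}$ and over $\mathcal{C}/\mathcal{A}$, invoking surjectivity of $\mathcal{A}$ in exactly the direction where the paper does. The only difference is presentational — you organize it as two separate implications while the paper phrases it as an equivalence of two quantified statements — so nothing further is needed.
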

\begin{proof}
	Firstly, note that $ \mathcal{B}_1/\mathcal{A} $ and $ \mathcal{B}_2/\mathcal{A} $ are both partitions on the set $ \mathcal{C}/\mathcal{A} $, therefore the statement $ \mathcal{B}_1/\mathcal{A} \leq \mathcal{B}_2/\mathcal{A}$ is meaningful.\\
	Since $ \mathcal{A} \leq \mathcal{B}_1,\mathcal{B}_2  $ by assumption, we can, using \cref{lemma:quotient_partition}, write $ \mathcal{B}_1 \leq \mathcal{B}_2 $ as $ \mathcal{B}_1/\mathcal{A}(\mathcal{A}(c)) = \mathcal{B}_1/\mathcal{A}(\mathcal{A}(d))
	\implies
	\mathcal{B}_2/\mathcal{A}(\mathcal{A}(c)) = \mathcal{B}_2/\mathcal{A}(\mathcal{A}(d)) $ for all $ c,d\in\mathcal{C} $.\\
	We have to show that this is equivalent to $ \mathcal{B}_1/\mathcal{A}(k) = \mathcal{B}_1/\mathcal{A}(l)
	\implies
	\mathcal{B}_2/\mathcal{A}(k) = \mathcal{B}_2/\mathcal{A}(l) $ for all $ k,l\in \mathcal{C}/\mathcal{A} $. The forward direction holds because	$ \mathcal{A} $ is surjective. That is, for all $ k,l\in \mathcal{C}/\mathcal{A} $ there are some $ c,d\in\mathcal{C} $ such that $ \mathcal{A}(c)= k $ and $ \mathcal{A}(d) = l $. The backwards direction is immediate since for all $ c,d\in\mathcal{C} $, we have $ \mathcal{A}(c),\mathcal{A}(d)\in \mathcal{C}/\mathcal{A}$.
\end{proof}
\begin{lemma}
	\label{lemma:partition_quotient_canceling}
	Consider partitions $ \mathcal{A},\mathcal{B}_1,\mathcal{B}_2 $ on $ \mathcal{C} $ such that $ \mathcal{A} \leq \mathcal{B}_1 \leq \mathcal{B}_2  $. Then $ (\mathcal{B}_2/\mathcal{A})/(\mathcal{B}_1/\mathcal{A}) 
	=
	\mathcal{B}_2/\mathcal{B}_1$.
	\hfill$ \square $
\end{lemma}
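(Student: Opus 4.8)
The plan is to exploit the uniqueness of the quotient partition established in \cref{lemma:quotient_partition}, reducing the claimed identity to a single composition check that collapses by surjectivity of $\mathcal{A}$.

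First I would confirm that both sides are well-defined and share the same domain and codomain. The right-hand side $\mathcal{B}_2/\mathcal{B}_1$ exists because $\mathcal{B}_1 \leq \mathcal{B}_2$; by \cref{lemma:quotient_partition} it is the unique map $\mathcal{C}/\mathcal{B}_1 \to \mathcal{C}/\mathcal{B}_2$ with $(\mathcal{B}_2/\mathcal{B}_1)\circ\mathcal{B}_1 = \mathcal{B}_2$. For the left-hand side, \cref{lemma:partition_quotient_preserves_partial_order} applied to $\mathcal{A} \leq \mathcal{B}_1 \leq \mathcal{B}_2$ gives $\mathcal{B}_1/\mathcal{A} \leq \mathcal{B}_2/\mathcal{A}$ as partitions on $\mathcal{C}/\mathcal{A}$, so $(\mathcal{B}_2/\mathcal{A})/(\mathcal{B}_1/\mathcal{A})$ is defined. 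Since $\mathcal{B}_1/\mathcal{A}$, viewed as a surjection, has codomain $\mathcal{C}/\mathcal{B}_1$, its quotient set $(\mathcal{C}/\mathcal{A})/(\mathcal{B}_1/\mathcal{A})$ is canonically identified with $\mathcal{C}/\mathcal{B}_1$; likewise the common target is $\mathcal{C}/\mathcal{B}_2$. Hence both sides are maps $\mathcal{C}/\mathcal{B}_1 \to \mathcal{C}/\mathcal{B}_2$ and the equality is meaningful.

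The heart of the argument is then the uniqueness clause of \cref{lemma:quotient_partition}: the left-hand side is the unique map $Q$ satisfying $Q \circ (\mathcal{B}_1/\mathcal{A}) = \mathcal{B}_2/\mathcal{A}$. It therefore suffices to show that $\mathcal{B}_2/\mathcal{B}_1$ satisfies this same equation, i.e.\ that $(\mathcal{B}_2/\mathcal{B}_1)\circ(\mathcal{B}_1/\mathcal{A}) = \mathcal{B}_2/\mathcal{A}$. Because $\mathcal{A}\colon\mathcal{C}\to\mathcal{C}/\mathcal{A}$ is surjective, two maps out of $\mathcal{C}/\mathcal{A}$ coincide as soon as they agree after precomposition with $\mathcal{A}$, so I would verify the identity in that form. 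Using associativity of composition together with the defining relations $(\mathcal{B}_1/\mathcal{A})\circ\mathcal{A} = \mathcal{B}_1$, $(\mathcal{B}_2/\mathcal{B}_1)\circ\mathcal{B}_1 = \mathcal{B}_2$ and $(\mathcal{B}_2/\mathcal{A})\circ\mathcal{A} = \mathcal{B}_2$, the left side becomes $(\mathcal{B}_2/\mathcal{B}_1)\circ(\mathcal{B}_1/\mathcal{A})\circ\mathcal{A} = (\mathcal{B}_2/\mathcal{B}_1)\circ\mathcal{B}_1 = \mathcal{B}_2$, while the right side is $(\mathcal{B}_2/\mathcal{A})\circ\mathcal{A} = \mathcal{B}_2$; both equal $\mathcal{B}_2$, so the equation holds and uniqueness forces the two quotients to agree.

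There is no deep obstacle here; the only point requiring care is the bookkeeping of domains and codomains, in particular the identification $(\mathcal{C}/\mathcal{A})/(\mathcal{B}_1/\mathcal{A}) = \mathcal{C}/\mathcal{B}_1$ that makes the two sides comparable. Once that is in place, the result drops out purely from associativity of composition and the right-cancellability of the surjection $\mathcal{A}$.
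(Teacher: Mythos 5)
Your proposal is correct and takes essentially the same route as the paper's proof: both reduce the claim to the defining relations $(\mathcal{B}_1/\mathcal{A})\circ\mathcal{A}=\mathcal{B}_1$, $(\mathcal{B}_2/\mathcal{A})\circ\mathcal{A}=\mathcal{B}_2$ and $(\mathcal{B}_2/\mathcal{B}_1)\circ\mathcal{B}_1=\mathcal{B}_2$ from \cref{lemma:quotient_partition}, using \cref{lemma:partition_quotient_preserves_partial_order} for well-definedness and the surjectivity of $\mathcal{A}$ to close the argument. Your uniqueness-plus-right-cancellation phrasing is simply the composition-level form of the paper's pointwise chase, which picks $c$ with $\mathcal{B}_1(c)=k$ and evaluates both sides to $\mathcal{B}_2(c)$.
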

\begin{proof}
	From \cref{lemma:partition_quotient_preserves_partial_order}, we have $ \mathcal{B}_1/\mathcal{A} \leq \mathcal{B}_2/\mathcal{A} $. Therefore, $ (\mathcal{B}_2/\mathcal{A})/(\mathcal{B}_1/\mathcal{A}) $ is well-defined.\\
	Furthermore, from \cref{lemma:quotient_partition}, $ (\mathcal{B}_2/\mathcal{A})/(\mathcal{B}_1/\mathcal{A})  $ and $ \mathcal{B}_2/\mathcal{B}_1  $ are both mappings in $ \mathcal{C}/\mathcal{B}_1\to \mathcal{C}/\mathcal{B}_2 $. We now show that $ (\mathcal{B}_2/\mathcal{A})/(\mathcal{B}_1/\mathcal{A})(k) 
	=
	\mathcal{B}_2/\mathcal{B}_1(k) $ for all $ k\in\mathcal{C}/\mathcal{B}_1 $. Note that $ k\in\mathcal{C}/\mathcal{B}_1 $ if and only if there is some $ c\in\mathcal{C} $ such that $ \mathcal{B}_1(c)=k $. Since $ \mathcal{A}\leq \mathcal{B}_1 $, we have $ k = \mathcal{B}_1/\mathcal{A}(\mathcal{A}(c)) $. Then
	\begin{eqnarray*}
	(\mathcal{B}_2/\mathcal{A})/(\mathcal{B}_1/\mathcal{A})(k) 
	&=
	(\mathcal{B}_2/\mathcal{A})/(\mathcal{B}_1/\mathcal{A})\circ\mathcal{B}_1/\mathcal{A}(\mathcal{A}(c)) \\
	&=
	\mathcal{B}_2/\mathcal{A}\circ\mathcal{A}(c) \\
	&=
	\mathcal{B}_2(c).
	\end{eqnarray*}
	Since $ \mathcal{B}_1\leq \mathcal{B}_2 $, we have $ \mathcal{B}_2 = \mathcal{B}_2/\mathcal{B}_1\circ \mathcal{B}_1 $. Then
	\begin{eqnarray*}
	\mathcal{B}_2(c) 
	&=
	\mathcal{B}_2/\mathcal{B}_1\circ \mathcal{B}_1(c)
	\\
	&=
	\mathcal{B}_2/\mathcal{B}_1(k).
	\end{eqnarray*}
\end{proof}
It is often convenient to establish an order on a set of cells. That is, to associate with each cell a distinct integer from $ 1 $ to $ n $, where $ n $ is the size of that set. We now see that this allows us to represent partitions using matrices.\\
Consider we identify $ \mathcal{C} $ with $ \{1,\ldots,\vert \mathcal{C}\vert\} $ and $ \mathcal{C}/\mathcal{A} $ with $ \{1,\ldots,\vert \mathcal{C}/\mathcal{A}\vert\} $. Then we can represent a partition $ \mathcal{A}:\mathcal{C}\to \mathcal{C}/\mathcal{A} $ through a \textit{partition matrix} (also called characteristic matrix) $ P \in \{0,1\}^{\vert \mathcal{C}\vert \times \vert \mathcal{C}/\mathcal{A}\vert }$, such that $ \left[P\right]_{ck} = 1 $ if $ \mathcal{A}(c) = k $ and $ \left[P\right]_{ck} = 0 $ otherwise. That is, rows corresponds to the cells and columns correspond to the colors, with $ 1 $ encoding that the cell of that row maps into the color associated with that column. This is illustrated in the following example.
\begin{exmp}
	Consider the same sets of cells and partitions as in \cref{exmp:partition_quotient_example}.
	For $ \mathcal{C} $ we use the indexing $ (a,b,c,d,e) = (1,2,3,4,5) $, for $ \mathcal{C}/\mathcal{A} $ we index $ (ab,c,de) = (1,2,3) $, and we index $ \mathcal{C}/\mathcal{B} $ according to $ (ab,cde) = (2,1)$.
	We index $ ab $ differently as a member of $ \mathcal{C}/\mathcal{A} $ than as a member of $ \mathcal{C}/\mathcal{B} $. This is not an issue since an ordering is a property within a given set, not something intrinsic to an element.
	Using the mentioned indexing, the partitions $ \mathcal{A},\mathcal{B},\mathcal{B}/\mathcal{A} $ are represented through the partition matrices $ P_{\mathcal{A}},P_{\mathcal{A}},P_{\mathcal{B}/\mathcal{A}} $, which are given by
	\begin{eqnarray*}
	P_{\mathcal{A}} =
	\left[\begin{array}{ccc} 
		1 & 0 & 0\\
		1 & 0 & 0\\
		0 & 1 & 0\\
		0 & 0 & 1\\
		0 & 0 & 1 
	\end{array}\right]
	,\quad
	P_{\mathcal{B}} =
	\left[\begin{array}{ccc} 
	0 & 1\\
	0 & 1\\
	1 & 0\\
	1 & 0\\
	1 & 0
	\end{array}	\right]
	,\quad
	P_{\mathcal{B}/\mathcal{A}} =
	\left[\begin{array}{cc} 
	0 & 1\\
	1 & 0\\
	1 & 0
	\end{array}\right]
	.
	\end{eqnarray*}
	\hfill$ \square $
\end{exmp}
These matrices are related by $ P_{\mathcal{A}} P_{\mathcal{B}/\mathcal{A}} = P_{\mathcal{B}}$. This is equivalent to $ \mathcal{B}/\mathcal{A}\circ\mathcal{A} = \mathcal{B} $. It is more clear that these formulas are analogous if we consider the transposed version $ P_{\mathcal{B}/\mathcal{A}}^{\top}   P_{\mathcal{A}}^{\top}  = P_{\mathcal{B}}^{\top}$. In this work, we considered it more useful to define partition matrices the way we did instead of the transposed alternative.\\
Given a partition $ \mathcal{A} $, we can index its related sets $ \mathcal{C} $ and $ \mathcal{C}/\mathcal{A} $ in different ways. This means that $ \mathcal{A} $ can be represented by multiple partition matrices that are related to each other by a reordering of rows and columns. This is not an issue as long as we keep things consistent by always using the same assigned ordering when constructing other partition matrices that also involve $ \mathcal{C} $ and $ \mathcal{C}/\mathcal{A} $.\\
We often use the partition and its matrix interchangeably, that is, $ P_{\mathcal{A}} \leq \mathcal{B} $ or $ P_{\mathcal{A}} \leq P_{\mathcal{B}} $ to mean $ \mathcal{A} \leq \mathcal{B}$.\\
Given partition matrices $ P_{\mathcal{A}},P_{\mathcal{B}} $ such that $ P_{\mathcal{A}} \leq P_{\mathcal{B}} $, we have, by assumption, already assigned an ordering on all the relevant sets $ \mathcal{C} $, $ \mathcal{C}/\mathcal{A} $ and $ \mathcal{C}/\mathcal{B} $. Therefore there exists an unique partition matrix $ P_{\mathcal{B}/\mathcal{A}} $, representing $ \mathcal{B}/\mathcal{A} $ such that $ P_{\mathcal{A}}  P_{\mathcal{B}/\mathcal{A}} = P_{\mathcal{B}}  $.\\
The trivial partition can be represented by any $ \vert\mathcal{C}\vert\times\vert\mathcal{C}\vert $ permutation matrix, one of which is the identity.\\
The rank of a partition corresponds to the rank of any of its matrix representations. That is, $rank(\mathcal{A}) = rank(P_{\mathcal{A}}) $.\\
Given some matrix $ M $ of appropriate dimensions, $ PM $ is always well-defined as an expansion of $ M $, where its rows are replicated. In the case of $ MP $, we must be able to sum elements of $ M $. In our context, the sum operations are the previously mentioned monoid sum operations $ \| $.
\subsection{Lattices of partitions}\label{subsec:lattice_partitions} 
A lattice $ L $ is a partially ordered set such that given any two elements $ a,b\in L $, there exists in $ L $ a \textit{least upper bound} or \textit{join} denoted by $ a \vee_L b $. Similarly, there is in $ L $ a \textit{greatest lower bound} or \textit{meet} denoted by $ a \wedge_L b $.
\begin{exmp}
	Consider \fref{fig:lattice_examples}, where we represent two partially ordered sets $ L $ and $ S $. We connect two different elements if and only if one is larger than the other (according to its assigned partial order $ \leq $) and they have no other element in-between. Furthermore, we present graphically the larger elements above the smaller terms. For instance, in \fref{fig:Lattice_example}, we have $ e\leq_L b $ and $ b\leq_L a $ so we connect them. However, we do not connect $ e-a $ despite $ e \leq_L a $ since $ b $ is in-between them. Note that $ L $ is a lattice since $ \vee_{L} $ and $ \wedge_{L} $ are well-defined for every pair of elements (e.g., $ b\vee_{L}d = a$ and $ b\wedge_{L} d =f $). On the other hand, $ S $ does not have this property. The set of elements larger than $ l $ and $ m $ is $ \{i,j,k\} $. Out of these, $ j,k $ are both smaller than $ i $, however, neither $ j\leq k $ nor $ k\leq j $. That is, they are non-comparable. Since $ \{i,j,k\} $ does not have a smallest element, $ l \vee_{S} m $ is not defined, which means that $ S $ is not a lattice.
	\begin{figure}[h]
		\centering
		\begin{subfigure}[t]{0.3\textwidth}
			\centering
			\begin{tikzpicture}[
node1/.style = {circle,minimum size=23,draw},
node2/.style = {circle,minimum size=23,draw,fill=white!75!black},
node3/.style = {circle,minimum size=23,draw,fill=white!50!black},
lightgray/.style = {rectangle,minimum size=20,draw,draw,fill=white!93!black},
pweak/.style = {rectangle,minimum size=20,draw,draw,fill=white!50!black},
proot/.style = {rectangle,minimum size=20,draw,draw,fill=white!75!black},
pstrong/.style = {rectangle,minimum size=20,draw},
noderect/.style = {rectangle,minimum size=20,draw},
edge1/.style = {>=latex,thick},
edgedash/.style = {>=latex,thick,dashed},
edge2/.style = {>=latex,thick,blue},
edge3/.style = {>=latex,thick,red}
]

\def\layer{1.3}
\def\del{0.75}

\node[noderect] at (0,0) (bot){$ h $};

\node[noderect] at ({-2*\del},{\layer} ) (12){$ e $};
\node[noderect] at ({0*\del},{\layer} ) (24){$ f $};
\node[noderect] at ({2*\del},{\layer} ) (23){$ g $};

\node[noderect] at ({-2*\del},{2*\layer} ) (124){$ b $};
\node[noderect] at ({0*\del},{2*\layer} ) (123){$ c $};
\node[noderect] at ({2*\del},{2*\layer} ) (234){$ d $};

\node[noderect] at (0,{3*\layer} ) (1234){$ a $};

\draw [-,edge1](bot) -- (12);
\draw [-,edge1](bot) -- (24);
\draw [-,edge1](bot) -- (23);

\draw [-,edge1](12) -- (124);
\draw [-,edge1](12) -- (123);

\draw [-,edge1](24) -- (124);
\draw [-,edge1](24) -- (234);

\draw [-,edge1](23) -- (123);
\draw [-,edge1](23) -- (234);

\draw [-,edge1](124) -- (1234);
\draw [-,edge1](123) -- (1234);
\draw [-,edge1](234) -- (1234);

\end{tikzpicture} 
			\caption{Lattice $ L $.}
			\label{fig:Lattice_example}
		\end{subfigure}
		\begin{subfigure}[t]{0.3\textwidth}
			\centering
			\begin{tikzpicture}[
node1/.style = {circle,minimum size=23,draw},
node2/.style = {circle,minimum size=23,draw,fill=white!75!black},
node3/.style = {circle,minimum size=23,draw,fill=white!50!black},
lightgray/.style = {rectangle,minimum size=20,draw,draw,fill=white!93!black},
pweak/.style = {rectangle,minimum size=20,draw,draw,fill=white!50!black},
proot/.style = {rectangle,minimum size=20,draw,draw,fill=white!75!black},
pstrong/.style = {rectangle,minimum size=20,draw},
noderect/.style = {rectangle,minimum size=20,draw},
edge1/.style = {>=latex,thick},
edgedash/.style = {>=latex,thick,dashed},
edge2/.style = {>=latex,thick,blue},
edge3/.style = {>=latex,thick,red}
]

\def\layer{1.3}
\def\del{0.75}

\node[noderect] at (0,0) (bot){$ n $};

\node[noderect] at ({-1*\del},{\layer} ) (12){$ l $};
\node[noderect] at ({1*\del},{\layer} ) (24){$ m $};

\node[noderect] at ({-1*\del},{2*\layer} ) (124){$ j $};
\node[noderect] at ({1*\del},{2*\layer} ) (123){$ k $};

\node[noderect] at (0,{3*\layer} ) (1234){$ i $};

\draw [-,edge1](bot) -- (12);
\draw [-,edge1](bot) -- (24);

\draw [-,edge1](12) -- (124);
\draw [-,edge1](12) -- (123);

\draw [-,edge1](24) -- (124);
\draw [-,edge1](24) -- (123);

\draw [-,edge1](124) -- (1234);
\draw [-,edge1](123) -- (1234);

\end{tikzpicture} 
			\caption{Non-lattice $ S $.}
			\label{fig:Non_lattice_example}
		\end{subfigure}
		\caption{Partially ordered sets $ L,S $ such that $ L $ is a lattice and $ S $ is not a lattice.}
		\label{fig:lattice_examples}
	\end{figure}
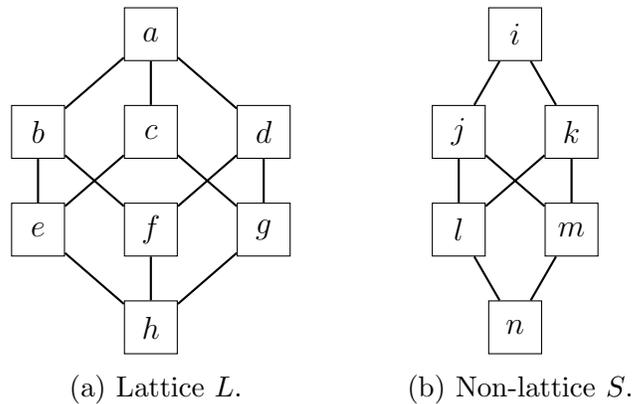
	\hfill$ \square $
\end{exmp} 
In this work, we are interested only in lattices of partitions, partially ordered according to the finer ($ \leq $) relation, described in \eref{eq:refinement_def}.\\
The set of all partitions on a finite set of cells $ \mathcal{C} $, partially ordered by the finer ($ \leq $) relation, forms a \textit{lattice} $ L_{\mathcal{C}} $. In this set, the join ($ \vee $) and meet ($ \wedge $) operations can be calculated according to \cref{lemma:join_chain_def,lemma:meet_def} respectively.
\begin{lemma}\label{lemma:join_chain_def}
	The partition given by $ \mathcal{A} = \mathcal{A}_1 \vee \mathcal{A}_2 $ is such that $ \mathcal{A}(c) = \mathcal{A}(d) $ if and only if there is a chain of cells $ c=c_1,\ldots,c_k = d $ such that, for each $ c_i,c_{i+1} $, with $ 1 \leq i < k $, we have either $ \mathcal{A}_1 (c_{i}) = \mathcal{A}_1 (c_{i+1})$ or $ \mathcal{A}_2 (c_{i}) = \mathcal{A}_2 (c_{i+1}) $.
	\hfill$ \square $
\end{lemma}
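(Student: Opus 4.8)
The plan is to define the binary relation $\sim$ on $\mathcal{C}$ by declaring $c\sim d$ precisely when a chain as in the statement exists, to verify that $\sim$ is an equivalence relation so that it induces a genuine partition $\mathcal{A}$, and then to characterize that partition as the least upper bound of $\mathcal{A}_1$ and $\mathcal{A}_2$ in the lattice $L_{\mathcal{C}}$. Since the join is by definition the least upper bound and is unique whenever it exists, this identifies $\mathcal{A}$ with $\mathcal{A}_1 \vee \mathcal{A}_2$.

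First I would check that $\sim$ is an equivalence relation, which is exactly what is needed for it to define a surjection $\mathcal{A}\colon\mathcal{C}\to\mathcal{C}/\mathcal{A}$ with $\mathcal{A}(c)=\mathcal{A}(d)$ iff $c\sim d$. Reflexivity follows from the length-one chain $c=c_1=c$ (which has no consecutive pairs to constrain); symmetry from reversing a chain; and transitivity from concatenating a chain from $c$ to $d$ with one from $d$ to $e$, where the repeated middle cell is harmless. Thus $\sim$ partitions $\mathcal{C}$ into its equivalence classes and determines $\mathcal{A}$.

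Next I would show that $\mathcal{A}$ is an upper bound, namely $\mathcal{A}_1 \leq \mathcal{A}$ and $\mathcal{A}_2 \leq \mathcal{A}$. If $\mathcal{A}_1(c)=\mathcal{A}_1(d)$, then the two-term chain $c,d$ satisfies the requirement, since its single consecutive pair agrees under $\mathcal{A}_1$; hence $c\sim d$, that is $\mathcal{A}(c)=\mathcal{A}(d)$. By the definition of $\leq$ in \eref{eq:refinement_def} this is precisely $\mathcal{A}_1 \leq \mathcal{A}$, and the argument for $\mathcal{A}_2$ is identical.

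The crux is showing that $\mathcal{A}$ is the \emph{least} upper bound. Let $\mathcal{B}$ be any partition with $\mathcal{A}_1 \leq \mathcal{B}$ and $\mathcal{A}_2 \leq \mathcal{B}$; I must prove $\mathcal{A} \leq \mathcal{B}$. Suppose $\mathcal{A}(c)=\mathcal{A}(d)$, so there is a chain $c=c_1,\ldots,c_k=d$ as in the statement. For each consecutive pair $c_i,c_{i+1}$ we have either $\mathcal{A}_1(c_i)=\mathcal{A}_1(c_{i+1})$ or $\mathcal{A}_2(c_i)=\mathcal{A}_2(c_{i+1})$; because both $\mathcal{A}_1\leq\mathcal{B}$ and $\mathcal{A}_2\leq\mathcal{B}$, in either case $\mathcal{B}(c_i)=\mathcal{B}(c_{i+1})$. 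Propagating these equalities transitively along the entire chain yields $\mathcal{B}(c)=\mathcal{B}(d)$, which is exactly $\mathcal{A}\leq\mathcal{B}$. Combined with the previous paragraph, $\mathcal{A}$ is the least upper bound and therefore equals $\mathcal{A}_1\vee\mathcal{A}_2$. I do not expect a genuine obstacle here; the only points demanding care are the degenerate chains in the equivalence-relation step and the transitive propagation of equalities in the least-upper-bound step, which are bookkeeping rather than conceptual difficulties.
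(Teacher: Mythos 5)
Your proposal is correct and follows essentially the same route as the paper's proof: both identify the join with the partition induced by the chain relation, verifying that this relation is an equivalence relation and that the resulting partition is the least upper bound with respect to $\leq$. You merely spell out more explicitly what the paper compresses --- the reflexivity/symmetry/transitivity check and the upper-bound step via two-term chains --- which the paper leaves implicit.
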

\begin{proof}
	Any partition $ \mathcal{A} $ that is simultaneous coarser than $  \mathcal{A}_1 $ and $  \mathcal{A}_2 $ has to obey (from \eref{eq:refinement_def})
	\begin{eqnarray*}
	\cases{ \mathcal{A}_1(c) = \mathcal{A}_1(d) \\
	\text{or}\\
	\mathcal{A}_2(c) =  \mathcal{A}_2(d)}
	\implies
	\mathcal{A}(c) = \mathcal{A}(d).
	\end{eqnarray*}
	For such partition, any chain of cells $ c=c_1,\ldots,c_k = d $ such that, for each $ c_i,c_{i+1} $, with $ 1 \leq i < k $, either $ \mathcal{A}_1(c_{i}) = \mathcal{A}_1(c_{i+1})$ or $ \mathcal{A}_2(c_{i}) = \mathcal{A}_2(c_{i+1}) $, implies that $ \mathcal{A}(c) = \mathcal{A}(d) $. The finest such partition $ \mathcal{A} $ is the one such that $ \mathcal{A}(c) = \mathcal{A}(d) $ if and only if there is such a chain. The existence of such chains induces an equivalence relation on the set of cells. Therefore this defines a valid partition.
\end{proof}
\begin{lemma}\label{lemma:meet_def}
	The partition given by $ \mathcal{A} = \mathcal{A}_1 \wedge \mathcal{A}_2 $ is such that $ \mathcal{A}(c) = \mathcal{A}(d) $ if and only if $ \mathcal{A}_1 (c) = \mathcal{A}_1 (d)$ and $ \mathcal{A}_2 (c) = \mathcal{A}_2 (d) $.
	\hfill$ \square $
\end{lemma}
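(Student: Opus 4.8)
The plan is to verify directly that the partition $\mathcal{A}$ characterized by the stated equivalence is precisely the greatest lower bound of $\mathcal{A}_1$ and $\mathcal{A}_2$ in the lattice $L_{\mathcal{C}}$ ordered by $\leq$, mirroring the structure of the proof of \cref{lemma:join_chain_def} but with the conjunction replacing the chain/transitive-closure construction used there for the join. First I would observe that the relation defined by ``$\mathcal{A}_1(c) = \mathcal{A}_1(d)$ and $\mathcal{A}_2(c) = \mathcal{A}_2(d)$'' is the intersection of the two equivalence relations induced by $\mathcal{A}_1$ and $\mathcal{A}_2$ on $\mathcal{C}$. Since reflexivity, symmetry and transitivity are each preserved under intersection, this relation is itself an equivalence relation and therefore determines a valid partition $\mathcal{A}$ through its equivalence classes.

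Next I would show that $\mathcal{A}$ is a lower bound, i.e.\ $\mathcal{A} \leq \mathcal{A}_1$ and $\mathcal{A} \leq \mathcal{A}_2$. Using the characterization of $\leq$ in \eref{eq:refinement_def}, it suffices to note that $\mathcal{A}(c) = \mathcal{A}(d)$ forces both $\mathcal{A}_1(c) = \mathcal{A}_1(d)$ and $\mathcal{A}_2(c) = \mathcal{A}_2(d)$ by the very definition of $\mathcal{A}$; discarding either conjunct yields the two required refinements.

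Finally, to confirm that $\mathcal{A}$ is the \emph{greatest} lower bound, I would take an arbitrary lower bound $\mathcal{L}$, that is, a partition with $\mathcal{L} \leq \mathcal{A}_1$ and $\mathcal{L} \leq \mathcal{A}_2$, and show $\mathcal{L} \leq \mathcal{A}$. If $\mathcal{L}(c) = \mathcal{L}(d)$, then $\mathcal{L} \leq \mathcal{A}_1$ gives $\mathcal{A}_1(c) = \mathcal{A}_1(d)$ and $\mathcal{L} \leq \mathcal{A}_2$ gives $\mathcal{A}_2(c) = \mathcal{A}_2(d)$; both conditions holding simultaneously is exactly what the definition of $\mathcal{A}$ requires for $\mathcal{A}(c) = \mathcal{A}(d)$, so $\mathcal{L} \leq \mathcal{A}$. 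Combining the two steps, $\mathcal{A}$ is the coarsest partition finer than both $\mathcal{A}_1$ and $\mathcal{A}_2$, hence $\mathcal{A} = \mathcal{A}_1 \wedge \mathcal{A}_2$. I do not anticipate a serious obstacle: unlike the join, the meet requires no transitive-closure argument, since the intersection of equivalence relations is already transitive. The only points demanding a little care are keeping the direction of $\leq$ straight — the meet is the greatest lower bound, so it is the \emph{coarsest} partition refining both — and confirming at the outset that the candidate relation is genuinely an equivalence relation, so that $\mathcal{A}$ is well defined.
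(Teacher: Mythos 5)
Your proposal is correct and takes essentially the same route as the paper's proof: both define $\mathcal{A}$ by turning the implication forced by \eref{eq:refinement_def} into an equivalence (i.e., intersecting the two equivalence relations) and observe this yields a valid partition that is the coarsest lower bound. You merely spell out the verification in more detail — explicitly checking that the intersection is an equivalence relation and that any common refinement $\mathcal{L}$ satisfies $\mathcal{L} \leq \mathcal{A}$ — which the paper's proof leaves implicit.
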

\begin{proof}
	Any partition $ \mathcal{A} $ that is simultaneously finer than $ \mathcal{A}_1 $ and $ \mathcal{A}_2 $ has to obey (from \eref{eq:refinement_def})
	\begin{eqnarray*}
	\mathcal{A}(c) = \mathcal{A}(d)
	\implies
	\cases{\mathcal{A}_1(c) = \mathcal{A}_1(d),\\
		\mathcal{A}_2(c) = \mathcal{A}_2(d).}
	\end{eqnarray*}
	The coarsest such partition is created by making the implication into an equivalence. This induces an equivalence relation on the set of cells. Therefore it defines a valid partition.
\end{proof}
Not every subset of partitions forms a lattice. Furthermore, subsets of lattices that are themselves lattices might not be sublattices of the original lattice. That is, their join and meet operations might be different. With regard to lattices of partitions, the join is coarser that in \cref{lemma:join_chain_def} and the meet is finer than in \cref{lemma:meet_def}.\\
Denote by $ L_{\type{}} $ the subset of $ L_{\mathcal{C}} $ consisting on the partitions of $ \mathcal{C} $ that are finer than $ \type{} $. Note that $ L_{\type{}} $ remains closed under the same join ($ \vee $) and meet ($ \wedge $) operations. Therefore $ L_{\type{}} $ is a sublattice of $ L_{\mathcal{C}} $.\\
All the lattices $ L $ in this work are bounded, which means that they have a (\textit{maximum}/\textit{greatest element}/\textit{top}), denoted by $ \top_L $ and a (\textit{minimum}/\textit{least element}/\textit{bottom}), denoted by $ \bot_L $.
In particular, the top partitions of $ L_{\mathcal{C}} $ and $ L_{\type{}} $ are $ \top_{\mathcal{C}} = \{\mathcal{C}\} $ and $ \top_{\type{}} = \type{} $, respectively. Their bottom element is the trivial partition on $ \mathcal{C} $, which we denote by $ \bot_{\mathcal{C}} $. We write the trivial partition as $ \bot $ whenever the underlying set is clear from context.\\
We now show that the existence of a minimal element together with a join operation is enough to guarantee that a finite set forms a lattice.
\begin{lemma}
	Consider a finite partially ordered ($ \leq_L $) set $ L $ such that there is a minimal element $ \bot_L \in L $ and for every pair $ \mathcal{A}_1,\mathcal{A}_2 \in L $, there exists an element denoted $ \mathcal{A}_1 \vee_L \mathcal{A}_2 $ which is their least upper bond in $ L $. Then $ L $ is a lattice.
	\label{lemma:minimal_join_equal_lattice} 
	\hfill$ \square $
\end{lemma}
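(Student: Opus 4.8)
The plan is to show that the meet $\mathcal{A}_1 \wedge_L \mathcal{A}_2$ of every pair of elements exists; since the join is supplied by hypothesis, this is exactly what is needed to conclude that $L$ is a lattice. The guiding idea is the classical fact that in a finite poset the presence of all pairwise joins together with a global least element already forces all pairwise meets to exist, by realizing the meet as the join of the common lower bounds.

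First I would record a preliminary observation about iterated joins. Since $\vee_L$ is defined as a least upper bound, it is automatically commutative and associative as an operation on $L$. Consequently, for any finite nonempty subset $S \subseteq L$ the iterated join $\bigvee_L S$ is well-defined and equals the least upper bound of $S$ in $L$; existence is guaranteed by repeatedly applying the pairwise-join hypothesis, and this process terminates because $L$ is finite.

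Next, fix $\mathcal{A}_1, \mathcal{A}_2 \in L$ and consider the set of their common lower bounds,
\begin{eqnarray*}
D = \{ x \in L : x \leq_L \mathcal{A}_1 \text{ and } x \leq_L \mathcal{A}_2 \}.
\end{eqnarray*}
Because $\bot_L \leq_L y$ for every $y \in L$, we have $\bot_L \in D$, so $D$ is nonempty; it is also finite, being a subset of $L$. Hence $d := \bigvee_L D$ is well-defined, and I would then verify that $d$ is in fact the greatest element of $D$, i.e. the sought meet. By construction $d$ is an upper bound of $D$, so $x \leq_L d$ for all $x \in D$. On the other hand, every $x \in D$ satisfies $x \leq_L \mathcal{A}_1$, so $\mathcal{A}_1$ is itself an upper bound of $D$; since $d$ is the \emph{least} upper bound of $D$, this yields $d \leq_L \mathcal{A}_1$, and symmetrically $d \leq_L \mathcal{A}_2$. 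Therefore $d \in D$. Being simultaneously a member of $D$ and an upper bound for $D$, the element $d$ is the greatest lower bound of $\{\mathcal{A}_1, \mathcal{A}_2\}$, so $\mathcal{A}_1 \wedge_L \mathcal{A}_2 = d$ exists. As the pair was arbitrary, all meets exist, and together with the hypothesized joins this makes $L$ a lattice.

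The only delicate point, which I would state explicitly but not belabor, is the passage from pairwise joins to joins of arbitrary finite nonempty subsets used to define $d$; this rests solely on the commutativity and associativity of the least-upper-bound operation in a poset and on the finiteness of $L$. Everything else is a direct unwinding of the definitions of upper and lower bound, so I expect no real obstacle beyond keeping the ``least upper bound of $D$'' and ``lower bound of $\{\mathcal{A}_1,\mathcal{A}_2\}$'' roles of $d$ clearly separated.
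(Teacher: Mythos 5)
Your proposal is correct and takes essentially the same route as the paper's own proof: both construct the meet of $\mathcal{A}_1,\mathcal{A}_2$ as the iterated join $\bigvee_L$ of the finite, nonempty (since $\bot_L$ belongs to it) set of common lower bounds, then verify that this join lies in that set and is therefore its greatest element. Your two explicit refinements --- justifying $d \leq_L \mathcal{A}_i$ via the \emph{least} upper bound property rather than the paper's looser ``all elements of $S$ are smaller, so $\mathcal{B}$ is smaller as well,'' and noting that iterated joins are well-defined by commutativity and associativity --- merely spell out steps the paper leaves implicit.
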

\begin{proof}
	Consider any pair of elements $ \mathcal{A}_1 ,\mathcal{A}_2  \in  L$. Call $ S $ the subset of $ L $ of the elements that are simultaneously smaller ($ \leq_L  $) than $ \mathcal{A}_1 $ and $\mathcal{A}_2 $. That is, $ S: = \left\{ \mathcal{P}\in L: \mathcal{P}\leq_L \mathcal{A}_1,\mathcal{A}_2 \right\} $.
	Note that $ S $ is finite. Furthermore, it is not empty since $ \bot_L \in S $. Then, to obtain the largest element of $ S $ we apply the join $ (\vee_L) $ operation over the whole set, obtaining $ \mathcal{B} = \bigvee_{\mathcal{P}\in S}^L \mathcal{P} $. By assumption, the result is in $ L $. Furthermore, since all the elements of $ S $ are smaller than $ \mathcal{A}_1 $ and $ \mathcal{A}_2 $, then $ \mathcal{B} $ is smaller as well. Therefore $ \mathcal{B} \in S $. By construction, $ \mathcal{B} $ is larger than every other element of $ S $, therefore it is an upper bound of $ S $. That is, $ \mathcal{B} \in S $ is the greatest lower bound of $ \mathcal{A}_1 ,\mathcal{A}_2  $ in $ L $, which we 
	denote by $ \mathcal{A}_1 \wedge_L \mathcal{A}_2 $, which means that $ L $ is a lattice.
\end{proof}
In this work, we have particular interest in lattices of partitions $ L $ in which the bottom partition is the trivial one $ (\bot_L = \bot) $ and the join is given according to \cref{lemma:join_chain_def} $ (\vee_{L} = \vee) $.
\begin{lemma}
	\label{lemma:bot_join_implies_cir}
	Consider a lattice of partitions $ L \subseteq L_{\type{}}$ such that $ \bot_L = \bot $ and $ \vee_L = \vee $. Then given any partition $ \mathcal{A}\in L_{\type{}} $, there is a partition $ \mathcal{B}\in L $ that is the coarsest one in $ L $ such that $ \mathcal{B} \leq \mathcal{A}$.
	\hfill $ \square $
\end{lemma}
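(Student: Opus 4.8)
The plan is to produce the desired partition $\mathcal{B}$ explicitly as a join over a carefully chosen candidate set, in the same spirit as the construction appearing in the proof of \cref{lemma:minimal_join_equal_lattice}. First I would collect all the candidates: define
$
S := \{\mathcal{P}\in L : \mathcal{P}\leq\mathcal{A}\},
$
the set of elements of $L$ that are finer than $\mathcal{A}$. This set is finite, since $L$ is, and it is non-empty: the trivial partition $\bot$ is finer than every partition of $\mathcal{C}$, and by the hypothesis $\bot_L=\bot$ it lies in $L$, so $\bot\in S$.

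Next I would form the join of all the elements of $S$, setting $\mathcal{B} := \bigvee_{\mathcal{P}\in S}^{L}\mathcal{P}$. Because $L$ is a lattice, it is closed under finite joins, so $\mathcal{B}\in L$; and by construction $\mathcal{B}$ is coarser than (or equal to) every element of $S$, i.e.\ $\mathcal{P}\leq\mathcal{B}$ for all $\mathcal{P}\in S$.

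The key step, and the one I expect to be the only genuine obstacle, is verifying that $\mathcal{B}\leq\mathcal{A}$, that is, that passing to the join has not overshot $\mathcal{A}$. This is exactly where the hypothesis $\vee_L=\vee$ is indispensable. Since the join in $L$ coincides with the standard partition join of \cref{lemma:join_chain_def}, $\mathcal{B}$ is the least upper bound of $S$ computed in the full lattice $L_{\mathcal{C}}$. As every $\mathcal{P}\in S$ satisfies $\mathcal{P}\leq\mathcal{A}$, the partition $\mathcal{A}$ is itself an upper bound of $S$ in $L_{\mathcal{C}}$; hence the least upper bound must satisfy $\mathcal{B}\leq\mathcal{A}$. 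Were $\vee_L$ merely some abstract lattice join, strictly coarser than $\vee$ as the surrounding text warns can happen, this inequality could fail, and $\mathcal{B}$ might not be finer than $\mathcal{A}$. Thus $\mathcal{B}\in S$.

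Finally I would combine the two observations to conclude. We have $\mathcal{B}\in L$, $\mathcal{B}\leq\mathcal{A}$, so $\mathcal{B}$ is one of the partitions in $L$ that are finer than $\mathcal{A}$; and $\mathcal{B}$ is coarser than every element of $S$, that is, coarser than every partition of $L$ that is finer than $\mathcal{A}$. Therefore $\mathcal{B}$ is the coarsest partition in $L$ with $\mathcal{B}\leq\mathcal{A}$, which is precisely the claimed partition (the value $cir_L(\mathcal{A})$).
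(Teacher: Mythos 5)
Your proof is correct and follows essentially the same route as the paper: both form the set $S=\{\mathcal{P}\in L:\mathcal{P}\leq\mathcal{A}\}$, note it is finite and contains $\bot$ by the hypothesis $\bot_L=\bot$, take $\mathcal{B}=\bigvee_{\mathcal{P}\in S}\mathcal{P}$, and use $\vee_L=\vee$ to conclude that the join, being the standard partition join, cannot overshoot the upper bound $\mathcal{A}$. Your explicit remark on why $\vee_L=\vee$ is indispensable matches the paper's accompanying remark on where the argument would break down.
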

\begin{proof}
	Call $ S $ the subset of $ L $ of the elements that are finer $ ( \leq ) $ than $ \mathcal{A} $. That is, $ S: = \left\{ \mathcal{P}\in L: \mathcal{P}\leq \mathcal{A} \right\} $.	Note that $ S $ is finite. Furthermore, it is not empty since $ \bot\in S $. Then, to obtain the coarsest element of $ S $ we apply the join $ (\vee_L = \vee) $ operation over the whole set, obtaining $ \mathcal{B} = \bigvee_{\mathcal{P}\in S} \mathcal{P} $. Then $ \mathcal{B} \in L$. Furthermore, since $ L \subseteq L_{\type{}} $ and $ \vee_L = \vee $, all the elements of $ S $ being finer than $ \mathcal{A} $ implies that $ \mathcal{B} $ is finer as well. Therefore $ \mathcal{B} \in S $. By construction, $ \mathcal{B} $ is coarser than every other element of $ S $, therefore it is an upper bound of $ S $. That is, $ \mathcal{B} \in S $ is the greatest lower bound of $ \mathcal{A}$ in $ L $.
\end{proof}
\begin{remark}
	\cref{lemma:bot_join_implies_cir} holds only because $ \bot_L = \bot $ and $ \vee_L = \vee  $. If $ \bot_L \neq \bot $, then it would not work for any $\mathcal{A} < \bot_L $ (or non-comparable). Furthermore, note that $ \mathcal{A}_1, \mathcal{A}_2 \leq \mathcal{A} $ only implies $ \mathcal{A}_1 \vee_L \mathcal{A}_2 \leq \mathcal{A} $ if those partitions are all in the lattice associated with $ \vee_L $. Since $ \vee_L = \vee $ we can apply this implication to the lattice $ L_{\type{}} $.
	\hfill$ \square $
\end{remark}
The correspondence between partitions $ \mathcal{A}\in L_{\type{}} $ and $ \mathcal{B}\in L $ described in \cref{lemma:bot_join_implies_cir} establishes a function in $ L_{\type{}} \to L $, which we denote by $ cir_L$.\\
\cref{lemma:minimal_join_equal_lattice} implies that a set $ L $ with a minimal partition $ \bot_L $ and a join $ \vee_{L} $ is automatically a lattice, therefore it has a meet operation $ \wedge_L $. Furthermore, in the case that $ L $ is a lattice of partitions such that $ \bot_L = \bot $ and $ \vee_L = \vee $, it is not guaranteed that $ \wedge_L = \wedge $. We know, however, that $ \mathcal{A}_1 \wedge_L \mathcal{A}_2 \leq \mathcal{A}_1 \wedge \mathcal{A}_2$. Then, using $ cir_L $, it is clear how to write $ \wedge_L $ as a function of $ \wedge $.
\begin{corollary}
	Consider a lattice of partitions $ L \subseteq L_{\type{}}$ such that $ \bot_L = \bot $ and $ \vee_L = \vee $. Then given partitions $ \mathcal{A}_1 ,\mathcal{A}_2  \in  L $, we have $ \mathcal{A}_1 \wedge_L \mathcal{A}_2 = cir_L(\mathcal{A}_1 \wedge \mathcal{A}_2)$.
	\label{cor:meet_cir_relation_general}
	\hfill $ \square $
\end{corollary}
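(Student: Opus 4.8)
The plan is to show that the two sides of the claimed identity coincide because they are, quite literally, the coarsest element of one and the same subset of $L$. First I would verify that $cir_L(\mathcal{A}_1 \wedge \mathcal{A}_2)$ is even well-defined, i.e.\ that its argument lies in $L_{\type{}}$ so that \cref{lemma:bot_join_implies_cir} applies: since $\mathcal{A}_1, \mathcal{A}_2 \in L \subseteq L_{\type{}}$ we have $\mathcal{A}_1 \wedge \mathcal{A}_2 \leq \mathcal{A}_1 \leq \type{}$, hence $\mathcal{A}_1 \wedge \mathcal{A}_2 \in L_{\type{}}$.

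Next I would introduce the set
\[
S := \{\mathcal{P} \in L : \mathcal{P} \leq \mathcal{A}_1 \text{ and } \mathcal{P} \leq \mathcal{A}_2\}
\]
and argue that each of the two quantities is its coarsest member. On one side, by the construction in the proof of \cref{lemma:minimal_join_equal_lattice}, the meet $\mathcal{A}_1 \wedge_L \mathcal{A}_2$ is the greatest lower bound of $\mathcal{A}_1,\mathcal{A}_2$ in $L$; since the order on $L$ is the restriction of the finer relation $\leq$, this is exactly the coarsest element of $S$. On the other side, \cref{lemma:bot_join_implies_cir} tells us that $cir_L(\mathcal{A}_1 \wedge \mathcal{A}_2)$ is the coarsest element of $\{\mathcal{P} \in L : \mathcal{P} \leq \mathcal{A}_1 \wedge \mathcal{A}_2\}$.

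The one step that needs a short argument is the set identity $\{\mathcal{P} \in L : \mathcal{P} \leq \mathcal{A}_1 \wedge \mathcal{A}_2\} = S$. For this I would invoke \cref{lemma:meet_def}: for a pair $c,d$, the condition $\mathcal{P}(c)=\mathcal{P}(d)$ forces $(\mathcal{A}_1 \wedge \mathcal{A}_2)(c) = (\mathcal{A}_1 \wedge \mathcal{A}_2)(d)$ precisely when it forces both $\mathcal{A}_1(c)=\mathcal{A}_1(d)$ and $\mathcal{A}_2(c)=\mathcal{A}_2(d)$. Reading this through \eref{eq:refinement_def} yields $\mathcal{P} \leq \mathcal{A}_1 \wedge \mathcal{A}_2 \iff (\mathcal{P} \leq \mathcal{A}_1 \text{ and } \mathcal{P} \leq \mathcal{A}_2)$, which is exactly the asserted equality of the two sets.

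With the two defining sets shown equal, their coarsest (greatest) elements agree by uniqueness, and I would conclude $\mathcal{A}_1 \wedge_L \mathcal{A}_2 = cir_L(\mathcal{A}_1 \wedge \mathcal{A}_2)$. I do not expect a genuine obstacle: the only points requiring care are the well-definedness of $cir_L$ at $\mathcal{A}_1 \wedge \mathcal{A}_2$ and the set identity, both of which reduce to the universal property of the meet as a greatest lower bound encoded in \cref{lemma:meet_def}. An alternative route, using the inequality $\mathcal{A}_1 \wedge_L \mathcal{A}_2 \leq \mathcal{A}_1 \wedge \mathcal{A}_2$ already noted in the text preceding the corollary, would be to establish the two opposing inequalities $\mathcal{A}_1 \wedge_L \mathcal{A}_2 \leq cir_L(\mathcal{A}_1 \wedge \mathcal{A}_2)$ and $cir_L(\mathcal{A}_1 \wedge \mathcal{A}_2) \leq \mathcal{A}_1 \wedge_L \mathcal{A}_2$ separately, but the ``coarsest element of the same set'' argument is cleaner and I would favor it.
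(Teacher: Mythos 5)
Your proposal is correct and takes essentially the same route the paper intends: the paper states this corollary as immediate from the surrounding discussion, namely that $\mathcal{A}_1 \wedge_L \mathcal{A}_2$ (constructed in \cref{lemma:minimal_join_equal_lattice} as the maximum of the set of elements of $L$ finer than both $\mathcal{A}_1$ and $\mathcal{A}_2$) and $cir_L(\mathcal{A}_1 \wedge \mathcal{A}_2)$ (the maximum of the elements of $L$ finer than $\mathcal{A}_1 \wedge \mathcal{A}_2$, per \cref{lemma:bot_join_implies_cir}) are the coarsest elements of one and the same set, by the universal property of the meet. Your explicit checks, that $\mathcal{A}_1 \wedge \mathcal{A}_2 \in L_{\type{}}$ so $cir_L$ applies, and the set identity via \cref{lemma:meet_def}, simply spell out what the paper leaves implicit.
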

The meet operation $ \wedge_L $ is meaningful only when applied to elements of $ L $ while $ cir_L $ can be applied to any element of $ L_{\type{}} $.\\ 
We now illustrate the $ cir_L $ operation in the following example.
\begin{exmp}
	\label{exmp:cir_exmp}
	\Fref{fig:cir_F_example_Lt} shows the lattice of all partitions finer than $ \type{} = \{\{1,2,3\},\{4,5\}\} $ and \fref{fig:cir_F_example_LF} shows some lattice $ L $, which contains the trivial partition $ \bot $ and is closed under the partition join $ \vee $. In this work, we use partitions to encode patterns of synchrony on a set of cells. Then the singleton elements of a partition correspond to cells that are not synchronized with any other cell. We omit them in order to simplify the notation (e.g., the partition $ \{\{1,2\},\{3\},\{4,5\}\} $ is simply represented as $ 12/45 $).
	 The lattices are colored such that each element of $ L_{\type{}} $ is of the same color of the element of $ L $ that $ cir_L $ maps to.
	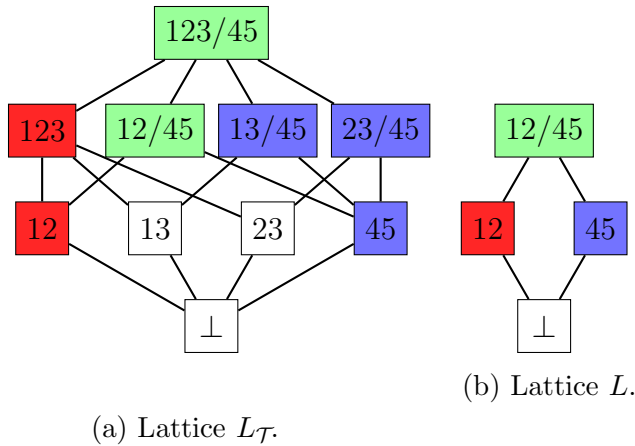
\begin{figure}[h]
		\centering
		\begin{subfigure}[t]{0.3\textwidth}
			\centering
			\begin{tikzpicture}[
node1/.style = {circle,minimum size=23,draw},
node2/.style = {circle,minimum size=23,draw,fill=white!75!black},
node3/.style = {circle,minimum size=23,draw,fill=white!50!black},
red/.style = {rectangle,minimum size=20,draw,draw,fill=white!15!red},
green/.style = {rectangle,minimum size=20,draw,draw,fill=white!60!green},
blue/.style = {rectangle,minimum size=20,draw,draw,fill=white!45!blue},
lightgray/.style = {rectangle,minimum size=20,draw,draw,fill=white!93!black},
pweak/.style = {rectangle,minimum size=20,draw,draw,fill=white!50!black},
proot/.style = {rectangle,minimum size=20,draw,draw,fill=white!75!black},
pstrong/.style = {rectangle,minimum size=20,draw},
noderect/.style = {rectangle,minimum size=20,draw},
edge1/.style = {>=latex,thick},
edgedash/.style = {>=latex,thick,dashed},
edge2/.style = {>=latex,thick,blue},
edge3/.style = {>=latex,thick,red}
]

\def\layer{1.3}
\def\del{0.75}

\node[pstrong] at (0,0) (bot){$ \bot $};

\node[red] at ({-3*\del},{\layer} ) (12){$ 12 $};
\node[pstrong] at ({-1*\del},{\layer} ) (13){$ 13 $};
\node[pstrong] at ({1*\del},{\layer} ) (23){$ 23 $};
\node[blue] at ({3*\del},{\layer} ) (45){$ 45 $};

\node[red] at ({-3*\del},{2*\layer} ) (123){$ 123 $};
\node[green] at ({-1*\del},{2*\layer} ) (12_45){$ 12/45 $};
\node[blue] at ({1*\del},{2*\layer} ) (13_45){$ 13/45 $};
\node[blue] at ({3*\del},{2*\layer} ) (23_45){$ 23/45 $};

\node[green] at (0,{3*\layer} ) (123_45){$ 123/45 $};

\draw [-,edge1](bot) -- (12);
\draw [-,edge1](bot) -- (13);
\draw [-,edge1](bot) -- (23);
\draw [-,edge1](bot) -- (45);

\draw [-,edge1](12) -- (123);
\draw [-,edge1](12) -- (12_45);

\draw [-,edge1](13) -- (123);
\draw [-,edge1](13) -- (13_45);

\draw [-,edge1](23) -- (123);
\draw [-,edge1](23) -- (23_45);

\draw [-,edge1](45) -- (12_45);
\draw [-,edge1](45) -- (13_45);
\draw [-,edge1](45) -- (23_45);

\draw [-,edge1](123) -- (123_45);
\draw [-,edge1](12_45) -- (123_45);
\draw [-,edge1](13_45) -- (123_45);
\draw [-,edge1](23_45) -- (123_45);

\end{tikzpicture} 
			\caption{Lattice $ L_{\type{}} $.}
			\label{fig:cir_F_example_Lt}
		\end{subfigure}
		\begin{subfigure}[t]{0.3\textwidth}
			\centering
			\begin{tikzpicture}[
node1/.style = {circle,minimum size=23,draw},
node2/.style = {circle,minimum size=23,draw,fill=white!75!black},
node3/.style = {circle,minimum size=23,draw,fill=white!50!black},
red/.style = {rectangle,minimum size=20,draw,draw,fill=white!15!red},
green/.style = {rectangle,minimum size=20,draw,draw,fill=white!60!green},
blue/.style = {rectangle,minimum size=20,draw,draw,fill=white!45!blue},
lightgray/.style = {rectangle,minimum size=20,draw,draw,fill=white!93!black},
pweak/.style = {rectangle,minimum size=20,draw,draw,fill=white!50!black},
proot/.style = {rectangle,minimum size=20,draw,draw,fill=white!75!black},
pstrong/.style = {rectangle,minimum size=20,draw},
noderect/.style = {rectangle,minimum size=20,draw},
edge1/.style = {>=latex,thick},
edgedash/.style = {>=latex,thick,dashed},
edge2/.style = {>=latex,thick,blue},
edge3/.style = {>=latex,thick,red}
]

\def\layer{1.3}
\def\del{0.75}

\node[pstrong] at (0,0) (bot){$ \bot $};
\node[red] at ({-1*\del},{\layer} ) (12){$ 12 $};
\node[blue] at ({1*\del},{\layer} ) (45){$ 45 $};
\node[green] at (0,{2*\layer} ) (12_45){$ 12/45 $};

\draw [-,edge1](bot) -- (12);
\draw [-,edge1](bot) -- (45);

\draw [-,edge1](12) -- (12_45);
\draw [-,edge1](45) -- (12_45);

\end{tikzpicture} 
			\caption{Lattice $ L $.}
			\label{fig:cir_F_example_LF}
		\end{subfigure}
		\caption{Illustration of a $ cir_L $ function over a suitable lattice of partitions $ L $.}
		\label{fig:cir_F_example}
	\end{figure}
	\hfill $ \square $
\end{exmp}	
\subsection{Lattice quotients}\label{subsec:lattice_quotients}
In this section, we define the quotient operation on sets of partitions. In particular, we show that for lattices of partitions $ L $ with the properties we are interested in ($ \vee_{L} = \vee $ and $ \bot_L = \bot $), all these properties are preserved under the quotient operation.
\begin{defi}
	Consider a set of partitions $ L $ on some set of cells $ \mathcal{C} $. Then for some $ \mathcal{A}\in L $, we define the quotient $ L/\mathcal{A} $ as the set of elements of the form $ \mathcal{B}/ \mathcal{A} $, for all $ \mathcal{B}\in L $ such that $ \mathcal{A}\leq \mathcal{B} $.
	\hfill$ \square $
\end{defi}
\begin{remark}
	$ L/\mathcal{A} $, which is a set of partitions defined on $ \mathcal{C}/\mathcal{A} $, always contains the trivial partition on that set (consider $ \mathcal{B} = \mathcal{A} $).
	\hfill$ \square $
\end{remark}
We now show that if $ L $ is a lattice, then the quotient $ L/\mathcal{A} $ is also a lattice in its own right and its join and meet operations are induced from the join and meet of the original lattice $ L $.
\begin{lemma}
	\label{lemma:lattice_quotient_operations}
	Consider a lattice of partitions $ L $ and some partition $ \mathcal{A}\in L $. Then $ L/\mathcal{A} $ is also a lattice and its join $( \vee_{L/\mathcal{A}} )$ and meet $( \wedge_{L/\mathcal{A}}  )$ operations are given by
	\numparts
	\begin{eqnarray}
	\label{eq:lattice_quotient_join}
	(\mathcal{B}_1/\mathcal{A})\vee_{L/\mathcal{A}} 	(\mathcal{B}_2/\mathcal{A})
	&=
	(\mathcal{B}_1\vee_L\mathcal{B}_2)/\mathcal{A},
	\\
	\label{eq:lattice_quotient_meet}
	(\mathcal{B}_1/\mathcal{A})\wedge_{L/\mathcal{A}} 	(\mathcal{B}_2/\mathcal{A})
	&=
	(\mathcal{B}_1\wedge_L\mathcal{B}_2)/\mathcal{A},
	\end{eqnarray}
	\endnumparts
	for any $ \mathcal{B}_1,\mathcal{B}_2 \in L $ such that $ \mathcal{A} \leq \mathcal{B}_1,\mathcal{B}_2 $, or equivalently, for any $ \mathcal{B}_1/\mathcal{A},\mathcal{B}_2/\mathcal{A} \in L/\mathcal{A} $.
	Furthermore, its top $ (\top_{L/\mathcal{A}} )$ and bottom $( \bot_{L/\mathcal{A}} )$ partitions are given by
	\numparts
	\begin{eqnarray}
	\label{eq:lattice_quotient_top}
	\top_{L/\mathcal{A}} 
	&= 
	\top_{L}/\mathcal{A},
	\\
	\label{eq:lattice_quotient_bot}
	\bot_{L/\mathcal{A}} 
	&= 
	\mathcal{A}/\mathcal{A}.
	\end{eqnarray}
	\endnumparts
	\hfill$ \square $
\end{lemma}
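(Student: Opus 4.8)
The plan is to recognize $L/\mathcal{A}$ as an order-isomorphic copy of the principal filter of $\mathcal{A}$ in $L$, so that the lattice structure of $L$ transports directly. First I would define $U_{\mathcal{A}}:=\{\mathcal{B}\in L:\mathcal{A}\leq\mathcal{B}\}$ and observe that, by the definition of the quotient, $L/\mathcal{A}=\{\mathcal{B}/\mathcal{A}:\mathcal{B}\in U_{\mathcal{A}}\}$; that is, $L/\mathcal{A}$ is the image of $U_{\mathcal{A}}$ under the map $\Phi\colon \mathcal{B}\mapsto\mathcal{B}/\mathcal{A}$. By \cref{lemma:partition_quotient_preserves_partial_order}, for $\mathcal{B}_1,\mathcal{B}_2\in U_{\mathcal{A}}$ we have $\mathcal{B}_1\leq\mathcal{B}_2$ if and only if $\mathcal{B}_1/\mathcal{A}\leq\mathcal{B}_2/\mathcal{A}$, so $\Phi$ is an order isomorphism between $U_{\mathcal{A}}$ and $L/\mathcal{A}$ (injectivity follows by applying antisymmetry on both sides of this equivalence).

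Next I would note that $U_{\mathcal{A}}$ is a sublattice of $L$: it is closed under $\vee_L$ because $\mathcal{A}\leq\mathcal{B}_1\leq\mathcal{B}_1\vee_L\mathcal{B}_2$, and closed under $\wedge_L$ because $\mathcal{A}$ is a lower bound of $\{\mathcal{B}_1,\mathcal{B}_2\}$ while $\mathcal{B}_1\wedge_L\mathcal{B}_2$ is their greatest lower bound, whence $\mathcal{A}\leq\mathcal{B}_1\wedge_L\mathcal{B}_2$. To establish \eref{eq:lattice_quotient_join} I would show directly that $(\mathcal{B}_1\vee_L\mathcal{B}_2)/\mathcal{A}$ is the least upper bound of $\mathcal{B}_1/\mathcal{A}$ and $\mathcal{B}_2/\mathcal{A}$ in $L/\mathcal{A}$: it is an upper bound since $\mathcal{B}_i\leq\mathcal{B}_1\vee_L\mathcal{B}_2$ yields $\mathcal{B}_i/\mathcal{A}\leq(\mathcal{B}_1\vee_L\mathcal{B}_2)/\mathcal{A}$ via \cref{lemma:partition_quotient_preserves_partial_order}; and it is least because every upper bound in $L/\mathcal{A}$ has the form $\mathcal{D}/\mathcal{A}$ with $\mathcal{D}\in U_{\mathcal{A}}$ (surjectivity of $\Phi$), and $\mathcal{B}_i/\mathcal{A}\leq\mathcal{D}/\mathcal{A}$ then forces $\mathcal{B}_i\leq\mathcal{D}$, so $\mathcal{B}_1\vee_L\mathcal{B}_2\leq\mathcal{D}$ and therefore $(\mathcal{B}_1\vee_L\mathcal{B}_2)/\mathcal{A}\leq\mathcal{D}/\mathcal{A}$. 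The dual argument, using that $\mathcal{B}_1\wedge_L\mathcal{B}_2$ is the greatest lower bound in $U_{\mathcal{A}}$, gives \eref{eq:lattice_quotient_meet}. Since every pair in $L/\mathcal{A}$ thus has a join and a meet, $L/\mathcal{A}$ is a lattice.

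Finally, for the bounds I would transport the extremal elements of $U_{\mathcal{A}}$ through $\Phi$. Since $L$ is bounded, $\top_L\in U_{\mathcal{A}}$ is the greatest element of $U_{\mathcal{A}}$, and because $\Phi$ is an order isomorphism, $\top_L/\mathcal{A}$ is the greatest element of $L/\mathcal{A}$, giving \eref{eq:lattice_quotient_top}; dually $\mathcal{A}$ is the least element of $U_{\mathcal{A}}$, so $\mathcal{A}/\mathcal{A}$ is the least element of $L/\mathcal{A}$, which is moreover the trivial partition on $\mathcal{C}/\mathcal{A}$ as observed after \cref{lemma:quotient_partition}, giving \eref{eq:lattice_quotient_bot}. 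The step that requires care, and the main obstacle, is confirming that the join computed within the subposet $L/\mathcal{A}$ genuinely coincides with the transported join $(\mathcal{B}_1\vee_L\mathcal{B}_2)/\mathcal{A}$ rather than merely being bounded by it; this is exactly where surjectivity of $\Phi$ is essential, since it lets one replace an arbitrary upper bound of $\mathcal{B}_1/\mathcal{A},\mathcal{B}_2/\mathcal{A}$ by a quotient $\mathcal{D}/\mathcal{A}$ and pull the inequality back to $L$ through \cref{lemma:partition_quotient_preserves_partial_order}.
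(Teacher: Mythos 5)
Your proof is correct and follows essentially the same route as the paper's: both rest on \cref{lemma:partition_quotient_preserves_partial_order} to transfer inequalities back and forth between $L$ and $L/\mathcal{A}$, and both obtain leastness (resp.\ greatestness) of the transported join (resp.\ meet) by writing an arbitrary bound in $L/\mathcal{A}$ as $\mathcal{D}/\mathcal{A}$ with $\mathcal{D}\in L$, $\mathcal{A}\leq\mathcal{D}$, and pulling the inequality back to $L$. Your packaging of this as an order isomorphism between the principal filter $U_{\mathcal{A}}$ and $L/\mathcal{A}$ (with the sublattice check for $U_{\mathcal{A}}$, and the meet case written out rather than left as ``completely analogous'') is a cleaner reorganization of the same argument, not a different method.
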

\begin{proof}
	Consider any partition $ \mathcal{P}/\mathcal{A} \in L/\mathcal{A}$ such that $ \mathcal{P}/\mathcal{A} \geq \mathcal{B}_1/\mathcal{A} $ and $ \mathcal{P}/\mathcal{A} \geq \mathcal{B}_2/\mathcal{A} $. From \cref{lemma:partition_quotient_preserves_partial_order}, this is equivalent to saying that $ \mathcal{P} \geq \mathcal{B}_1$ and $ \mathcal{P} \geq \mathcal{B}_2$. Since $ \mathcal{P},\mathcal{B}_1,\mathcal{B}_2 \in L$, this is equivalent to $ \mathcal{P}\geq \mathcal{B}_1\vee_L\mathcal{B}_2 $. Once again from \cref{lemma:partition_quotient_preserves_partial_order}, this is equivalent to $ \mathcal{P}/\mathcal{A} \geq (\mathcal{B}_1\vee_L\mathcal{B}_2)/\mathcal{A} $. Note that $ (\mathcal{B}_1\vee_L\mathcal{B}_2)/\mathcal{A} \in L/\mathcal{A}$. Furthermore, $  (\mathcal{B}_1\vee_L\mathcal{B}_2)/\mathcal{A}  $ is coarser than $ \mathcal{B}_1/\mathcal{A} $ and $ \mathcal{B}_2/\mathcal{A} $ and any partition that is coarser than them has to also be coarser than $  (\mathcal{B}_1\vee_L\mathcal{B}_2)/\mathcal{A} $. Then $  (\mathcal{B}_1\vee_L\mathcal{B}_2)/\mathcal{A} $ is the finest such partition, which means that it corresponds to the join $( \vee_{L/\mathcal{A}}) $ of $ L/\mathcal{A} $, which proves \eref{eq:lattice_quotient_join}. \Eref{eq:lattice_quotient_meet} is proven in a completely analogous way.\\
	Consider any partition $ \mathcal{P}/\mathcal{A} \in L/\mathcal{A} $. Then $ \mathcal{P}\in L $ is such that $ \mathcal{A} \leq \mathcal{P}\leq \top_{L} $. Then from \cref{lemma:partition_quotient_preserves_partial_order}, we have $ \mathcal{A}/\mathcal{A} \leq \mathcal{P}/\mathcal{A}\leq \top_{L}/\mathcal{A} $, which proves \eref{eq:lattice_quotient_top} and \eref{eq:lattice_quotient_bot}.
\end{proof}
\begin{lemma}
	\label{lemma:quotient_join}
	Consider partitions $\mathcal{A},\mathcal{B}_1,\mathcal{B}_2 $ such that $ \mathcal{A} \leq \mathcal{B}_1,\mathcal{B}_2 $. Then
	\begin{eqnarray}
	(\mathcal{B}_1/\mathcal{A})\vee	(\mathcal{B}_2/\mathcal{A})
	&=
	(\mathcal{B}_1\vee\mathcal{B}_2)/\mathcal{A}.
	\end{eqnarray}
	\hfill$ \square $	
\end{lemma}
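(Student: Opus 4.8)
The plan is to recognize this as the specialization of \cref{lemma:lattice_quotient_operations} to the full partition lattice $L_{\mathcal{C}}$, whose join is precisely the standard partition join $\vee$ of \cref{lemma:join_chain_def}. First I would observe that since $\mathcal{A},\mathcal{B}_1,\mathcal{B}_2$ are arbitrary partitions on $\mathcal{C}$ with $\mathcal{A}\leq\mathcal{B}_1,\mathcal{B}_2$, they all lie in $L_{\mathcal{C}}$, and there $\mathcal{B}_1\vee_{L_{\mathcal{C}}}\mathcal{B}_2 = \mathcal{B}_1\vee\mathcal{B}_2$. Applying \eref{eq:lattice_quotient_join} with $L = L_{\mathcal{C}}$ then yields $(\mathcal{B}_1/\mathcal{A})\vee_{L_{\mathcal{C}}/\mathcal{A}}(\mathcal{B}_2/\mathcal{A}) = (\mathcal{B}_1\vee\mathcal{B}_2)/\mathcal{A}$, so the statement reduces to matching the two notions of join on the quotient set.

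The one point that needs care, and which I expect to be the crux, is identifying the abstract quotient-lattice join $\vee_{L_{\mathcal{C}}/\mathcal{A}}$ with the concrete standard join $\vee$ appearing on the left of the statement. For this I would show that $L_{\mathcal{C}}/\mathcal{A}$ is the \emph{entire} lattice of partitions on $\mathcal{C}/\mathcal{A}$: given any partition $\mathcal{Q}$ on $\mathcal{C}/\mathcal{A}$, the composite $\mathcal{Q}\circ\mathcal{A}$ is a partition $\mathcal{B}$ on $\mathcal{C}$ with $\mathcal{A}\leq\mathcal{B}$ and $\mathcal{B}/\mathcal{A}=\mathcal{Q}$ by \cref{lemma:quotient_partition}, so every partition on $\mathcal{C}/\mathcal{A}$ occurs as an element of $L_{\mathcal{C}}/\mathcal{A}$. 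Since the partial order on $L_{\mathcal{C}}/\mathcal{A}$ is the restriction of $\leq$, its least upper bound coincides with that of the full partition lattice on $\mathcal{C}/\mathcal{A}$, which by \cref{lemma:join_chain_def} is exactly $\vee$. This gives $\vee_{L_{\mathcal{C}}/\mathcal{A}} = \vee$ and completes the argument.

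Alternatively, I could argue directly from the chain characterization of \cref{lemma:join_chain_def} without invoking \cref{lemma:lattice_quotient_operations}. I would fix $k,l\in\mathcal{C}/\mathcal{A}$, pick representatives $c,d\in\mathcal{C}$ with $\mathcal{A}(c)=k$ and $\mathcal{A}(d)=l$ using surjectivity of $\mathcal{A}$, and use the identity $\mathcal{B}_j/\mathcal{A}\circ\mathcal{A}=\mathcal{B}_j$ to translate each matching condition $(\mathcal{B}_j/\mathcal{A})(\mathcal{A}(c'))=(\mathcal{B}_j/\mathcal{A})(\mathcal{A}(c''))$ into $\mathcal{B}_j(c')=\mathcal{B}_j(c'')$. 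A chain in $\mathcal{C}$ from $c$ to $d$ witnessing $(\mathcal{B}_1\vee\mathcal{B}_2)(c)=(\mathcal{B}_1\vee\mathcal{B}_2)(d)$ projects under $\mathcal{A}$ to a chain in $\mathcal{C}/\mathcal{A}$ witnessing the equality of colors under $(\mathcal{B}_1/\mathcal{A})\vee(\mathcal{B}_2/\mathcal{A})$, and conversely any chain $k=k_1,\ldots,k_m=l$ in $\mathcal{C}/\mathcal{A}$ lifts by choosing representatives $c_i$ with $\mathcal{A}(c_i)=k_i$ and forcing $c_1=c$, $c_m=d$. The only mildly delicate bookkeeping here is lifting chains while preserving the prescribed endpoints.

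Between the two, I would favor the first route: it reuses \cref{lemma:lattice_quotient_operations} essentially verbatim and isolates all the work into the single observation that quotienting the full partition lattice returns the full partition lattice on the quotient set, so that the abstract lattice join degenerates to the concrete chain join.
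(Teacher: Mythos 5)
Your proposal is correct, and your favored route is genuinely different from the paper's: the paper proves this lemma by the direct chain argument, essentially your alternative route, translating chains in $\mathcal{C}/\mathcal{A}$ into chains in $\mathcal{C}$ via surjectivity of $\mathcal{A}$ and the identity $\mathcal{B}_j/\mathcal{A}\circ\mathcal{A}=\mathcal{B}_j$, exactly with the endpoint bookkeeping you describe. Your main route instead specializes \cref{lemma:lattice_quotient_operations} to $L=L_{\mathcal{C}}$, and it is non-circular since that lemma precedes this one and its proof rests only on \cref{lemma:partition_quotient_preserves_partial_order}. The one genuinely new ingredient you must supply, and do supply correctly, is that $L_{\mathcal{C}}/\mathcal{A}$ is the \emph{full} partition lattice on $\mathcal{C}/\mathcal{A}$: given $\mathcal{Q}$ on $\mathcal{C}/\mathcal{A}$, the composite $\mathcal{Q}\circ\mathcal{A}$ is surjective, hence a partition $\mathcal{B}$ on $\mathcal{C}$ with $\mathcal{A}\leq\mathcal{B}$, and uniqueness in \cref{lemma:quotient_partition} forces $\mathcal{B}/\mathcal{A}=\mathcal{Q}$ (modulo the harmless identification of $(\mathcal{C}/\mathcal{A})/\mathcal{Q}$ with $\mathcal{C}/\mathcal{B}$, consistent with the paper's conventions); since the order on $L_{\mathcal{C}}/\mathcal{A}$ is the finer relation, its least upper bound is then the chain join of \cref{lemma:join_chain_def} applied to the cell set $\mathcal{C}/\mathcal{A}$. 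It is worth noting that you have inverted the paper's logical order: the paper proves the present lemma by chains and only afterwards combines it with \cref{lemma:lattice_quotient_operations} to obtain \cref{cor:lattice_quotient_joins_standard}, which is precisely the bridge $\vee_{L/\mathcal{A}}=\vee$ that you establish independently (for $L=L_{\mathcal{C}}$) via the fullness observation — so you could not have cited that corollary, and correctly did not. What each approach buys: yours is shorter modulo the abstract lemma and makes explicit the structural fact that the interval above $\mathcal{A}$ in the partition lattice of $\mathcal{C}$ is isomorphic to the partition lattice of $\mathcal{C}/\mathcal{A}$; the paper's chain proof is self-contained and elementary, and keeps the explicit chain description in hand, which is the form used elsewhere (e.g., in the proof of \cref{lemma:join_equal_intersection}).
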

\begin{proof}
	Firstly, note that both sides describe partitions on the same set. Assume $\mathcal{A},\mathcal{B}_1,\mathcal{B}_2 $ are partitions on a set of cells $ \mathcal{C} $. Then $ \mathcal{B}_1/\mathcal{A} $ and $ \mathcal{B}_2/\mathcal{A} $ are partitions on $ \mathcal{C}/\mathcal{A} $, and so is their join. Therefore the left hand side describes a partition on $ \mathcal{C}/\mathcal{A} $. It is clear that the right hand side is also a partition on $ \mathcal{C}/\mathcal{A} $.\\
	In order to prove that the two partitions are the same, we show that two cells are of the same color in the partition of left hand side if and only if they are also of the same color in the partition of the right hand side. That is,
	\begin{eqnarray*}
	(\mathcal{B}_1/\mathcal{A})\vee	(\mathcal{B}_2/\mathcal{A})(k)
	&=
	(\mathcal{B}_1/\mathcal{A})\vee	(\mathcal{B}_2/\mathcal{A})(l)
	\iff\\
	(\mathcal{B}_1\vee\mathcal{B}_2)/\mathcal{A}(k)
	&=
	(\mathcal{B}_1\vee\mathcal{B}_2)/\mathcal{A}(l)
	\end{eqnarray*}
	for all $ l,k\in \mathcal{C}/\mathcal{A} $.
	From \cref{lemma:join_chain_def}, 
	$ (\mathcal{B}_1/\mathcal{A})\vee	(\mathcal{B}_2/\mathcal{A})(k)
	=
	(\mathcal{B}_1/\mathcal{A})\vee	(\mathcal{B}_2/\mathcal{A})(l) $ is equivalent to the existence of a chain of cells $ k=k_1,\ldots,k_n = l $ in $ \mathcal{C}/\mathcal{A} $ such that, for each $ k_i,k_{i+1} $, with $ 1 \leq i < n $, we have either $ (\mathcal{B}_1/\mathcal{A}) (k_{i}) = (\mathcal{B}_1/\mathcal{A}) (k_{i+1})$ or $ (\mathcal{B}_2/\mathcal{A}) (k_{i}) = (\mathcal{B}_2/\mathcal{A}) (k_{i+1}) $. Note that $ k\in\mathcal{C}/\mathcal{A} $ if and only if there is some $ c\in\mathcal{C} $ such that $ \mathcal{A}(c)=k $. Equivalently, under some cell correspondence $\mathcal{A}(c_i) = k_i$, there is a chain of cells $ c=c_1,\ldots,c_n = d $ in $ \mathcal{C} $ such that, for each $ c_i,c_{i+1} $, with $ 1 \leq i < n $, we have either $ (\mathcal{B}_1/\mathcal{A}) (\mathcal{A}(c_{i})) = (\mathcal{B}_1/\mathcal{A}) (\mathcal{A}(c_{i+1})$ or $ (\mathcal{B}_2/\mathcal{A}) (\mathcal{A}(c_{i})) = (\mathcal{B}_2/\mathcal{A}) (\mathcal{A}(c_{i+1})) $.
	More simply, either $ \mathcal{B}_1 (c_{i}) = \mathcal{B}_1(c_{i+1})$ or $ \mathcal{B}_2 (c_{i}) = \mathcal{B}_2 (c_{i+1}) $. Then from \cref{lemma:join_chain_def} again, this is equivalent to $ \mathcal{B}_1\vee\mathcal{B}_2(c)
	=
	\mathcal{B}_1\vee\mathcal{B}_2(d) $. Since $ \mathcal{A} \leq \mathcal{B}_1,\mathcal{B}_2 $ by assumption, it is always true that $  \mathcal{A} \leq \mathcal{B}_1\vee\mathcal{B}_2 $. Therefore, what we have is equivalent to $( \mathcal{B}_1\vee\mathcal{B}_2)/\mathcal{A}(\mathcal{A}(c))
	=
	( \mathcal{B}_1\vee\mathcal{B}_2)/\mathcal{A}(\mathcal{A}(d)) $, which simplifies to $( \mathcal{B}_1\vee\mathcal{B}_2)/\mathcal{A}(k)
	=
	( \mathcal{B}_1\vee\mathcal{B}_2)/\mathcal{A}(l) $.
\end{proof}
The following is now immediate from \cref{lemma:lattice_quotient_operations,lemma:quotient_join}.
\begin{corollary}
	\label{cor:lattice_quotient_joins_standard}
	Consider a lattice of partitions $ L $, some partition $ \mathcal{A}\in L $ and its respective quotient lattice $ L/\mathcal{A} $. Then for the joins of those lattices, $ \vee_{L} = \vee $ with regard to partitions coarser than $ \mathcal{A} $ if and only if $ \vee_{L/\mathcal{A}} = \vee $. 
	\hfill$ \square $
\end{corollary}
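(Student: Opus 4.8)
The plan is to combine the two cited lemmas to rewrite both joins, and then to reduce the claimed equivalence to the injectivity of the quotient map $\cdot/\mathcal{A}$ on the partitions coarser than $\mathcal{A}$. First I would fix arbitrary $\mathcal{B}_1,\mathcal{B}_2\in L$ with $\mathcal{A}\leq\mathcal{B}_1,\mathcal{B}_2$, i.e. an arbitrary pair $\mathcal{B}_1/\mathcal{A},\mathcal{B}_2/\mathcal{A}\in L/\mathcal{A}$. Equation \eref{eq:lattice_quotient_join} of \cref{lemma:lattice_quotient_operations} gives $(\mathcal{B}_1/\mathcal{A})\vee_{L/\mathcal{A}}(\mathcal{B}_2/\mathcal{A})=(\mathcal{B}_1\vee_L\mathcal{B}_2)/\mathcal{A}$, while \cref{lemma:quotient_join} gives $(\mathcal{B}_1/\mathcal{A})\vee(\mathcal{B}_2/\mathcal{A})=(\mathcal{B}_1\vee\mathcal{B}_2)/\mathcal{A}$. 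Hence the statement $\vee_{L/\mathcal{A}}=\vee$ is, for each such pair, exactly the equality $(\mathcal{B}_1\vee_L\mathcal{B}_2)/\mathcal{A}=(\mathcal{B}_1\vee\mathcal{B}_2)/\mathcal{A}$, whereas $\vee_L=\vee$ on partitions coarser than $\mathcal{A}$ is the equality $\mathcal{B}_1\vee_L\mathcal{B}_2=\mathcal{B}_1\vee\mathcal{B}_2$.

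The forward implication is then immediate: assuming $\mathcal{B}_1\vee_L\mathcal{B}_2=\mathcal{B}_1\vee\mathcal{B}_2$ for all such pairs, applying $\cdot/\mathcal{A}$ to both sides yields the corresponding quotient identity, which by the rewriting above is precisely $\vee_{L/\mathcal{A}}=\vee$. For the converse I would use that $\cdot/\mathcal{A}$ is injective on partitions coarser than $\mathcal{A}$. The key point to record is that both $\mathcal{B}_1\vee_L\mathcal{B}_2$ and $\mathcal{B}_1\vee\mathcal{B}_2$ are coarser than $\mathcal{A}$, since each dominates $\mathcal{B}_1\geq\mathcal{A}$, so both quotients are well defined. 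If two partitions $\mathcal{P}_1,\mathcal{P}_2\geq\mathcal{A}$ satisfy $\mathcal{P}_1/\mathcal{A}=\mathcal{P}_2/\mathcal{A}$, then in particular $\mathcal{P}_1/\mathcal{A}\leq\mathcal{P}_2/\mathcal{A}$ and $\mathcal{P}_2/\mathcal{A}\leq\mathcal{P}_1/\mathcal{A}$, so \cref{lemma:partition_quotient_preserves_partial_order} gives $\mathcal{P}_1\leq\mathcal{P}_2$ and $\mathcal{P}_2\leq\mathcal{P}_1$, hence $\mathcal{P}_1=\mathcal{P}_2$. Applying this with $\mathcal{P}_1=\mathcal{B}_1\vee_L\mathcal{B}_2$ and $\mathcal{P}_2=\mathcal{B}_1\vee\mathcal{B}_2$ turns the quotient equality back into $\mathcal{B}_1\vee_L\mathcal{B}_2=\mathcal{B}_1\vee\mathcal{B}_2$, which is $\vee_L=\vee$ on partitions coarser than $\mathcal{A}$.

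The only step requiring any care, and the putative obstacle, is verifying that both joins indeed land among the partitions coarser than $\mathcal{A}$, so that the quotient map is defined on them and injective there; once that observation is in place, the equivalence follows purely formally from the two cited lemmas together with the antisymmetry packaged in \cref{lemma:partition_quotient_preserves_partial_order}. This is why the corollary can be claimed as immediate.
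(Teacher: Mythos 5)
Your proposal is correct and follows exactly the route the paper intends: the paper states this corollary as immediate from \cref{lemma:lattice_quotient_operations,lemma:quotient_join}, which is precisely your rewriting of both joins as quotients of $\mathcal{B}_1\vee_L\mathcal{B}_2$ and $\mathcal{B}_1\vee\mathcal{B}_2$. Your only addition is to make explicit the detail the paper leaves implicit, namely that both joins are coarser than $\mathcal{A}$ and that the quotient map is injective there (via the antisymmetry packaged in \cref{lemma:partition_quotient_preserves_partial_order}), which is needed for the converse direction.
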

In this work we have a particular interest in lattices of partitions that contain the trivial partition and whose join is determined by the partition join of \cref{lemma:join_chain_def}. We have shown that these properties are preserved under the lattice quotient operation. That is,
\begin{theorem}
	\label{thm:quotient_of_nice_lattice}
	Consider a lattice of partitions $ L $ on a set of cells $ \mathcal{C} $, such that $ \bot_L = \bot_{\mathcal{C}} $ and $ \vee_L = \vee $. Then given any partition $ \mathcal{A}_1\in  L $, we have $ L/\mathcal{A} $ is a lattice on the set $ \mathcal{C}/\mathcal{A} $ such that $ \bot_{L/\mathcal{A}} = \bot_{\mathcal{C}/\mathcal{A}} $ and $ \vee_{L/\mathcal{A}} = \vee $.
	\hfill$ \square $
\end{theorem}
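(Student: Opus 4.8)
The plan is to read off all three conclusions directly from the results already established for the quotient operation, since by this point every ingredient is in place. By \cref{lemma:lattice_quotient_operations}, for the given $\mathcal{A}\in L$ the quotient $L/\mathcal{A}$ is already known to be a lattice on $\mathcal{C}/\mathcal{A}$, with its join, meet, top and bottom all inherited from $L$ through the quotient. This settles the claim that $L/\mathcal{A}$ is a lattice with no further argument, so the only remaining tasks are to identify its bottom element and its join operation explicitly.

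For the bottom element, I would invoke \eref{eq:lattice_quotient_bot}, which gives $\bot_{L/\mathcal{A}} = \mathcal{A}/\mathcal{A}$. It then remains to recognize $\mathcal{A}/\mathcal{A}$ as the trivial partition on $\mathcal{C}/\mathcal{A}$. This was already observed in the discussion following \cref{lemma:quotient_partition}: the map $\mathcal{A}/\mathcal{A}\colon \mathcal{C}/\mathcal{A}\to\mathcal{C}/\mathcal{A}$ acts as the identity on $\mathcal{C}/\mathcal{A}$ and is therefore the trivial (finest) partition $\bot_{\mathcal{C}/\mathcal{A}}$. Chaining these two facts yields $\bot_{L/\mathcal{A}} = \bot_{\mathcal{C}/\mathcal{A}}$.

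For the join, I would appeal to \cref{cor:lattice_quotient_joins_standard}, which asserts that $\vee_L = \vee$ with regard to partitions coarser than $\mathcal{A}$ holds if and only if $\vee_{L/\mathcal{A}} = \vee$. The hypothesis of the theorem is that $\vee_L = \vee$ holds throughout $L$, so in particular it holds when restricted to those elements of $L$ that are coarser than $\mathcal{A}$, which are exactly the partitions giving rise to the elements of $L/\mathcal{A}$. Applying the corollary in the forward direction then delivers $\vee_{L/\mathcal{A}} = \vee$, which completes the proof.

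The step that carries the most weight is not this final assembly, which is essentially bookkeeping, but rather the supporting machinery it draws on: \cref{lemma:quotient_join} and its deployment inside \cref{cor:lattice_quotient_joins_standard}, where one must verify that the chain characterization of the standard join of \cref{lemma:join_chain_def} commutes with passing to the quotient. At the level of this theorem, the one point requiring any care is the observation that the global hypothesis $\vee_L = \vee$ indeed specializes to the ``coarser than $\mathcal{A}$'' statement demanded by the corollary; since a statement holding on all of $L$ holds a fortiori on any subfamily, this specialization is immediate, so no genuine obstacle remains here.
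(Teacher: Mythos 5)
Your proof is correct and follows essentially the same route as the paper, which presents the theorem as an immediate consequence of \cref{lemma:lattice_quotient_operations} (giving the lattice structure and $\bot_{L/\mathcal{A}} = \mathcal{A}/\mathcal{A}$, identified with $\bot_{\mathcal{C}/\mathcal{A}}$ via the remark after \cref{lemma:quotient_partition}) and \cref{cor:lattice_quotient_joins_standard} (giving $\vee_{L/\mathcal{A}} = \vee$). Your added observation that the global hypothesis $\vee_L = \vee$ specializes to the ``coarser than $\mathcal{A}$'' case required by the corollary is exactly the implicit step in the paper's assembly.
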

We know from \cref{lemma:bot_join_implies_cir} that lattices with these properties have $ cir $ functions associated to them. We now show how these functions are related.
\begin{lemma}
	\label{lemma:Lattices_w_join_bot_have_cir}
	Consider a lattice of partitions $ L $ such that $ \bot_L = \bot $ and $ \vee_L = \vee $. Then given some partition $ \mathcal{A}\in L $, the lattice $ L/\mathcal{A} $ has a $ cir_{L/\mathcal{A}} : L_{\type{}}/\mathcal{A} \to L/\mathcal{A}$ function, which is related to the $ cir_{L} :L_{\type{}} \to L $ of the original lattice $ L $. In particular, for every $ \mathcal{B}/\mathcal{A}\in L_{\type{}}/\mathcal{A} $, we have
	\begin{eqnarray}
	cir_{L/{\mathcal{A}}}(\mathcal{B}/\mathcal{A})
	=
	cir_{L}(\mathcal{B})/\mathcal{A}. 
	\end{eqnarray}	
	\hfill$ \square $
\end{lemma}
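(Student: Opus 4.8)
The plan is to run everything through the explicit description of $ cir $ furnished by \cref{lemma:bot_join_implies_cir}: for a lattice of partitions $ L' \subseteq L_{\type{}} $ with $ \bot_{L'}=\bot $ and $ \vee_{L'}=\vee $, the value $ cir_{L'}(\mathcal{P}) $ is obtained as the join of all elements of $ L' $ that are finer than $ \mathcal{P} $, and is therefore the \emph{coarsest} such element, i.e.\ it is coarser than every element of $ L' $ below $ \mathcal{P} $. First I would confirm that both sides of the claimed identity are well defined. By \cref{thm:quotient_of_nice_lattice}, $ L/\mathcal{A} $ is a lattice on $ \mathcal{C}/\mathcal{A} $ with $ \bot_{L/\mathcal{A}} = \bot $ and $ \vee_{L/\mathcal{A}} = \vee $; since $ \mathcal{A}\in L\subseteq L_{\type{}} $ forces $ \mathcal{A}\leq\type{} $, the quotient $ \type{}/\mathcal{A} $ exists and $ L/\mathcal{A}\subseteq L_{\type{}}/\mathcal{A} $, where $ L_{\type{}}/\mathcal{A} $ is exactly the lattice of partitions on $ \mathcal{C}/\mathcal{A} $ finer than $ \type{}/\mathcal{A} $ (using \cref{lemma:partition_quotient_preserves_partial_order} and surjectivity of the quotient map). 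Hence \cref{lemma:bot_join_implies_cir} applies to $ L/\mathcal{A} $ and produces $ cir_{L/\mathcal{A}} $. For the right-hand side, $ \mathcal{B}/\mathcal{A}\in L_{\type{}}/\mathcal{A} $ means $ \mathcal{A}\leq\mathcal{B} $ with $ \mathcal{B}\in L_{\type{}} $, so $ cir_L(\mathcal{B}) $ is defined; and because $ \mathcal{A}\in L $ is itself an element of $ L $ finer than $ \mathcal{B} $, the characterization above gives $ \mathcal{A}\leq cir_L(\mathcal{B}) $, so $ cir_L(\mathcal{B})/\mathcal{A} $ is well defined and lies in $ L/\mathcal{A} $.

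Writing $ \mathcal{R}:=cir_L(\mathcal{B}) $, the substance of the argument is to verify that $ \mathcal{R}/\mathcal{A} $ satisfies the defining property of $ cir_{L/\mathcal{A}}(\mathcal{B}/\mathcal{A}) $, namely that it is the coarsest element of $ L/\mathcal{A} $ finer than $ \mathcal{B}/\mathcal{A} $. I would do this in two steps, each powered by the order-preservation of the quotient map from \cref{lemma:partition_quotient_preserves_partial_order}. Since $ \mathcal{A}\leq\mathcal{R}\leq\mathcal{B} $, that lemma yields $ \mathcal{R}/\mathcal{A}\leq\mathcal{B}/\mathcal{A} $, so $ \mathcal{R}/\mathcal{A} $ is an element of $ L/\mathcal{A} $ finer than $ \mathcal{B}/\mathcal{A} $. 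For maximality, take any $ \mathcal{Q}/\mathcal{A}\in L/\mathcal{A} $ (so $ \mathcal{Q}\in L $ and $ \mathcal{A}\leq\mathcal{Q} $) with $ \mathcal{Q}/\mathcal{A}\leq\mathcal{B}/\mathcal{A} $; applying \cref{lemma:partition_quotient_preserves_partial_order} in the reverse direction gives $ \mathcal{Q}\leq\mathcal{B} $, so $ \mathcal{Q} $ is an element of $ L $ finer than $ \mathcal{B} $, whence $ \mathcal{Q}\leq\mathcal{R} $ by the coarsest-below characterization of $ \mathcal{R}=cir_L(\mathcal{B}) $. One final application of \cref{lemma:partition_quotient_preserves_partial_order} delivers $ \mathcal{Q}/\mathcal{A}\leq\mathcal{R}/\mathcal{A} $. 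This proves that $ \mathcal{R}/\mathcal{A} $ is the coarsest element of $ L/\mathcal{A} $ below $ \mathcal{B}/\mathcal{A} $, hence equals $ cir_{L/\mathcal{A}}(\mathcal{B}/\mathcal{A}) $, giving the claimed $ cir_{L/\mathcal{A}}(\mathcal{B}/\mathcal{A})=cir_L(\mathcal{B})/\mathcal{A} $.

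I do not expect a serious obstacle: the computation reduces to transporting the extremal characterization of $ cir $ back and forth across the quotient, which \cref{lemma:partition_quotient_preserves_partial_order} permits since the quotient map is an order embedding on the partitions coarser than $ \mathcal{A} $, and the result then follows from uniqueness of least upper bounds. The only genuine care points are the well-definedness bookkeeping already flagged above, in particular establishing $ \mathcal{A}\leq cir_L(\mathcal{B}) $ so the right-hand quotient exists, and identifying the domain $ L_{\type{}}/\mathcal{A} $ of $ cir_{L/\mathcal{A}} $ with the partitions on $ \mathcal{C}/\mathcal{A} $ finer than $ \type{}/\mathcal{A} $; neither requires any new idea beyond the lemmas already available.
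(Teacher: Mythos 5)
Your proof is correct and follows essentially the same route as the paper's: both establish $\mathcal{A}\leq cir_{L}(\mathcal{B})$ from $\mathcal{A}\in\{\mathcal{P}\in L:\mathcal{P}\leq\mathcal{B}\}$ and then transport the coarsest-element characterization of $cir$ across the quotient via the order preservation of \cref{lemma:partition_quotient_preserves_partial_order}. Your write-up merely makes explicit the two-sided maximality check and the well-definedness bookkeeping that the paper's proof compresses into identifying the two defining sets with $S/\mathcal{A}$.
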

\begin{proof}
	Firstly, since we consider elements $ \mathcal{B}/\mathcal{A}\in L_{\type{}}/\mathcal{A} $, we have $ \mathcal{A} \leq \mathcal{B} $ by assumption.\\
	By definition, $ cir_L(\mathcal{B}) $ is the maximal element of the set $ S: = \left\{ \mathcal{P}\in L: \mathcal{P}\leq \mathcal{B} \right\} $ (which we know exists from \cref{lemma:bot_join_implies_cir}). Then since $ \mathcal{A}\in S $, we have $ cir_L(\mathcal{B}) \geq \mathcal{A} $. Therefore $ cir_L(\mathcal{B})/\mathcal{A} \in L/\mathcal{A}$ exists and it corresponds to the maximal element of $ S/\mathcal{A} $.\\
	On the other hand, $ cir_{L/{\mathcal{A}}}(\mathcal{B}/\mathcal{A}) $ is by definition the maximal term of $ \left\{ \mathcal{P}/\mathcal{A}\in L/\mathcal{A}: \mathcal{P}/\mathcal{A}\leq \mathcal{B}/\mathcal{A} \right\} $, which is again the set $ S/\mathcal{A} $, concluding the proof. 
\end{proof}
\begin{exmp}
	In \fref{fig:Lattice_quotient_example_main} we have a lattice of partitions $ L $, on a set of cells $ \mathcal{C} = \{1,2,3,4\} $, such that $ \bot_{L} = \bot_{\mathcal{C}} $ and $ \vee_{L} = \vee $. Consider the partition $ \mathcal{A} = \{\{1\},\{2,4\},\{3\}\}$, which is in $ L $. We denote the elements of the quotient set $ \mathcal{C}/\mathcal{A} = \{1,24,3\} $ and illustrate the quotient lattice $ L/\mathcal{A} $ in \fref{fig:Lattice_quotient_example_quotient}. Note that $ L/\mathcal{A} $ is also such that $ \bot_{L/\mathcal{A}} = \bot_{\mathcal{C}/\mathcal{A}} $ and $ \vee_{L/\mathcal{A}} = \vee $.
	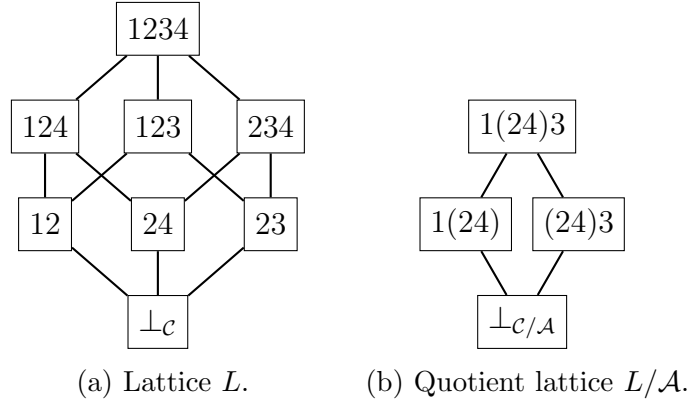
\begin{figure}[h]
		\centering
		\begin{subfigure}[t]{0.3\textwidth}
			\centering
			\begin{tikzpicture}[
node1/.style = {circle,minimum size=23,draw},
node2/.style = {circle,minimum size=23,draw,fill=white!75!black},
node3/.style = {circle,minimum size=23,draw,fill=white!50!black},
lightgray/.style = {rectangle,minimum size=20,draw,draw,fill=white!93!black},
pweak/.style = {rectangle,minimum size=20,draw,draw,fill=white!50!black},
proot/.style = {rectangle,minimum size=20,draw,draw,fill=white!75!black},
pstrong/.style = {rectangle,minimum size=20,draw},
noderect/.style = {rectangle,minimum size=20,draw},
edge1/.style = {>=latex,thick},
edgedash/.style = {>=latex,thick,dashed},
edge2/.style = {>=latex,thick,blue},
edge3/.style = {>=latex,thick,red}
]

\def\layer{1.3}
\def\del{0.75}

\node[noderect] at (0,0) (bot){$ \bot_{\mathcal{C}} $};

\node[noderect] at ({-2*\del},{\layer} ) (12){$ 12 $};
\node[noderect] at ({0*\del},{\layer} ) (24){$ 24 $};
\node[noderect] at ({2*\del},{\layer} ) (23){$ 23 $};

\node[noderect] at ({-2*\del},{2*\layer} ) (124){$ 124 $};
\node[noderect] at ({0*\del},{2*\layer} ) (123){$ 123 $};
\node[noderect] at ({2*\del},{2*\layer} ) (234){$ 234 $};

\node[noderect] at (0,{3*\layer} ) (1234){$ 1234 $};

\draw [-,edge1](bot) -- (12);
\draw [-,edge1](bot) -- (24);
\draw [-,edge1](bot) -- (23);

\draw [-,edge1](12) -- (124);
\draw [-,edge1](12) -- (123);

\draw [-,edge1](24) -- (124);
\draw [-,edge1](24) -- (234);

\draw [-,edge1](23) -- (123);
\draw [-,edge1](23) -- (234);

\draw [-,edge1](124) -- (1234);
\draw [-,edge1](123) -- (1234);
\draw [-,edge1](234) -- (1234);

\end{tikzpicture} 
			\caption{Lattice $ L $.}
			\label{fig:Lattice_quotient_example_main}
		\end{subfigure}
		\begin{subfigure}[t]{0.3\textwidth}
			\centering
			\begin{tikzpicture}[
node1/.style = {circle,minimum size=23,draw},
node2/.style = {circle,minimum size=23,draw,fill=white!75!black},
node3/.style = {circle,minimum size=23,draw,fill=white!50!black},
lightgray/.style = {rectangle,minimum size=20,draw,draw,fill=white!93!black},
pweak/.style = {rectangle,minimum size=20,draw,draw,fill=white!50!black},
proot/.style = {rectangle,minimum size=20,draw,draw,fill=white!75!black},
pstrong/.style = {rectangle,minimum size=20,draw},
noderect/.style = {rectangle,minimum size=20,draw},
edge1/.style = {>=latex,thick},
edgedash/.style = {>=latex,thick,dashed},
edge2/.style = {>=latex,thick,blue},
edge3/.style = {>=latex,thick,red}
]

\def\layer{1.3}
\def\del{0.75}

\node[noderect] at (0,0) (bot){$ \bot_{\mathcal{C}/\mathcal{A}} $};
\node[noderect] at ({-1*\del},{\layer} ) (124){$ 1(24) $};
\node[noderect] at ({1*\del},{\layer} ) (234){$ (24)3 $};
\node[noderect] at (0,{2*\layer} ) (1234){$ 1(24)3 $};

\draw [-,edge1](bot) -- (124);
\draw [-,edge1](bot) -- (234);

\draw [-,edge1](124) -- (1234);
\draw [-,edge1](234) -- (1234);

\end{tikzpicture} 
			\caption{Quotient lattice $ L/\mathcal{A} $.}
			\label{fig:Lattice_quotient_example_quotient}
		\end{subfigure}
		\caption{Illustration of a lattice $ L $ and its quotient lattice $ L/\mathcal{A} $.}
	\end{figure}
	\hfill$ \square $
\end{exmp}
\subsection{Polydiagonals}\label{subsec:polydiagonals}
We now relate a partition that encodes an equality-based synchrony pattern to the corresponding subset of the state set in the network.
\begin{defi}
	Given a partition $ \mathcal{A} \in  L_{\type{}} $, we call the subset of $ \xset $
	\begin{equation}
	\Delta_{\mathcal{A}}^{\xset} 
	:=
	\{ \mathbf{x}\in\xset\colon
	\mathcal{A}(c)
	=
	\mathcal{A}(d)
	\implies x_c = x_d\},
	\end{equation}
	the \textit{polydiagonal} of $ \mathcal{A} $ in $ \xset $.
	\label{defi:polydiag_defi}
	\hfill$ \square $
\end{defi}
This means that any $ \mathbf{x}\in \Delta_{\mathcal{A}}^{\xset} $ can be given by $ \mathbf{x} = P\overline{\mathbf{x}} $ for some $ \overline{\mathbf{x}} $, where $ P $ is a partition matrix of $ \mathcal{A} $.
Consider for instance $ \mathcal{A} = \{\{1,2\},\{3\}\} $, represented by $ P=
\left[\begin{array}{cc} 
1 & 0 \\ 1 & 0 \\ 0 & 1
\end{array} \right]
$. Then $ \mathbf{x} = P\overline{\mathbf{x}}  $ with $ \overline{\mathbf{x}} = 
\left[\begin{array}{c} 
x_{12} \\ x_3 
\end{array} \right]
$ gives us $ \mathbf{x} = 
\left[\begin{array}{c} 
x_{12} \\  x_{12} \\ x_3
\end{array} \right]
$.
\begin{remark}
	If the state sets $ \{\xset_i\}_{i\in\tset} $ have only one element, then it is irrelevant to talk about synchronism in the first place. For this reason, we assume that the state sets are non-empty and non-singleton. That is, we can always choose $ x_c \neq x_d $ with $ x_c,x_d \in\mathbb{X}_i $ for $ i = \type{}(c) = \type{}(d) $.
	\hfill$ \square $
\end{remark}
The partial order relationship between partitions ($ \leq $) induces the following inclusion partial order ($ \subseteq $) between polydiagonals.
\begin{lemma}
	Consider partitions $ \mathcal{A},\mathcal{B}  \in  L_{\type{}} $ and their respective polydiagonals $ \Delta_{\mathcal{A}}^{\xset}, \Delta_{\mathcal{B}}^{\xset} $. Then
	\begin{equation}
	\mathcal{A} \leq \mathcal{B} \Longleftrightarrow \Delta_{\mathcal{A}}^{\xset} \supseteq \Delta_{\mathcal{B}}^{\xset}.
	\end{equation}
	\label{lemma:subset_polydiags}
	\hfill$ \square $
\end{lemma}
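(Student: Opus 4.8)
The plan is to prove the two implications separately, treating $\mathcal{A} \leq \mathcal{B} \implies \Delta_{\mathcal{A}}^{\xset} \supseteq \Delta_{\mathcal{B}}^{\xset}$ as the routine direction and the converse as the one requiring an explicit construction. For the forward direction I would take an arbitrary $\mathbf{x} \in \Delta_{\mathcal{B}}^{\xset}$ and show it lies in $\Delta_{\mathcal{A}}^{\xset}$. By \cref{defi:polydiag_defi} this amounts to checking that $\mathcal{A}(c) = \mathcal{A}(d)$ forces $x_c = x_d$ for all cells $c,d$. The chain is immediate: $\mathcal{A}(c) = \mathcal{A}(d)$ gives $\mathcal{B}(c) = \mathcal{B}(d)$ by the refinement definition \eref{eq:refinement_def}, and then membership of $\mathbf{x}$ in $\Delta_{\mathcal{B}}^{\xset}$ yields $x_c = x_d$. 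This establishes $\Delta_{\mathcal{B}}^{\xset} \subseteq \Delta_{\mathcal{A}}^{\xset}$.

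The converse is the harder direction and the main obstacle, since from the inclusion of polydiagonals I must recover a purely combinatorial statement about the partitions. I would argue by contraposition. If $\mathcal{A} \not\leq \mathcal{B}$, then by \eref{eq:refinement_def} there exist cells $c,d$ with $\mathcal{A}(c) = \mathcal{A}(d)$ but $\mathcal{B}(c) \neq \mathcal{B}(d)$. The key observation is that since $\mathcal{A}\in L_{\type{}}$ implies $\mathcal{A} \leq \type{}$, the equality $\mathcal{A}(c) = \mathcal{A}(d)$ also gives $\type{}(c) = \type{}(d)$, so the two cells share a type $i = \type{}(c) = \type{}(d)$. I would then construct a witness $\mathbf{x} = P_{\mathcal{B}} \overline{\mathbf{x}}$, constant on each color of $\mathcal{B}$, choosing the value on the color containing $c$ and the value on the (distinct) color containing $d$ to be two different elements of $\xset_i$. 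By construction $\mathbf{x}\in \Delta_{\mathcal{B}}^{\xset}$, while $x_c \neq x_d$ together with $\mathcal{A}(c)=\mathcal{A}(d)$ forces $\mathbf{x}\notin \Delta_{\mathcal{A}}^{\xset}$, contradicting $\Delta_{\mathcal{A}}^{\xset} \supseteq \Delta_{\mathcal{B}}^{\xset}$.

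The only subtlety worth care is the well-definedness and type-consistency of this witness state. The remark following \cref{defi:polydiag_defi}, asserting that each $\xset_i$ is non-singleton, is exactly what makes the distinct assignment of values possible; and the fact that both partitions refine $\type{}$ is what guarantees $c$ and $d$ have a common type, so that a single state set $\xset_i$ can host the two distinct values. Everything else is a direct unwinding of \cref{defi:polydiag_defi} and \eref{eq:refinement_def}.
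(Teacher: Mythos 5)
Your proof is correct and follows essentially the same route as the paper: the forward direction by directly unwinding \eref{eq:refinement_def} and \cref{defi:polydiag_defi}, and the converse by contraposition, constructing a witness $\mathbf{x}\in\Delta_{\mathcal{B}}^{\xset}\setminus\Delta_{\mathcal{A}}^{\xset}$ using the non-singleton state sets. Your explicit check that $\mathcal{A}\leq\type{}$ guarantees $c$ and $d$ share a cell type (so two distinct values exist in a common $\xset_i$) is a detail the paper leaves implicit, and it is a welcome addition.
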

\begin{proof}
	The forward direction is direct from \eref{eq:refinement_def} together with \cref{defi:polydiag_defi}. The backwards direction is proved by showing its contrapositive, that is, $\neg(\mathcal{A} \leq \mathcal{B}) \implies \neg(\Delta_{\mathcal{A}}^{\xset} \supseteq \Delta_{\mathcal{B}}^{\xset})$. If $ \neg(\mathcal{A} \leq \mathcal{B}) $, then there are $ c,d \in \mathcal{C} $ such that $ \mathcal{A}(c) = \mathcal{A}(d) $ and $ \mathcal{B}(c) \neq \mathcal{B}(d) $. Then, under the assumption that the state sets are non-singleton, there is $ \mathbf{x}\in \Delta_{\mathcal{B}}^{\xset} $ such that $ x_c\neq x_d $, that is, $ \mathbf{x} \notin \Delta_{\mathcal{A}}^{\xset} $, which proves the contrapositive.
\end{proof}
Moreover, the intersection of two polydiagonals is itself a polydiagonal. In particular, it is related to the join $(\vee) $ operation as follows.
\begin{lemma}
	Given partitions $ \mathcal{A}_1,\mathcal{A}_2   \in  L_{\type{}} $, we have $ \Delta_{\mathcal{A}_1\vee \mathcal{A}_2}^{\xset} =  \Delta_{\mathcal{A}_1}^{\xset}\cap\Delta_{\mathcal{A}_2}^{\xset} $.
	\label{lemma:join_equal_intersection}
	\hfill$ \square $
\end{lemma}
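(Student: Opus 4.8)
The plan is to prove the set equality by double inclusion, handling one direction with the order-theoretic machinery already established and the other with the explicit chain description of the join. I would first note that $\mathcal{A}_1\vee\mathcal{A}_2\in L_{\type{}}$, so that its polydiagonal is well-defined; this holds because $L_{\type{}}$ is closed under the join.

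For the inclusion $\Delta_{\mathcal{A}_1\vee\mathcal{A}_2}^{\xset}\subseteq\Delta_{\mathcal{A}_1}^{\xset}\cap\Delta_{\mathcal{A}_2}^{\xset}$, I would use that the join is by definition an upper bound, so $\mathcal{A}_1\leq\mathcal{A}_1\vee\mathcal{A}_2$ and $\mathcal{A}_2\leq\mathcal{A}_1\vee\mathcal{A}_2$. Applying \cref{lemma:subset_polydiags} to each of these relations yields $\Delta_{\mathcal{A}_1\vee\mathcal{A}_2}^{\xset}\subseteq\Delta_{\mathcal{A}_1}^{\xset}$ and $\Delta_{\mathcal{A}_1\vee\mathcal{A}_2}^{\xset}\subseteq\Delta_{\mathcal{A}_2}^{\xset}$, and intersecting gives the claim. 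This direction is essentially immediate.

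For the reverse inclusion $\Delta_{\mathcal{A}_1}^{\xset}\cap\Delta_{\mathcal{A}_2}^{\xset}\subseteq\Delta_{\mathcal{A}_1\vee\mathcal{A}_2}^{\xset}$, I would take $\mathbf{x}$ in the intersection and verify the defining implication of the polydiagonal of $\mathcal{A}_1\vee\mathcal{A}_2$ from \cref{defi:polydiag_defi}. Thus I assume $(\mathcal{A}_1\vee\mathcal{A}_2)(c)=(\mathcal{A}_1\vee\mathcal{A}_2)(d)$ and must conclude $x_c=x_d$. The key tool is the chain characterization of the join in \cref{lemma:join_chain_def}: there exists a chain $c=c_1,\ldots,c_k=d$ in which each consecutive pair $c_i,c_{i+1}$ satisfies either $\mathcal{A}_1(c_i)=\mathcal{A}_1(c_{i+1})$ or $\mathcal{A}_2(c_i)=\mathcal{A}_2(c_{i+1})$. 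In the first case, $\mathbf{x}\in\Delta_{\mathcal{A}_1}^{\xset}$ forces $x_{c_i}=x_{c_{i+1}}$, and in the second case $\mathbf{x}\in\Delta_{\mathcal{A}_2}^{\xset}$ forces the same. Chaining these equalities by transitivity of equality of states gives $x_c=x_{c_1}=\cdots=x_{c_k}=x_d$, as required.

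I do not expect a serious obstacle; the only point requiring care is recognizing that it is the explicit chain description of the join, rather than the abstract least-upper-bound property alone, that lets the pointwise state equalities propagate along the chain and close the argument.
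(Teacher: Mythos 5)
Your proof is correct and follows essentially the same route as the paper's: the easy inclusion via \cref{lemma:subset_polydiags} applied to $\mathcal{A}_1,\mathcal{A}_2 \leq \mathcal{A}_1\vee\mathcal{A}_2$, and the reverse inclusion via the chain characterization of the join in \cref{lemma:join_chain_def}, propagating the state equalities $x_{c_i}=x_{c_{i+1}}$ along the chain. Your write-up merely makes explicit the transitivity step that the paper's proof leaves compressed.
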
 
\begin{proof}
	Since $ \mathcal{A}_1\vee \mathcal{A}_2 $ is coarser than both $ \mathcal{A}_1 $ and $ \mathcal{A}_2 $, we know from \cref{lemma:subset_polydiags} that 
	$ \Delta_{\mathcal{A}_1\vee \mathcal{A}_2}^{\xset} \subseteq \Delta_{\mathcal{A}_1}^{\xset} $ and $ \Delta_{\mathcal{A}_1\vee \mathcal{A}_2}^{\xset} \subseteq \Delta_{\mathcal{A}_2}^{\xset} $. Therefore $ \Delta_{\mathcal{A}_1\vee \mathcal{A}_2}^{\xset} \subseteq \Delta_{\mathcal{A}_1}^{\xset} \cap \Delta_{\mathcal{A}_2}^{\xset}$. \\
	We now prove the converse. Assume $\mathbf{x}\in \Delta_{\mathcal{A}_1}^{\xset} \cap \Delta_{\mathcal{A}_2}^{\xset}$. Then $\mathbf{x}\in \Delta_{\mathcal{A}_1}^{\xset} $ and $\mathbf{x}\in \Delta_{\mathcal{A}_2}^{\xset}$. This implies that for every chain of cells $ c=c_1,\ldots,c_k = d $ such that either $ \mathcal{A}_1(c_{i}) = \mathcal{A}_1(c_{i+1})$ or $ \mathcal{A}_2(c_{i}) = \mathcal{A}_2(c_{i+1}) $, we have $ x_c = x_d $. From \cref{lemma:join_chain_def}, we have $ \mathbf{x}\in \Delta_{\mathcal{A}_1\vee \mathcal{A}_2}^{\xset}$. Therefore $
	\Delta_{\mathcal{A}_1}^{\xset} \cap \Delta_{\mathcal{A}_2}^{\xset}
	\subseteq
	\Delta_{\mathcal{A}_1\vee \mathcal{A}_2}^{\xset} $.
\end{proof}
The union of polydiagonals need not be another polydiagonal. There exists, however, the smallest polydiagonal that contains the union of two polydiagnals. These properties are analogous to the intersection and union of vector subspaces.
\begin{lemma}
	Given partitions $ \mathcal{A}_1,\mathcal{A}_2   \in  L_{\type{}} $, we have $ \Delta_{\mathcal{A} }^{\xset} \supseteq  \Delta_{\mathcal{A}_1}^{\xset}\cup\Delta_{\mathcal{A}_2}^{\xset} $ if and only if $ \mathcal{A} \leq \mathcal{A}_1\wedge \mathcal{A}_2 $.
	\hfill$ \square $
\end{lemma}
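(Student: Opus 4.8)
The plan is to reduce the union-containment to two separate polydiagonal containments and then translate them, via the dictionary already established between the refinement order and polydiagonal inclusion, into a single refinement below the meet. This makes the statement essentially a corollary of \cref{lemma:subset_polydiags} combined with the universal property of $\wedge$, and it is the natural dual of \cref{lemma:join_equal_intersection}, which pairs the join with intersection.

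First I would observe, by elementary set theory, that $\Delta_{\mathcal{A}}^{\xset} \supseteq \Delta_{\mathcal{A}_1}^{\xset}\cup\Delta_{\mathcal{A}_2}^{\xset}$ holds if and only if both $\Delta_{\mathcal{A}}^{\xset} \supseteq \Delta_{\mathcal{A}_1}^{\xset}$ and $\Delta_{\mathcal{A}}^{\xset} \supseteq \Delta_{\mathcal{A}_2}^{\xset}$ hold: a set contains a union precisely when it contains each of the united sets. Next I would apply \cref{lemma:subset_polydiags} to each of these two inclusions separately, turning them into the refinement statements $\mathcal{A} \leq \mathcal{A}_1$ and $\mathcal{A} \leq \mathcal{A}_2$. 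This step implicitly uses that $\mathcal{A}\in L_{\type{}}$, so that $\Delta_{\mathcal{A}}^{\xset}$ is well defined through \cref{defi:polydiag_defi}. The claim then reduces to showing that $\mathcal{A} \leq \mathcal{A}_1$ and $\mathcal{A} \leq \mathcal{A}_2$ together are equivalent to $\mathcal{A} \leq \mathcal{A}_1\wedge \mathcal{A}_2$.

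Finally I would close the argument by invoking the defining property of the meet as greatest lower bound in the lattice $L_{\type{}}$, whose meet is $\wedge$ as characterized in \cref{lemma:meet_def}. The backward direction is immediate, since $\mathcal{A}_1\wedge\mathcal{A}_2 \leq \mathcal{A}_1,\mathcal{A}_2$, so $\mathcal{A} \leq \mathcal{A}_1\wedge\mathcal{A}_2$ forces $\mathcal{A}\leq \mathcal{A}_1$ and $\mathcal{A}\leq\mathcal{A}_2$ by transitivity; the forward direction is exactly the statement that any common lower bound of $\mathcal{A}_1$ and $\mathcal{A}_2$ lies below their greatest lower bound.

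I do not anticipate any genuine obstacle here. The only point warranting a moment of care is the forward direction of the meet's universal property, which, if one prefers not to appeal to abstract lattice theory, can be verified directly from the pointwise description in \cref{lemma:meet_def}: if $\mathcal{A}(c)=\mathcal{A}(d)$ implies both $\mathcal{A}_1(c)=\mathcal{A}_1(d)$ and $\mathcal{A}_2(c)=\mathcal{A}_2(d)$, then by that characterization it implies $(\mathcal{A}_1\wedge\mathcal{A}_2)(c)=(\mathcal{A}_1\wedge\mathcal{A}_2)(d)$, which is precisely $\mathcal{A}\leq \mathcal{A}_1\wedge\mathcal{A}_2$.
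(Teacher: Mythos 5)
Your proposal is correct and follows essentially the same route as the paper's own proof: both split the union containment into the two separate containments, convert each to a refinement via the equivalence in \cref{lemma:subset_polydiags}, and finish with the greatest-lower-bound property of $\wedge$. The only cosmetic difference is in the backward direction, where the paper first establishes the inclusion for $\mathcal{A}_1\wedge\mathcal{A}_2$ itself and then pushes it down to any finer $\mathcal{A}$, while you go through $\mathcal{A}\leq\mathcal{A}_1,\mathcal{A}_2$ by transitivity; the two assemble the same ingredients in a different order.
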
 
\begin{proof}
	Consider a partition $ \mathcal{A}  \in  L_{\type{}} $ such that $ \Delta_{ \mathcal{A}}^{\xset} \supseteq  \Delta_{\mathcal{A}_1}^{\xset}\cup\Delta_{\mathcal{A}_2}^{\xset} $. Then $ \Delta_{ \mathcal{A}}^{\xset} \supseteq  \Delta_{\mathcal{A}_1}^{\xset} $ and $ \Delta_{ \mathcal{A}}^{\xset} \supseteq  \Delta_{\mathcal{A}_2}^{\xset} $. From \cref{lemma:subset_polydiags}, this means that $ \mathcal{A} \leq \mathcal{A}_1$ and $ \mathcal{A} \leq \mathcal{A}_2$, therefore $ \mathcal{A} \leq \mathcal{A}_1 \wedge \mathcal{A}_2$.\\
	We now prove the converse. It is enough to show that $ \mathcal{A}_1 \wedge \mathcal{A}_2 $ satisfies the inclusion condition since from \cref{lemma:subset_polydiags}, any partition finer than it would also satisfy it.	
	Since $ \mathcal{A}_1\wedge \mathcal{A}_2 $ is finer than both $ \mathcal{A}_1 $ and $ \mathcal{A}_2 $, it follows that 
	$ \Delta_{\mathcal{A}_1 \wedge \mathcal{A}_2}^{\xset} \supseteq \Delta_{\mathcal{A}_1}^{\xset} $ and $ \Delta_{\mathcal{A}_1 \wedge \mathcal{A}_2}^{\xset} \supseteq \Delta_{\mathcal{A}_2}^{\xset} $. Therefore $ \Delta_{\mathcal{A}_1 \wedge \mathcal{A}_2}^{\xset} \supseteq \Delta_{\mathcal{A}_1}^{\xset} \cup \Delta_{\mathcal{A}_2}^{\xset}$.
\end{proof}
\subsection{Invariance of polydiagonals}\label{subsec:invariant_poly}
We now investigate the properties of a function that preserves equality-based synchrony patterns.
\begin{defi}
	If for a $ \mathcal{G} $-admissible function $ f\colon\xset\to\yset $ and a partition $ \mathcal{A}  \in  L_{\type{}}$ we have
	\begin{equation}
	f\left(\Delta_{\mathcal{A}}^{\xset}\right) \subseteq \Delta_{\mathcal{A}}^{\yset},
	\label{eq:polysynchronous}
	\end{equation}
	then $ \mathcal{A} $ is $ f $-\textit{invariant}.\\ Furthermore, if for $ F\subseteq\mathcal{F}_{\mathcal{G}} $, $ \mathcal{A} $ is $ f $-invariant for every $ f\in F $, then we say that $ \mathcal{A} $ is \mbox{$ F $-invariant}.
	\hfill$ \square $
\end{defi}
If $ \mathcal{A} $ is $ f $-invariant, then for every $ \mathbf{x} \in\xset  $ such that $ \mathbf{x} = P\overline{\mathbf{x}} $, with $ P $ representing $ \mathcal{A} $, there is $ \overline{\mathbf{y}} $ such that \mbox{$ f(P\overline{\mathbf{x}}) = P\overline{\mathbf{y}} $}. This means that there is a function $ \overline{f}\colon\overline{\xset}\to\overline{\yset} $ with sets $ \overline{\xset} := \xset^{\overline{\mathbf{k}}} $ and $ \overline{\yset} := \yset^{\overline{\mathbf{k}}} $, for an appropriate $ \overline{\mathbf{k}} \geq \mathbf{0}_{\vert\tset\vert} $, such that
\begin{equation}
f(P\overline{\mathbf{x}})
=
P\overline{f}(\overline{\mathbf{x}}).
\label{eq:f_in_invariant_space}
\end{equation}
Consider again $ \mathcal{A} = \{\{1,2\},\{3\}\} $, represented by $ P=
\left[\begin{array}{cc}
1 & 0 \\ 1 & 0 \\ 0 & 1
\end{array} \right]
$. Then $ \mathcal{A} $ is $ f $-invariant if $ f\left(
\left[\begin{array}{c}
x_{12} \\ x_{12} \\ x_3 
\end{array} \right]
\right) = 
\left[\begin{array}{c}
y_{12} \\ y_{12} \\ y_3 
\end{array} \right]
 $. That is, $ f\left( P 
\left[\begin{array}{c}
x_{12} \\ x_3 
\end{array} \right]
\right) = P
\left[\begin{array}{c}
y_{12} \\ y_3 
\end{array} \right]
$. This means that $ f $ induces a related function $ \overline{f}\left(
\left[\begin{array}{c}
x_{12} \\ x_3 
\end{array} \right]
\right) = 
\left[\begin{array}{c}
y_{12} \\ y_3 
\end{array} \right]
$.
\begin{corollary}
	The trivial partition $ \bot $ is always $ \mathcal{F}_{\mathcal{G}} $-invariant.
	\label{lemma:f_invarian_trivial}
	\hfill$ \square $
\end{corollary}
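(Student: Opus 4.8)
The plan is to reduce the invariance condition \eref{eq:polysynchronous} to a triviality by first identifying the polydiagonal of the trivial partition. First I would observe that, since $\bot$ is the finest partition, each of its colors is a singleton, so the defining equality $\bot(c) = \bot(d)$ appearing in \cref{defi:polydiag_defi} holds if and only if $c = d$. Consequently the implication $\bot(c) = \bot(d) \implies x_c = x_d$ is satisfied by \emph{every} $\mathbf{x}\in\xset$: it is non-vacuous only in the case $c = d$, in which case the conclusion $x_c = x_d$ holds automatically. This shows that $\Delta_{\bot}^{\xset} = \xset$, and by the identical argument $\Delta_{\bot}^{\yset} = \yset$.

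With the two polydiagonals identified as the full state and output sets, the second step is immediate. Any $f\in\mathcal{F}_{\mathcal{G}}$ is, by \cref{defi:F_G_admissibility}, a function $f\colon\xset\to\yset$, so
\begin{eqnarray*}
f\left(\Delta_{\bot}^{\xset}\right) = f(\xset) \subseteq \yset = \Delta_{\bot}^{\yset},
\end{eqnarray*}
which is exactly the inclusion \eref{eq:polysynchronous} for the choice $\mathcal{A} = \bot$. Since this holds for every $f\in\mathcal{F}_{\mathcal{G}}$, the partition $\bot$ is $\mathcal{F}_{\mathcal{G}}$-invariant.

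There is essentially no obstacle here; the only point requiring any care is the first step, namely recognizing that the polydiagonal of the trivial partition coincides with the entire space $\xset$ precisely because its defining implication is vacuous. Everything else follows directly from the definitions of admissibility and of $f$-invariance, so I would expect the argument to be a one- or two-line consequence of the definitions, needing none of the lattice machinery developed earlier in this section.
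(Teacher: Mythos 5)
Your proof is correct and is exactly the immediate argument the paper has in mind: the paper states this corollary without proof precisely because $\Delta_{\bot}^{\xset} = \xset$ and $\Delta_{\bot}^{\yset} = \yset$ make the inclusion \eref{eq:polysynchronous} automatic for any $f\colon\xset\to\yset$. Your identification of the vacuous implication in \cref{defi:polydiag_defi} as the only point needing care is also apt; nothing further is required.
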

\begin{lemma}
	Consider partitions $ \mathcal{A}_1 ,\mathcal{A}_2  \in  L_{\type{}}$ and $ f\in \mathcal{F}_{\mathcal{G}}$ such that $ \mathcal{A}_1,\mathcal{A}_2  $ are both \mbox{$ f $-invariant}. Then $ \mathcal{A}_1 \vee \mathcal{A}_2 $ is also $ f $-invariant.
	\label{lemma:f_invarian_join}
	\hfill$ \square $
\end{lemma}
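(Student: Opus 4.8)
The plan is to reduce the claim to the already-established correspondence between the join of partitions and the intersection of polydiagonals. The key observation is that \cref{lemma:join_equal_intersection} characterizes $\Delta_{\mathcal{A}_1\vee\mathcal{A}_2}^{\xset}$ as the intersection $\Delta_{\mathcal{A}_1}^{\xset}\cap\Delta_{\mathcal{A}_2}^{\xset}$, and that the identical identity holds verbatim with the output set $\yset$ in place of the state set $\xset$; this is because the polydiagonal on $\yset$ is defined in exactly the same way, and the relevant part of the proof of \cref{lemma:join_equal_intersection} transfers unchanged. The strategy is therefore to peel apart the join-polydiagonal on the domain side, push a generic point through $f$ using the two invariance hypotheses separately, and then reassemble the join-polydiagonal on the codomain side.

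Concretely, I would first take an arbitrary $\mathbf{x}\in\Delta_{\mathcal{A}_1\vee\mathcal{A}_2}^{\xset}$ and apply \cref{lemma:join_equal_intersection} to obtain $\mathbf{x}\in\Delta_{\mathcal{A}_1}^{\xset}\cap\Delta_{\mathcal{A}_2}^{\xset}$; in particular $\mathbf{x}$ lies in each of $\Delta_{\mathcal{A}_1}^{\xset}$ and $\Delta_{\mathcal{A}_2}^{\xset}$. Next, since each $\mathcal{A}_j$ is $f$-invariant by hypothesis, \eref{eq:polysynchronous} yields $f(\mathbf{x})\in\Delta_{\mathcal{A}_1}^{\yset}$ and $f(\mathbf{x})\in\Delta_{\mathcal{A}_2}^{\yset}$, so that $f(\mathbf{x})\in\Delta_{\mathcal{A}_1}^{\yset}\cap\Delta_{\mathcal{A}_2}^{\yset}$. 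Finally, invoking the $\yset$-version of \cref{lemma:join_equal_intersection} collapses this intersection to $\Delta_{\mathcal{A}_1\vee\mathcal{A}_2}^{\yset}$, giving $f(\mathbf{x})\in\Delta_{\mathcal{A}_1\vee\mathcal{A}_2}^{\yset}$. As $\mathbf{x}$ was arbitrary, this establishes $f\!\left(\Delta_{\mathcal{A}_1\vee\mathcal{A}_2}^{\xset}\right)\subseteq\Delta_{\mathcal{A}_1\vee\mathcal{A}_2}^{\yset}$, which is precisely the $f$-invariance of $\mathcal{A}_1\vee\mathcal{A}_2$.

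I do not anticipate any genuine obstacle: the argument is a short chase that applies the intersection lemma once on the domain and once on the codomain, sandwiching the invariance hypotheses in between. The only point worth flagging is the appeal to \cref{lemma:join_equal_intersection} on the output set $\yset$. For the codomain only the inclusion $\Delta_{\mathcal{A}_1}^{\yset}\cap\Delta_{\mathcal{A}_2}^{\yset}\subseteq\Delta_{\mathcal{A}_1\vee\mathcal{A}_2}^{\yset}$ is actually needed, and that direction rests solely on the chain characterization of \cref{lemma:join_chain_def}, so it requires no additional non-singleton assumption on the sets $\{\yset_i\}_{i\in\tset}$.
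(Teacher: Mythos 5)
Your proposal is correct and matches the paper's own proof essentially verbatim: both arguments take an arbitrary $\mathbf{x}\in\Delta_{\mathcal{A}_1\vee\mathcal{A}_2}^{\xset}$, split it via \cref{lemma:join_equal_intersection} into $\Delta_{\mathcal{A}_1}^{\xset}\cap\Delta_{\mathcal{A}_2}^{\xset}$, apply the two invariance hypotheses, and reassemble on the codomain side using the same intersection identity for $\yset$. Your closing remark that the codomain step needs only the inclusion resting on the chain characterization of \cref{lemma:join_chain_def} is a careful (if optional) refinement of what the paper leaves implicit.
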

\begin{proof}
	Take any $ \mathbf{x}\in \Delta_{\mathcal{A}_1 \vee \mathcal{A}_2}^{\xset} = \Delta_{\mathcal{A}_1}^{\xset} \cap \Delta_{\mathcal{A}_2}^{\xset}$. Then $ \mathbf{x}\in  \Delta_{\mathcal{A}_1}^{\xset} $
	and $ \mathbf{x}\in  \Delta_{\mathcal{A}_2}^{\xset} $.	
	By assumption, we have $ f\left(\mathbf{x}\right) \in \Delta_{\mathcal{A}_1}^{\yset} $ and $ f\left(\mathbf{x}\right) \in \Delta_{\mathcal{A}_2}^{\yset} $, that is, $ f\left(\mathbf{x}\right) \in \Delta_{\mathcal{A}_1}^{\yset} \cap \Delta_{\mathcal{A}_2}^{\yset} = \Delta_{\mathcal{A}_1 \vee \mathcal{A}_2}^{\yset}$. Therefore $ \mathcal{A}_1 \vee \mathcal{A}_2 $ is \mbox{$ f $-invariant}.
\end{proof}
\begin{corollary}
	Consider partitions $ \mathcal{A}_1 ,\mathcal{A}_2  \in  L_{\type{}}$ and $ F\subseteq \mathcal{F}_{\mathcal{G}}$ such that $ \mathcal{A}_1,\mathcal{A}_2  $ are both \mbox{$ F $-invariant}. Then $ \mathcal{A}_1 \vee \mathcal{A}_2 $ is also $ F $-invariant.
	\label{lemma:F_invarian_join}
	\hfill$ \square $
\end{corollary}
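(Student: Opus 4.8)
The plan is to reduce the statement about the whole family $F$ to the single-function result already established in \cref{lemma:f_invarian_join}, by unwinding the definition of $F$-invariance. Recall that, by definition, a partition is said to be $F$-invariant precisely when it is $f$-invariant for every $f\in F$. Thus the quantifier over $F$ is the only new ingredient here, and the work has already been done at the level of a single admissible function.

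Concretely, I would fix an arbitrary $f\in F$. Since $\mathcal{A}_1$ and $\mathcal{A}_2$ are $F$-invariant by hypothesis, both are in particular $f$-invariant for this chosen $f$. Now $f\in F\subseteq\mathcal{F}_{\mathcal{G}}$, so $f$ is $\mathcal{G}$-admissible and \cref{lemma:f_invarian_join} applies directly, yielding that $\mathcal{A}_1\vee\mathcal{A}_2$ is $f$-invariant. Because $f$ was an arbitrary element of $F$, this conclusion holds for every $f\in F$, which is exactly the statement that $\mathcal{A}_1\vee\mathcal{A}_2$ is $F$-invariant.

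There is essentially no obstacle to overcome: the result is a routine corollary whose entire content is the observation that $F$-invariance is the conjunction of $f$-invariance over all $f\in F$, so a property that holds for each individual $f$ (here, preservation under join, from \cref{lemma:f_invarian_join}) automatically upgrades to the family $F$. The only point worth stating carefully is that one must verify $f\in\mathcal{F}_{\mathcal{G}}$ before invoking \cref{lemma:f_invarian_join}, which is immediate from the assumption $F\subseteq\mathcal{F}_{\mathcal{G}}$.
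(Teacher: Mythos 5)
Your proposal is correct and follows essentially the same route as the paper's own proof: unwind $F$-invariance into $f$-invariance for each $f\in F$, apply \cref{lemma:f_invarian_join} pointwise, and reassemble. Your explicit check that $f\in\mathcal{F}_{\mathcal{G}}$ (from $F\subseteq\mathcal{F}_{\mathcal{G}}$) before invoking the lemma is a fine, if minor, addition.
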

\begin{proof}
	By definition, $ \mathcal{A}_1,\mathcal{A}_2 $ being $ F $-invariant implies that they are $ f $-invariant for every $ f\in F $. Then from \cref{lemma:f_invarian_join}, $ \mathcal{A}_1 \vee \mathcal{A}_2 $ is also $ f $-invariant for every $ f\in F $. That is, $ \mathcal{A}_1 \vee \mathcal{A}_2 $ is $ F $-invariant.
\end{proof}
\begin{remark}
	Being interested only in a particular subset of admissible functions $ F\subseteq \mathcal{F}_{\mathcal{G}} $ is quite natural. In particular, the definition of $ \mathcal{F}_{\mathcal{G}} $ does not include any type of smoothness assumption. In general, we could be interested in admissible functions that are constructed through of oracle components that have more properties than the minimal ones described in \cref{defi:oracle}. For instance, an oracle component such that a cell becomes insensitive to cells that are on the same state, corresponds, under the current formalism, to the following constraint.
	\begin{eqnarray}
	\label{eq:oracle_exo}
	\hat{f}_i
	\left(x; 
	\left[\begin{array}{c}
	w_{i_1} \\ 
	\mathbf{w}
	\end{array} \right]
	,
	\left[\begin{array}{c}
	x \\ 
	\mathbf{x}
	\end{array} \right]
	\right)
	=
	\hat{f}_i
	\left(x; 
	\mathbf{w},
	\mathbf{x}
	\right).
	\end{eqnarray}
	This assumption is present, for instance in the Kuramoto model.	This makes the cells of such a system always insensitive to self-loops.
	\hfill$ \square $
\end{remark}
We can now show that the sets of $ F $-invariant partitions form lattices.
\begin{theorem}
	Denote by $ L_{F} $ the subset of partitions in $ L_{\type{}} $ that are $ F $-invariant, with $ F\subseteq \mathcal{F}_{\mathcal{G}} $.  Then $ L_{F} $ is a lattice whose minimal element $ \bot_F $ is the trivial partition $ \bot $ and whose join operation $ \vee_F $ is the partition join $ \vee $ as described in \cref{lemma:join_chain_def}.
	\label{lemma:f_set_invariant_lattice}
	\hfill$ \square $
\end{theorem}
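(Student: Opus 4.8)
The plan is to verify the three hypotheses of \cref{lemma:minimal_join_equal_lattice}, namely that $L_F$ is a finite partially ordered set possessing a minimal element and a well-defined least upper bound for every pair, and then to invoke that lemma to conclude that $L_F$ is a lattice. Along the way I would identify the minimal element with $\bot$ and the pairwise least upper bound with the partition join $\vee$, which yields the two additional claims $\bot_F = \bot$ and $\vee_F = \vee$.

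First, $L_F$ is by definition a subset of $L_{\type{}}$, which is finite because $\mathcal{C}$ is finite, so $L_F$ inherits the partial order $\leq$ and is itself finite. For the minimal element, I would appeal to \cref{lemma:f_invarian_trivial}, which guarantees that $\bot$ is $\mathcal{F}_{\mathcal{G}}$-invariant, hence $F$-invariant for any $F\subseteq\mathcal{F}_{\mathcal{G}}$; thus $\bot\in L_F$. Since $\bot$ is the finest partition of $\mathcal{C}$, it is finer than every element of $L_F$, so it is indeed the minimal element and $\bot_F = \bot$.

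Next, for any pair $\mathcal{A}_1,\mathcal{A}_2\in L_F$, I would use \cref{lemma:F_invarian_join} to conclude that $\mathcal{A}_1\vee\mathcal{A}_2$ is again $F$-invariant, so $\mathcal{A}_1\vee\mathcal{A}_2\in L_F$. It then remains to check that this element is the least upper bound of $\mathcal{A}_1,\mathcal{A}_2$ \emph{within} $L_F$. Since $\mathcal{A}_1\vee\mathcal{A}_2$ is the least upper bound in the ambient lattice $L_{\type{}}$ (\cref{lemma:join_chain_def}), it is in particular an upper bound of the pair inside $L_F$; and any upper bound of the pair that happens to lie in $L_F$ is a fortiori an upper bound in $L_{\type{}}$, hence coarser than $\mathcal{A}_1\vee\mathcal{A}_2$. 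Therefore $\mathcal{A}_1\vee\mathcal{A}_2$ is the least upper bound in $L_F$, giving $\vee_F = \vee$.

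Having exhibited a minimal element and a pairwise join in $L_F$, I would finish by applying \cref{lemma:minimal_join_equal_lattice}, which immediately yields that $L_F$ is a lattice (and incidentally produces the meet $\wedge_F$ as the resulting greatest lower bound). The only point requiring genuine care is the argument that the ambient join remains the join within the subset $L_F$; this is precisely the closure of $L_F$ under $\vee$ supplied by \cref{lemma:F_invarian_join}, without which the least upper bound could fail to exist inside $L_F$ and the subset might be partially ordered without being a lattice. Beyond this bookkeeping I expect no serious obstacle, since all the substantive content — invariance of $\bot$, invariance of joins of invariant partitions, and the finite-poset criterion for being a lattice — has already been established in the preceding results.
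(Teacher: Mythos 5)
Your proposal is correct and follows essentially the same route as the paper's proof: finiteness of $L_F$, membership of $\bot$ via \cref{lemma:f_invarian_trivial}, closure under $\vee$ via \cref{lemma:F_invarian_join}, and then \cref{lemma:minimal_join_equal_lattice} to conclude. Your explicit verification that the ambient join remains the least upper bound inside $L_F$ is the same observation the paper makes in one line (``any partition coarser than $\mathcal{A}_1$ and $\mathcal{A}_2$ has to be coarser than $\mathcal{A}_1\vee\mathcal{A}_2$''), just spelled out in more detail.
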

\begin{proof}
	Since $ L_{\type{}} $ is finite, then $ L_{F} $ is also finite. From \cref{lemma:f_invarian_trivial}, $ \bot \in L_F $ for all $ F\subseteq \mathcal{F}_{\mathcal{G}} $. Since $ \bot $ is the finest partition, $ \bot_F = \bot $.\\
	Consider any $ \mathcal{A}_1,\mathcal{A}_2 \in L_{F} $. Then from \cref{lemma:F_invarian_join}, $ \mathcal{A}_1 \vee \mathcal{A}_2 \in L_{F} $. Any partition coarser than $ \mathcal{A}_1 $ and $ \mathcal{A}_2 $ has to be coarser than $ \mathcal{A}_1 \vee \mathcal{A}_2 $. Therefore $ \vee_F = \vee $.
	Then from \cref{lemma:minimal_join_equal_lattice}, $ L_{F} $ is a lattice.
\end{proof}
\begin{remark}
	Note that $ L_{\emptyset} = L_{\type{}} $ since being $ \emptyset $-invariant is vacuously satisfied.
	\hfill$ \square $
\end{remark}
\begin{corollary}
	Denote by $ L_f $ (instead of by $ L_{\{f\}} $) the subset of partitions in $ L_{\type{}} $ that are $ f $-invariant, with $ f \in \mathcal{F}_{\mathcal{G}} $. Then for all $ F\subseteq \mathcal{F}_{\mathcal{G}} $, we have $ L_F = \bigcap_{f\in F} L_f $.
	\hfill$ \square $
\end{corollary}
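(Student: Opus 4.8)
The plan is to prove this by directly unfolding the definition of $F$-invariance, since the statement is essentially a definitional reformulation. The whole argument is a chain of logical equivalences showing that membership in $L_F$ is equivalent to membership in the intersection.

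First I would fix an arbitrary partition $\mathcal{A}\in L_{\type{}}$ and observe that, by the definition of $F$-invariance given just after \eref{eq:polysynchronous}, the statement ``$\mathcal{A}$ is $F$-invariant'' is \emph{defined} to mean ``$\mathcal{A}$ is $f$-invariant for every $f\in F$''. Reading off the definition of $L_f$ (the $f$-invariant partitions inside $L_{\type{}}$) and of $L_F$, this gives the equivalence
\begin{eqnarray*}
\mathcal{A}\in L_F
\iff
\big(\mathcal{A}\in L_{\type{}}\ \text{and}\ \mathcal{A}\ \text{is}\ f\text{-invariant for all}\ f\in F\big)
\iff
\mathcal{A}\in L_f\ \text{for all}\ f\in F,
\end{eqnarray*}
where the second equivalence uses that each $L_f$ already sits inside $L_{\type{}}$, so the constraint $\mathcal{A}\in L_{\type{}}$ is absorbed into the condition $\mathcal{A}\in L_f$. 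The right-hand side is by definition $\mathcal{A}\in\bigcap_{f\in F}L_f$, which closes the argument since $\mathcal{A}$ was arbitrary.

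The only step that requires a moment's care is the interpretation of the empty intersection, which is the main (and quite mild) obstacle. When $F=\emptyset$ the condition ``$\mathcal{A}$ is $f$-invariant for all $f\in F$'' is vacuously true, so $L_\emptyset=L_{\type{}}$ as already noted in the preceding remark; correspondingly, the intersection $\bigcap_{f\in\emptyset}L_f$ must be read as the empty intersection taken \emph{within the ambient set} $L_{\type{}}$, which again equals $L_{\type{}}$. With this convention the equivalence above holds uniformly for every $F\subseteq\mathcal{F}_{\mathcal{G}}$, including the empty case, so no separate argument is needed. I expect no computational content beyond this bookkeeping, as the corollary is purely a restatement of what $F$-invariance means in terms of the lattices $L_f$.
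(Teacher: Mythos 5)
Your proposal is correct and matches the paper's (implicit) reasoning exactly: the paper states this corollary without proof precisely because it is the definitional unfolding you give, namely that $\mathcal{A}$ being $F$-invariant means $\mathcal{A}$ is $f$-invariant for every $f\in F$, i.e.\ $\mathcal{A}\in L_f$ for all $f\in F$. Your handling of the empty case $F=\emptyset$ (the empty intersection read inside the ambient set $L_{\type{}}$) is a sensible bit of bookkeeping that is consistent with the paper's remark that $L_{\emptyset}=L_{\type{}}$.
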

\begin{corollary}
	\label{lemma:invariant_lattice_properties}
	For every $ F_1, F_2 \subseteq \mathcal{F}_{\mathcal{G}} $, we have
	\begin{enumerate}
		\item If $ F_1 \subseteq F_2 $, then $ L_{F_1} \supseteq L_{F_2} $.
		\label{lemma:monotonic_F_invariance}
		\item $ L_{F_1\cup F_2} = L_{F_1} \cap L_{F_2} $.
		\item $ L_{F_1\cap F_2} \supseteq L_{F_1} \cup L_{F_2} $.
	\end{enumerate} 
	\hfill$ \square $
\end{corollary}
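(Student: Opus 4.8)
The plan is to reduce all three claims to the characterization $L_F = \bigcap_{f \in F} L_f$ established in the preceding corollary, which converts each statement about $F$-invariance into an elementary fact about indexed intersections of the sets $L_f$. The only conceptual point to bear in mind is that the assignment $F \mapsto L_F$ is order-reversing: enlarging $F$ imposes more invariance constraints and therefore yields a smaller set of invariant partitions.

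For the first (monotonicity) claim I would argue directly from the definition of $F$-invariance. If $F_1 \subseteq F_2$ and $\mathcal{A} \in L_{F_2}$, then $\mathcal{A}$ is $f$-invariant for every $f \in F_2$, hence in particular for every $f \in F_1$, so $\mathcal{A} \in L_{F_1}$; this gives $L_{F_2} \subseteq L_{F_1}$. Equivalently, it is the observation that intersecting the family $\{L_f\}_{f}$ over the larger index set $F_2$ produces a subset of the intersection over $F_1$.

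For the second claim I would use that intersection distributes over a union of index sets:
\begin{eqnarray*}
L_{F_1 \cup F_2}
&=& \bigcap_{f \in F_1 \cup F_2} L_f \\
&=& \Bigl(\bigcap_{f \in F_1} L_f\Bigr) \cap \Bigl(\bigcap_{f \in F_2} L_f\Bigr) \\
&=& L_{F_1} \cap L_{F_2}.
\end{eqnarray*}
The third claim then follows immediately from the first: since $F_1 \cap F_2 \subseteq F_1$ and $F_1 \cap F_2 \subseteq F_2$, monotonicity yields $L_{F_1} \subseteq L_{F_1 \cap F_2}$ and $L_{F_2} \subseteq L_{F_1 \cap F_2}$, whence $L_{F_1} \cup L_{F_2} \subseteq L_{F_1 \cap F_2}$.

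None of the steps is a genuine obstacle; once the preceding corollary is available the content is purely set-theoretic. The one point worth flagging in the writeup is why the third claim is only an inclusion and not an equality: a partition that is invariant under every function in the common set $F_1 \cap F_2$ need not be invariant under the extra functions in $F_1 \setminus F_2$ nor in $F_2 \setminus F_1$, so $L_{F_1 \cap F_2}$ may contain partitions lying in neither $L_{F_1}$ nor $L_{F_2}$, and the reverse inclusion fails in general.
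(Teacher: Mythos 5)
Your proposal is correct and matches the paper's intended argument: the statement appears there as an unproved corollary immediately following the identity $L_F = \bigcap_{f\in F} L_f$, and your reduction of all three items to elementary facts about indexed intersections (with monotonicity also checkable directly from the definition of $F$-invariance) is exactly the reasoning the paper leaves implicit. Your closing remark on why item (iii) is only an inclusion is a sound and worthwhile addition.
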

From \cref{lemma:monotonic_F_invariance} of \cref{lemma:invariant_lattice_properties}, $ L_{\mathcal{F}_{\mathcal{G}}} $ is the smallest possible lattice of invariant partitions.\\
We have shown in \cref{lemma:bot_join_implies_cir} that for a lattice $ L $ such that $ \bot_L = \bot $ and $ \vee_L = \vee $, there exists of a function $ cir_L $ that assigns to each element in $ L_{\type{}} $ an element of $ L $. Since every $ F $-invariant lattice satisfies these assumptions, we have the following.
\begin{corollary} 
	Consider a $ F $-invariant lattice $ L_F $, with $ F\subseteq \mathcal{F}_{\mathcal{G}} $. Given any partition $ \mathcal{A}\in L_{\type{}} $, there is a partition $ \mathcal{B}\in L_F $ that is the coarsest one in $ L_F $ such that $ \mathcal{B} \leq \mathcal{A}$. This establishes the function $ cir _F \colon L_{\type{}} \to L_F $.
	\label{lemma:cir_F_invariant}
	\hfill $ \square $
\end{corollary}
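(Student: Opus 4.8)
The plan is to recognize this statement as a direct instantiation of \cref{lemma:bot_join_implies_cir} at the particular lattice $L = L_F$, followed by the observation that the correspondence it provides is single-valued and hence defines a function. So the work reduces entirely to checking that $L_F$ meets the three standing hypotheses of \cref{lemma:bot_join_implies_cir}: that $L_F$ is a lattice of partitions contained in $L_{\type{}}$, that its bottom element is the trivial partition ($\bot_{L_F} = \bot$), and that its join coincides with the partition join ($\vee_{L_F} = \vee$).

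First I would dispatch the inclusion $L_F \subseteq L_{\type{}}$, which holds by the very definition of $L_F$ in \cref{lemma:f_set_invariant_lattice}, where $L_F$ is introduced as the subset of $L_{\type{}}$ consisting of the $F$-invariant partitions. Next I would invoke \cref{lemma:f_set_invariant_lattice} itself to supply the remaining two properties in one stroke: that theorem asserts precisely that $L_F$ is a lattice with $\bot_F = \bot$ and $\vee_F = \vee$. At this point every hypothesis of \cref{lemma:bot_join_implies_cir} is verified with $L := L_F$.

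The conclusion then follows verbatim: for any $\mathcal{A} \in L_{\type{}}$, \cref{lemma:bot_join_implies_cir} guarantees the existence of a partition $\mathcal{B} \in L_F$ that is the coarsest element of $L_F$ satisfying $\mathcal{B} \leq \mathcal{A}$. To finish, I would note that this $\mathcal{B}$ is uniquely determined by $\mathcal{A}$ — being the unique coarsest element of the finite set $\{\mathcal{P} \in L_F : \mathcal{P} \leq \mathcal{A}\}$, as constructed via the join over that set in the proof of \cref{lemma:bot_join_implies_cir} — so the assignment $\mathcal{A} \mapsto \mathcal{B}$ is well-defined and constitutes the desired function $cir_F \colon L_{\type{}} \to L_F$.

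There is essentially no genuine obstacle here; the statement is a repackaging of results already in hand, and the only point requiring any care is confirming that the two crucial equalities $\bot_F = \bot$ and $\vee_F = \vee$ hold, since it is exactly the condition $\vee_F = \vee$ that allows the join-over-a-subset argument inside \cref{lemma:bot_join_implies_cir} to remain within $L_F$ while still agreeing with the refinement order on $L_{\type{}}$. Both equalities are, however, supplied directly by \cref{lemma:f_set_invariant_lattice}, so the corollary is immediate.
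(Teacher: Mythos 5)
Your proposal is correct and takes exactly the same route as the paper: the paper obtains \cref{lemma:cir_F_invariant} immediately by applying \cref{lemma:bot_join_implies_cir} with $L = L_F$, whose hypotheses $L_F \subseteq L_{\type{}}$, $\bot_F = \bot$ and $\vee_F = \vee$ are supplied by \cref{lemma:f_set_invariant_lattice}. Your closing remark on the uniqueness of the coarsest element, ensuring $cir_F$ is a well-defined function, is a harmless (and correct) elaboration of what the paper leaves implicit.
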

\begin{corollary}
	Consider partitions $ \mathcal{A}_1 ,\mathcal{A}_2  \in  L_F$, with $ F\subseteq \mathcal{F}_{\mathcal{G}} $.
	Then $ \mathcal{A}_1 \wedge_F \mathcal{A}_2 = cir_F(\mathcal{A}_1 \wedge \mathcal{A}_2)$.
	\label{cor:meet_cir_relation}
	\hfill $ \square $
\end{corollary}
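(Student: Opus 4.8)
The plan is to recognize this as nothing more than the $F$-invariant instance of the general statement already proved in \cref{cor:meet_cir_relation_general}, so that essentially all of the work is done; the only task is to check that $L_F$ meets the hypotheses of that corollary and that the notation lines up.

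First I would invoke \cref{lemma:f_set_invariant_lattice}, which tells us that $L_F \subseteq L_{\type{}}$ is a lattice whose bottom element is the trivial partition and whose join coincides with the standard partition join, i.e. $\bot_F = \bot$ and $\vee_F = \vee$. These are exactly the hypotheses required of the lattice $L$ in \cref{cor:meet_cir_relation_general}. Moreover, by \cref{lemma:cir_F_invariant} the $cir$ function attached to $L_F$ via \cref{lemma:bot_join_implies_cir} is precisely the one denoted $cir_F$, and $\wedge_F$ is by definition the meet of this lattice. Substituting $L = L_F$ into \cref{cor:meet_cir_relation_general} therefore yields $\mathcal{A}_1 \wedge_F \mathcal{A}_2 = cir_F(\mathcal{A}_1 \wedge \mathcal{A}_2)$ for all $\mathcal{A}_1, \mathcal{A}_2 \in L_F$, which is the claim.

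Were a self-contained argument preferred, I would instead argue by a double inequality. Set $\mathcal{B} = cir_F(\mathcal{A}_1 \wedge \mathcal{A}_2)$, the coarsest element of $L_F$ finer than $\mathcal{A}_1 \wedge \mathcal{A}_2$. Since $\mathcal{B} \leq \mathcal{A}_1 \wedge \mathcal{A}_2 \leq \mathcal{A}_1, \mathcal{A}_2$, the partition $\mathcal{B}$ is a common lower bound of $\mathcal{A}_1, \mathcal{A}_2$ inside $L_F$, hence $\mathcal{B} \leq \mathcal{A}_1 \wedge_F \mathcal{A}_2$ because $\wedge_F$ is the greatest lower bound in $L_F$. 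Conversely, $\mathcal{A}_1 \wedge_F \mathcal{A}_2 \in L_F$ satisfies $\mathcal{A}_1 \wedge_F \mathcal{A}_2 \leq \mathcal{A}_1, \mathcal{A}_2$, so being a common lower bound in the full partition lattice it is finer than the true meet $\mathcal{A}_1 \wedge \mathcal{A}_2$ by \cref{lemma:meet_def}; as an element of $L_F$ finer than $\mathcal{A}_1 \wedge \mathcal{A}_2$, it cannot exceed the coarsest such element, giving $\mathcal{A}_1 \wedge_F \mathcal{A}_2 \leq \mathcal{B}$. The two inequalities force equality.

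The main (and essentially only) obstacle here is bookkeeping rather than mathematics: one must confirm that $\mathcal{A}_1 \wedge \mathcal{A}_2$ still lies in $L_{\type{}}$ so that $cir_F$ is applicable — which holds because $\mathcal{A}_1 \wedge \mathcal{A}_2 \leq \mathcal{A}_1 \leq \type{}$ — and that the symbols $\wedge_F$ and $cir_F$ genuinely refer to the meet and the $cir$ map of the same lattice $L_F$. Once the hypotheses of \cref{cor:meet_cir_relation_general} are verified for $L_F$, there is nothing further to prove.
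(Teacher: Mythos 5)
Your proposal is correct and matches the paper's route exactly: the paper states this as an immediate specialization of \cref{cor:meet_cir_relation_general} to $L = L_F$, whose hypotheses ($\bot_F = \bot$, $\vee_F = \vee$) are supplied by \cref{lemma:f_set_invariant_lattice}, with $cir_F$ from \cref{lemma:cir_F_invariant} — precisely your first paragraph. Your supplementary double-inequality argument is also a sound, self-contained expansion of what the paper leaves implicit.
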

In summary, the join operation $ (\vee) $ described in \cref{lemma:join_chain_def} is fundamental with regard to the study of invariance in polydiagonals. In particular, it corresponds to the fact that the intersection of invariant polydiagonals gives us another invariant polydiagonal. On the other hand, the meet operation is not fixed. It is dependent on the particular lattice $ L $ and does not present a clear intuitive meaning. In fact, from \cref{lemma:minimal_join_equal_lattice}, its existence can be seen as a mere consequence of a minimal partition $ \bot_L $ together with some join operation $ \vee_L $. Since $ \vee_L = \vee $ for all the lattices we are interested in ($ F $-invariant lattices), the join operation is the most convenient of the two fundamental operations on lattices and we focus on it in this work.
\subsection{Balanced partitions}\label{subsec:balanced}
We now show that if the connectivity structure of a network $ \mathcal{G} $ respects certain conditions, it enforces certain polydiagonals to be invariant, regardless of the particular choice of admissible $ f\in\mathcal{F}_\mathcal{G} $.
\begin{defi}\label{defi:balanced_cond}
	Consider a network $ \mathcal{G} $ defined on a cell set $ \mathcal{C} $ with a cell type partition $ \type{} $ and an in-adjacency matrix $ M $. A partition $ \mathcal{A} \in L_{\type{}}$ with characteristic matrix $ P $ is said to be \textit{balanced} on $ \mathcal{G} $ if for all $ c,d\in\mathcal{C} $
	\begin{eqnarray}\label{eq:balanced_cond}
	\mathcal{A}(c)
	=
	\mathcal{A}(d)
	\implies
	\mathbf{m}_c P = \mathbf{m}_d P,
	\end{eqnarray}
	where $ \mathbf{m}_c, \mathbf{m}_d $ are the rows of matrix $ M $ corresponding to cells $ c $ and $ d $, respectively.
	\hfill$ \square $
\end{defi}
This property means that cells of the same color have equivalent colored in-neighborhoods. Furthermore, a partition is balanced if and only if there is a matrix $ Q $ of elements in the appropriate monoids $ \{\mathcal{M}_{ij}\}_{i,j\in\tset} $ such that
\begin{equation}\label{eq:balanced_cond_matrix_form}
M P = P Q.
\end{equation}
The matrix $ Q $ has particular significance and corresponds to the in-adjacency matrix of a network described in \cref{defi:quotient_network}.
\\
A balanced partition is usually indicated by the symbol $ \bp $ and we denote the set of all balanced partitions in a given network $ \mathcal{G} $ by $ \Lambda_{\mathcal{G}} $.\\
In \cite{stewart2007lattice} it was shown that for the unweighted formalism, $ \Lambda_{\mathcal{G}} $ forms a lattice under the partition refinement relation ($ \leq $), as described in \eref{eq:refinement_def}. We show that this follows easily from the results in \cref{subsec:invariant_poly}.
\begin{corollary}
	The trivial partition $ \bot $ is always balanced.
	\label{lemma:trivial_balanced}
	\hfill$ \square $
\end{corollary}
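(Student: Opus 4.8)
The plan is to verify the balanced condition \eref{eq:balanced_cond} of \cref{defi:balanced_cond} directly for the trivial partition. First I would note that $\bot$ is the finest partition on $\mathcal{C}$, hence finer than $\type{}$, so $\bot \in L_{\type{}}$ and it makes sense to ask whether it is balanced.

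The key observation is that, reading $\bot$ as a function on $\mathcal{C}$, each color is a singleton, so $\bot(c) = \bot(d)$ holds precisely when $c = d$. The antecedent of the implication in \eref{eq:balanced_cond} is therefore satisfied only in the trivial case $c = d$, in which $\mathbf{m}_c$ and $\mathbf{m}_d$ are the same row and the consequent $\mathbf{m}_c P = \mathbf{m}_d P$ is immediate. Hence the implication holds for every pair $c, d \in \mathcal{C}$ and $\bot$ is balanced.

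Equivalently, one may argue through the matrix form \eref{eq:balanced_cond_matrix_form}: represent $\bot$ by the identity matrix $P = I$ (any permutation matrix would serve equally well), so that $M P = M = P M$ and \eref{eq:balanced_cond_matrix_form} holds with the choice $Q = M$, whose entries already lie in the monoids $\{\mathcal{M}_{ij}\}_{i,j\in\tset}$. Because the antecedent of \eref{eq:balanced_cond} collapses to the case $c = d$, there is no real obstacle here; the statement is a direct consequence of $\bot$ being the finest partition, and the only point worth recording explicitly is that $\bot \in L_{\type{}}$ so that \cref{defi:balanced_cond} applies without further comment.
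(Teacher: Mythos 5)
Your proposal is correct and contains the paper's own argument verbatim: the paper proves the corollary by taking $P = I$ and $Q = M$ in \eref{eq:balanced_cond_matrix_form}, which is exactly your second paragraph. Your additional pointwise check that the antecedent of \eref{eq:balanced_cond} collapses to $c = d$ is a trivially equivalent reformulation, so there is nothing genuinely different to compare.
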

\begin{proof}
	For any $ M $, the condition \eref{eq:balanced_cond_matrix_form} is satisfied with $ P = I $ and $ Q = M $.
\end{proof}
The following was proven in \cite{sequeira2021commutative} for the weighted formalism. 
\begin{lemma}
	Consider balanced partitions $ \bp_1, \bp_2 \in\Lambda_{\mathcal{G}} $. Then $\bp_1 \vee \bp_2$ is also balanced.
	\label{lemma:join_balanced}
	\hfill $ \square $
\end{lemma}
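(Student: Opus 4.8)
The plan is to argue directly from the matrix characterization of balancedness in \eref{eq:balanced_cond} together with the chain description of the join in \cref{lemma:join_chain_def}, rather than routing through the invariance of polydiagonals. Write $\bp := \bp_1 \vee \bp_2$ and let $P, P_1, P_2$ be partition matrices representing $\bp, \bp_1, \bp_2$, respectively. Since $\bp_1 \leq \bp$ and $\bp_2 \leq \bp$, the matrix form of \cref{lemma:quotient_partition} provides partition matrices $R_1 = P_{\bp/\bp_1}$ and $R_2 = P_{\bp/\bp_2}$ with $P = P_1 R_1 = P_2 R_2$.

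The key step is to show that, for each $k \in \{1,2\}$, we have $\bp_k(c) = \bp_k(d) \implies \mathbf{m}_c P = \mathbf{m}_d P$. Indeed, $\bp_k$ balanced gives, via \eref{eq:balanced_cond}, that $\mathbf{m}_c P_k = \mathbf{m}_d P_k$; right-multiplying this equality by $R_k$ and using $P = P_k R_k$ yields $\mathbf{m}_c P = (\mathbf{m}_c P_k) R_k = (\mathbf{m}_d P_k) R_k = \mathbf{m}_d P$. So passing from the finer coloring $\bp_k$ to the coarser coloring $\bp$ preserves the equality of the colored in-neighborhood rows.

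To finish, take any $c, d$ with $\bp(c) = \bp(d)$. By \cref{lemma:join_chain_def} there is a chain $c = c_1, \ldots, c_n = d$ in which every consecutive pair satisfies $\bp_1(c_i) = \bp_1(c_{i+1})$ or $\bp_2(c_i) = \bp_2(c_{i+1})$. The key step applies to each link, giving $\mathbf{m}_{c_i} P = \mathbf{m}_{c_{i+1}} P$ for $1 \leq i < n$, and by transitivity of equality $\mathbf{m}_c P = \mathbf{m}_d P$. Hence $\bp$ satisfies \eref{eq:balanced_cond} and is balanced.

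The only point requiring care is the matrix bookkeeping in the key step: one must justify the associativity $(\mathbf{m}_c P_k) R_k = \mathbf{m}_c (P_k R_k)$ for the monoid-valued row $\mathbf{m}_c$ against the $0/1$ matrices $P_k, R_k$. Since right-multiplication by $R_k$ merely selects and regroups columns, combining entries through the monoid operation $\|$, and $\|$ is associative and commutative over each $\mathcal{M}_{ij}$, this manipulation is legitimate and no genuine obstacle remains. (Equivalently, one could observe that balanced partitions are exactly the $\mathcal{F}_{\mathcal{G}}$-invariant ones and invoke \cref{lemma:f_set_invariant_lattice}, but the direct computation above is self-contained within this excerpt.)
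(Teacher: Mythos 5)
Your proof is correct. Note, however, that the paper does not actually prove \cref{lemma:join_balanced} in-text: it is quoted from the reference where the weighted formalism was introduced, so there is no internal proof to match. Within this paper's own machinery, the natural derivation would be the one you mention parenthetically at the end -- balanced partitions coincide with the $\mathcal{F}_{\mathcal{G}}$-invariant ones, and invariant partitions are closed under $\vee$ by \cref{lemma:f_invarian_join} -- but that route leans on \cref{thm:balanced_implies_all} and \cref{thm:F_G_implies_balanced}, the latter of which is itself only quoted here, so your direct computation is genuinely more self-contained. Your argument is the ``algebraic'' counterpart: the chain characterization of $\vee$ from \cref{lemma:join_chain_def} reduces everything to single links, and on each link the factorization $P = P_k R_k$ (valid by the matrix form of \cref{lemma:quotient_partition}, since $\bp_k \leq \bp_1 \vee \bp_2$) turns the balanced condition for $\bp_k$ into the one for the join. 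Two bookkeeping points are worth making explicit, and you essentially handle both: first, $(\mathbf{m}_c P_k) R_k = \mathbf{m}_c (P_k R_k)$ holds because each color of $\bp$ is a disjoint union of colors of $\bp_k$, so the regrouping is licensed by generalized associativity and commutativity of $\|$ in each $\mathcal{M}_{ij}$; second, one should observe that $\bp_1 \vee \bp_2 \leq \type{}$ (immediate, since $\type{}$ is an upper bound of both $\bp_1, \bp_2$ and the join is the least upper bound), which is what guarantees that the colors are type-homogeneous, hence that all the monoid sums appearing in $\mathbf{m}_c P$ combine elements of a single monoid and the balanced condition \eref{eq:balanced_cond} is even well-posed for the join. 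With those two remarks in place, your proof is complete and valid for arbitrary commutative monoids, at no extra cost in generality.
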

Using \cref{lemma:minimal_join_equal_lattice} again, the following is an immediate consequence of \cref{lemma:trivial_balanced} and \cref{lemma:join_balanced}.
\begin{corollary}
	Given a network $ \mathcal{G} $, the set of balanced partitions $\Lambda_{\mathcal{G}} $ forms a lattice whose minimal element $ \bot_{\mathcal{G}} $ is the trivial partition $ \bot $ and whose join operation $ \vee_{\mathcal{G}} $ is the partition join $ \vee $ as described in 	\cref{lemma:join_chain_def}.
	\label{lemma:balanced_is_lattice}
	\hfill$ \square $
\end{corollary}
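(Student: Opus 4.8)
The plan is to verify the two hypotheses of \cref{lemma:minimal_join_equal_lattice}—a minimal element and a pairwise join—for the poset $(\Lambda_{\mathcal{G}}, \leq)$, so that the lemma then delivers the lattice structure for free. The argument is the exact analogue of the proof of \cref{lemma:f_set_invariant_lattice}, with the balanced-specific \cref{lemma:trivial_balanced,lemma:join_balanced} substituting for \cref{lemma:trivial_balanced,lemma:join_balanced}'s invariance counterparts \cref{lemma:f_invarian_trivial,lemma:F_invarian_join}. First I would note that $\Lambda_{\mathcal{G}} \subseteq L_{\type{}}$ and that $L_{\type{}}$ is finite because the cell set $\mathcal{C}$ is finite; hence $\Lambda_{\mathcal{G}}$ is a finite poset under the refinement relation $\leq$.

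For the minimal element, \cref{lemma:trivial_balanced} gives $\bot \in \Lambda_{\mathcal{G}}$, and since $\bot$ is the finest partition on $\mathcal{C}$ it is finer than every balanced partition, so $\bot_{\mathcal{G}} = \bot$. For the join, take any $\bp_1, \bp_2 \in \Lambda_{\mathcal{G}}$; \cref{lemma:join_balanced} ensures $\bp_1 \vee \bp_2$ is again balanced, so the partition join does not leave $\Lambda_{\mathcal{G}}$. By the characterization of $\vee$ in \cref{lemma:join_chain_def}, any partition coarser than both $\bp_1$ and $\bp_2$—in particular any balanced one—is coarser than $\bp_1 \vee \bp_2$, so $\bp_1 \vee \bp_2$ is the least upper bound of $\bp_1, \bp_2$ inside $\Lambda_{\mathcal{G}}$, i.e.\ $\vee_{\mathcal{G}} = \vee$. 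With a finite poset, a minimal element, and a pairwise join in hand, \cref{lemma:minimal_join_equal_lattice} concludes that $\Lambda_{\mathcal{G}}$ is a lattice.

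I expect no genuine obstacle here, since the substantive content has already been absorbed into \cref{lemma:trivial_balanced,lemma:join_balanced}; the statement is, as the surrounding text says, immediate. The one point deserving care is the identification $\vee_{\mathcal{G}} = \vee$: it is not enough that $\Lambda_{\mathcal{G}}$ be closed under $\vee$, one must also invoke the universal property of the partition join to guarantee that the ambient $\vee$ actually computes the intrinsic least upper bound within the subset—exactly the subtlety behind the earlier warning that a lattice subset need not be a sublattice.
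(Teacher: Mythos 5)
Your proof is correct and takes essentially the same route as the paper, which presents \cref{lemma:balanced_is_lattice} as an immediate consequence of \cref{lemma:trivial_balanced,lemma:join_balanced} via \cref{lemma:minimal_join_equal_lattice}, exactly as you do. Your closing caveat—that closure under $\vee$ alone is insufficient and one must also use that $\vee$ computes the least upper bound in the ambient lattice—is precisely the step the paper makes explicit in its proof of the analogous \cref{lemma:f_set_invariant_lattice}.
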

From \cref{lemma:bot_join_implies_cir}, the following is immediate.
\begin{corollary}
	Given any partition $ \mathcal{A}\in L_{\type{}} $, there is a balanced partition $ \bp \in \Lambda_{\mathcal{G}} $ that is the coarsest one in $ \Lambda_{\mathcal{G}} $ such that $ \bp \leq \mathcal{A}$.
	\label{lemma:cir_balanced}
	\hfill $ \square $
\end{corollary}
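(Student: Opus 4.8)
The plan is to obtain this as a direct instance of \cref{lemma:bot_join_implies_cir}, applied to the lattice $L = \Lambda_{\mathcal{G}}$. To do so, I first verify that $\Lambda_{\mathcal{G}}$ satisfies the three hypotheses of that lemma: that it is a lattice of partitions contained in $L_{\type{}}$, that its bottom element is the trivial partition $\bot$, and that its join coincides with the standard partition join $\vee$ of \cref{lemma:join_chain_def}.

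The containment $\Lambda_{\mathcal{G}} \subseteq L_{\type{}}$ is immediate from \cref{defi:balanced_cond}, since a balanced partition is by definition required to be an element of $L_{\type{}}$. The remaining two properties are exactly the content of \cref{lemma:balanced_is_lattice}, which asserts that $\Lambda_{\mathcal{G}}$ is a lattice with $\bot_{\mathcal{G}} = \bot$ and $\vee_{\mathcal{G}} = \vee$. Thus all hypotheses of \cref{lemma:bot_join_implies_cir} hold with $L = \Lambda_{\mathcal{G}}$, and applying that lemma yields, for every $\mathcal{A} \in L_{\type{}}$, a partition $\bp \in \Lambda_{\mathcal{G}}$ that is the coarsest balanced partition finer than $\mathcal{A}$, which is precisely the claim.

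There is essentially no obstacle beyond the bookkeeping of these hypotheses: the genuine work was already carried out in establishing \cref{lemma:balanced_is_lattice} (balanced partitions form a lattice with the standard join and trivial bottom) and \cref{lemma:bot_join_implies_cir} (such lattices admit a coarsest-finer element via the join-over-a-finite-subset construction). The only point warranting a word of care is that the finiteness invoked in \cref{lemma:bot_join_implies_cir} is inherited here, because $L_{\type{}}$, and hence its subset $\Lambda_{\mathcal{G}}$, is finite.
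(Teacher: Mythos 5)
Your proposal is correct and takes essentially the same route as the paper, which likewise obtains this corollary as an immediate consequence of \cref{lemma:bot_join_implies_cir} applied to $L = \Lambda_{\mathcal{G}}$, with the hypotheses $\bot_{\mathcal{G}} = \bot$ and $\vee_{\mathcal{G}} = \vee$ supplied by \cref{lemma:balanced_is_lattice}. Your additional remarks that $\Lambda_{\mathcal{G}} \subseteq L_{\type{}}$ holds by \cref{defi:balanced_cond} and that finiteness is inherited from $L_{\type{}}$ are correct pieces of bookkeeping that the paper leaves implicit.
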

This implies the existence of a $ cir $ function from $ L_{\type{}} $ to $ \Lambda_{\mathcal{G}} $, which we denote by just $ cir $. Then we have the following.
\begin{corollary}
	Consider balanced partitions $ \bp_1, \bp_2 \in\Lambda_{\mathcal{G}} $.
	Then $ \bp_1 \wedge_{\mathcal{G}} \bp_2 = cir(\bp_1 \wedge \bp_2)$.
	\label{cor:meet_cir_relation_balanced}
	\hfill $ \square $
\end{corollary}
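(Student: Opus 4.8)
The plan is to recognize this statement as an immediate specialization of the general \cref{cor:meet_cir_relation_general} to the particular lattice $L = \Lambda_{\mathcal{G}}$. That general corollary asserts exactly the identity $\mathcal{A}_1 \wedge_L \mathcal{A}_2 = cir_L(\mathcal{A}_1 \wedge \mathcal{A}_2)$ for any lattice of partitions $L \subseteq L_{\type{}}$ satisfying $\bot_L = \bot$ and $\vee_L = \vee$. So the whole proof reduces to checking that $\Lambda_{\mathcal{G}}$ is such a lattice and then reading off the conclusion, with $cir_{\Lambda_{\mathcal{G}}}$ identified with the map $cir$ introduced after \cref{lemma:cir_balanced}.

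First I would verify the two structural hypotheses. By \cref{defi:balanced_cond} every balanced partition lies in $L_{\type{}}$, so $\Lambda_{\mathcal{G}} \subseteq L_{\type{}}$. Next, \cref{lemma:balanced_is_lattice} already establishes that $\Lambda_{\mathcal{G}}$ is a lattice whose minimal element $\bot_{\mathcal{G}}$ is the trivial partition $\bot$ and whose join $\vee_{\mathcal{G}}$ coincides with the partition join $\vee$ of \cref{lemma:join_chain_def}. These are precisely the conditions $\bot_L = \bot$ and $\vee_L = \vee$ required by \cref{cor:meet_cir_relation_general}.

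With the hypotheses in place, applying \cref{cor:meet_cir_relation_general} to $L = \Lambda_{\mathcal{G}}$ gives $\bp_1 \wedge_{\mathcal{G}} \bp_2 = cir_{\Lambda_{\mathcal{G}}}(\bp_1 \wedge \bp_2)$ for any $\bp_1, \bp_2 \in \Lambda_{\mathcal{G}}$, and since $cir_{\Lambda_{\mathcal{G}}}$ is exactly the $cir$ function supplied by \cref{lemma:cir_balanced}, this is the claimed identity. I do not expect any genuine obstacle here: all the substantive work (closure under $\vee$, that $\bot$ is balanced, and the existence of the $cir$ function) has been done in the preceding corollaries, so the only point requiring care is the bookkeeping of verifying that the abstract hypotheses of \cref{cor:meet_cir_relation_general} are indeed the properties recorded for $\Lambda_{\mathcal{G}}$ in \cref{lemma:balanced_is_lattice}.
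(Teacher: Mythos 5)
Your proposal is correct and matches the paper's intent exactly: the paper states this corollary without proof as an immediate instance of \cref{cor:meet_cir_relation_general}, whose hypotheses ($\Lambda_{\mathcal{G}} \subseteq L_{\type{}}$, $\bot_{\mathcal{G}} = \bot$, $\vee_{\mathcal{G}} = \vee$) are supplied by \cref{defi:balanced_cond} and \cref{lemma:balanced_is_lattice}, with $cir$ the function from \cref{lemma:cir_balanced}. Nothing is missing.
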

The particular $ cir $ function associated with $ \Lambda_{\mathcal{G}} $ is easy to compute and was extended in \cite{sequeira2021commutative} for the general weighted case.\\
In order to present the interesting properties of balanced partitions, we require the following result, which relates partitions and oracle components.
\begin{lemma}\label{lemma:partition_merge}
	For any oracle component $ \hat{f}_{i} \in \hat{\mathcal{F}}_{i} $, we have 
	\begin{eqnarray}
	\label{eq:partition_merge}
	\hat{f}_i(x;
	\mathbf{w},
	P\overline{\mathbf{x}}) 
	= 
	\hat{f}_i(x;
	P^{\top}\mathbf{w},
	\overline{\mathbf{x}}),
	\end{eqnarray}
	where $ P $ is a partition matrix of appropriate dimensions such that the vectors $ \mathbf{w} $ and $ P\overline{\mathbf{x}} $ have elements of matching cell types.
	\hfill$ \square $
\end{lemma}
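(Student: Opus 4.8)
The plan is to obtain \eref{eq:partition_merge} by iterating the merge rule \eref{eq:oracle_merge} one color at a time, using the permutation invariance \eref{eq:oracle_permut} to place the entries to be merged at the top of the argument lists. Write $P\in\{0,1\}^{n\times r}$, so that $\mathbf{w}$ and $P\overline{\mathbf{x}}$ have $n$ entries while $\overline{\mathbf{x}}$ has $r$ entries. Since each row of a partition matrix has exactly one nonzero entry, letting $C_k := \{c : [P]_{ck}=1\}$ denote the cells of color $k$, the vector $P\overline{\mathbf{x}}$ carries the common value $\overline{x}_k$ in every coordinate $c\in C_k$, whereas the $k$-th entry of $P^{\top}\mathbf{w}$ is the parallel sum (under $\|$) of the weights $w_c$ over $c\in C_k$. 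The matching-cell-types hypothesis ensures that the cells of each $C_k$ share a common type, so this monoid sum lives in a single $\mathcal{M}_{ij}$ and \eref{eq:oracle_merge} may legitimately be applied within each block.

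First I would induct on $n-r=\sum_{k}(\vert C_k\vert - 1)$, the number of edges in excess of the number of colors. In the base case $n-r=0$ every color is a singleton, so $P$ is a permutation matrix and $PP^{\top}=I$; then \eref{eq:partition_merge} is exactly \eref{eq:oracle_permut} applied to the right-hand side with the permutation $P$, since $P(P^{\top}\mathbf{w}) = \mathbf{w}$ while $P\overline{\mathbf{x}}$ is left unchanged.

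For the inductive step with $n-r\geq 1$, some color $k$ contains two distinct cells $c_1,c_2$. I would use \eref{eq:oracle_permut} to move their coordinates to the top of both $\mathbf{w}$ and $P\overline{\mathbf{x}}$; as these two state coordinates are both equal to $\overline{x}_k$ and of equal type, \eref{eq:oracle_merge} replaces the two edges by a single one of weight $w_{c_1}\|w_{c_2}$ still carrying $\overline{x}_k$. This yields $\hat{f}_i(x;\mathbf{w}',P'\overline{\mathbf{x}})$ with $n-1$ edges and the same $r$ colors, where $P'$ is the partition matrix obtained from $P$ by merging rows $c_1,c_2$ into a single row of color $k$. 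By associativity and commutativity of $\|$, collapsing two summands of color $k$ into their parallel sum leaves every column sum unchanged, so $P'^{\top}\mathbf{w}' = P^{\top}\mathbf{w}$. Since $n-1-r<n-r$, the induction hypothesis gives $\hat{f}_i(x;\mathbf{w}',P'\overline{\mathbf{x}}) = \hat{f}_i(x;P'^{\top}\mathbf{w}',\overline{\mathbf{x}}) = \hat{f}_i(x;P^{\top}\mathbf{w},\overline{\mathbf{x}})$, and composing this with the merge step above establishes \eref{eq:partition_merge}.

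The only real difficulty is bookkeeping: tracking, through each permutation and each merge, which coordinates belong to which color and which cell type is attached to each weight and state, so that every application of \eref{eq:oracle_merge} is valid (this is the implicit type-tracking discussed in the remarks following \cref{defi:oracle}). No step is conceptually deep; the entire content is that the merge rule, iterated under permutations, realizes precisely the matrix operations $\overline{\mathbf{x}}\mapsto P\overline{\mathbf{x}}$ and $\mathbf{w}\mapsto P^{\top}\mathbf{w}$.
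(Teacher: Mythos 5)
Your proposal is correct and takes essentially the same route as the paper: both proofs proceed by induction, decomposing the action of $P$ into a sequence of permutations and single two-entry merges so that \cref{thm:oracle_permut,thm:oracle_merge} of \cref{defi:oracle} can be applied one step at a time. The only difference is bookkeeping --- the paper fixes $n$ and inducts downward on the number of columns via the factorization $P=\overline{P}p\permut$, whereas you induct on $n-r$ by merging two same-color rows at a time and verifying $P'^{\top}\mathbf{w}'=P^{\top}\mathbf{w}$ --- a mirror-image organization of the same argument with no substantive difference.
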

\begin{remark}
	\cref{lemma:partition_merge} is valid for all inputs such that the evaluation is meaningful. That is, whenever the domain \eref{eq:oracle_domain_func} is respected. Furthermore, vectors $ \mathbf{w} $ and $ P\overline{\mathbf{x}} $ having elements of matching cell types is equivalent to $ P^{\top}\mathbf{w} $ and $ \overline{\mathbf{x}} $ having elements of matching cell types and the sum $ P^{\top}\mathbf{w} $ being well-defined. That is, each sum operates on elements of the same commutative monoid.
	\hfill$ \square $
\end{remark}
\begin{proof}[Proof of \cref{lemma:partition_merge}]
	The proof is by induction. In the base case, 	partition matrices of dimension $ n\times n $ with $ n>0 $ are in fact permutations, therefore it is direct from \cref{thm:oracle_permut} of \cref{defi:oracle}. Consider now fixed integers $ n,k $ such that $ 0<k<n $. Assume \eref{eq:partition_merge} applies to all partition matrices of dimension $ n\times (k+1) $ as long as it is applied to suitable (type matching) $ \mathbf{w} $ and $ \overline{\mathbf{x}} $. Any partition matrix $ P $ of dimension $ n\times k $ can be obtained by taking some partition matrix $ \overline{P} $ of dimension $ n\times (k+1) $ and merging together two of its columns. That is, $ P = \overline{P} p \permut $, with $ p =
	\left[\begin{array}{cc}
	1 & \mathbf{0}^{\top} \\ 1 & \mathbf{0}^{\top} \\ \mathbf{0} &\mathbf{I}_{k-1}
	\end{array} \right]
	 $ and where $ \permut $ is a permutation matrix of dimension $ k\times k $.\\ 
	Consider one such $ P $ and any suitable $ \mathbf{w} $ and $ \overline{\mathbf{x}} $. Then $ \hat{f}_i(x;
	\mathbf{w},
	P\overline{\mathbf{x}}) 
	=
	\hat{f}_i(x;
	\mathbf{w},
	\overline{P}(p \permut \overline{\mathbf{x}})) 
	$. By assumption, we call apply \eref{eq:partition_merge} with respect to $ \overline{P} $, which gets us $ \hat{f}_i(x;\overline{P}^{\top}\mathbf{w},p\permut \overline{\mathbf{x}}) $. Due to the particular shape of $ p $, applying \eref{eq:partition_merge} with regard to $ p $ is equivalent to \cref{thm:oracle_merge} of \cref{defi:oracle}. This gives us $ \hat{f}_i(x;p^{\top}\overline{P}^{\top}\mathbf{w},\permut \overline{\mathbf{x}}) $. Similarly, we can apply \eref{eq:partition_merge} with regard to $ \permut $ since it corresponds to \cref{thm:oracle_permut} of \cref{defi:oracle}. Since $ \permut $ is a permutation matrix, we have that $ \permut^{-1} = \permut^{\top} $. Therefore this becomes $ \hat{f}_i(x;\permut^{\top}p^{\top}\overline{P}^{\top}\mathbf{w}, \overline{\mathbf{x}}) 
	=
	\hat{f}_i(x;(\overline{P}p\permut)^{\top}\mathbf{w}, \overline{\mathbf{x}})
	=
	\hat{f}_i(x;P^{\top}\mathbf{w}, \overline{\mathbf{x}})
	$, which proves that \eref{eq:partition_merge} is satisfied for any partition matrix $ P $ of size $ n\times k $.
\end{proof}
A more convoluted version of \cref{lemma:partition_merge} was originally part of the definition of oracle components in \cite{sequeira2021commutative}. We had it in place of \cref{thm:oracle_permut,thm:oracle_merge} in \cref{defi:oracle}. We just proved \cref{lemma:partition_merge} using \cref{thm:oracle_permut,thm:oracle_merge}. Furthermore, it is straightforward that these items are just particular cases of \cref{lemma:partition_merge}. Therefore the two definitions are equivalent.\\
Having proven this, we can now state the following result, which underlines the importance of balanced partitions in the study of invariance.
\begin{theorem}\label{thm:balanced_implies_all}
	Consider a balanced partition $ \bp\in\Lambda_{\mathcal{G}} $ on a network $ \mathcal{G} $ and any $ \mathcal{G} $-admissible function $ f\in\mathcal{F}_{\mathcal{G}} $. Then $ \bp $ is $ f $-invariant. 
	\hfill$ \square $
\end{theorem}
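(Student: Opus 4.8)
The plan is to verify the invariance condition $f(\Delta_{\bp}^{\xset}) \subseteq \Delta_{\bp}^{\yset}$ directly from \cref{defi:polydiag_defi}. Fix an arbitrary $\mathbf{x} \in \Delta_{\bp}^{\xset}$; by \cref{defi:polydiag_defi} it can be written as $\mathbf{x} = P\overline{\mathbf{x}}$, where $P$ is a characteristic matrix of $\bp$. It then suffices to show that $f_c(\mathbf{x}) = f_d(\mathbf{x})$ for every pair of cells $c,d$ with $\bp(c) = \bp(d)$, since this is exactly the membership $f(\mathbf{x}) \in \Delta_{\bp}^{\yset}$. Note first that, because $\bp \in L_{\type{}}$, the cells $c$ and $d$ share the same type $i$, so both coordinates $f_c$ and $f_d$ are built from the same oracle component $\hat{f}_i$ through \eref{eq:dynamics_dependence}.

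The core computation chains the admissibility formula with \cref{lemma:partition_merge}. Starting from \eref{eq:dynamics_dependence} and substituting $\mathbf{x} = P\overline{\mathbf{x}}$, I would write
\begin{eqnarray*}
f_c(\mathbf{x}) = \hat{f}_i(x_c; \mathbf{m}_c^{\top}, P\overline{\mathbf{x}}) = \hat{f}_i(x_c; P^{\top}\mathbf{m}_c^{\top}, \overline{\mathbf{x}}),
\end{eqnarray*}
where the second equality is \cref{lemma:partition_merge}. Now two facts about the balanced partition enter. First, $\mathbf{x} \in \Delta_{\bp}^{\xset}$ together with $\bp(c) = \bp(d)$ forces $x_c = x_d$. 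Second, the balanced condition of \cref{defi:balanced_cond} gives $\mathbf{m}_c P = \mathbf{m}_d P$, hence $P^{\top}\mathbf{m}_c^{\top} = (\mathbf{m}_c P)^{\top} = (\mathbf{m}_d P)^{\top} = P^{\top}\mathbf{m}_d^{\top}$. Substituting both equalities and then applying \cref{lemma:partition_merge} in reverse yields
\begin{eqnarray*}
\hat{f}_i(x_c; P^{\top}\mathbf{m}_c^{\top}, \overline{\mathbf{x}}) = \hat{f}_i(x_d; P^{\top}\mathbf{m}_d^{\top}, \overline{\mathbf{x}}) = \hat{f}_i(x_d; \mathbf{m}_d^{\top}, P\overline{\mathbf{x}}) = f_d(\mathbf{x}),
\end{eqnarray*}
which is the desired equality.

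Most of the genuine work is already packaged inside \cref{lemma:partition_merge}, which translates ``merging synchronized input states'' into ``summing the corresponding edge weights in parallel'' at the level of the oracle component. The only real conceptual step here is recognizing that the balanced condition $\mathbf{m}_c P = \mathbf{m}_d P$ is precisely the statement that the merged weight vectors $P^{\top}\mathbf{m}_c^{\top}$ and $P^{\top}\mathbf{m}_d^{\top}$ coincide, so that after the merge the two oracle evaluations have identical arguments. I therefore expect no serious obstacle; the one point requiring care is the bookkeeping of cell types, so that the type-matching hypothesis of \cref{lemma:partition_merge} is satisfied (the vectors $\mathbf{m}_c^{\top}$ and $P\overline{\mathbf{x}}$ must have entries of matching types), which is automatic once $c$ and $d$ are known to share a type and $P$ represents a type-respecting partition.
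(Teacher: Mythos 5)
Your proof is correct and takes essentially the same route as the paper's: both reduce $f$-invariance to showing $f_c(\mathbf{x}) = f_d(\mathbf{x})$ for same-color cells $c,d$, and both obtain this from $x_c = x_d$, the balanced condition $\mathbf{m}_c P = \mathbf{m}_d P$, and \cref{lemma:partition_merge}. The only cosmetic difference is that you traverse the chain of oracle-component equalities in the opposite direction (merge, substitute, un-merge), whereas the paper starts from the merged form; the content is identical.
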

\begin{proof}
	Consider any $ \bp \in \Lambda_{\mathcal{G}} $ and a state in the related polydiagonal $ \mathbf{x}\in\Delta_{\bp}^{\xset} $. That is, $ \mathbf{x} = P\overline{\mathbf{x}} $ for some $ \overline{\mathbf{x}} $, where $ P $ is a partition matrix of $ \bp $.\\
	For any pair of cells $ c,d \in \mathcal{C}$ such that $ \bp(c) = \bp(d) $, we have $ x_c =x_d $. Furthermore, from \cref{defi:balanced_cond}, we have $ P^{\top}\mathbf{m}_c^{\top} = P^{\top}\mathbf{m}_d^{\top} $. Therefore
	$
	\hat{f}_{i}\left(x_c; P^{\top}\mathbf{m}_c^{\top}, \overline{\mathbf{x}} \right)
	=
	\hat{f}_{i}\left(x_d; P^{\top}\mathbf{m}_d^{\top}, \overline{\mathbf{x}} \right)$ for any $ \hat{f}_{i} \in \hat{\mathcal{F}}_{i} $, with $ i = \type{}(c) = \type{}(d)$.\\
	Using \cref{lemma:partition_merge}, this becomes $ \hat{f}_{i}\left(x_c; \mathbf{m}_c^{\top}, P\overline{\mathbf{x}} \right)
	=
	\hat{f}_{i}\left(x_d; \mathbf{m}_d^{\top}, P\overline{\mathbf{x}} \right) $, which from \cref{defi:F_G_admissibility} is equivalent to $ f_c(P\overline{\mathbf{x}}) 
	= 
	f_d(P\overline{\mathbf{x}}) $. This means that for every $ \mathcal{G} $-admissible function $ f\in\mathcal{F}_{\mathcal{G}} $, there is a $ \overline{f} $ such that $ f(P\overline{\mathbf{x}}) = P\overline{f}(\overline{\mathbf{x}}) $. That is, $ \bp $ is $f $-invariant.
\end{proof}
This is equivalent to the following statement.
\begin{corollary}\label{cor:balanced_implies_all_simple}
	Given a network $ \mathcal{G} $, we have $  \Lambda_{\mathcal{G}} \subseteq L_{F}  $, for any $ F\subseteq \mathcal{F}_{\mathcal{G}} $.
	\hfill$ \square $
\end{corollary}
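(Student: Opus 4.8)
The plan is to obtain this corollary as an immediate consequence of \cref{thm:balanced_implies_all}, essentially by unwinding the definition of $F$-invariance and tracking the relevant quantifiers. The inclusion $\Lambda_{\mathcal{G}} \subseteq L_F$ asserts that every balanced partition is $F$-invariant, and \cref{thm:balanced_implies_all} already establishes the stronger statement that every balanced partition is $f$-invariant for \emph{every} $f \in \mathcal{F}_{\mathcal{G}}$.

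First I would fix an arbitrary balanced partition $\bp \in \Lambda_{\mathcal{G}}$. By \cref{defi:balanced_cond}, any balanced partition is by definition an element of $L_{\type{}}$, so the membership requirement built into the definition of $L_F$ is automatically satisfied; this is the only structural point that needs to be noted rather than derived. Next, I would invoke \cref{thm:balanced_implies_all} to conclude that $\bp$ is $f$-invariant for every $\mathcal{G}$-admissible $f \in \mathcal{F}_{\mathcal{G}}$.

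Then, since $F \subseteq \mathcal{F}_{\mathcal{G}}$ by hypothesis, the same conclusion holds in particular for every $f \in F$. By the definition of $F$-invariance, $\bp$ being $f$-invariant for all $f \in F$ is exactly the statement that $\bp$ is $F$-invariant, so $\bp \in L_F$. As $\bp$ was an arbitrary element of $\Lambda_{\mathcal{G}}$, this yields $\Lambda_{\mathcal{G}} \subseteq L_F$ for every $F \subseteq \mathcal{F}_{\mathcal{G}}$.

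There is no substantial obstacle here: all the analytic work --- translating the balanced condition $\mathbf{m}_c P = \mathbf{m}_d P$ into the invariance $f(\Delta_{\bp}^{\xset}) \subseteq \Delta_{\bp}^{\yset}$ via \cref{lemma:partition_merge} --- was carried out in the proof of \cref{thm:balanced_implies_all}. The only care needed is to respect the order of quantifiers: the theorem gives invariance for all admissible $f$, and restricting attention to the subfamily $F$ preserves invariance precisely because $F \subseteq \mathcal{F}_{\mathcal{G}}$ (a monotonicity also reflected in \cref{lemma:monotonic_F_invariance} of \cref{lemma:invariant_lattice_properties}).
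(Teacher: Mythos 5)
Your proposal is correct and follows essentially the same route as the paper, which presents this corollary as an immediate restatement of \cref{thm:balanced_implies_all}: a balanced partition is $f$-invariant for every $f\in\mathcal{F}_{\mathcal{G}}$, hence in particular for every $f\in F\subseteq\mathcal{F}_{\mathcal{G}}$, and is therefore $F$-invariant. Your remark that membership in $L_{\type{}}$ is built into \cref{defi:balanced_cond} correctly identifies the only structural point that needs checking.
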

\begin{theorem}\label{thm:F_G_implies_balanced}
	Consider a partition $ \mathcal{A}\leq \type{} $ on some network $ \mathcal{G} $. If $ \mathcal{A} $ is $ \mathcal{F}_{\mathcal{G}} $-invariant, then $ \mathcal{A} $ is balanced on $ \mathcal{G} $.
	\hfill$ \square $
\end{theorem}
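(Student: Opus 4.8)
The plan is to prove the contrapositive: assuming that $\mathcal{A}$ is \emph{not} balanced on $\mathcal{G}$, I would construct a single admissible function $f\in\mathcal{F}_{\mathcal{G}}$ for which $\mathcal{A}$ fails to be $f$-invariant, which already shows that $\mathcal{A}$ is not $\mathcal{F}_{\mathcal{G}}$-invariant. By \cref{defi:balanced_cond}, $\mathcal{A}$ not being balanced means there are cells $c,d$ with $\mathcal{A}(c)=\mathcal{A}(d)$ but $\mathbf{m}_c P \neq \mathbf{m}_d P$, where $P$ is a partition matrix of $\mathcal{A}$. Since $\mathcal{A}\leq\type{}$, both cells share a type $i=\type{}(c)=\type{}(d)$. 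The inequality $\mathbf{m}_c P \neq \mathbf{m}_d P$ means the two colour-summed in-neighborhoods disagree in some colour $k$; writing $j$ for the common type of the cells in colour $k$, and setting $w_{c,k}:=\sum_{e:\,\mathcal{A}(e)=k} m_{ce}$ and $w_{d,k}:=\sum_{e:\,\mathcal{A}(e)=k} m_{de}$, I obtain $w_{c,k}\neq w_{d,k}$ in the monoid $\mathcal{M}_{ij}$.

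The heart of the argument is to build an oracle component of type $i$ that reads off exactly this colour-$k$ weight. I would define the monoid-valued functional $W$ that, given any finite in-neighborhood, returns the $(\|)$-sum of the weights of all type-$j$ in-edges whose source sits in a fixed state $v\in\xset_j$. This $W$ satisfies all three requirements of \cref{defi:oracle}: it is permutation invariant (\cref{thm:oracle_permut}); it is compatible with edge merging (\cref{thm:oracle_merge}), since merging two type-$j$ edges in a common state either leaves the running sum untouched or replaces $w_{j_1},w_{j_2}$ by $w_{j_1}\|w_{j_2}$; and it is insensitive to zero edges (\cref{thm:oracle_0_equal}) because $0_{ij}$ is the monoid identity. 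Post-composing with any map $h\colon\mathcal{M}_{ij}\to\yset_i$ preserves these equalities, so $\hat f_i:=h\circ W$ (ignoring the own-state argument) is a genuine oracle component of type $i$; I pick $h$ with $h(w_{c,k})\neq h(w_{d,k})$. Completing $\hat f$ with arbitrary constant oracle components for the remaining types yields $\hat f\in\hat{\mathcal{F}}_{\tset}$ and hence the admissible $f=\left.\hat f\right|_{\mathcal{G}}\in\mathcal{F}_{\mathcal{G}}$.

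It remains to exhibit a state in $\Delta_{\mathcal{A}}^{\xset}$ on which $f$ leaves the polydiagonal. Using the standing assumption that each state set is non-singleton, I would pick $v\neq v'$ in $\xset_j$ and define $\overline{\mathbf{x}}$ by assigning state $v$ to colour $k$, state $v'$ to every other colour of type $j$, and arbitrary states to the colours of the remaining types; then $\mathbf{x}=P\overline{\mathbf{x}}\in\Delta_{\mathcal{A}}^{\xset}$ (see \cref{defi:polydiag_defi}) and the only type-$j$ cells in state $v$ are precisely those of colour $k$. Evaluating through \cref{defi:F_G_admissibility} (equivalently, after merging along the polydiagonal via \cref{lemma:partition_merge}), $W$ returns $w_{c,k}$ at cell $c$ and $w_{d,k}$ at cell $d$, so $f_c(\mathbf{x})=h(w_{c,k})\neq h(w_{d,k})=f_d(\mathbf{x})$ while $\mathcal{A}(c)=\mathcal{A}(d)$. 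Hence $f(\mathbf{x})\notin\Delta_{\mathcal{A}}^{\yset}$, so $\mathcal{A}$ is not $f$-invariant and therefore not $\mathcal{F}_{\mathcal{G}}$-invariant.

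I expect the main obstacle to be the construction in the second paragraph: the separating map must be a \emph{bona fide} oracle component, i.e.\ self-consistent on \emph{all} finite in-neighborhoods rather than only on the one arising from $\mathcal{G}$, so the verification of \cref{thm:oracle_merge,thm:oracle_0_equal} is where the genuine content lies. The reason the ``filter by type and source-state, then sum'' functional works is that the monoid sum is exactly the operation with respect to which edge merging and zero edges are defined, which makes $W$ automatically consistent. A secondary technical point is that the separation needs $\yset_i$ to contain two distinct values; I would either record this as a standing hypothesis, parallel to the non-singleton assumption on the state sets, or observe that for a type whose output set is a singleton the invariance of any colour pair is vacuous and the statement carries no content for that type.
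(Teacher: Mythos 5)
Your proof is correct, and it is essentially the argument this paper relies on: \cref{thm:F_G_implies_balanced} is stated here without an in-text proof, deferring to \cite{sequeira2021commutative} (and the groupoid-formalism papers), where the result is obtained in exactly your way --- by exhibiting a small family $F\subseteq\mathcal{F}_{\mathcal{G}}$ of ``weight-reading'' admissible functions with $\Lambda_{\mathcal{G}}\supseteq L_{F}$, and your filtered monoid-sum functional $h\circ W$, with the verification of \cref{thm:oracle_merge,thm:oracle_0_equal} on \emph{all} finite in-neighborhoods, is precisely such a family. The one point worth recording is the caveat you already flag: the separation step needs $\yset_i$ to be non-singleton, and the paper's standing non-singleton assumption covers only the state sets $\xset_i$ (with singleton output sets every partition is vacuously $\mathcal{F}_{\mathcal{G}}$-invariant, so the hypothesis is genuinely needed), hence it should be stated alongside the existing remark.
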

This is equivalent to the following statement.
\begin{corollary}
	\label{cor:F_G_implies_balanced_simple}
	Given a network $ \mathcal{G} $, we have $  \Lambda_{\mathcal{G}} \supseteq L_{\mathcal{F}_{\mathcal{G}}}  $.
	\hfill$ \square $
\end{corollary}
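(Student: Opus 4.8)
The plan is to prove the contrapositive: assuming $\mathcal{A}$ is \emph{not} balanced on $\mathcal{G}$, I will construct a single $f\in\mathcal{F}_{\mathcal{G}}$ witnessing that $\mathcal{A}$ is not $\mathcal{F}_{\mathcal{G}}$-invariant. Write $P$ for a characteristic matrix of $\mathcal{A}$. By \cref{defi:balanced_cond}, non-balancedness yields cells $c,d$ with $\mathcal{A}(c)=\mathcal{A}(d)$ but $\mathbf{m}_cP\neq\mathbf{m}_dP$. Since $\mathcal{A}\leq\type{}$ forces $\type{}(c)=\type{}(d)=:i$, and since the two aggregated rows differ, there is a color $k^\ast$, say of type $j$, with $w^c:=[\mathbf{m}_cP]_{k^\ast}\neq[\mathbf{m}_dP]_{k^\ast}=:w^d$ in $\mathcal{M}_{ij}$. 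The whole task reduces to producing an admissible function that, restricted to the polydiagonal $\Delta_{\mathcal{A}}^{\xset}$, can detect this single differing aggregated weight.

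Next I would design the discriminating oracle component of type $i$. Let $g(x;\mathbf{w},\mathbf{x})$ be the parallel sum ($\|$) of all weights $w_\ell$ whose corresponding input is of type $j$ and sits in a fixed state $s_1\in\xset_j$, and post-compose it with a map $\phi\colon\mathcal{M}_{ij}\to\yset_i$ to obtain $\hat f_i:=\phi\circ g$; the components of the other types are taken to be arbitrary constants. Evaluating the induced $f$ on a polydiagonal point $\mathbf{x}=P\overline{\mathbf{x}}$ and invoking \cref{lemma:partition_merge} collapses each color to a single super-neighbour, giving $f_c(P\overline{\mathbf{x}})=\hat f_i(x_c;(\mathbf{m}_cP)^\top,\overline{\mathbf{x}})$ and likewise for $d$, with identical first and third arguments and second arguments $(\mathbf{m}_cP)^\top\neq(\mathbf{m}_dP)^\top$. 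To ensure $g$ aggregates \emph{only} the weight attached to $k^\ast$, I choose $\overline{\mathbf{x}}$ so that color $k^\ast$ carries the state $s_1$ while every other type-$j$ color carries a common state $s_2\neq s_1$; only two distinct state values are needed, and they exist because the state sets are non-singleton. On this point $g$ returns exactly $w^c$ at $c$ and $w^d$ at $d$, so picking $\phi$ with $\phi(w^c)\neq\phi(w^d)$ (e.g.\ sending $w^c$ to $y_1$ and everything else to $y_2$, for distinct $y_1,y_2\in\yset_i$) yields $f_c(P\overline{\mathbf{x}})\neq f_d(P\overline{\mathbf{x}})$. Since $c$ and $d$ share a color, this shows $f(P\overline{\mathbf{x}})\notin\Delta_{\mathcal{A}}^{\yset}$ even though $P\overline{\mathbf{x}}\in\Delta_{\mathcal{A}}^{\xset}$, so $\mathcal{A}$ is not $f$-invariant, contradicting $\mathcal{F}_{\mathcal{G}}$-invariance.

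The heart of the argument, and the step I expect to be the main obstacle, is verifying that $g$ (hence $\hat f_i$) is a genuine oracle component in the sense of \cref{defi:oracle} and that the state assignment really isolates $k^\ast$. Permutation invariance (\cref{thm:oracle_permut}) is clear since $g$ sums over a set cut out by a per-input condition; the merge axiom (\cref{thm:oracle_merge}) holds because merging two type-$j$ inputs sharing a state replaces $w_1,w_2$ by $w_1\|w_2$ without changing the $\|$-sum; and the zero axiom (\cref{thm:oracle_0_equal}) is automatic because $0_{ij}$ is the monoid identity and contributes nothing, which also disposes of the case where one of $w^c,w^d$ vanishes. The construction requires $\yset_i$ to contain at least two elements so that $\phi$ can separate $w^c$ from $w^d$; this non-degeneracy of the output set is the natural companion of the standing non-singleton assumption on the state sets, and holds in particular for the dynamics interpretation, where the output set is the state set.
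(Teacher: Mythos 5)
Your proof is correct, but it is not the paper's route, because the paper contains no internal proof of this statement at all: the corollary is presented as a mere restatement of \cref{thm:F_G_implies_balanced}, which the paper imports without proof from the cited literature (the unweighted case from the groupoid-formalism papers, the weighted generalization from \cite{sequeira2021commutative}). You instead give a self-contained constructive proof of the contrapositive, and it checks out. Your ``aggregate the weights of type-$j$ inputs sitting in state $s_1$'' component $\hat f_i=\phi\circ g$ genuinely satisfies all three axioms of \cref{defi:oracle}: permutation invariance because the $\|$-sum ranges over a set selected by a per-entry predicate; the merge axiom because when $x_{j_{12}}=s_1$ the merged entry contributes $w_{j_1}\|w_{j_2}$ and otherwise neither side contributes; and the zero axiom because $0_{ij}$ is the monoid identity (which, as you note, also handles the case $w^c=0_{ij}$ or $w^d=0_{ij}$). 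Combined with \cref{lemma:partition_merge}, the identity $P^{\top}\mathbf{m}_c^{\top}=(\mathbf{m}_cP)^{\top}$, and your choice of $\overline{\mathbf{x}}$ isolating the color $k^{\ast}$ (legitimate, since any $\overline{\mathbf{x}}$ yields a point of $\Delta^{\xset}_{\mathcal{A}}$ and $s_1\neq s_2$ exist by the standing non-singleton assumption on state sets), the evaluations at $c$ and $d$ return $\phi(w^c)\neq\phi(w^d)$ while $x_c=x_d$, breaking invariance as claimed. Two things your route buys beyond the paper's citation. First, the family $F$ of all such functions, over choices of $i$, $j$, $s_1$ and $\phi$, is a proper subset of $\mathcal{F}_{\mathcal{G}}$ with $\Lambda_{\mathcal{G}}\supseteq L_{F}$, so you have in fact reproduced the \emph{stronger} result that the paper says is ``hidden away'' in the original proofs, not just the corollary. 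Second, your caveat about $\yset_i$ needing two elements is not cosmetic: if $\yset_i$ is a singleton, every admissible component of type $i$ is constant and $\Delta^{\yset}_{\mathcal{A}}$ imposes no constraint there, so an unbalanced partition whose only imbalance occurs at type-$i$ cells would be vacuously $\mathcal{F}_{\mathcal{G}}$-invariant and the corollary would fail; since the paper's non-singleton remark is stated only for the state sets $\xset_i$, you have surfaced an implicit hypothesis of the imported theorem, one that is automatic in the dynamics reading where $\yset=\xset$.
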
	
Note that $ L_{\mathcal{F}_{\mathcal{G}}} $ is the smallest possible lattice of invariant partitions. In \cite{stewart2003symmetry,golubitsky2005patterns,golubitsky2006nonlinear}, \cref{thm:F_G_implies_balanced} was derived by proving a stronger result: there exists some subset $ F\subseteq \mathcal{F}_{\mathcal{G}} $ such that $  \Lambda_{\mathcal{G}} \supseteq L_{F} $. This type of result is of interest since we might be interested only in certain subclasses of admissible functions and not the full $ \mathcal{F}_{\mathcal{G}} $.\\
This stronger result, which was originally hidden away in their proof of the unweighted version of \cref{thm:F_G_implies_balanced}, was made explicit and generalized in \cite{sequeira2021commutative} for the general weighted formalism.\\
From \cref{cor:balanced_implies_all_simple,cor:F_G_implies_balanced_simple} the following is now immediate.
\begin{corollary}
	Given a network $ \mathcal{G} $, we have $  \Lambda_{\mathcal{G}} = L_{\mathcal{F}_{\mathcal{G}}}  $.
	\hfill$ \square $
\end{corollary}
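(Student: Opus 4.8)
The plan is to prove the equality $\Lambda_{\mathcal{G}} = L_{\mathcal{F}_{\mathcal{G}}}$ by a straightforward double-inclusion argument, invoking the two inclusion results that have just been established. Since both directions are already available in the preceding corollaries, this statement is essentially a matter of assembly rather than new technical work.

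First I would establish the forward inclusion $\Lambda_{\mathcal{G}} \subseteq L_{\mathcal{F}_{\mathcal{G}}}$. This follows from \cref{cor:balanced_implies_all_simple}, which asserts $\Lambda_{\mathcal{G}} \subseteq L_{F}$ for \emph{every} $F \subseteq \mathcal{F}_{\mathcal{G}}$; specializing to the particular choice $F = \mathcal{F}_{\mathcal{G}}$ yields exactly $\Lambda_{\mathcal{G}} \subseteq L_{\mathcal{F}_{\mathcal{G}}}$. Conceptually, this direction records the fact (\cref{thm:balanced_implies_all}) that every balanced partition is $f$-invariant for every admissible $f$, so balanced partitions are always among the $\mathcal{F}_{\mathcal{G}}$-invariant ones.

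Next I would invoke the reverse inclusion $\Lambda_{\mathcal{G}} \supseteq L_{\mathcal{F}_{\mathcal{G}}}$, which is precisely the content of \cref{cor:F_G_implies_balanced_simple}. This encodes the converse fact (\cref{thm:F_G_implies_balanced}) that any partition invariant under the full set of admissible functions must satisfy the balance condition \eref{eq:balanced_cond}. Combining the two inclusions gives $\Lambda_{\mathcal{G}} = L_{\mathcal{F}_{\mathcal{G}}}$, which completes the argument.

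There is no genuine obstacle at this stage: all the substantive content resides in \cref{thm:balanced_implies_all} and \cref{thm:F_G_implies_balanced}, whose proofs supply the two inclusions through \cref{lemma:partition_merge} and the balance condition, respectively. The only point worth flagging is to check that $L_{\mathcal{F}_{\mathcal{G}}}$ is well-formed as the instance of the general $L_{F}$ construction with $F = \mathcal{F}_{\mathcal{G}}$, so that \cref{cor:balanced_implies_all_simple} is genuinely applicable with this choice of $F$; this is immediate from the definition of $L_{F}$ as the lattice of $F$-invariant partitions finer than $\type{}$.
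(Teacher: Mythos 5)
Your proposal is correct and matches the paper's own argument exactly: the paper derives this corollary as immediate from \cref{cor:balanced_implies_all_simple} (specialized to $F = \mathcal{F}_{\mathcal{G}}$) and \cref{cor:F_G_implies_balanced_simple}, which is precisely your double-inclusion assembly. Nothing further is needed.
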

\subsection{Quotient networks}\label{subsec:quotients}
In this section we describe how the behavior of a network $ \mathcal{G} $ when evaluated at some polydiagonal $ \Delta_\bp^{\xset}$ for some balanced partition $ \bp $ can be described by a smaller network $ \mathcal{Q} $.
\begin{defi}\label{defi:quotient_network}
	Consider a network $ \mathcal{G} $ defined on a cell set $ \mathcal{C}_{\mathcal{G}} $ with a cell type partition $ \type{G} $ and an in-adjacency matrix $ M $. Take a balanced partition $ \bp \in \Lambda_{\mathcal{G}} $.\\
	The \textit{quotient network} $ \mathcal{Q} $ of $ \mathcal{G} $ over $ \bp $, denoted $ \mathcal{Q} := \mathcal{G}/\bp $, is defined on a cell set $ \mathcal{C}_{\mathcal{Q}} := \mathcal{C}_{\mathcal{G}}/\bp $ with a cell type partition $ \type{Q} := \type{G}/\bp $ and an in-adjacency matrix $ Q $ given by $ M P = P Q $, where $ P $ represents $ \bp $.
	\hfill$ \square $
\end{defi}
\begin{remark}
	Assume that a particular ordering has been chosen for the sets of cells $ \mathcal{C}_{\mathcal{G}}$ and $ \mathcal{C}_{\mathcal{Q}} $. Then the partition $ P $ representing $ \bp $ and the in-adjacency  matrices $ M $ and $ Q $ are uniquely defined.
	\hfill$ \square $
\end{remark}
The following result corresponds to the merging of Lemmas 5.8. and 5.9. in \cite{sequeira2021commutative}, which we now present in a cleaner way.
\begin{lemma}\label{lemma:balanced_quotient_invariance}
	Consider a balanced partition $ \bp_{01}\in \Lambda_{\mathcal{G}_0}$ on a network $ \mathcal{G}_0 $ and its respective quotient network $ \mathcal{G}_1 = \mathcal{G}_0/\bp_{01} $. For a partition $ \bp_{02} $ such that $ \bp_{01}\leq \bp_{02} $, define $ \bp_{12} :=\bp_{02} /\bp_{01} $. Then $ \bp_{02}\in \Lambda_{\mathcal{G}_0} $ if and only if $ \bp_{12} \in \Lambda_{\mathcal{G}_1} $. Furthermore, if $ \bp_{02} $ and $ \bp_{12} $ satisfy this, then $ \mathcal{G}_0 / \bp_{02} = \mathcal{G}_{1} / \bp_{12} $.
	\hfill $ \square $
\end{lemma}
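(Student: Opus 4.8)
The plan is to work entirely with partition matrices and the matrix form \eref{eq:balanced_cond_matrix_form} of the balanced condition. Fix compatible orderings on $\mathcal{C}_0$, on $\mathcal{C}_1 = \mathcal{C}_0/\bp_{01}$, and on the image set $\mathcal{C}_0/\bp_{02}$, and write $M_0, M_1$ for the in-adjacency matrices of $\mathcal{G}_0, \mathcal{G}_1$. Let $P_{01}, P_{02}, P_{12}$ be the characteristic matrices of $\bp_{01}, \bp_{02}, \bp_{12}$. Since $\bp_{01} \leq \bp_{02}$ and $\bp_{12} = \bp_{02}/\bp_{01}$, the matrix identity attached to \cref{lemma:quotient_partition} gives $P_{01} P_{12} = P_{02}$; and since $\bp_{01} \in \Lambda_{\mathcal{G}_0}$ with quotient $\mathcal{G}_1$, \cref{defi:quotient_network} gives $M_0 P_{01} = P_{01} M_1$. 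These two identities are the only structural facts the argument needs.

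For the equivalence I would first establish the matrix part of the balanced condition in both directions. If $\bp_{12} \in \Lambda_{\mathcal{G}_1}$, so $M_1 P_{12} = P_{12} Q_{12}$ for some $Q_{12}$, then
\begin{eqnarray*}
M_0 P_{02}
&= M_0 P_{01} P_{12} = P_{01} M_1 P_{12} = P_{01} P_{12} Q_{12} = P_{02} Q_{12},
\end{eqnarray*}
so $\bp_{02}$ satisfies \eref{eq:balanced_cond_matrix_form} with $Q_{02} = Q_{12}$. Conversely, if $\bp_{02} \in \Lambda_{\mathcal{G}_0}$, so $M_0 P_{02} = P_{02} Q_{02}$, then
\begin{eqnarray*}
P_{01}(M_1 P_{12})
&= M_0 P_{01} P_{12} = M_0 P_{02} = P_{02} Q_{02} = P_{01}(P_{12} Q_{02}).
\end{eqnarray*}
Here the crucial step---and the point I expect to be the main obstacle---is left-cancellation of $P_{01}$, which is delicate because the entries lie in commutative monoids with no additive inverses, so one cannot invert $P_{01}^{\top} P_{01}$. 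The resolution is that left-multiplication by a partition matrix is merely a replication of rows (as noted after the partition-matrix definition), so $P_{01} X = P_{01} Y$ forces $X_k = Y_k$ for every color $k$ of $\bp_{01}$ by surjectivity of $\bp_{01}$; hence $M_1 P_{12} = P_{12} Q_{02}$, and $\bp_{12}$ is balanced with $Q_{12} = Q_{02}$. To complete membership in the respective $ L_{\type{}} $ sets I would invoke \cref{lemma:partition_quotient_preserves_partial_order}: $\bp_{02} \leq \type{G_0}$ holds if and only if $\bp_{02}/\bp_{01} = \bp_{12} \leq \type{G_0}/\bp_{01} = \type{G_1}$, so the type-refinement requirement transfers in lockstep with the matrix condition.

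Finally, for the ``furthermore'' claim I would check that the three data of a network from \cref{defi:CCN} agree. The two computations above already show that the quotient in-adjacency matrices coincide, $Q_{02} = Q_{12}$. The cell sets coincide because $\mathcal{C}_1/\bp_{12}$ is the image of $\bp_{02}/\bp_{01}$, which equals $\mathcal{C}_0/\bp_{02}$ (the same common-image observation that gives $rank(\mathcal{B}/\mathcal{A}) = rank(\mathcal{B})$). For the cell-type partitions, applying \cref{lemma:partition_quotient_canceling} along the chain $\bp_{01} \leq \bp_{02} \leq \type{G_0}$ yields $\type{G_1}/\bp_{12} = (\type{G_0}/\bp_{01})/(\bp_{02}/\bp_{01}) = \type{G_0}/\bp_{02}$, i.e.\ the two quotient type partitions are identical. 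With cell set, cell types, and in-adjacency matrix all matching, $\mathcal{G}_0/\bp_{02} = \mathcal{G}_1/\bp_{12}$.
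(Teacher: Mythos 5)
Your proposal is correct and follows essentially the same route as the paper's proof: the identities $P_{01}P_{12}=P_{02}$ and $M_0P_{01}=P_{01}M_1$, the two-line matrix computations in each direction, left-cancellation of $P_{01}$, and the cell-type bookkeeping via \cref{lemma:partition_quotient_canceling}. Your only real deviation is a welcome one---where the paper simply asserts that $P_{01}$ has full column rank and is left invertible, you justify cancellation by the row-replication/surjectivity argument, which is the right way to see that the step is sound over commutative monoids without additive inverses (equivalently, one may note that a $0$/$1$ selection matrix picking one representative row per color is a genuine left inverse even in this setting).
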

\begin{proof}
	Denote by $ {\type{}}_0  $, $ {\type{}}_1  $, the cell type partitions of networks $ \mathcal{G}_0 $, $ \mathcal{G}_1 $, respectively. Then $ {\type{}}_1  = {\type{}}_0 /\bp_{01} $ by definition.
	From \cref{lemma:partition_quotient_canceling}, we know $ {\type{}}_1/\bp_{12} = ({\type{}}_0 /\bp_{01})/(\bp_{02} /\bp_{01}) = {\type{}}_0 /\bp_{02}$. \\
	Consider now that $ M_{0} $, $ M_1 $ are the in-adjacency matrices of $ \mathcal{G}_0 $, $ \mathcal{G}_1 $, respectively, and that $ P_{01} $, $ P_{02} $, $ P_{12} $ are the partition matrices of $ \bp_{01} $, $ \bp_{02} $, $ \bp_{12} $. Then from $  \bp_{12} =\bp_{02} /\bp_{01} $, we have $ P_{02} = P_{01} P_{12} $. Moreover, since $ \bp_{01}\in \Lambda_{\mathcal{G}_0}$, we have $ M_0 P_{01} = P_{01} M_1$.\\
	In order to show that $ \bp_{02}\in \Lambda_{\mathcal{G}_0} $ if and only if $ \bp_{12} \in \Lambda_{\mathcal{G}_1} $, we prove
	\begin{equation*}	
	M_0 P_{02} = P_{02} M_2
	\Longleftrightarrow
	M_1 P_{12} = P_{12} M_2.
	\end{equation*}
	Expanding $ P_{02} $ in the left hand side, this becomes
	\begin{equation*}	
	M_0 P_{01} P_{12} = P_{01} P_{12} M_2
	\Longleftrightarrow
	M_1 P_{12} = P_{12} M_2.
	\end{equation*}
	From $ M_0 P_{01} = P_{01} M_1$, this can be written as
	\begin{equation*}	
	P_{01} (M_1 P_{12}) = P_{01} (P_{12} M_2)
	\Longleftrightarrow
	M_1 P_{12} = P_{12} M_2,
	\end{equation*}
	which is now clear since $ P_{01} $ has full column rank, that is, it is left invertible.
	If there is a matrix $ M_2 $ that satisfies these expressions, then the network $ \mathcal{G}_2 $ defined by the in-adjacency matrix $ M_2 $ and the cell type partition $ {\type{}}_2 = {\type{}}_1 / \bp_{12} = {\type{}}_0 / \bp_{02}$, is such that $ \mathcal{G}_2 = \mathcal{G}_0 / \bp_{02} = \mathcal{G}_{1} / \bp_{12} $.
\end{proof}
From \cref{thm:balanced_implies_all}, any balanced partition $ \bp \in \Lambda_{\mathcal{G}} $ is $ f $-invariant for any $ f\in\mathcal{F}_{\mathcal{G}} $. In \eref{eq:f_in_invariant_space} it was shown that for a partition $ \mathcal{A} $ and a function $ f $ such that $ \mathcal{A} $ is $ f $-invariant, then $ f $, when evaluated on $ \Delta_{\mathcal{A}}^{\xset} $ can be determined by a simpler function $ \overline{f} $. We will see that for the case of balanced partitions this function is particularly noteworthy.
\begin{defi}
	\label{thm:f_g_balanced_quotient}
	Consider a network $ \mathcal{G} $ and a balanced partition $ \bp \in \Lambda_{\mathcal{G}} $. Let $ f\in\mathcal{F}_{\mathcal{G}} $.
	The \textit{quotient function} $ g := f/\bp $ is defined by restricting $ f $ to the polydiagonal $ \Delta_\bp^{\xset}$. That is,
	\begin{eqnarray}
	f(P\overline{\mathbf{x}}) = Pg(\overline{\mathbf{x}}),
	\end{eqnarray}
	where P is a partition matrix of $ \bp $.
	\hfill$ \square $
\end{defi}
We now show that the quotient function is intimately related to the quotient network.
\begin{theorem}\label{thm:f_g_project_oracle}
	Consider networks $ \mathcal{G} $ and $\mathcal{Q} $ such that $ \mathcal{Q} = \mathcal{G}/\bp $ for some balanced partition $ \bp \in \Lambda_{\mathcal{G}} $. Then for any $ f\in\mathcal{F}_{\mathcal{G}} $, which is given by $ f = \left.\hat{f}\right|_{\mathcal{G}} $, for some $ \hat{f}\in\hat{\mathcal{F}}_{T} $, its quotient function $ g = f/\bp $ is given by $ g = \left.\hat{f}\right|_{\mathcal{Q}} $. Therefore $ g\in \mathcal{F}_{\mathcal{Q}}$.
	\hfill$ \square $
\end{theorem}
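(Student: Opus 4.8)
The plan is to unwind both sides of the defining relation $f(P\overline{\mathbf{x}}) = Pg(\overline{\mathbf{x}})$ coordinatewise and compare the outcome against the admissibility formula of \cref{defi:F_G_admissibility} instantiated on $\mathcal{Q}$. First I would observe that, since $\bp \in \Lambda_{\mathcal{G}}$, \cref{thm:balanced_implies_all} guarantees that $\bp$ is $f$-invariant, so the quotient function $g = f/\bp$ of \cref{thm:f_g_balanced_quotient} is well-defined and satisfies $f(P\overline{\mathbf{x}}) = Pg(\overline{\mathbf{x}})$, where $P$ is a partition matrix of $\bp$. Writing $f = \left.\hat{f}\right|_{\mathcal{G}}$, the goal reduces to showing that $g$ agrees, color by color, with $\left.\hat{f}\right|_{\mathcal{Q}}$.

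Next I would fix a cell $c \in \mathcal{C}_{\mathcal{G}}$ and set $k = \bp(c) \in \mathcal{C}_{\mathcal{Q}}$. Reading off the $c$-th coordinate of $Pg(\overline{\mathbf{x}})$ gives exactly $g_k(\overline{\mathbf{x}})$, while the $c$-th coordinate of $f(P\overline{\mathbf{x}})$ is $f_c(P\overline{\mathbf{x}})$, which \cref{defi:F_G_admissibility} expands as $\hat{f}_i(x_c; \mathbf{m}_c^{\top}, P\overline{\mathbf{x}})$ with $i = \type{G}(c)$. The decisive step is to invoke \cref{lemma:partition_merge} to transfer the matrix $P$ from the state argument into the weight argument, yielding $\hat{f}_i(x_c; P^{\top}\mathbf{m}_c^{\top}, \overline{\mathbf{x}})$.

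I would then identify the transformed weight vector via the balance condition. Since $P^{\top}\mathbf{m}_c^{\top} = (\mathbf{m}_c P)^{\top}$ is the transpose of the $c$-th row of $MP$, and $MP = PQ$ by \cref{defi:quotient_network}, the $c$-th row of $PQ$ is simply the $k$-th row $\mathbf{q}_k$ of $Q$ (the $c$-th row of $P$ merely selects color $k = \bp(c)$), so $P^{\top}\mathbf{m}_c^{\top} = \mathbf{q}_k^{\top}$. Using that $\mathbf{x} = P\overline{\mathbf{x}}$ forces $x_c = \overline{x}_k$, this gives $g_k(\overline{\mathbf{x}}) = \hat{f}_i(\overline{x}_k; \mathbf{q}_k^{\top}, \overline{\mathbf{x}})$. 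Because $\bp \leq \type{G}$, the type $i = \type{G}(c)$ coincides with $\type{Q}(k)$, so the right-hand side is precisely the defining expression of $\left.\hat{f}\right|_{\mathcal{Q}}$ at color $k$; as $k$ ranges over all of $\mathcal{C}_{\mathcal{Q}}$, we conclude $g = \left.\hat{f}\right|_{\mathcal{Q}}$ and hence $g \in \mathcal{F}_{\mathcal{Q}}$.

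The coordinate bookkeeping is routine; the one point requiring care is verifying that \cref{lemma:partition_merge} applies, namely that $\mathbf{m}_c^{\top}$ and $P\overline{\mathbf{x}}$ carry matching cell types and that the induced column sums in $P^{\top}\mathbf{m}_c^{\top}$ remain inside single monoids. This is where $\bp \leq \type{G}$ is essential: each color merges only cells of one common type, so every entry of $\mathbf{q}_k$ is a parallel sum $(\|)$ within a single monoid $\mathcal{M}_{ij}$. I expect this type-consistency check to be the main (though mild) obstacle, with the remainder being a direct substitution chain.
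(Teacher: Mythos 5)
Your proposal is correct and follows essentially the same route as the paper's own proof: read off $g_k(\overline{\mathbf{x}}) = f_c(P\overline{\mathbf{x}})$ from \cref{thm:f_g_balanced_quotient}, expand via $\mathcal{G}$-admissibility, shift $P$ from states to weights with \cref{lemma:partition_merge}, and identify $P^{\top}\mathbf{m}_c^{\top} = \mathbf{q}_k^{\top}$ from the balance condition $MP = PQ$. Your added remarks on $x_c = \overline{x}_k$, the type identification $\type{Q}(k) = \type{G}(c)$, and the monoid type-consistency needed for \cref{lemma:partition_merge} simply make explicit details the paper leaves implicit.
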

\begin{proof}
	By \cref{thm:f_g_balanced_quotient}, we have $ g_k(\overline{\mathbf{x}}) = f_c(P\overline{\mathbf{x}}) $ for all $ c \in \mathcal{C}_{\mathcal{G}}$ and $ k = \bp(c) $. Then from $ \mathcal{G} $-admissibility, $ f_c(P\overline{\mathbf{x}}) = \hat{f}_{i}\left(\overline{x}_k; \mathbf{m}^{\top}_c, P\overline{\mathbf{x}} \right)$, with $ i = \type{G}(c) $. From \cref{lemma:partition_merge}, $ \hat{f}_{i}\left(\overline{x}_k; \mathbf{m}^{\top}_c, P\overline{\mathbf{x}} \right) = \hat{f}_{i}\left(\overline{x}_k; P^{\top}\mathbf{m}^{\top}_c,\overline{\mathbf{x}} \right) $, and since $ \bp $ is balanced, this is equal to $ \hat{f}_i(\overline{x}_k;\mathbf{q}^{\top}_k,\overline{\mathbf{x}}) $. That is, $ g_k(\overline{\mathbf{x}}) =  \hat{f}_i(\overline{x}_k;\mathbf{q}^{\top}_k,\overline{\mathbf{x}}) $ for all $ k\in\mathcal{C}_{\mathcal{Q}} $ with $ i = \type{Q}(k) $. Therefore $ g =  (g_k)_{k\in\mathcal{C}_{\mathcal{Q}}}$ is $ \mathcal{Q} $-admissible, with $ g = \left.\hat{f}\right|_{\mathcal{Q}} $.
\end{proof}
The following is now immediate from \cref{thm:f_g_balanced_quotient} and \cref{thm:f_g_project_oracle}.
\begin{corollary}\label{cor:quotient_oracle_independence}
	Consider networks $ \mathcal{G} $ and $\mathcal{Q} $ such that $ \mathcal{Q} = \mathcal{G}/\bp $ for some balanced partition $ \bp \in \Lambda_{\mathcal{G}} $. Then for any $ \hat{f}^1,\hat{f}^2 \in\hat{\mathcal{F}}_{T} $, we have
	\begin{equation*}
	\left.\hat{f}^1\right|_{\mathcal{G}} = \left.\hat{f}^2\right|_{\mathcal{G}} \implies
	\left.\hat{f}^1\right|_{\mathcal{Q}} = \left.\hat{f}^2\right|_{\mathcal{Q}}.
	\end{equation*}
	\hfill$ \square $
\end{corollary}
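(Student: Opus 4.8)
The plan is to recognize that the quotient function $f/\bp$ of \cref{thm:f_g_balanced_quotient} is a function of the $\mathcal{G}$-admissible map $f$ alone, and is therefore insensitive to which oracle function was used to realize $f$ on $\mathcal{G}$. Writing $f := \left.\hat{f}^1\right|_{\mathcal{G}} = \left.\hat{f}^2\right|_{\mathcal{G}}$, the hypothesis of the corollary says precisely that the two oracle functions induce one and the same $\mathcal{G}$-admissible function $f$. The strategy is then to feed this single $f$ through \cref{thm:f_g_project_oracle} along both realizations and compare.

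First I would check that $f/\bp$ is uniquely determined by $f$. By \cref{thm:f_g_balanced_quotient} its defining relation is $f(P\overline{\mathbf{x}}) = P\,(f/\bp)(\overline{\mathbf{x}})$, where $P$ is a partition matrix of $\bp$. Since $P$ has full column rank (it is left-invertible), there is at most one $g$ with $f(P\overline{\mathbf{x}}) = P\,g(\overline{\mathbf{x}})$ for all $\overline{\mathbf{x}}$; and since $\bp$ is balanced, \cref{thm:balanced_implies_all} guarantees $\bp$ is $f$-invariant, so at least one such $g$ exists. Hence $f/\bp$ depends on $f$ only, and not on any oracle realization of $f$.

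Then I would apply \cref{thm:f_g_project_oracle} twice. Because $f = \left.\hat{f}^1\right|_{\mathcal{G}}$, that theorem yields $f/\bp = \left.\hat{f}^1\right|_{\mathcal{Q}}$; because $f = \left.\hat{f}^2\right|_{\mathcal{G}}$, the same theorem yields $f/\bp = \left.\hat{f}^2\right|_{\mathcal{Q}}$. Chaining these two identities through the common quotient function gives $\left.\hat{f}^1\right|_{\mathcal{Q}} = f/\bp = \left.\hat{f}^2\right|_{\mathcal{Q}}$, which is the asserted implication. There is essentially no obstacle: all the real work is carried by \cref{thm:f_g_project_oracle}, which already establishes that the quotient of an oracle-realized admissible function is realized on $\mathcal{Q}$ by the \emph{same} oracle function. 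The only point deserving a moment's care is the well-definedness of $f/\bp$ from $f$, resting on the left-invertibility of $P$; once noted, the two applications of the theorem collapse immediately to the desired conclusion.
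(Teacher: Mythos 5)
Your proposal is correct and follows essentially the same route as the paper, which derives the corollary directly from \cref{thm:f_g_balanced_quotient} and \cref{thm:f_g_project_oracle}: since the common admissible function $f$ determines its quotient $f/\bp$ uniquely, applying \cref{thm:f_g_project_oracle} to each oracle realization gives $\left.\hat{f}^1\right|_{\mathcal{Q}} = f/\bp = \left.\hat{f}^2\right|_{\mathcal{Q}}$. Your explicit verification of well-definedness (existence via \cref{thm:balanced_implies_all}, uniqueness via left-invertibility of $P$) simply spells out details the paper treats as immediate.
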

Now that we understand the relationship between $ f\in\mathcal{F}_{\mathcal{G}} $ and its quotient $ g =f/\bp $ in terms of oracle functions, the following is clear.
\begin{corollary}
	\label{cor:f_g_lift}
	Consider networks $ \mathcal{G} $ and $\mathcal{Q} $ such that $ \mathcal{Q} = \mathcal{G}/\bp $ for some balanced partition $ \bp \in \Lambda_{\mathcal{G}} $. Then for any $ g\in\mathcal{F}_{\mathcal{Q}} $, there is some $ f\in\mathcal{F}_{\mathcal{G}} $ such that $ g = f/\bp $.
	\hfill$ \square $
\end{corollary}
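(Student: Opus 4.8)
The plan is to read \cref{thm:f_g_project_oracle} ``backwards'': that theorem tells us that quotienting a $\mathcal{G}$-admissible function corresponds to re-evaluating its underlying oracle function on the quotient network $\mathcal{Q}$. The essential structural observation is that $\mathcal{G}$ and $\mathcal{Q} = \mathcal{G}/\bp$ share the \emph{same} set of cell types $\tset$: since $\bp \in \Lambda_{\mathcal{G}} \subseteq L_{\type{}}$, the balanced partition $\bp$ is finer than the cell type partition $\type{G}$, so $\type{Q} = \type{G}/\bp$ is well-defined and still takes values in $\tset$. Consequently, one and the same oracle set $\hat{\mathcal{F}}_{T}$ may be evaluated on either network, and this common domain is exactly what enables the lift.

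First I would invoke the $\mathcal{Q}$-admissibility of $g$. By \cref{defi:F_G_admissibility} applied to the network $\mathcal{Q}$, the hypothesis $g \in \mathcal{F}_{\mathcal{Q}}$ means there exists an oracle function $\hat{f} \in \hat{\mathcal{F}}_{T}$ with $g = \left.\hat{f}\right|_{\mathcal{Q}}$. This is the sole place where the admissibility of $g$ enters, and it furnishes the oracle function that will act as the common ``physical law'' for both networks.

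Next I would define the candidate lift $f := \left.\hat{f}\right|_{\mathcal{G}}$, obtained by evaluating that same oracle function $\hat{f}$ on $\mathcal{G}$. By construction $f$ is $\mathcal{G}$-admissible, so $f \in \mathcal{F}_{\mathcal{G}}$, and it only remains to check $f/\bp = g$. This is precisely what \cref{thm:f_g_project_oracle} delivers: applied to $f = \left.\hat{f}\right|_{\mathcal{G}}$, it gives $f/\bp = \left.\hat{f}\right|_{\mathcal{Q}}$, which equals $g$ by the choice of $\hat{f}$. Chaining these two identities through the shared oracle function closes the argument.

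I expect no genuine obstacle, as the statement is an immediate consequence of \cref{thm:f_g_project_oracle}. The one point I would make explicit—and the only thing that could in principle go wrong—is the legitimacy of restricting the oracle function extracted from $g$ to the larger network $\mathcal{G}$ in the first place; this rests entirely on $\tset$ being common to $\mathcal{G}$ and $\mathcal{Q}$, as guaranteed by \cref{defi:quotient_network}. Everything else is bookkeeping.
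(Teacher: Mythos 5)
Your proposal is correct and is exactly the argument the paper intends: the corollary is stated as immediate from \cref{thm:f_g_project_oracle}, and the lift is obtained, just as you do, by extracting an oracle function $\hat{f}$ with $g = \left.\hat{f}\right|_{\mathcal{Q}}$ (which exists by \cref{defi:F_G_admissibility} since $g\in\mathcal{F}_{\mathcal{Q}}$) and setting $f = \left.\hat{f}\right|_{\mathcal{G}}$, so that \cref{thm:f_g_project_oracle} gives $f/\bp = \left.\hat{f}\right|_{\mathcal{Q}} = g$. Your explicit remark that $\mathcal{G}$ and $\mathcal{Q}$ share the same cell type set $\tset$, so the same oracle set $\hat{\mathcal{F}}_{\tset}$ evaluates on both networks, is a correct articulation of the point the paper leaves implicit.
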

\cref{cor:f_g_lift} refers only to existence, not to uniqueness. That is, it could be possible to have $ f_1,f_2 \in \mathcal{F}_{\mathcal{G}}$ such that $  f_1\neq f_2  $ but $ g = f_1/\bp = f_2/\bp$. However, they have to match when evaluated on the polydiagonal $ \Delta_\bp^{\xset} $.
\begin{exmp}
	Consider the given partition $ \mathcal{A} = \{ \{1,2\},\{3\} \} $ on the \textit{CCN} of \cref{exmp:admiss_funcs} (\fref{fig:Gadmissibility}). One partition matrix of $ \mathcal{A} $ is 
	\begin{equation}P =
	\left[\begin{array}{cc}
	1 & 0\\
	1 & 0\\
	0 & 1
	\end{array} \right]
	,
	\end{equation}
	in which each column identifies one of the colors of the partition.
	From this we obtain the product
	\begin{equation}MP =
	\left[\begin{array}{cc}
	1 & 1\\
	1 & 1\\
	2 & 1
	\end{array} \right]
	.
	\label{eq:MP_example}
	\end{equation}
	Rows $ 1 $ and $ 2 $ are the same and the respective cells are of the same cell type. That means that for any admissible $ f $ we have $ f_1(\mathbf{x}) = f_2(\mathbf{x}) $ when $ x_1 = x_2 $.\\
	Observe that this is in agreement with the functional form we wrote in \eref{eq:func_match1},\eref{eq:func_match2}.\\
	Since the rows of $ MP $ respect an equality relationship according to $ \mathcal{A} $, then $ \mathcal{A} $ is balanced and there is a quotient matrix $ Q $ that obeys the balanced condition \eref{eq:balanced_cond_matrix_form}. In fact, the quotient matrix $ Q  $ is
	\begin{equation}
	Q =
	\left[\begin{array}{cc}
	1 & 1\\
	2 & 1
	\end{array} \right]
	,
	\end{equation}
	which is directly obtained from $ MP $ by compressing its rows according to $ \mathcal{A} $.\\
	The behavior of this \textit{CCN} when $ x_1 = x_2 $ is then described by the smaller \textit{CCN} given by the quotient matrix $ Q $ which is represented in \fref{fig:Gadmissibility_quotientb}.
	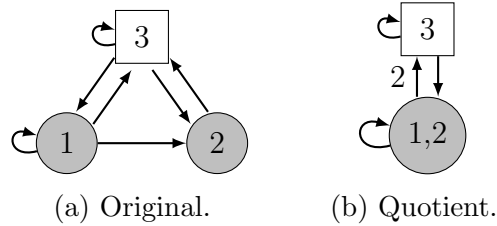
\begin{figure}[h]
		\centering
		\begin{subfigure}[t]{0.23\textwidth}
			\centering
			\begin{tikzpicture}[
node1/.style = {circle,minimum size=23,draw},
node2/.style = {circle,minimum size=23,draw,fill=white!75!black},
node3/.style = {circle,minimum size=23,draw,fill=white!50!black},
noderect/.style = {rectangle,minimum size=20,draw},
edge1/.style = {>=latex,thick},
edge2/.style = {>=latex,thick,blue},
edge3/.style = {>=latex,thick,red}
]
\node[node2] at (0,0)(n1){1};
\node[node2] at (2,0)(n2){2};
\node[noderect] at (1,{sqrt(2)})(n3){3};

\DoubleLine{n1}{n3}{<-,edge1}{}{->,edge1}{}
\draw [->,edge1](n1) -- (n2);
\DoubleLine{n2}{n3}{<-,edge1}{}{->,edge1}{}

\draw [->,edge1] (n1) edge[loop left,looseness=5] (n1);
\draw [->,edge1] (n3) edge[loop left,looseness=5] (n3);

%\node (minus123) at ($(n1)!0.4!(n2) + (0,0.3)$) {(-)};

%\DoubleLine{n1}{n2}{<-,edge1}{}{->,edge1}{}
%\DoubleLine{n1}{n3}{<-,edge1}{}{->,edge1}{}
%\DoubleLine{n2}{n3}{<-,edge1}{}{->,edge1}{}

\end{tikzpicture} 
			\caption{Original.}
			\label{fig:Gadmissibility_quotienta}
		\end{subfigure}
		\begin{subfigure}[t]{0.23\textwidth}
			\centering
			\begin{tikzpicture}[
node1/.style = {circle,minimum size=23,draw},
node2/.style = {circle,minimum size=23,draw,fill=white!75!black},
node3/.style = {circle,minimum size=23,draw,fill=white!50!black},
noderect/.style = {rectangle,minimum size=20,draw},
edge1/.style = {>=latex,thick},
edge2/.style = {>=latex,thick,blue},
edge3/.style = {>=latex,thick,red}
]
\node[node2] at (0,0)(n1){1,2};
\node[noderect] at (0,{sqrt(2)})(n3){3};
%\node[node1] at (1,{sqrt(2)})(n3){3};

%\DoubleLine{n1}{n3}{<-,edge1}{}{->,edge1}{}
\DoubleLine{n1}{n3}{->,edge1}{2}{<-,edge1}{}
%\draw [->,edge1](n3) -- (n1);
%\DoubleLine{n2}{n3}{<-,edge1}{}{->,edge1}{}

\draw [->,edge1] (n1) edge[loop left,looseness=5] (n1);
\draw [->,edge1] (n3) edge[loop left,looseness=5] (n3);

%\node (minus123) at ($(n1)!0.4!(n2) + (0,0.3)$) {(-)};

%\DoubleLine{n1}{n2}{<-,edge1}{}{->,edge1}{}
%\DoubleLine{n1}{n3}{<-,edge1}{}{->,edge1}{}
%\DoubleLine{n2}{n3}{<-,edge1}{}{->,edge1}{}

\end{tikzpicture} 
			\caption{Quotient.}
			\label{fig:Gadmissibility_quotientb}
		\end{subfigure}
		\caption{Color-coded network of \fref{fig:Gadmissibility} and its quotient over the balanced partition $ \{ \{1,2\},\{3\} \} $.}
		\label{fig:Gadmissibility_quotient}
	\end{figure}
	The coloring is a way of representing the partition $ \mathcal{A} = \{ \{1,2\},\{3\} \} $ over which the quotient is done. In both \fref{fig:Gadmissibility_quotienta} and \fref{fig:Gadmissibility_quotientb} each gray cell receives one connection from a gray cell and one connection from a white cell. On the other hand, each white cell receives a connection from a white cell and two connections from a gray cell.
	The function $ g = f/\bp $ has the following structure
	\begin{eqnarray}
	g_{12}(\mathbf{x}) &= \hat{f}_1(x_{12};
	\left[\begin{array}{cc}
	 1 & 1
	\end{array} \right]
	^{\top},\mathbf{x}),\\
	g_3(\mathbf{x}) &= \hat{f}_2(x_3;
	\left[\begin{array}{cc}
	 2 & 1
	 \end{array} \right]
	 ^{\top},\mathbf{x}),
	\end{eqnarray}
	where $ \hat{f}\in\hat{\mathcal{F}}_{T}  $ is any oracle function such that $ f = \left.\hat{f}\right|_{\mathcal{G}} $.
	\hfill$ \square $
\end{exmp}
We now extend the concept of quotient of admissible functions to sets of admissible functions.
\begin{defi}
	Consider networks $ \mathcal{G} $ and $ \mathcal{Q} $ such that $ \mathcal{Q} = \mathcal{G}/\bp $ for some $ \bp\in \Lambda_{\mathcal{G}} $. Given any subset of $ \mathcal{G} $-admissible functions $ F_{\mathcal{G}}\subseteq \mathcal{F}_{\mathcal{G}} $, define its quotient $ F_\mathcal{Q} = F_\mathcal{G}/\bp $ to be the subset of $ \mathcal{F}_{\mathcal{Q}} $ such that $ g\in F_\mathcal{Q}  $ if and only if there is some $ f\in F_{\mathcal{G}} $ such that $ g = f/\bp  $.
	\hfill$ \square $
\end{defi}
From \cref{cor:f_g_lift} it is immediate that $ \mathcal{F}_{\mathcal{G}}/\bp = \mathcal{F}_{\mathcal{Q}} $. That is, $ \mathcal{F}_{\mathcal{G}}/\bp = \mathcal{F}_{\mathcal{G}/\bp} $.\\
We are now ready to study the relation between the invariant lattices $ L_{F_{\mathcal{G}}} $ of a network $ \mathcal{G} $ and corresponding invariant lattice of its quotient network $ \mathcal{Q} = \mathcal{G}/\bp $.\\
The following is direct from \cref{lemma:balanced_quotient_invariance}.
\begin{corollary}
	\label{lemma:LG_LQ_relation_balanced}
	Consider networks $ \mathcal{G} $ and $ \mathcal{Q} $ such that $ \mathcal{Q} = \mathcal{G}/\bp $ for some $ \bp\in \Lambda_{\mathcal{G}} $. Then $ \Lambda_{\mathcal{Q}} =  \Lambda_{\mathcal{G}}/\bp $.
	\hfill$ \square $
\end{corollary}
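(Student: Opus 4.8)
The plan is to prove the set equality $\Lambda_{\mathcal{Q}} = \Lambda_{\mathcal{G}}/\bp$ by double inclusion, with both directions obtained by specializing the equivalence of \cref{lemma:balanced_quotient_invariance} to $\mathcal{G}_0 = \mathcal{G}$, $\bp_{01} = \bp$, and $\mathcal{G}_1 = \mathcal{Q} = \mathcal{G}/\bp$. The point to keep in view throughout is that, by definition, $\Lambda_{\mathcal{G}}/\bp$ consists exactly of the partitions $\bp'/\bp$ for those $\bp' \in \Lambda_{\mathcal{G}}$ with $\bp \leq \bp'$.

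For the inclusion $\Lambda_{\mathcal{G}}/\bp \subseteq \Lambda_{\mathcal{Q}}$, I would take such a $\bp'$ and set $\bp_{12} = \bp'/\bp$. Since $\bp' \in \Lambda_{\mathcal{G}}$ and $\bp \leq \bp'$, \cref{lemma:balanced_quotient_invariance} applied with $\bp_{02} = \bp'$ yields $\bp_{12} \in \Lambda_{\mathcal{Q}}$, so every element of $\Lambda_{\mathcal{G}}/\bp$ is a balanced partition of $\mathcal{Q}$.

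For the reverse inclusion $\Lambda_{\mathcal{Q}} \subseteq \Lambda_{\mathcal{G}}/\bp$, the key observation is that every partition of $\mathcal{C}_{\mathcal{Q}} = \mathcal{C}/\bp$ lifts to a partition of $\mathcal{C}$ coarser than $\bp$. Concretely, given $\bp_{12} \in \Lambda_{\mathcal{Q}}$, I would form the composite $\bp_{02} := \bp_{12} \circ \bp$, which is surjective (as a composition of surjections) and hence, through its level sets, a partition of $\mathcal{C}$. By construction $\bp \leq \bp_{02}$, and since $\bp$ is surjective, \cref{lemma:quotient_partition} forces $\bp_{02}/\bp = \bp_{12}$. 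Applying \cref{lemma:balanced_quotient_invariance} in the other direction then yields $\bp_{02} \in \Lambda_{\mathcal{G}}$, whence $\bp_{12} = \bp_{02}/\bp \in \Lambda_{\mathcal{G}}/\bp$.

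I expect the only point needing care to be this reverse inclusion, specifically the verification that every balanced partition of the quotient network is realized as $\bp_{02}/\bp$ for a genuine partition $\bp_{02}$ of the original cell set lying above $\bp$ --- that is, the surjectivity of the $(\,\cdot\,)/\bp$ map onto partitions of $\mathcal{C}/\bp$. Everything else is a routine translation through the definition of $\Lambda_{\mathcal{G}}/\bp$ and the ``if and only if'' of \cref{lemma:balanced_quotient_invariance}; no new balancedness computation is required beyond that lemma, which is why the result is stated as being direct.
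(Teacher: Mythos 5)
Your proof is correct and follows exactly the route the paper intends: the paper states this corollary as immediate from \cref{lemma:balanced_quotient_invariance}, whose ``if and only if'' gives both inclusions once one observes, as you do, that every $\bp_{12}\in\Lambda_{\mathcal{Q}}$ lifts to $\bp_{02}=\bp_{12}\circ\bp$ with $\bp\leq\bp_{02}$ and $\bp_{02}/\bp=\bp_{12}$. The only detail you leave tacit (and which is equally tacit in the paper) is that the lift satisfies $\bp_{02}\leq\type{G}$, which follows at once from $\bp_{12}\leq\type{Q}=\type{G}/\bp$.
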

We now generalize this to lattices of $ F $-invariant partitions.
\begin{theorem}
	\label{thm:LG_LQ_relation}
	Consider networks $ \mathcal{G} $ and $ \mathcal{Q} $ such that $ \mathcal{Q} = \mathcal{G}/\bp $ for some $ \bp\in \Lambda_{\mathcal{G}} $, and subsets $ F_{\mathcal{G}}\subseteq \mathcal{F}_{\mathcal{G}} $, $ F_{\mathcal{Q}}\subseteq \mathcal{F}_{\mathcal{Q}} $ such that $ F_\mathcal{Q} = F_\mathcal{G}/\bp $.\\
	Then for partitions $ \mathcal{A}_{\mathcal{G}} \leq \type{G}$ and $ \mathcal{A}_{\mathcal{Q}} \leq \type{Q}$ such that $ \mathcal{A}_{\mathcal{G}} \geq \bp $ and $ \mathcal{A}_{\mathcal{Q}}  = \mathcal{A}_{\mathcal{G}}/\bp  $, we have $ \mathcal{A}_{\mathcal{G}} \in L_{F_{\mathcal{G}}} $ if and only if $ \mathcal{A}_{\mathcal{Q}} \in L_{F_{\mathcal{Q}}} $.	That is, $  L_{F_{\mathcal{Q}}} = L_{F_{\mathcal{G}}} /\bp  $.
	\hfill$ \square $
\end{theorem}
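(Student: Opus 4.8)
The plan is to reduce the biconditional to a single-function statement and then lift it to the sets $F_{\mathcal{G}},F_{\mathcal{Q}}$. Fix a partition matrix $P$ of $\bp$ (mapping $\mathcal{C}_{\mathcal{G}}$ onto $\mathcal{C}_{\mathcal{Q}} = \mathcal{C}_{\mathcal{G}}/\bp$) and a partition matrix $\overline{P}$ of $\mathcal{A}_{\mathcal{Q}} = \mathcal{A}_{\mathcal{G}}/\bp$. Since $\mathcal{A}_{\mathcal{Q}}\circ\bp = \mathcal{A}_{\mathcal{G}}$ by \cref{lemma:quotient_partition}, the corresponding matrix identity is $P\overline{P} = P_{\mathcal{A}_{\mathcal{G}}}$. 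The central observation I would establish is that $\overline{\mathbf{x}}\mapsto P\overline{\mathbf{x}}$ carries $\Delta_{\mathcal{A}_{\mathcal{Q}}}^{\overline{\xset}}$ bijectively onto $\Delta_{\mathcal{A}_{\mathcal{G}}}^{\xset}$ (and likewise with $\yset$). Indeed, $\overline{\mathbf{x}}\in\Delta_{\mathcal{A}_{\mathcal{Q}}}^{\overline{\xset}}$ iff $\overline{\mathbf{x}} = \overline{P}\mathbf{z}$ for some $\mathbf{z}$, whence $P\overline{\mathbf{x}} = P\overline{P}\mathbf{z} = P_{\mathcal{A}_{\mathcal{G}}}\mathbf{z}\in\Delta_{\mathcal{A}_{\mathcal{G}}}^{\xset}$; conversely, since $\bp$ is surjective and $\mathcal{A}_{\mathcal{Q}}(\bp(c)) = \mathcal{A}_{\mathcal{G}}(c)$, reading off the coordinate identity $(P\overline{\mathbf{x}})_c = \overline{x}_{\bp(c)}$ shows that $P\overline{\mathbf{x}}\in\Delta_{\mathcal{A}_{\mathcal{G}}}^{\xset}$ forces $\overline{\mathbf{x}}\in\Delta_{\mathcal{A}_{\mathcal{Q}}}^{\overline{\xset}}$.

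With this correspondence in hand, fix $f\in\mathcal{F}_{\mathcal{G}}$ and let $g = f/\bp$, so that $f(P\overline{\mathbf{x}}) = Pg(\overline{\mathbf{x}})$ by \cref{thm:f_g_balanced_quotient}. I claim $\mathcal{A}_{\mathcal{G}}$ is $f$-invariant iff $\mathcal{A}_{\mathcal{Q}}$ is $g$-invariant. For the forward direction, take $\overline{\mathbf{x}}\in\Delta_{\mathcal{A}_{\mathcal{Q}}}^{\overline{\xset}}$; then $P\overline{\mathbf{x}}\in\Delta_{\mathcal{A}_{\mathcal{G}}}^{\xset}$, so $f$-invariance gives $Pg(\overline{\mathbf{x}}) = f(P\overline{\mathbf{x}})\in\Delta_{\mathcal{A}_{\mathcal{G}}}^{\yset}$, and the output version of the bijection yields $g(\overline{\mathbf{x}})\in\Delta_{\mathcal{A}_{\mathcal{Q}}}^{\overline{\yset}}$. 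The converse is symmetric: every $\mathbf{x}\in\Delta_{\mathcal{A}_{\mathcal{G}}}^{\xset}$ equals $P\overline{\mathbf{x}}$ for some $\overline{\mathbf{x}}\in\Delta_{\mathcal{A}_{\mathcal{Q}}}^{\overline{\xset}}$, and $g$-invariance pushes through the same identity. I expect this single-function equivalence, specifically the coordinate bookkeeping justifying the polydiagonal bijection, to be the main technical point; everything downstream is formal.

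Next I would lift to the sets. If $\mathcal{A}_{\mathcal{G}}\in L_{F_{\mathcal{G}}}$, then for any $g\in F_{\mathcal{Q}}$ there is $f\in F_{\mathcal{G}}$ with $g = f/\bp$ (by the definition of $F_{\mathcal{Q}} = F_{\mathcal{G}}/\bp$); since $\mathcal{A}_{\mathcal{G}}$ is $f$-invariant, the equivalence gives that $\mathcal{A}_{\mathcal{Q}}$ is $g$-invariant, so $\mathcal{A}_{\mathcal{Q}}\in L_{F_{\mathcal{Q}}}$. Conversely, if $\mathcal{A}_{\mathcal{Q}}\in L_{F_{\mathcal{Q}}}$, then for every $f\in F_{\mathcal{G}}$ we have $f/\bp\in F_{\mathcal{Q}}$, so $\mathcal{A}_{\mathcal{Q}}$ is $(f/\bp)$-invariant, and the equivalence makes $\mathcal{A}_{\mathcal{G}}$ be $f$-invariant; hence $\mathcal{A}_{\mathcal{G}}\in L_{F_{\mathcal{G}}}$. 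This establishes the stated biconditional.

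Finally, for the set identity $L_{F_{\mathcal{Q}}} = L_{F_{\mathcal{G}}}/\bp$, first note that $\bp\in L_{F_{\mathcal{G}}}$: being balanced, $\bp$ is $f$-invariant for all $f\in\mathcal{F}_{\mathcal{G}}\supseteq F_{\mathcal{G}}$ by \cref{thm:balanced_implies_all}, so $L_{F_{\mathcal{G}}}/\bp$ is well-defined. The inclusion $L_{F_{\mathcal{G}}}/\bp\subseteq L_{F_{\mathcal{Q}}}$ is immediate from the biconditional applied to any $\mathcal{A}_{\mathcal{G}}\in L_{F_{\mathcal{G}}}$ with $\bp\leq\mathcal{A}_{\mathcal{G}}$. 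For the reverse, given $\mathcal{A}_{\mathcal{Q}}\in L_{F_{\mathcal{Q}}}$ I would lift it to the pullback partition $\mathcal{A}_{\mathcal{G}} := \mathcal{A}_{\mathcal{Q}}\circ\bp$ on $\mathcal{C}_{\mathcal{G}}$; then $\bp\leq\mathcal{A}_{\mathcal{G}}$ and $\mathcal{A}_{\mathcal{G}}/\bp = \mathcal{A}_{\mathcal{Q}}$ by \cref{lemma:quotient_partition}, while $\mathcal{A}_{\mathcal{Q}}\leq\type{Q} = \type{G}/\bp$ forces $\mathcal{A}_{\mathcal{G}}\leq\type{G}$, so the biconditional yields $\mathcal{A}_{\mathcal{G}}\in L_{F_{\mathcal{G}}}$ and thus $\mathcal{A}_{\mathcal{Q}}\in L_{F_{\mathcal{G}}}/\bp$. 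The two inclusions give the equality.
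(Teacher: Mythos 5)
Your proposal is correct and takes essentially the same route as the paper's proof: both reduce the claim to the single-function equivalence ($\mathcal{A}_{\mathcal{G}}$ is $f$-invariant iff $\mathcal{A}_{\mathcal{Q}}$ is $g$-invariant for $g = f/\bp$) via \cref{thm:f_g_balanced_quotient} and then quantify over $F_{\mathcal{G}}$ and $F_{\mathcal{Q}}$, your polydiagonal bijection being the set-level phrasing of the paper's matrix identity $P_{\mathcal{G}} = P P_{\mathcal{Q}}$ combined with left-invertibility of $P$. Your explicit closing of the set identity $L_{F_{\mathcal{Q}}} = L_{F_{\mathcal{G}}}/\bp$ — checking $\bp\in L_{F_{\mathcal{G}}}$ via \cref{thm:balanced_implies_all} so the quotient is well-defined, and lifting an arbitrary $\mathcal{A}_{\mathcal{Q}}$ through the pullback $\mathcal{A}_{\mathcal{Q}}\circ\bp$ — is a detail the paper leaves implicit in its biconditional, and is a welcome addition rather than a different argument.
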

\begin{proof}
	We consider the partitions $ \bp $, $ \mathcal{A}_{\mathcal{G}} $ and $ \mathcal{A}_{\mathcal{Q}} $ to be represented by partition matrices $ P $, $ P_{\mathcal{G}} $ and $ P_{\mathcal{Q}} $ respectively, such that $ P_{\mathcal{G}} = P P_{\mathcal{Q}} $.\\ 
	Assume $ \mathcal{A}_{\mathcal{G}} \in L_{F_{\mathcal{G}}} $. For any $ g\in F_{\mathcal{Q}} $, there is some $ f\in F_{\mathcal{G}} $ such that $ g=f/\bp $. Then since $ \bp $ is balanced, it follows from \cref{thm:f_g_balanced_quotient} that $ f(P_{\mathcal{G}}\overline{\mathbf{x}}) = f(PP_{\mathcal{Q}}\overline{\mathbf{x}}) = Pg(P_{\mathcal{Q}}\overline{\mathbf{x}})$.
	On the other hand, since $ \mathcal{A}_{\mathcal{G}} \in L_{F_{\mathcal{G}}} $, it follows that $ f(P_{\mathcal{G}}\overline{\mathbf{x}}) = P_{\mathcal{G}}\overline{f}(\overline{\mathbf{x}})  = P P_{\mathcal{Q}}\overline{f}(\overline{\mathbf{x}}) $ for some $ \overline{f} $. Therefore $ Pg(P_{\mathcal{Q}}\overline{\mathbf{x}}) =  P P_{\mathcal{Q}}\overline{f}(\overline{\mathbf{x}})$. Since $ P $ always has full column rank, it is left-invertible, which means that $ g(P_{\mathcal{Q}}\overline{\mathbf{x}}) =  P_{\mathcal{Q}}\overline{f}(\overline{\mathbf{x}})$. That is, $ \mathcal{A}_{\mathcal{Q}} $ is $ g $-invariant for any $ g\in F_{\mathcal{Q}} $, so $ \mathcal{A}_{\mathcal{G}} \in L_{F_{\mathcal{G}}} $ implies $ \mathcal{A}_{\mathcal{Q}} \in L_{F_{\mathcal{Q}}} $. We now prove the converse direction.\\
	Assume $ \mathcal{A}_{\mathcal{Q}} \in L_{F_{\mathcal{Q}}} $. For any $ f\in F_{\mathcal{G}} $, its quotient $ g=f/\bp $ is in $ F_{\mathcal{Q}} $. Then from the fact that $ \mathcal{A}_{\mathcal{Q}} \in L_{F_{\mathcal{Q}}} $ it follows that $ g(P_{\mathcal{Q}}\overline{\mathbf{x}}) = P_{\mathcal{Q}} \overline{g}(\overline{\mathbf{x}}) $ for some $ \overline{g} $. Multiplying on the left by $ P $ gives us $ Pg(P_{\mathcal{Q}}\overline{\mathbf{x}}) = PP_{\mathcal{Q}} \overline{g}(\overline{\mathbf{x}}) 
	=
	P_{\mathcal{G}} \overline{g}(\overline{\mathbf{x}})
	$. On the other hand, since $ \bp $ is balanced, \cref{thm:f_g_balanced_quotient} implies that $ Pg(P_{\mathcal{Q}}\overline{\mathbf{x}}) =
	f(PP_{\mathcal{Q}}\overline{\mathbf{x}}) = 
	f(P_{\mathcal{G}}\overline{\mathbf{x}})$. Therefore $ 
	f(P_{\mathcal{G}}\overline{\mathbf{x}}) = P_{\mathcal{G}} \overline{g}(\overline{\mathbf{x}}) $. That is, $ \mathcal{A}_{\mathcal{G}} $ is $ f $-invariant for any $ f\in F_{\mathcal{G}} $, so $ \mathcal{A}_{\mathcal{Q}} \in L_{F_{\mathcal{Q}}} $ implies $ \mathcal{A}_{\mathcal{G}} \in L_{F_{\mathcal{G}}} $, which completes the proof.
\end{proof}
This is in agreement with \cref{lemma:LG_LQ_relation_balanced} when we consider the particular case $ F_{\mathcal{G}} = \mathcal{F}_{\mathcal{G}} $ and $ F_{\mathcal{Q}} = \mathcal{F}_{\mathcal{Q}} $.
The following is now immediate from \cref{thm:LG_LQ_relation,lemma:Lattices_w_join_bot_have_cir}.
\begin{corollary}
	Consider networks $ \mathcal{G} $ and $ \mathcal{Q} $ such that $ \mathcal{Q} = \mathcal{G}/\bp $ for some $ \bp\in \Lambda_{\mathcal{G}} $, and subsets $ F_{\mathcal{G}}\subseteq \mathcal{F}_{\mathcal{G}} $, $ F_{\mathcal{Q}}\subseteq \mathcal{F}_{\mathcal{Q}} $ such that $ F_\mathcal{Q} = F_\mathcal{G}/\bp $.\\
	Then for $ \mathcal{A} \leq \type{G}$ such that $ \mathcal{A} \geq \bp $, we have
	\begin{eqnarray}
	cir_{F_{\mathcal{G}}}(\mathcal{A})/\bp = cir_{F_{\mathcal{Q}}}(\mathcal{A}/\bp). 
	\end{eqnarray}	
	\hfill$ \square $
\end{corollary}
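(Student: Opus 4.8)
The plan is to realize the two $F$-invariant lattices as a lattice and its quotient in the sense of \cref{lemma:Lattices_w_join_bot_have_cir}, and then read off the desired $cir$-identity directly from that lemma once the quotient lattice has been identified with $L_{F_{\mathcal{Q}}}$ via \cref{thm:LG_LQ_relation}.

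First I would verify that \cref{lemma:Lattices_w_join_bot_have_cir} applies to $L = L_{F_{\mathcal{G}}}$ with the quotienting partition taken to be $\bp$. By \cref{lemma:f_set_invariant_lattice} the lattice $L_{F_{\mathcal{G}}}$ satisfies $\bot_{L_{F_{\mathcal{G}}}} = \bot$ and $\vee_{L_{F_{\mathcal{G}}}} = \vee$, which is exactly the hypothesis of that lemma. Moreover $\bp\in L_{F_{\mathcal{G}}}$: since $\bp\in\Lambda_{\mathcal{G}}$, \cref{cor:balanced_implies_all_simple} yields $\bp\in\Lambda_{\mathcal{G}}\subseteq L_{F_{\mathcal{G}}}$, so the quotient $L_{F_{\mathcal{G}}}/\bp$ and its associated $cir$ function are well-defined. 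With this data, \cref{lemma:Lattices_w_join_bot_have_cir} (its $\mathcal{B}$ being the present $\mathcal{A}$) gives, for every $\mathcal{A}/\bp\in L_{\type{G}}/\bp$,
\begin{eqnarray*}
cir_{L_{F_{\mathcal{G}}}/\bp}(\mathcal{A}/\bp) = cir_{L_{F_{\mathcal{G}}}}(\mathcal{A})/\bp.
\end{eqnarray*}

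To finish I would rewrite each side in the notation of the statement. The right-hand side is $cir_{F_{\mathcal{G}}}(\mathcal{A})/\bp$ by the naming convention fixed after \cref{lemma:cir_F_invariant}. For the left-hand side I would invoke \cref{thm:LG_LQ_relation}, which asserts $L_{F_{\mathcal{Q}}} = L_{F_{\mathcal{G}}}/\bp$ as lattices on $\mathcal{C}/\bp$; since the $cir$ function of an invariant lattice is determined by the lattice together with its ambient cell-type lattice, and here $L_{\type{Q}} = L_{\type{G}}/\bp$ (because $\type{Q} = \type{G}/\bp$, by \cref{lemma:partition_quotient_preserves_partial_order}), this identifies $cir_{L_{F_{\mathcal{G}}}/\bp}$ with $cir_{F_{\mathcal{Q}}}$. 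Substituting both identifications yields $cir_{F_{\mathcal{G}}}(\mathcal{A})/\bp = cir_{F_{\mathcal{Q}}}(\mathcal{A}/\bp)$; the hypotheses $\bp\leq\mathcal{A}\leq\type{G}$ ensure that $\mathcal{A}/\bp$ is a legitimate element of the common domain $L_{\type{Q}}$.

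Since the paper labels this result \emph{immediate}, I do not anticipate a genuine obstacle. The only step requiring care is the bookkeeping that matches the abstract quotient $L/\mathcal{A}$ of \cref{lemma:Lattices_w_join_bot_have_cir} to the concrete $L_{F_{\mathcal{G}}}/\bp$, and in particular the confirmation that $L_{F_{\mathcal{G}}}/\bp$ and $L_{F_{\mathcal{Q}}}$ are literally the same lattice on the same ground set $\mathcal{C}/\bp$ (not merely isomorphic), so that their $cir$ functions coincide on the nose rather than up to relabeling.
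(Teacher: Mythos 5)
Your proposal is correct and follows exactly the paper's route: the paper derives this corollary as immediate from \cref{thm:LG_LQ_relation} (which gives $L_{F_{\mathcal{Q}}} = L_{F_{\mathcal{G}}}/\bp$ on the nose, as lattices on $\mathcal{C}/\bp$) combined with \cref{lemma:Lattices_w_join_bot_have_cir} applied to $L = L_{F_{\mathcal{G}}}$ and the quotienting partition $\bp$. Your additional verifications --- that $\bp\in\Lambda_{\mathcal{G}}\subseteq L_{F_{\mathcal{G}}}$ via \cref{cor:balanced_implies_all_simple}, that \cref{lemma:f_set_invariant_lattice} supplies the hypotheses $\bot_L=\bot$ and $\vee_L=\vee$, and that the domains match because $\type{Q}=\type{G}/\bp$ --- are precisely the bookkeeping the paper leaves implicit.
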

\section{Network connectivity}\label{sec:net_connect}
In this section we summarize the definitions and notation necessary to study the connectivity of a directed network and relate those characteristics to its dynamics.
\subsection{Neighborhoods and reachability}\label{subsec:neigh_reach}
\begin{defi}
	The \textit{in-neighborhood} $ \mathcal{N}^{-} $ of a cell $ c \in \mathcal{C}$, is the subset of cells $ d \in \mathcal{C}$ such that the set of directed edges from $ d $ to $ c $ is not equivalent to an edge with zero weight.  
	Similarly, its out-neighborhood, denoted $ \mathcal{N}^{+}(c) $, is the subset of cells $ d \in \mathcal{C}$ such that the set of directed edges from $ c $ to $ d $ is not equivalent to an edge with zero weight.
	\hfill $ \square $
\end{defi}
In our context, this means that if $ M $ is an in-adjacency matrix of a network, then $ \mathcal{N}^{-}(c) = \{d\in\mathcal{C}\colon m_{cd} \neq 0_{ij} , i =\type{}(c) , j =\type{}(d)\} $. The commutative monoid structure allows us to encode arbitrary (finite) edges from a cell $ d $ to a cell $ c $ using a single element. This definition says that even if there are non-zero edges from $ d $ to $ c $, if their total effect is equivalent to a non-edge ($ 0_{ij} $), then $ d $ is not in $ \mathcal{N}^{-}(c) $.
\begin{remark}
	We often denote $ c\in \mathcal{N}^{-}(d) $, or equivalently, $ d\in \mathcal{N}^{+}(c) $, by $ c \edge d $.
	\hfill $ \square $
\end{remark}  
\begin{defi}
	The \textit{cumulative in-neighborhood} $ \mathcal{V}^{-} $ of a cell $ c \in \mathcal{C}$, is defined as $ \mathcal{V}^{-}(c) := c\cup \mathcal{N}^{-}(c) $.
	\hfill $ \square $
\end{defi}
\begin{defi}
	The $ k^{th} $ cumulative in-neighborhood $ \mathcal{V}_{k}^{-} $ of a cell $ c \in \mathcal{C}$, is defined recursively as
	\begin{eqnarray}
	\mathcal{V}_{0}^{-}(c) &:= c,
	\\
	\mathcal{V}_{k}^{-}(c) &:=
	\bigcup_{d\in\mathcal{V}_{k-1}^{-}(c)} \mathcal{V}^{-}(d)
	,\quad k > 0.
	\label{eq:in_cum_neigh_recurs}
	\end{eqnarray}
	That is, the set of cells from which there is a directed path of at most $ k $  edges that ends at $ c $. Note that $\mathcal{V}_1^{-} = \mathcal{V}^{-} $. The $ k^{th} $ cumulative \mbox{out-neighborhood} $ \mathcal{V}_{k}^{+}$ is defined similarly by replacing the signs.
	\hfill $ \square $
\end{defi}
\begin{lemma}
	The sequence $ \left(\mathcal{V}_{k}^{-} \right)_{k\geq 0}$ is monotonically increasing, that is,
	\begin{eqnarray*}
	\mathcal{V}_{k}^{-}(c) \subseteq \mathcal{V}_{k+1}^{-}(c)
	,\quad k\geq 0.
	\end{eqnarray*}
	Moreover, if $ \mathcal{V}_{k}^{-}(c) = \mathcal{V}_{k+1}^{-}(c) $ for some $ k\geq 0 $, then the recursion \eref{eq:in_cum_neigh_recurs} has reached a fixed point, which means that $ \mathcal{V}_{k}^{-}(c) = \mathcal{V}_{n}^{-}(c) $ for all $ n\geq k $.
	\hfill $ \square $
	\label{lemma:cum_sum_monotonic_conv}
\end{lemma}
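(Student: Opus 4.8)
The plan is to prove both claims by straightforward induction on $ k $, the only subtlety being that the recursion in \eref{eq:in_cum_neigh_recurs} is only available for indices strictly greater than zero, so the step from $ \mathcal{V}_0^{-} $ to $ \mathcal{V}_1^{-} $ must be treated as a separate base case. The engine behind both parts is the observation that the map sending a set of cells $ X $ to $ \bigcup_{d\in X}\mathcal{V}^{-}(d) $ is monotone, i.e.\ it preserves inclusions, together with the fact that each $ \mathcal{V}^{-}(d) $ contains $ d $ itself, since $ \mathcal{V}^{-}(d) = d\cup\mathcal{N}^{-}(d) $.

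For monotonicity, I would first check the base case $ \mathcal{V}_0^{-}(c) \subseteq \mathcal{V}_1^{-}(c) $: here $ \mathcal{V}_0^{-}(c) = c $ and $ \mathcal{V}_1^{-}(c) = \mathcal{V}^{-}(c) = c\cup\mathcal{N}^{-}(c) $, so the inclusion is immediate. For the inductive step, assuming $ \mathcal{V}_{k-1}^{-}(c)\subseteq \mathcal{V}_{k}^{-}(c) $ for some $ k\geq 1 $, I would write both $ \mathcal{V}_{k}^{-}(c) $ and $ \mathcal{V}_{k+1}^{-}(c) $ via \eref{eq:in_cum_neigh_recurs} as unions of sets $ \mathcal{V}^{-}(d) $ indexed by $ \mathcal{V}_{k-1}^{-}(c) $ and $ \mathcal{V}_{k}^{-}(c) $ respectively. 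Since the second index set contains the first by hypothesis, the union defining $ \mathcal{V}_{k+1}^{-}(c) $ is taken over a superset, hence $ \mathcal{V}_{k}^{-}(c)\subseteq \mathcal{V}_{k+1}^{-}(c) $. This closes the induction and establishes the chain $ \mathcal{V}_0^{-}(c)\subseteq \mathcal{V}_1^{-}(c)\subseteq\cdots $.

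For the fixed-point claim, the key point is that $ \mathcal{V}_{m+1}^{-}(c) $ depends on the earlier terms only through $ \mathcal{V}_{m}^{-}(c) $, since by \eref{eq:in_cum_neigh_recurs} it equals $ \bigcup_{d\in\mathcal{V}_{m}^{-}(c)}\mathcal{V}^{-}(d) $. Thus, if $ \mathcal{V}_{k}^{-}(c) = \mathcal{V}_{k+1}^{-}(c) $, then applying this formula with $ m = k+1 $ and substituting the equal index set gives $ \mathcal{V}_{k+2}^{-}(c) = \bigcup_{d\in\mathcal{V}_{k+1}^{-}(c)}\mathcal{V}^{-}(d) = \bigcup_{d\in\mathcal{V}_{k}^{-}(c)}\mathcal{V}^{-}(d) = \mathcal{V}_{k+1}^{-}(c) $. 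A trivial induction on $ n\geq k $ then yields $ \mathcal{V}_{n}^{-}(c) = \mathcal{V}_{k}^{-}(c) $ for all such $ n $. I do not anticipate any genuine obstacle here: the argument is a routine ``monotone map reaches a fixed point'' computation, and the only care needed is to handle the base index $ k=0 $ separately from the recursion formula.
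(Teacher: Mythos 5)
Your proof is correct. There is, however, nothing in the paper to compare it against: \cref{lemma:cum_sum_monotonic_conv} is stated without proof and immediately used to motivate the definition of $\mathcal{R}^{-}$, so your write-up is a filling-in of an omitted argument rather than an alternative to an existing one. Both of your inductions are sound: the base case $\mathcal{V}_{0}^{-}(c) = c \subseteq c\cup\mathcal{N}^{-}(c) = \mathcal{V}_{1}^{-}(c)$, the index-set comparison in the inductive step, and the substitution $\mathcal{V}_{k+2}^{-}(c) = \bigcup_{d\in\mathcal{V}_{k+1}^{-}(c)}\mathcal{V}^{-}(d) = \bigcup_{d\in\mathcal{V}_{k}^{-}(c)}\mathcal{V}^{-}(d) = \mathcal{V}_{k+1}^{-}(c)$ for the fixed-point claim, where your observation that $\mathcal{V}_{m+1}^{-}(c)$ depends on earlier terms only through $\mathcal{V}_{m}^{-}(c)$ is exactly the right lever. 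One simplification worth noting: monotonicity needs neither induction nor a separate base case. Since $d\in\mathcal{V}^{-}(d)$ for every cell $d$, reading \eref{eq:in_cum_neigh_recurs} at index $k+1\geq 1$ gives, for every $k\geq 0$,
\[
\mathcal{V}_{k+1}^{-}(c)
=
\bigcup_{d\in\mathcal{V}_{k}^{-}(c)}\mathcal{V}^{-}(d)
\;\supseteq\;
\bigcup_{d\in\mathcal{V}_{k}^{-}(c)}\{d\}
=
\mathcal{V}_{k}^{-}(c),
\]
in one line; the inductive hypothesis you carry ($\mathcal{V}_{k-1}^{-}(c)\subseteq\mathcal{V}_{k}^{-}(c)$) is thus never actually needed for the first claim, only the containment $d\in\mathcal{V}^{-}(d)$ that you already isolated as the engine of the argument.
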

This result motivates the following definition.
\begin{defi}
	The \textit{in-reachability} $ \mathcal{R}^{-} $ of a cell $ c \in \mathcal{C}$, is defined as
	\begin{eqnarray}
	\mathcal{R}^{-}(c)
	:= 
	\bigcup_{k \geq 0}
	\mathcal{V}_k^{-}(c).
	\label{eq:in_reachable}
	\end{eqnarray}
	That is, the set of cells from which there is a finite directed path that ends at $ c $.\\
	The \mbox{out-reachability} $ \mathcal{R}^{+} $ is defined similarly by replacing the signs.
	\hfill $ \square $
\end{defi}
\begin{remark}
	We often denote $ c\in\mathcal{R}^{-}(d) $, or equivalently, $ d\in\mathcal{R}^{+}(c) $, by $ c \dpath d $, illustrating that there is a direct path starting at cell $ c $ and ending at cell $ d $.
	\hfill $ \square $
\end{remark}
\begin{corollary}
	For any cell $ c $ we have $ \mathcal{V}_{k}^{-}(c) \subseteq \mathcal{R}^{-}(c) $ for all $ k\geq 0 $. Moreover, when considering a finite amount of cells, equality is achieved at some finite $ k $.
	\label{cor:cumulative_neigh_k_inside_reachable}
	\hfill $ \square $
\end{corollary}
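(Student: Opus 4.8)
The plan is to treat the two assertions separately. The inclusion $\mathcal{V}_{k}^{-}(c) \subseteq \mathcal{R}^{-}(c)$ is immediate from the definition: by \eref{eq:in_reachable} the set $\mathcal{R}^{-}(c)$ is precisely the union $\bigcup_{j\geq 0}\mathcal{V}_j^{-}(c)$, so every member of that family, in particular $\mathcal{V}_{k}^{-}(c)$, sits inside the union. No further work is needed for this half.

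For the second assertion I would invoke the monotonicity part of \cref{lemma:cum_sum_monotonic_conv}: the sequence $\left(\mathcal{V}_{k}^{-}(c)\right)_{k\geq 0}$ is weakly increasing, and every term is a subset of the cell set $\mathcal{C}$, which is finite. The key step is then a cardinality/pigeonhole argument. Since $\mathcal{V}_0^{-}(c) = c$ has one element and each strict inclusion would increase the cardinality by at least one, a chain of strict inclusions could produce at most $\vert\mathcal{C}\vert$ terms before exhausting $\mathcal{C}$. Hence there must exist a finite $k$ (indeed $k \leq \vert\mathcal{C}\vert - 1$) for which $\mathcal{V}_{k}^{-}(c) = \mathcal{V}_{k+1}^{-}(c)$, i.e.\ the recursion reaches a fixed point.

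To finish, I would apply the fixed-point part of \cref{lemma:cum_sum_monotonic_conv}: at such a $k$ we have $\mathcal{V}_{k}^{-}(c) = \mathcal{V}_{n}^{-}(c)$ for all $n\geq k$, while monotonicity gives $\mathcal{V}_{m}^{-}(c) \subseteq \mathcal{V}_{k}^{-}(c)$ for $m < k$. Therefore the union defining $\mathcal{R}^{-}(c)$ collapses to the single set $\mathcal{V}_{k}^{-}(c)$, yielding $\mathcal{V}_{k}^{-}(c) = \mathcal{R}^{-}(c)$ at a finite index, as claimed. I do not expect any substantive obstacle: the entire content is the observation that a weakly increasing chain of subsets of a finite set must stabilize, and \cref{lemma:cum_sum_monotonic_conv} already supplies the fact that stabilization at one step propagates to all later steps. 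The only point meriting a line of care is the explicit cardinality bound ensuring that stabilization occurs at a \emph{finite} $k$ rather than merely in the limit.
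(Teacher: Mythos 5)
Your proposal is correct and follows the same route the paper intends: the inclusion is immediate from the definition of $\mathcal{R}^{-}$ as the union $\bigcup_{k\geq 0}\mathcal{V}_k^{-}(c)$, and the equality at finite $k$ is exactly the combination of the monotonicity and fixed-point statements of \cref{lemma:cum_sum_monotonic_conv} with the finiteness of the cell set, which is why the paper states the result as a corollary without further proof. Your explicit pigeonhole bound $k \leq \vert\mathcal{C}\vert - 1$ is a harmless (and correct) refinement of what the paper leaves implicit.
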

\begin{corollary}
	If $ c\in\mathcal{R}^{-}(d) $, then \mbox{$ \mathcal{R}^{-}(c) \subseteq \mathcal{R}^{-}(d)$}. That is, if $ c\dpath d $, then for every cell $ e $ such that $ e\dpath c $ we also have $ e\dpath d $.
	\label{lemma:reachable_inclusion}
	\hfill $ \square $
\end{corollary}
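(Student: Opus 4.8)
The statement is precisely the transitivity of the in-reachability relation $\dpath$, and the plan is to realize path concatenation inside the recursive definition of the cumulative in-neighborhoods $\mathcal{V}_k^{-}$. Recall that $c \dpath d$ means $c \in \mathcal{R}^{-}(d)$, so I would fix an arbitrary $e \in \mathcal{R}^{-}(c)$ and aim to show $e \in \mathcal{R}^{-}(d)$; since $e$ is arbitrary this yields the claimed inclusion $\mathcal{R}^{-}(c) \subseteq \mathcal{R}^{-}(d)$, and the ``that is'' reformulation then follows immediately by unwinding the notation $\dpath$. Using $\mathcal{R}^{-}(\cdot) = \bigcup_{k \geq 0} \mathcal{V}_k^{-}(\cdot)$ from \eref{eq:in_reachable} (together with \cref{cor:cumulative_neigh_k_inside_reachable}), the two hypotheses supply finite indices $m, n \geq 0$ with $c \in \mathcal{V}_m^{-}(d)$ and $e \in \mathcal{V}_n^{-}(c)$.

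The heart of the argument is the following composition property, which plays the role of concatenating a path of length at most $n$ from $e$ to $c$ with a path of length at most $m$ from $c$ to $d$: for all $n \geq 0$, if $c \in \mathcal{V}_m^{-}(d)$ and $e \in \mathcal{V}_n^{-}(c)$, then $e \in \mathcal{V}_{m+n}^{-}(d)$. I would prove this by induction on $n$. The base case $n = 0$ is trivial, since $\mathcal{V}_0^{-}(c) = \{c\}$ forces $e = c \in \mathcal{V}_m^{-}(d) = \mathcal{V}_{m+0}^{-}(d)$. For the inductive step, $e \in \mathcal{V}_n^{-}(c)$ means, by \eref{eq:in_cum_neigh_recurs}, that $e \in \mathcal{V}^{-}(c')$ for some $c' \in \mathcal{V}_{n-1}^{-}(c)$; applying the induction hypothesis to $c'$ gives $c' \in \mathcal{V}_{m+n-1}^{-}(d)$, and then one further application of the recursion \eref{eq:in_cum_neigh_recurs} yields $e \in \mathcal{V}^{-}(c') \subseteq \mathcal{V}_{m+n}^{-}(d)$.

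With the composition property established, I would conclude that $e \in \mathcal{V}_{m+n}^{-}(d) \subseteq \mathcal{R}^{-}(d)$, finishing the proof. The only mildly delicate point — the ``main obstacle'' such as it is — is setting up the induction on the correct variable: one must induct on the length $n$ of the path from $e$ to $c$ while keeping $m$ (the path from $c$ to $d$) fixed, so that each peeling-off of an in-neighbor in the recursion for $\mathcal{V}_n^{-}(c)$ corresponds to a single increment of the index $k$ in $\mathcal{V}_k^{-}(d)$. Everything else is a direct unwinding of the definitions of $\mathcal{V}^{-}$, $\mathcal{V}_k^{-}$ and $\mathcal{R}^{-}$.
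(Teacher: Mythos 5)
Your proof is correct: the composition property (if $c\in\mathcal{V}_m^{-}(d)$ and $e\in\mathcal{V}_n^{-}(c)$ then $e\in\mathcal{V}_{m+n}^{-}(d)$), established by induction on $n$ through the recursion \eref{eq:in_cum_neigh_recurs}, is precisely the path-concatenation argument the paper implicitly relies on when it states this result as a corollary without any written proof. You have simply made explicit the details the paper leaves to the reader, so the approach is essentially the same.
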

\begin{exmp}
	\begin{figure}[h]
		\centering
		\begin{tikzpicture}[
node1/.style = {circle,minimum size=23,draw},
node2/.style = {circle,minimum size=23,draw,fill=white!75!black},
node3/.style = {circle,minimum size=23,draw,fill=white!50!black},
noderect/.style = {rectangle,minimum size=20,draw},
edge1/.style = {>=latex,thick},
edge2/.style = {>=latex,thick,blue},
edge3/.style = {>=latex,thick,red}
]
\node[node1] at (0,0)(n1){1};
\node[node1] at (1.5,0)(n2){2};
\node[node1] at (3,0)(n3){3};
\node[node1] at (4.5,0)(n4){4};

%\DoubleLine{n1}{n3}{<-,edge1}{}{->,edge1}{}
\draw [->,edge1](n1) -- (n2);
\draw [->,edge1](n2) -- (n3);
\draw [->,edge1](n3) -- (n4);
%\DoubleLine{n2}{n3}{<-,edge1}{}{->,edge1}{}

%\draw [->,edge1] (n1) edge[loop left,looseness=5] (n1);
%\draw [->,edge1] (n3) edge[loop above,looseness=5] (n3);

%\node (minus123) at ($(n1)!0.4!(n2) + (0,0.3)$) {(-)};

%\DoubleLine{n1}{n2}{<-,edge1}{}{->,edge1}{}
%\DoubleLine{n1}{n3}{<-,edge1}{}{->,edge1}{}
%\DoubleLine{n2}{n3}{<-,edge1}{}{->,edge1}{}

\end{tikzpicture} 
		\caption{Simple chain of 4 cells.}
		\label{fig:CCN_chain}
	\end{figure}
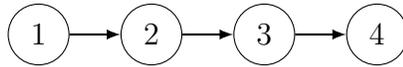
	Consider the simple network in \fref{fig:CCN_chain}. Cell $ 3 $ receives an edge from cell $ 2 $, that is, $ \mathcal{N}^{-}(3) = \{2\} $. Its cumulative in-neighborhood is $  \mathcal{V}^{-}(3) = 3\cup \mathcal{N}^{-}(3) = \{2,3\} $. Using the definition, its second cumulative in-neighborhood is $ \mathcal{V}_{2}^{-}(3) = \mathcal{V}^{-}(2) \cup \mathcal{V}^{-}(3) $, which results in $  \{1,2\} \cup \{2,3\} = \{1,2,3\} $, which are the cells that have a directed path to cell $ 3 $ with a length of two or less. This is already the maximal cumulative in-neighborhood of cell $ 3 $ since $ \mathcal{V}_{3}^{-}(3) = \mathcal{V}^{-}(1) \cup \mathcal{V}^{-}(2) \cup \mathcal{V}^{-}(3) $ again equals $  \{1,2,3\} $. That is, $ \mathcal{V}_{2}^{-}(3) = \mathcal{R}^{-}(3) $.\\
	Furthermore, the point at which the cumulative in-neighborhoods equals the in-reachability set depends on the particular cell of the network. For instance, $ \mathcal{V}_{0}^{-}(1) = \mathcal{R}^{-}(1) = \{1\}$ and $ \mathcal{V}_{3}^{-}(4) = \mathcal{R}^{-}(4) = \{1,2,3,4\}$.\\
	Finally, $ \mathcal{R}^{-}(1) \subset \mathcal{R}^{-}(2) \subset  \mathcal{R}^{-}(3) \subset \mathcal{R}^{-}(4)$ since each cell has a direct path to every cell that is identified with an higher number. In particular, the set inclusions are strict, that is, there are no two cells with the same in-reachability set. This would require directed loops, that is, $ \mathcal{R}^{-}(c) = \mathcal{R}^{-}(d) $ is equivalent to $ \mathcal{R}^{-}(c) \subseteq \mathcal{R}^{-}(d) $ and $ \mathcal{R}^{-}(d) \subseteq \mathcal{R}^{-}(c) $, which implies $ c\in \mathcal{R}^{-}(d) $ and $ d\in \mathcal{R}^{-}(c) $. That is, $ c\dpath d $ and $ d\dpath c $.
	\hfill $ \square $
\end{exmp}
\subsection{Dynamics from in-neighborhoods}
\label{subsec:dynamics_neigh}
Consider a network $ \mathcal{G} $ and an \mbox{$ \mathcal{G} $-admissible} state set $ \xset $ such that a state $ \mathbf{x}\in\xset $ evolves (either discretely or continuously) according to a \mbox{$ \mathcal{G} $-admissible} function $ f\in\mathcal{F}_\mathcal{G} $. That is,
\begin{equation}
\mathbf{x}^{+}/\dot{\mathbf{x}}
=
f(\mathbf{x}).
\label{eq:discrete_continuous_dynamics}
\end{equation}
From the definition of admissibility, the component $ f_c $ of an \mbox{$ \mathcal{G} $-admissible} function $ f $ is dependent only on the states associated with the cells in $ \mathcal{V}^{-}(c)$. This allows us to relate the dynamics of the system to the neighborhoods of cells.
We now show how $ \mathcal{V}_{k}^{-} $ in particular is related to the evolution of an admissible system in both the discrete and continuous cases. 
\begin{theorem}
	Consider a network that evolves discretely according to a function $ f\in\mathcal{F}_\mathcal{G} $. Then $ x_c[n],x_c[n+1],\ldots, x_c[n+k] $ are fully determined by the set of states $\{x_d[n]\}  $, with $ d\in\mathcal{V}_{k}^{-}(c) $.
	\hfill $ \square $
	\label{thm:evo_neigh_discrete}
\end{theorem}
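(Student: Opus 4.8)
The plan is to induct on $k$, exploiting the autonomy (time-invariance) of the discrete update $\mathbf{x}^+ = f(\mathbf{x})$ together with the recursive definition \eref{eq:in_cum_neigh_recurs} of the cumulative in-neighborhoods. The base case $k=0$ is immediate: since $\mathcal{V}_0^-(c) = \{c\}$, the value $x_c[n]$ is trivially determined by $\{x_d[n] : d \in \mathcal{V}_0^-(c)\} = \{x_c[n]\}$.

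The single fact I would extract from admissibility first is that the one-step update of a cell depends only on its cumulative in-neighborhood. From \cref{defi:F_G_admissibility} we have $x_c[m+1] = f_c(\mathbf{x}[m]) = \hat{f}_i(x_c[m]; \mathbf{m}_c^\top, \mathbf{x}[m])$; since the entries of $\mathbf{m}_c$ indexed by cells outside $\mathcal{N}^-(c)$ are zero, \cref{thm:oracle_0_equal} of \cref{defi:oracle} lets me drop the corresponding arguments, so that $x_c[m+1]$ is a function only of $\{x_d[m] : d \in \mathcal{V}^-(c)\}$. I would record this as a one-step statement valid for every $m$, which is exactly where time-invariance enters.

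For the inductive step I would peel the trajectory from the \emph{front} rather than the back, which is what makes the paper's recursion apply verbatim. Assuming the statement for $k-1$ at every starting time, I apply it at starting time $n+1$: the values $x_c[n+1], \ldots, x_c[n+k]$ are determined by $\{x_d[n+1] : d \in \mathcal{V}_{k-1}^-(c)\}$. By the one-step fact, each $x_d[n+1]$ is in turn determined by $\{x_e[n] : e \in \mathcal{V}^-(d)\}$, so the entire block $x_c[n+1], \ldots, x_c[n+k]$ is determined by $\bigcup_{d \in \mathcal{V}_{k-1}^-(c)} \{x_e[n] : e \in \mathcal{V}^-(d)\}$, which by \eref{eq:in_cum_neigh_recurs} is precisely $\{x_e[n] : e \in \mathcal{V}_k^-(c)\}$. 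Adjoining $x_c[n]$, determined by itself with $c \in \mathcal{V}_k^-(c)$ by \cref{lemma:cum_sum_monotonic_conv}, completes the step.

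The only genuine choice point I expect is the direction of peeling. Peeling off the last update $x_c[n+k] = f_c(\mathbf{x}[n+k-1])$ would force me to control $x_e[n+k-1]$ for $e \in \mathcal{V}^-(c)$ through $\mathcal{V}_{k-1}^-(e)$ and then to establish $\mathcal{V}_{k-1}^-(e) \subseteq \mathcal{V}_k^-(c)$ --- the ``other'' form of the recursion, which would require a separate path-concatenation lemma. Front-peeling sidesteps this auxiliary identity entirely. Beyond that the argument is routine; the only care required is to treat every ``determined by'' claim as a genuine functional dependency, so that the compositions invoked at each stage are well defined.
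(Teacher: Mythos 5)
Your proposal is correct and takes essentially the same route as the paper: its proof also inducts on $k$ with front-peeling, applying the induction hypothesis at starting time $n+1$ and then the one-step admissibility fact, so that the recursion \eref{eq:in_cum_neigh_recurs} directly yields $\mathcal{V}_{k+1}^{-}(c)$. The only cosmetic difference is that the paper establishes only that $x_c[n+k]$ is determined and invokes the monotonicity of $\left(\mathcal{V}_{k}^{-}\right)_{k\geq 0}$ (\cref{lemma:cum_sum_monotonic_conv}) for the earlier values, whereas you carry the whole block $x_c[n+1],\ldots,x_c[n+k]$ through the induction and make the appeal to \cref{thm:oracle_0_equal} of \cref{defi:oracle} explicit where the paper leaves it implicit in the definition of admissibility.
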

\begin{proof}
	It is enough to just prove that $ x_c[n+k] $ is fully determined, the rest comes directly from the monotonicity of $ \left(\mathcal{V}_{k}^{-} \right)_{k\geq 0}$.\\
	The proof is by induction. The base case $ k=0 $ is trivial. Assume this to be true for some $ k\geq 0 $. Then $x_c[n+k+1] $ is fully determined by the set of states $\{x_d[n+1]\}  $ with $ d\in\mathcal{V}_{k}^{-}(c) $. From $ f $ being \mbox{$ \mathcal{G} $-admissible}, the states $\{x_d[n+1]\} $ themselves are fully determined by $ \{x_e[n]\}  $ with $ e\in\mathcal{V}_{1}^{-}(d) $ for each $ d\in\mathcal{V}_{k}^{-}(c) $. This means that $ x_c[n+k+1] $ is fully determined by the states $ \{x_d[n]\} $ with $ d\in\mathcal{V}_{k+1}^{-}(c) $, which proves the induction step.
\end{proof}
\begin{theorem}
	Consider a system that evolves continuously according to a function $ f\in\mathcal{F}_\mathcal{G} $. Then, assuming sufficient differentiability, the derivatives up to $ k^{th} $ order at time $ t $, that is, $ x_c(t) ,\dot{x}_c(t) ,\ldots,x_c^{(k)}(t) $ are fully determined by the set of states $ \{x_d(t)\} $, with $ d\in\mathcal{V}_{k}^{-}(c) $.
	\label{thm:evo_neigh_continuous}
	\hfill $ \square $
\end{theorem}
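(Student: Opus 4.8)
The plan is to mirror the inductive argument used in the discrete case of \cref{thm:evo_neigh_discrete}, replacing the one-step time shift $ n\mapsto n+1 $ by differentiation in $ t $ and invoking the chain rule. I would proceed by induction on $ k $, proving the single statement that $ x_c^{(k)}(t) $ is a function of the states $ \{x_d(t)\} $ with $ d\in\mathcal{V}_k^-(c) $. Since the sequence $ \left(\mathcal{V}_k^-\right)_{k\geq 0} $ is monotonically increasing by \cref{lemma:cum_sum_monotonic_conv}, establishing this for the top-order derivative $ x_c^{(k)}(t) $ immediately yields the claim for all lower-order derivatives as well, so it suffices to control only $ x_c^{(k)}(t) $ at each stage.

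The base case $ k=0 $ is immediate, as $ x_c(t) $ is trivially a function of itself and $ \mathcal{V}_0^-(c)=\{c\} $. For the inductive step I would assume that $ x_c^{(k)}(t)=g\left(\{x_d(t)\}_{d\in\mathcal{V}_k^-(c)}\right) $ for some sufficiently differentiable $ g $, and differentiate in $ t $. The chain rule gives
\begin{eqnarray*}
x_c^{(k+1)}(t)
=
\sum_{d\in\mathcal{V}_k^-(c)}
\frac{\partial g}{\partial x_d}\,
\dot{x}_d(t).
\end{eqnarray*}
The decisive observation is that, by admissibility, each $ \dot{x}_d(t)=f_d(\mathbf{x}(t)) $ is itself a function of the state vector, depending only on the states of the cells in $ \mathcal{V}^-(d)=\mathcal{V}_1^-(d) $, and in particular introduces no higher-order derivatives into the expansion. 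Hence $ x_c^{(k+1)}(t) $ is a function of the states $ \{x_d(t)\} $ with $ d $ ranging over $ \mathcal{V}_k^-(c)\cup\bigcup_{d\in\mathcal{V}_k^-(c)}\mathcal{V}_1^-(d) $.

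It then remains to identify this index set with $ \mathcal{V}_{k+1}^-(c) $. By the recursion \eref{eq:in_cum_neigh_recurs}, $ \bigcup_{d\in\mathcal{V}_k^-(c)}\mathcal{V}^-(d)=\mathcal{V}_{k+1}^-(c) $, and since each $ d\in\mathcal{V}^-(d) $ we also have $ \mathcal{V}_k^-(c)\subseteq\mathcal{V}_{k+1}^-(c) $, so the union collapses exactly onto $ \mathcal{V}_{k+1}^-(c) $, closing the induction. I expect the only genuine subtlety to be the bookkeeping guaranteeing that no higher-order derivatives of the neighboring states enter the chain-rule expansion: this is precisely what the autonomous form $ \dot{x}_d=f_d(\mathbf{x}) $ ensures, since differentiating a function of states produces only first derivatives of states, each of which is again a function of states. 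The differentiability hypothesis in the statement is exactly what legitimizes this repeated differentiation.
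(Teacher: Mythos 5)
Your proposal is correct and takes essentially the same route as the paper's proof: induction on $k$ with the reduction to the top-order derivative via monotonicity of $\left(\mathcal{V}_{k}^{-}\right)_{k\geq 0}$, the chain-rule expansion $x_c^{(k+1)}(t)=\sum_{d\in\mathcal{V}_{k}^{-}(c)}\frac{\partial g}{\partial x_d}\,\dot{x}_d(t)$, and admissibility to confine each $\dot{x}_d$ to the states indexed by $\mathcal{V}_{1}^{-}(d)$. Your explicit identification of the index set $\mathcal{V}_{k}^{-}(c)\cup\bigcup_{d\in\mathcal{V}_{k}^{-}(c)}\mathcal{V}^{-}(d)$ with $\mathcal{V}_{k+1}^{-}(c)$ only spells out bookkeeping the paper leaves implicit.
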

\begin{proof}
	It is enough to just prove that $ x_c^{(k)}(t) $ is fully determined, the rest comes directly from the monotonicity of $ \left(\mathcal{V}_{k}^{-} \right)_{k\geq 0}$.\\
	The proof is by induction. The base case $ k=0 $ is trivial. Assume this to be true for some $ k\geq 0 $. Then there is a function $ g $ such that
	\begin{eqnarray*}
	x_c^{(k)}(t) = g(\{x_d(t) \colon d\in\mathcal{V}_{k}^{-}(c)\}).
	\end{eqnarray*}
	That is, $ x_c^{(k)}(t)$ is fully determined by the set of states $ \{x_d(t) \} $ with $ d\in\mathcal{V}_{k}^{-}(c) $. Differentiating on both sides gives
	\begin{eqnarray*}
	x_c^{(k+1)}(t) &= \sum_{d\in\mathcal{V}_{k}^{-}(c)}\frac{\partial g}{\partial x_d} x_d^{(1)}(t).
	\end{eqnarray*}
	From $ f $ being \mbox{$ \mathcal{G} $-admissible}, the first derivatives $ \{x_d^{(1)}(t)\} $ are fully determined by $\{x_e(t)\} $ with \mbox{$ e\in\mathcal{V}_{1}^{-}(d) $} for each $ d\in\mathcal{V}_{k}^{-}(c) $. This means that $ x_c^{(k+1)}(t) $ is fully determined by the states $\{x_d(t)\}  $ with \mbox{$ d\in\mathcal{V}_{k+1}^{-}(c) $}. 
\end{proof}
We now show that knowledge about the \mbox{in-reachability} $ \mathcal{R}^{-} $ of a cell fully defines its evolution.
\begin{theorem}
	Consider a network that evolves either discretely or continuously, according to a function $ f\in\mathcal{F}_\mathcal{G} $. Then the whole trajectory $ \left(x_c[k]\right)_{k\geq n} / x_c(\cdot) $ is fully determined by the set of states $ x_d[n] /x_d(t) $ for $ d\in\mathcal{R}^{-}(c) $.
	\label{thm:Rin_subsystem}
	\hfill $ \square $
\end{theorem}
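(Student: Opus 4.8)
The plan is to exploit the fact that the in-reachability set $\mathcal{R}^{-}(c)$ is \emph{closed} under the operation of taking in-neighborhoods, so that the cells it contains form a self-contained dynamical subsystem whose evolution is decoupled from the rest of the network. First I would establish this closure property: if $d\in\mathcal{R}^{-}(c)$, i.e.\ $d\dpath c$, then every $e\in\mathcal{N}^{-}(d)$ satisfies $e\dpath d\dpath c$, hence $e\in\mathcal{R}^{-}(c)$. This is precisely \cref{lemma:reachable_inclusion}, and it yields $\mathcal{V}^{-}(d)\subseteq\mathcal{R}^{-}(c)$ for every $d\in\mathcal{R}^{-}(c)$. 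Since $c\in\mathcal{V}_{0}^{-}(c)\subseteq\mathcal{R}^{-}(c)$, the cell $c$ itself belongs to this set.

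Next I would use $\mathcal{G}$-admissibility to observe that the component $f_d$ of $f$ depends only on the states of the cells in $\mathcal{V}^{-}(d)$. Combined with the closure property, this means that for every $d\in\mathcal{R}^{-}(c)$ the rate of change (continuous case) or the next value (discrete case) of $x_d$ is a function of the states $\{x_e : e\in\mathcal{R}^{-}(c)\}$ alone. Thus, restricting $f$ to the coordinates indexed by $\mathcal{R}^{-}(c)$ produces a well-defined autonomous subsystem on $\xset^{\mathcal{K}(\mathcal{R}^{-}(c))}$ that evolves independently of the remaining cells.

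For the discrete case I would give the direct argument: by \cref{thm:evo_neigh_discrete}, $x_c[n+m]$ is fully determined by $\{x_d[n] : d\in\mathcal{V}_{m}^{-}(c)\}$, and by \cref{cor:cumulative_neigh_k_inside_reachable} we have $\mathcal{V}_{m}^{-}(c)\subseteq\mathcal{R}^{-}(c)$ for every $m\geq 0$. Hence each $x_c[n+m]$, and therefore the whole trajectory $(x_c[k])_{k\geq n}$, is a function of $\{x_d[n] : d\in\mathcal{R}^{-}(c)\}$. For the continuous case, the closed subsystem identified above is an autonomous ODE on the reduced state space; specifying the initial states $\{x_d(t) : d\in\mathcal{R}^{-}(c)\}$ determines a unique solution of this subsystem (by the existence and uniqueness of trajectories implicit in ``evolves according to $f$''), and since $c\in\mathcal{R}^{-}(c)$ the trajectory $x_c(\cdot)$ is thereby fully determined.

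The main obstacle is the continuous case. It is tempting to mimic \cref{thm:evo_neigh_continuous} and argue that all derivatives $x_c^{(k)}(t)$ are determined by $\{x_d(t):d\in\mathcal{R}^{-}(c)\}$; however, knowing every derivative at a single instant pins down the whole trajectory only under analyticity, which is not assumed here. The correct route is therefore the decoupling argument: one must verify carefully that the restriction of $f$ to $\mathcal{R}^{-}(c)$ is genuinely a closed system (this is exactly where the closure property is essential) and then invoke uniqueness of solutions for that reduced system, rather than relying on a Taylor-series reconstruction.
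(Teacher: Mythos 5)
Your proposal is correct and takes essentially the same route as the paper: closure of $\mathcal{R}^{-}(c)$ under in-neighborhoods (via \cref{lemma:reachable_inclusion}) plus $\mathcal{G}$-admissibility yields a self-contained subsystem whose initial conditions determine the trajectory, avoiding any analyticity assumption. Your closing caveat about why the derivative-based argument of \cref{thm:evo_neigh_continuous} would fail without analyticity is exactly the point the paper makes in the remark following its proof.
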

\begin{proof}
	From \cref{lemma:reachable_inclusion}, for any \mbox{in-reachability} set $ \mathcal{R}^{-}(c) = \mathcal{S} $, any cell $ d\in \mathcal{S} $ has its own \mbox{in-reachability} contained within that same set. That is, \mbox{$ \mathcal{R}^{-}(d) \subseteq \mathcal{S}$}. Since $ \mathcal{V}^{-}(d) \subseteq \mathcal{R}^{-}(d) $, we have $ \mathcal{V}^{-}(d) \subseteq \mathcal{S} $.\\
	By admissibility, the dynamics of a cell $ d $ is a function of the states of the cells in $ \mathcal{V}^{-}(d) $. Therefore we can constrain our network to the subset of cells $ \mathcal{S} $ while preserving all their dependencies within that same set. That is, knowledge about the initial conditions of the cells $ \mathcal{S} $ is enough to fully determine the evolution of the induced subsystem.
\end{proof}
\begin{remark}
	For the discrete time case (\cref{thm:evo_neigh_discrete}), this result is direct from \cref{cor:cumulative_neigh_k_inside_reachable}. However, to extend the continuous time case (\cref{thm:evo_neigh_continuous}) in the same manner, we would have to require the dynamics to be analytic, which is usually too much to ask for. Often, only the Lipschitz condition is assumed. Our approach in the previous proof works for both the discrete and continuous cases.
	\hfill $ \square $
\end{remark}
\begin{corollary}
	Consider a subset of cells $ \mathcal{S} $ in a network that is an \mbox{in-reachability} set. That is, $ \mathcal{S} = \mathcal{R}^{-}(c) \subseteq \mathcal{C} $ for some $ c\in\mathcal{C} $. Then for any solution $ \mathbf{x}(t) $ of the whole system, restricting $ \mathbf{x}(t) $ to the cells in $ \mathcal{S} $ gives us a valid solution to the subnetwork induced by $ \mathcal{S} $. Conversely, for a solution $ \mathbf{x}_{\mathcal{S}}(t) $ on the subnetwork, there is a solution on the whole network that is an extension of it.
	\label{cor:Rin_extension_constrain}
	\hfill $ \square $
\end{corollary}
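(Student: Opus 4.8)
The plan is to build both directions on the structural closure fact already extracted in the proof of \cref{thm:Rin_subsystem}: since $\mathcal{S} = \mathcal{R}^{-}(c)$, every cell $d\in\mathcal{S}$ satisfies $\mathcal{V}^{-}(d)\subseteq\mathcal{R}^{-}(d)\subseteq\mathcal{S}$ by \cref{lemma:reachable_inclusion}. Equivalently, no edge enters $\mathcal{S}$ from $\mathcal{C}\setminus\mathcal{S}$, so the subnetwork induced by $\mathcal{S}$ is self-contained and the coupling between $\mathcal{S}$ and its complement is one-directional. I would state this observation once at the start and then invoke it for each inclusion.

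For the forward direction I take a solution $\mathbf{x}(t)$ of the full system \eref{eq:discrete_continuous_dynamics}. For each $d\in\mathcal{S}$, admissibility (\cref{defi:F_G_admissibility}) forces the component $f_d$ to depend only on the states indexed by $\mathcal{V}^{-}(d)$, all of which lie in $\mathcal{S}$; concretely, the row of $M$ associated with $d$ vanishes outside the columns of $\mathcal{S}$, so restricting $M$ to $\mathcal{S}\times\mathcal{S}$ loses no information. Hence the governing equation for $x_d$ coincides with the corresponding equation of the subnetwork (whose admissible function is the restriction of the same oracle components). Therefore $\mathbf{x}|_{\mathcal{S}}(t)$ satisfies the induced subnetwork dynamics and is a valid solution.

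For the converse I start from a subnetwork solution $\mathbf{x}_{\mathcal{S}}(t)$ and extend it to all of $\mathcal{C}$. Because $\mathcal{S}$ receives no incoming edges from $\mathcal{C}\setminus\mathcal{S}$, I would prescribe the $\mathcal{S}$-coordinates to follow $\mathbf{x}_{\mathcal{S}}(t)$, choose an arbitrary initial condition for the complementary cells, and let those cells evolve under the full dynamics with $\mathbf{x}_{\mathcal{S}}(t)$ entering only as a known time-varying input. In discrete time this extension is produced directly, one step at a time; in continuous time it follows from the well-posedness already implicit in \eref{eq:discrete_continuous_dynamics}. The resulting trajectory solves the whole network and restricts to $\mathbf{x}_{\mathcal{S}}(t)$ on $\mathcal{S}$, since prescribing the complementary cells cannot feed back into $\mathcal{S}$.

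The main obstacle is the continuous-time half of the converse: asserting that the driven complementary cells admit a solution at all. As the paper deliberately avoids global smoothness hypotheses, I would phrase this so that it rests only on the existence of solutions already assumed for admissible systems, rather than re-deriving it; the one-directional coupling established at the outset is exactly what guarantees that such an extension never perturbs the given solution on $\mathcal{S}$.
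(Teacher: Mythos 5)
Your proposal is correct and follows essentially the same route as the paper: the paper's proof is a one-line appeal to \cref{thm:Rin_subsystem}, whose argument is precisely the closure fact you state at the outset, namely that $\mathcal{V}^{-}(d)\subseteq\mathcal{R}^{-}(d)\subseteq\mathcal{S}$ for every $d\in\mathcal{S}$ (via \cref{lemma:reachable_inclusion}), so the coupling between $\mathcal{S}$ and its complement is one-directional. You are in fact more explicit than the paper about the converse direction --- the step-by-step discrete construction and the continuous-time existence caveat, which you correctly resolve by resting on the existence of solutions already implicit in \eref{eq:discrete_continuous_dynamics} rather than adding smoothness hypotheses the paper deliberately avoids.
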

\begin{proof}
	This is direct from \cref{thm:Rin_subsystem}. 
\end{proof}
\subsection{Strongly connected components and root dependency}\label{subsec:scc_rdc}
To study the \mbox{in-reachability} sets $ \mathcal{R}^{-} $ of the network, it is useful to decompose its graph into strongly connected components (\textit{SCC}).
\begin{defi}
	Two cells $ c,d\in\mathcal{C} $ are said to be \textit{strongly connected} if \mbox{$ \mathcal{R}^{-}(c) = \mathcal{R}^{-}(d) $}. That is, there are directed paths $ d \dpath c $ and $ c\dpath d $.
	\hfill $ \square $
\end{defi}
\begin{remark}
	The strongly connected property induces a partition on the set of cells $ \mathcal{C} $. The subsets of this partition are called the SCCs.
	\hfill $ \square $
\end{remark}
Since two cells in the same SCC have exactly the same \mbox{in-reachability} set, that is $ \mathcal{R}^{-}(c) = \mathcal{R}^{-}(d) $ for all $ c,d\in\mathcal{S}_i $, we simply refer to it as $ \mathcal{R}^{-}(\mathcal{S}_i) $.
\begin{defi}
	The \textit{condensation graph} is obtained by representing each SCC $ \mathcal{S}_i $ by a block and connecting $ \mathcal{S}_i \edge \mathcal{S}_j $ for $ i\neq j $, if there are cells \mbox{$ c_i\in\mathcal{S}_i$}, \mbox{$ c_j\in\mathcal{S}_j $} such that $ c_i\edge c_j $. 
	\hfill $ \square $
\end{defi}
The diagram obtained is \textit{blockwise acyclic}. For $ c_i\in\mathcal{S}_i, c_j\in\mathcal{S}_j $, $ i\neq j $, the existence of a directed path $ c_i\dpath c_j $ is equivalent to the existence of a directed path $ \mathcal{S}_i \dpath \mathcal{S}_j $ in the condensation graph. Moreover, if in the condensation graph there is a direct path $ \mathcal{S}_i \dpath \mathcal{S}_j $ then $ \mathcal{S}_i\subseteq \mathcal{R}^{-}(\mathcal{S}_j) $.\\
This decomposition can be done very efficiently in time $ O(|\mathcal{C}| + |\mathcal{E}|) $, where $ \mathcal{E} $ denotes the set of edges, using for instance Tarjan's algorithm \cite{tarjan1972depth}.\\
Building on the concept of SCCs, we are now ready to define a decomposition based on root dependency components (\textit{RDC}).
\begin{defi}\label{defi:root}
	An SCC $ \mathcal{S}_i $ is a \textit{root} if there are no other SCCs that have a directed path to it. That is, $ \mathcal{S}_i = \mathcal{R}^{-}(\mathcal{S}_i) $.
	\hfill $ \square $
\end{defi}
\begin{defi}
	Two cells $ c,d\in\mathcal{C} $ have the same \textit{root dependency} if $ \mathcal{R}^{-}(c) $, $\mathcal{R}^{-}(d) $ contain \textit{exactly} the same subset of roots.
	\hfill $ \square $
\end{defi}
\begin{remark}
	The property of having the same root dependency induces a partition on the set of cells $ \mathcal{C} $. The subsets of this partition are called the \textit{root dependency components}.
	Moreover, in network with $ n $ roots, this partition has at most $ 2^n -1 $ disjoint subsets, since there is no cell that does not depend on any root.
	\hfill $ \square $
\end{remark}
The following is straightforward from the definitions.
\begin{corollary}
	The partition formed by the SCCs is finer than the one formed by the RDCs.
	\hfill $ \square $
\end{corollary}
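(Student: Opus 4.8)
The plan is to argue directly from the definition of the refinement relation \eref{eq:refinement_def}. To show that the SCC partition is finer than the RDC partition, it suffices to verify that for any two cells $ c,d\in\mathcal{C} $, belonging to the same SCC implies belonging to the same RDC. In other words, I would take the hypothesis ``$ c,d $ lie in the same color of the SCC partition'' and derive ``$ c,d $ lie in the same color of the RDC partition,'' which is exactly the implication demanded by \eref{eq:refinement_def}.

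First I would unfold the SCC hypothesis: by \cref{defi:root} and the surrounding definitions, $ c $ and $ d $ are in the same SCC precisely when they are strongly connected, that is, when $ \mathcal{R}^{-}(c) = \mathcal{R}^{-}(d) $. The key observation is then entirely set-theoretic: since $ \mathcal{R}^{-}(c) $ and $ \mathcal{R}^{-}(d) $ are literally the same subset of $ \mathcal{C} $, they necessarily contain exactly the same roots. Hence the subset of roots inside $ \mathcal{R}^{-}(c) $ coincides with the subset of roots inside $ \mathcal{R}^{-}(d) $, which is precisely the condition for $ c $ and $ d $ to have the same root dependency, and therefore to lie in the same RDC.

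This completes the implication required by \eref{eq:refinement_def} and establishes the claim. I do not expect any genuine obstacle: the entire content is that equality of in-reachability sets is a strictly stronger requirement than mere agreement on the collection of reachable roots, so the equivalence relation defining the SCCs refines the one defining the RDCs. If one wished to emphasize that the refinement can be proper (so that the partitions genuinely differ), it would suffice to exhibit two distinct SCCs that depend on the same set of roots — for instance two SCCs arranged in a chain under a single common root — but this is not needed for the stated result.
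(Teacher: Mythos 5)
Your proof is correct and follows exactly the route the paper intends: the paper offers no written proof, calling the corollary straightforward from the definitions, and the content is precisely your observation that $ \mathcal{R}^{-}(c) = \mathcal{R}^{-}(d) $ forces the two in-reachability sets to contain the same roots, hence the same root dependency. (One cosmetic slip: strong connectedness is defined in the definition preceding \cref{defi:root}, not in \cref{defi:root} itself, but this does not affect the argument.)
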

\begin{exmp}\label{exmp:SCC_decomp_example}
	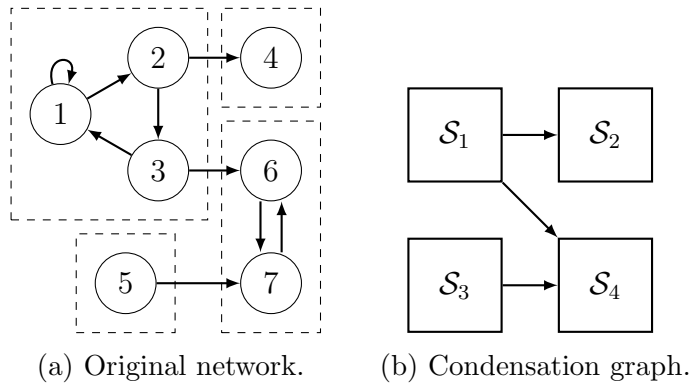
\begin{figure}[h]
		\centering
		\begin{subfigure}[t]{0.3\textwidth}
			\centering
			\begin{tikzpicture}[
node1/.style = {circle,minimum size=23,draw},
node2/.style = {circle,minimum size=23,draw,fill=white!75!black},
node3/.style = {circle,minimum size=23,draw,fill=white!50!black},
noderect/.style = {rectangle,minimum size=20,draw},
edge1/.style = {>=latex,thick},
edgedashed/.style = {>=latex,thick,dashed},
edge2/.style = {>=latex,thick,blue},
edge3/.style = {>=latex,thick,red}
]

\node[node1] at ({-3*sqrt(3)/12},0)(n5){5};

\node[node1] at (1.5,0)(n7){7};
\node[node1] at (1.5,1.5)(n6){6};
\node[node1] at (1.5,{2*1.5})(n4){4};

\node[node1] at ({-3*sqrt(3)/4},{1.5*1.5})(n1){1};
\node[node1] at (0,{2*1.5})(n2){2};
\node[node1] at (0,1.5)(n3){3};

\draw [->,edge1](n1) -- (n2);
\draw [->,edge1](n2) -- (n3);
\draw [->,edge1](n3) -- (n1);

\draw [->,edge1](n2) -- (n4);
\draw [->,edge1](n3) -- (n6);
\draw [->,edge1](n5) -- (n7);

\DoubleLine{n6}{n7}{<-,edge1}{}{->,edge1}{}

\node (m1box) [dashed, fit = (n1)(n2)(n3), inner sep=7pt, draw]{};

\node (m2box) [dashed, fit = (n5), inner sep=7pt, draw]{};

\node (m3box) [dashed, fit = (n4), inner sep=7pt, draw]{};

\node (m4box) [dashed, fit = (n6)(n7), inner sep=7pt, draw]{};

\draw [->,edge1] (n1) edge[loop above,looseness=5] (n1);

%\node[node1] at (0,0)(n1){1};
%\node[node1] at (2,0)(n2){2};
%\node[node2] at (1,{sqrt(2)})(n3){3};
%\node[node3] at (1,{2*sqrt(2)})(n4){4};

%\draw [->,edge1](n1) -- (n3);
%\draw [->,edge1](n2) -- (n3);
%\draw [->,edgedashed](n2) -- (n3);
%\node (minus23) at ($(n2)!0.4!(n3) + (-0.2,-0.2)$) {(-)};
%\draw [->,edge1](n3) -- (n4);

%\draw [->,edge1] (n4) edge[bend right,looseness=0.8] (n1);
%\draw [->,edge1] (n4) edge[bend left,looseness=0.8] (n2);

%\DoubleLine{n1}{n2}{<-,edge1}{}{->,edge1}{}
%\DoubleLine{n1}{n3}{<-,edge1}{}{->,edge1}{}
%\DoubleLine{n2}{n3}{<-,edge1}{}{->,edge1}{}

\end{tikzpicture} 
			\caption{Original network.}
			\label{fig:SCC_decomp_example_a}
		\end{subfigure}
		\begin{subfigure}[t]{0.3\textwidth}
			\centering
			\begin{tikzpicture}[
node1/.style = {circle,minimum size=23,draw},
node2/.style = {circle,minimum size=23,draw,fill=white!75!black},
node3/.style = {circle,minimum size=23,draw,fill=white!50!black},
noderect/.style = {rectangle,minimum size=20,draw},
edge1/.style = {>=latex,thick},
edge2/.style = {>=latex,thick,blue},
edge3/.style = {>=latex,thick,red},
block/.style = {draw, thick, rectangle, minimum height = 3em, minimum width = 3em},
dottedbox/.style = {draw=white!50!black, line width=1pt, dash pattern=on 3pt off 3pt, inner sep=4mm, rectangle, rounded corners}
]
\node[block] at (0,0)(S1){$ \mathcal{S}_{1} $};
\node[block] at (0,-2)(S3){$ \mathcal{S}_{3} $};
\node[block] at (2,0)(S2){$ \mathcal{S}_{2} $};
\node[block] at (2,-2)(S4){$ \mathcal{S}_{4} $};

\draw [->,edge1](S1) -- (S2);
%\draw [->,edge1](S2) -- (S3);
\draw [->,edge1](S1) -- (S4);
\draw [->,edge1](S3) -- (S4);

%\node (box) [dottedbox, fit = (S1) (S2)] {};

\end{tikzpicture} 
			\caption{Condensation graph.}
			\label{fig:balanced_types_condensation}
		\end{subfigure}	
		\caption{Decomposition of a network into its strongly connected components.}
		\label{fig:SCC_decomp_example}
	\end{figure}
	Consider the network in \fref{fig:SCC_decomp_example_a}, which has four different SCCs. In particular, $ \mathcal{S}_1 = \{1,2,3\} $, $ \mathcal{S}_2 = \{4\} $, $ \mathcal{S}_3 = \{5\} $ and $ \mathcal{S}_4 = \{6,7\} $. This induces the partition $ \{\{1,2,3\},\{4\},\{5\},\{6,7\}\} $ on the set of cells in the network. We form the condensation graph at 		\fref{fig:balanced_types_condensation} by representing each SCC by a block and connecting them appropriately. That is, we have $ 2 \edge 4 $, $ 3 \edge 6 $ and $ 5 \edge 7 $, which means that we need to connect $ \mathcal{S}_1 \edge \mathcal{S}_2 $, $ \mathcal{S}_1 \edge \mathcal{S}_4 $ and $ \mathcal{S}_3 \edge \mathcal{S}_4 $, respectively.\\
	Using the condensation graph, it is easy to see that the in-reachability sets of the SCCs are $ \mathcal{R}^{-}(\mathcal{S}_1) = \mathcal{S}_1 $, $ \mathcal{R}^{-}(\mathcal{S}_2) = \mathcal{S}_1\cup \mathcal{S}_2 $, $ \mathcal{R}^{-}(\mathcal{S}_3) = \mathcal{S}_3 $ and $ \mathcal{R}^{-}(\mathcal{S}_4) = \mathcal{S}_1\cup \mathcal{S}_3 \cup \mathcal{S}_4 $. This means that the network has two roots, $ \mathcal{S}_1 $ and $ \mathcal{S}_3 $. With two roots, we can partition the cells of the network into at most three RDCs. That is, those that depend on the root $ \mathcal{S}_1 $ but not $ \mathcal{S}_3 $ $ (\mathcal{S}_1 \cup \mathcal{S}_2) $, those that depend on $ \mathcal{S}_3 $ but not $ \mathcal{S}_1 $ $ (\mathcal{S}_3) $ and those that depend on both $ \mathcal{S}_1 $ and $ \mathcal{S}_3 $ $ (\mathcal{S}_4) $. Therefore the partition induced by the RDCs is $ \{\mathcal{S}_1 \cup\mathcal{S}_2 ,\mathcal{S}_3,\mathcal{S}_4\} =  \{\{1,2,3,4\},\{5\},\{6,7\}\} $, which is coarser that the partition of SCCs.
	\hfill $ \square $
\end{exmp}
\section{In-reachability based classification of synchrony partitions}\label{sec:part_classif}
Motivated by the influence of that different types of in-neighborhoods have on the dynamics of a network, we introduce an in-reachability based classification scheme for general partitions.
\subsection{Strong, rooted and weak partitions}
\label{subsec:SRW_partitions}
In this section we classify the colors of partitions according to their relationship to the structure of the network. For this purpose, we pay particular attention to the in-reachability sets, which fully determine the dynamical evolution of the cells, and the SCCs, which are the natural way of segmenting them.\\
Consider the network in \fref{fig:SCC_decomp_example}.
Note that $ \mathcal{S}_1 $ and $ \mathcal{S}_3 $ are roots, that is, $ \mathcal{R}^{-}(\mathcal{S}_1) = \mathcal{S}_1$ and $ \mathcal{R}^{-}(\mathcal{S}_3) = \mathcal{S}_3$. From \cref{thm:Rin_subsystem}, the evolution of each of those sets can be completely determined without regard to the rest of the network. That is, for any $ \mathcal{G} $-admissible function $ f $, we can restrict and evaluate it separately in the sets of cells $ \mathcal{S}_1 $, $ \mathcal{S}_3 $.\\
Consider a partition $ \mathcal{A} $ in this network such that there are cells in $ \mathcal{S}_1 $ and $ \mathcal{S}_3 $ that share the same color, that is, there are two cells $ c_1\in\mathcal{S}_1 $, $ c_3\in\mathcal{S}_3 $ such that $ \mathcal{A}(c_1) = \mathcal{A}(c_3) $.\\
Since the two SCCs evolve completely decoupled from one another, any disturbance on $ c_1 $ would not be felt by $ c_3 $ and vice-versa. Moreover, there is no cell that can simultaneously affect both $ c_1 $ and $ c_3 $ and act as a pacemaker to drive them to a common state. However, this lack of feedback between these cells does not mean that it is impossible for the synchrony pattern determined by $ \mathcal{A} $ to appear in a physical system. That is, for states sufficiently close to the polydiagonal $ \Delta_{\mathcal{A}}^{\xset} $ to be driven back to $ \Delta_{\mathcal{A}}^{\xset} $, or at least stay close to it.
This could be achieved if, for instance, both $ x_{c_1}(t) $ and $ x_{c_3}(t) $ converge to the same stable equilibrium point.\\
On the other hand, if $ x_{c_1}(t), x_{c_3}(t) $ converge to the same limit cycle, we would not expect such synchrony space to be stable, since there would be no mechanism that could counteract a possible phase offset. In particular, if $ \mathbf{x}_{\mathcal{S}_1}(t) $ is a solution for the subnetwork induced by $ \mathcal{S}_1 $, the time shifted version $ \mathbf{x}_{\mathcal{S}_1}(t-\delta) $ is also a solution. Therefore phase synchrony with $ \mathcal{S}_3 $ never happens unless we start with precise initial conditions.\\
Assume now that instead, there are two cells $ c_2\in\mathcal{S}_2 $, $ c_4\in\mathcal{S}_4 $ of the same color, that is $ \mathcal{A}(c_2) = \mathcal{A}(c_4)$. Their \mbox{in-reachability} sets are $ \mathcal{R}^{-}(\mathcal{S}_2) = \mathcal{S}_2 \cup \mathcal{S}_1 $ and $ \mathcal{R}^{-}(\mathcal{S}_4) = \mathcal{S}_4 \cup \mathcal{S}_1 \cup \mathcal{S}_3$ respectively. Now, although there is still no feedback between them, their \mbox{in-reachability} sets intersect in $ \mathcal{S}_1 $. Thus it could still be possible for $ c_2 $ and $ c_4 $ to maintain synchronism with non-trivial behavior if $ \mathcal{S}_1 $ is driving them to do so.\\
This shows that the structure of the network can make a crucial difference in the qualitative behavior of the invariant synchrony patterns, which motivates the following definitions.
\begin{defi}\label{def:color_connect_classif}
	A color $ A$ on a partition of a network $ \mathcal{G} $ is
	\begin{itemize}
		\item \textit{Strong} if all the cells of that color are in the same SCC. That is,
		\begin{eqnarray}
		c,d\in A \implies \mathcal{R}^{-}(c) = \mathcal{R}^{-}(d),
		\end{eqnarray}
		or equivalently,
		\begin{eqnarray}
		\bigcap_{c\in A} 	\mathcal{R}^{-}(c) 
		=
		\bigcup_{c\in A} 	\mathcal{R}^{-}(c).
		\end{eqnarray}
		\item \textit{Rooted} if it is not strong but there is some cell (root) in $ \mathcal{G} $ that has a directed path to all the cells of that color. That is,
		\begin{eqnarray}
		\emptyset \subset \bigcap_{c\in A} 	\mathcal{R}^{-}(c) 
		\subset 
		\bigcup_{c\in A} 	\mathcal{R}^{-}(c).
		\end{eqnarray}
		\item \textit{Weak} if it is neither strong nor rooted. That is,
		\begin{eqnarray}
		\bigcap_{c\in A} 	\mathcal{R}^{-}(c) = \emptyset .
		\end{eqnarray}
	\end{itemize}
	\hfill $ \square $
\end{defi}
Clearly, every color is of one, and only one, of these three types. The following properties are direct from the definition.
\begin{lemma}
	\label{lemma:color_strenght_comparison}
	Consider a strong color $ A_{s} $, a rooted color $ A_{r} $, and a weak color $ A_{w} $. Then the following is true
	\begin{itemize}
		\item If $ A \subseteq A_{s} $, then $ A $ is strong.
		\item If $ A \subseteq A_{r} $, then $ A $ is either rooted or strong.
		\item If $ A_{r} \subseteq A $, then $ A $ is either rooted or weak.
		\item If $ A_{w} \subseteq A $, then $ A $ is weak.
	\end{itemize}
	\hfill $ \square $
\end{lemma}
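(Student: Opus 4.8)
The proof is a direct verification of each of the four implications using the definitions of strong, rooted, and weak colors in terms of the set-theoretic behavior of $\bigcap_{c\in A}\mathcal{R}^{-}(c)$ and $\bigcup_{c\in A}\mathcal{R}^{-}(c)$. The key observation that drives everything is monotonicity: if $A\subseteq A'$, then $\bigcap_{c\in A'}\mathcal{R}^{-}(c)\subseteq\bigcap_{c\in A}\mathcal{R}^{-}(c)$ and $\bigcup_{c\in A}\mathcal{R}^{-}(c)\subseteq\bigcup_{c\in A'}\mathcal{R}^{-}(c)$. That is, enlarging the color can only shrink the intersection and grow the union. I would state this monotonicity once at the start and then apply it repeatedly.

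First I would handle the two ``shrinking'' cases. For $A\subseteq A_{s}$: since $A_s$ is strong we have $\bigcap_{c\in A_s}\mathcal{R}^{-}(c)=\bigcup_{c\in A_s}\mathcal{R}^{-}(c)$, and for any subset the intersection is always contained in the union, so by the monotonicity bounds $\bigcup_{c\in A}\mathcal{R}^{-}(c)\subseteq\bigcup_{c\in A_s}\mathcal{R}^{-}(c)=\bigcap_{c\in A_s}\mathcal{R}^{-}(c)\subseteq\bigcap_{c\in A}\mathcal{R}^{-}(c)$, which forces equality of intersection and union on $A$, i.e.\ $A$ is strong. For $A\subseteq A_{r}$: rooted means $\emptyset\subset\bigcap_{c\in A_r}\mathcal{R}^{-}(c)$, so the intersection over $A_r$ is nonempty; by monotonicity the intersection over $A$ contains it and is therefore also nonempty, which rules out the weak case and leaves $A$ strong or rooted.

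Next the two ``growing'' cases. For $A_{r}\subseteq A$: rooted means $\bigcap_{c\in A_r}\mathcal{R}^{-}(c)\subset\bigcup_{c\in A_r}\mathcal{R}^{-}(c)$ is a strict inclusion, so in particular the two are unequal; I claim this strictness propagates to $A$. The cleanest route is to note that strong is exactly the case ``intersection $=$ union'', and to show $A$ cannot be strong. Since $A_r$ is rooted it is not strong, so there exist $c,d\in A_r$ with $\mathcal{R}^{-}(c)\neq\mathcal{R}^{-}(d)$; these same two cells lie in $A$, so the cells of $A$ are not all in one SCC, hence $A$ is not strong and must be rooted or weak. For $A_{w}\subseteq A$: weak means $\bigcap_{c\in A_w}\mathcal{R}^{-}(c)=\emptyset$; by monotonicity $\bigcap_{c\in A}\mathcal{R}^{-}(c)\subseteq\bigcap_{c\in A_w}\mathcal{R}^{-}(c)=\emptyset$, so the intersection over $A$ is empty and $A$ is weak.

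The only subtlety worth double-checking is the $A_r\subseteq A$ case, where one might be tempted to argue directly about the strict inclusion $\bigcap\subset\bigcup$ surviving — but strictness of a chained inclusion need not be preserved when one enlarges the index set, since growing the union could in principle be matched by the intersection also moving. Routing the argument through the ``not strong'' characterization (existence of two cells with distinct in-reachability sets, which is a property preserved under passing to a superset of cells) sidesteps this entirely and is the cleanest formulation; I therefore expect that to be the main point requiring care, while the remaining three cases are immediate from monotonicity.
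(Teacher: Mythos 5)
Your proof is correct and follows exactly the route the paper intends: the paper offers no written proof, asserting the lemma as ``direct from the definition,'' and your case-by-case verification via monotonicity of $\bigcap_{c\in A}\mathcal{R}^{-}(c)$ and $\bigcup_{c\in A}\mathcal{R}^{-}(c)$ under enlargement of $A$ is precisely that direct argument. One small remark: your caution in the $A_{r}\subseteq A$ case is unnecessary, since enlarging the color can only shrink the intersection and grow the union, so the chain $\bigcap_{c\in A}\mathcal{R}^{-}(c)\subseteq\bigcap_{c\in A_{r}}\mathcal{R}^{-}(c)\subsetneq\bigcup_{c\in A_{r}}\mathcal{R}^{-}(c)\subseteq\bigcup_{c\in A}\mathcal{R}^{-}(c)$ already forces the strict inclusion (equality of the ends would collapse the middle), though your ``not strong because two cells of $A_{r}$ with distinct in-reachability sets remain in $A$'' route is equally valid.
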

\cref{def:color_connect_classif} classifies a particular color of some partition on $ \mathcal{G} $ with respect to its connectivity structure. This classification scheme is independent of the underlying partition containing that color. Furthermore, we do not assume any particular structure on the underlying partitions, such as being balanced or being finer that the partition of cell types $ \type{G} $.\\
Using this classification scheme for individual colors, we classify a whole partition according to the following definition.
\begin{defi}
	\label{def:partition_connect_classif}
	A partition $ \mathcal{A} $ on a network $ \mathcal{G} $ is
	\begin{itemize}
		\item \textit{Strong} if all of its colors are strong.
		\item \textit{Rooted} if it is not strong but all of its colors are either rooted or strong. That is, it has at least one rooted color.
		\item \textit{Weak} if any of its colors is weak.
	\end{itemize}
	\hfill $ \square $
\end{defi}
Clearly, every partition is of one, and only one, of these three types.
Similarly to \cref{lemma:color_strenght_comparison}, the following properties are direct from the definitions.
\begin{lemma}\label{lemma:partition_comparison_lemma}
	Consider a strong partition $ \mathcal{A}_{s} $, a rooted partition $ \mathcal{A}_{r} $ and a weak partition $ \mathcal{A}_{w} $. Then the following is true
	\begin{itemize}
		\item If $ \mathcal{A} \leq \mathcal{A}_{s} $, then $ \mathcal{A} $ is strong.
		\item If $ \mathcal{A} \leq \mathcal{A}_{r} $, then $ \mathcal{A} $ is either rooted or strong.
		\item If $ \mathcal{A}_{r} \leq \mathcal{A} $, then $ \mathcal{A} $ is either rooted or weak.
		\item If $ \mathcal{A}_{w} \leq \mathcal{A} $, then $ \mathcal{A} $ is weak.
	\end{itemize}
	\hfill $ \square $
\end{lemma}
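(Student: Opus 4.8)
The plan is to reduce each of the four statements to its color-level analogue in \cref{lemma:color_strenght_comparison}, exploiting the elementary correspondence between the refinement order and color containment: if $\mathcal{A} \leq \mathcal{B}$, then every color of $\mathcal{A}$ is contained in a unique color of $\mathcal{B}$, and conversely every color of $\mathcal{B}$ is a disjoint union of colors of $\mathcal{A}$. First I would record this correspondence directly from \eref{eq:refinement_def}, since it is the single tool that drives all four cases, and then combine it with the trichotomy in \cref{def:partition_connect_classif} that every partition is exactly one of strong, rooted or weak.

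For the first two bullets, where the given partition is the \emph{coarser} one, I would argue color by color downward. If $\mathcal{A} \leq \mathcal{A}_{s}$, each color $A$ of $\mathcal{A}$ lies inside some color of $\mathcal{A}_{s}$, which is strong because $\mathcal{A}_{s}$ is strong; the first bullet of \cref{lemma:color_strenght_comparison} then forces $A$ to be strong, so all colors of $\mathcal{A}$ are strong and $\mathcal{A}$ is strong. If $\mathcal{A} \leq \mathcal{A}_{r}$, each color $A$ of $\mathcal{A}$ lies inside a color of $\mathcal{A}_{r}$ that is either strong or rooted; the first and second bullets of \cref{lemma:color_strenght_comparison} then guarantee that $A$ is strong or rooted in either case, so $\mathcal{A}$ has no weak color and hence is strong or rooted by \cref{def:partition_connect_classif}.

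For the last two bullets, where the given partition is the \emph{finer} one, I would instead argue upward from a single witnessing color. If $\mathcal{A}_{r} \leq \mathcal{A}$, I would pick a rooted color $A_{r}$ of $\mathcal{A}_{r}$, which exists because $\mathcal{A}_{r}$ is rooted, and let $A$ be the color of $\mathcal{A}$ containing it; the third bullet of \cref{lemma:color_strenght_comparison} makes $A$ rooted or weak, so $\mathcal{A}$ has a non-strong color and therefore cannot be strong, i.e.\ it is rooted or weak. Likewise, if $\mathcal{A}_{w} \leq \mathcal{A}$, a weak color $A_{w}$ of $\mathcal{A}_{w}$ is contained in some color $A$ of $\mathcal{A}$, and the fourth bullet of \cref{lemma:color_strenght_comparison} makes $A$ weak, so $\mathcal{A}$ is weak.

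The only real subtlety, and the place where a careless argument would slip, is keeping track of which direction of containment a given refinement produces: the coarser-side statements must propagate the classification downward to \emph{every} color of $\mathcal{A}$, whereas the finer-side statements only need a \emph{single} witnessing color pushed upward. No metric or dynamical input is required here; once the containment bookkeeping is set up, each case is an immediate appeal to the corresponding item of \cref{lemma:color_strenght_comparison} together with the trichotomy of \cref{def:partition_connect_classif}.
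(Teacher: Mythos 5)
Your proof is correct and is essentially the argument the paper intends: the paper states \cref{lemma:partition_comparison_lemma} as ``direct from the definitions'' in parallel with \cref{lemma:color_strenght_comparison}, and your reduction --- pushing every color of the coarser partition down through containment for the first two bullets, and pushing a single witnessing rooted or weak color up for the last two --- is exactly the straightforward fleshing-out of that claim, with the containment correspondence from \eref{eq:refinement_def} and the trichotomy of \cref{def:partition_connect_classif} used just as the authors rely on them implicitly.
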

The following is straightforward.
\begin{corollary}
	\label{lemma:trivia_strong}
	If $ A $ is a singleton color, then is it strong. Furthermore, the trivial partition $ \bot $, which has only singleton colors, is always strong.
	\hfill $ \square $
\end{corollary}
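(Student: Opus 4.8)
The plan is to verify the defining condition for a strong color directly on a singleton and then propagate this to the whole trivial partition via \cref{def:partition_connect_classif}. Since the statement is a degenerate instance of the classification scheme, I expect no machinery beyond the definitions to be needed.

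First I would take a singleton color $A = \{c\}$ and apply the equivalent characterization of a strong color in \cref{def:color_connect_classif}, namely that $d,e \in A$ implies $\mathcal{R}^{-}(d) = \mathcal{R}^{-}(e)$. The only pair of cells available in a singleton is $d = e = c$, for which $\mathcal{R}^{-}(c) = \mathcal{R}^{-}(c)$ holds trivially. Equivalently, in the intersection/union form both $\bigcap_{d\in A}\mathcal{R}^{-}(d)$ and $\bigcup_{d\in A}\mathcal{R}^{-}(d)$ collapse to the single set $\mathcal{R}^{-}(c)$, so they coincide. Hence $A$ satisfies the strong condition. The one point worth checking, to confirm that $A$ is genuinely strong and not vacuously weak, is that $\mathcal{R}^{-}(c) \neq \emptyset$; this is immediate since $\mathcal{V}_{0}^{-}(c) = c \subseteq \mathcal{R}^{-}(c)$, so $c \in \mathcal{R}^{-}(c)$ and $\bigcap_{d\in A}\mathcal{R}^{-}(d) = \mathcal{R}^{-}(c)$ is nonempty, placing $A$ in the strong rather than the weak category.

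Finally I would observe that the trivial partition $\bot$ consists entirely of singleton colors, so by the previous step every color of $\bot$ is strong; \cref{def:partition_connect_classif} then classifies $\bot$ itself as strong. There is no substantive obstacle here: the result follows directly from unwinding the definitions, and the only mild subtlety is the harmless check that a single-cell in-reachability set is nonempty, which guarantees the color lands in the strong bucket instead of the weak one.
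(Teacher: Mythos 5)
Your proof is correct and matches the paper's intent exactly: the paper states this corollary as ``straightforward'' from \cref{def:color_connect_classif,def:partition_connect_classif}, and your argument is precisely that definitional unwinding (a singleton forces $\bigcap_{d\in A}\mathcal{R}^{-}(d)=\bigcup_{d\in A}\mathcal{R}^{-}(d)=\mathcal{R}^{-}(c)$, hence every color of $\bot$ is strong). Your nonemptiness check is harmless but superfluous, since weak is defined as \emph{neither strong nor rooted}, so a color satisfying the strong condition is classified strong regardless.
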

We now relate the classification of partitions to the network connectivity according to the decomposition into SCCs and RDCs, as defined in \cref{subsec:scc_rdc}.
The two following results are direct from the definitions.
\begin{lemma}\label{lemma:strong_iff_finer_scc}
	A partition is strong if and only if it is finer than the partition of SCCs.
	\hfill $ \square $
\end{lemma}
We use the term \textit{non-weak} to denote partitions or colors that are not weak, that is, either rooted or strong.
\begin{lemma}\label{lemma:finer_rdc_implies_nw}
	A partition finer than the partition of RDCs is non-weak.
	\hfill $ \square $
\end{lemma}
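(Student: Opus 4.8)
The plan is to reduce the statement about the whole partition to a statement about each individual color, and then unwind the definition of the RDC partition together with the standing fact that every cell depends on at least one root. Recall from \cref{def:partition_connect_classif} that a partition is non-weak precisely when none of its colors is weak, and from \cref{def:color_connect_classif} that a color $A$ fails to be weak exactly when $\bigcap_{c\in A}\mathcal{R}^{-}(c)\neq\emptyset$. So it is enough to prove: if $\mathcal{A}$ is finer than the RDC partition, then every color $A$ of $\mathcal{A}$ has non-empty common in-reachability intersection.

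First I would fix an arbitrary color $A$ of such an $\mathcal{A}$ and use the refinement hypothesis. Since $\mathcal{A}$ is finer than the partition of RDCs, \eref{eq:refinement_def} gives that any two cells $c,d\in A$ satisfy $\text{RDC}(c)=\text{RDC}(d)$; that is, all cells of $A$ lie in a single root dependency component and therefore share the \emph{same} root dependency. Concretely, there is a fixed set $\mathcal{P}$ of roots such that for every $c\in A$, the in-reachability set $\mathcal{R}^{-}(c)$ contains exactly the roots in $\mathcal{P}$ (here ``contains the root $\mathcal{S}_r$'' means $\mathcal{S}_r\subseteq\mathcal{R}^{-}(c)$, equivalently some, hence every, cell of the SCC $\mathcal{S}_r$ is in-reachable to $c$).

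The key step is then to extract a witness for non-weakness. By the remark that no cell can fail to depend on some root, each $\mathcal{R}^{-}(c)$ contains at least one root, so $\mathcal{P}$ is non-empty; I would pick any root $\mathcal{S}_r\in\mathcal{P}$. Because $\mathcal{S}_r\subseteq\mathcal{R}^{-}(c)$ holds for \emph{every} $c\in A$, it follows that $\mathcal{S}_r\subseteq\bigcap_{c\in A}\mathcal{R}^{-}(c)$, and in particular the intersection is non-empty. Hence $A$ is not weak. Since $A$ was an arbitrary color, $\mathcal{A}$ is non-weak, which is the claim.

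I do not expect a genuine obstacle here, as the text itself flags the result as direct from the definitions; the only points needing care are the precise reading of ``a root contained in an in-reachability set'' (which must be taken as set inclusion $\mathcal{S}_r\subseteq\mathcal{R}^{-}(c)$) and the degenerate case of a singleton color $A=\{c\}$, where the intersection is simply $\mathcal{R}^{-}(c)\ni c$ and non-weakness (indeed strength, by \cref{lemma:trivia_strong}) is immediate. Both are absorbed into the general argument above.
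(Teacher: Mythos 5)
Your proof is correct and takes essentially the approach the paper intends: the paper states this lemma without a written proof (``direct from the definitions''), and your argument---each color of the partition lies inside a single RDC, so its cells share a common set of roots, which is non-empty by the paper's remark that no cell fails to depend on some root, whence any such root $\mathcal{S}_r$ witnesses $\mathcal{S}_r\subseteq\bigcap_{c\in A}\mathcal{R}^{-}(c)\neq\emptyset$ and the color is non-weak---is exactly the natural formalization of that unwinding. The two points of care you flag (reading ``contains a root'' as the set inclusion $\mathcal{S}_r\subseteq\mathcal{R}^{-}(c)$, which is equivalent to intersecting it since in-reachability sets absorb whole SCCs, and the trivially strong singleton colors) are handled correctly and do not change the argument.
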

In \cref{subsec:invariant_poly} we saw that for any particular subset of functions $ F\subseteq \mathcal{F}_{\mathcal{G}} $, the subset of partitions that are $ F $-invariant always forms a lattice $ L_{F} $. Furthermore, its minimal element is always the trivial partition $ \bot $, which is strong. Also, given any two partitions $ \mathcal{A}_{1},\mathcal{A}_{2} \in L_{F} $, their least upper bound is given by $ \mathcal{A}_1 \vee \mathcal{A}_2 $, where $ \vee $ denotes the partition join operation as defined in \cref{lemma:join_chain_def}. We now show how the join operation interacts with the proposed classification scheme.
\begin{lemma}\label{lemma:strong_join}
	For any pair of strong partitions \mbox{$ \mathcal{A}_{1},\mathcal{A}_{2}$} on a network $ \mathcal{G} $, their join \mbox{$ \mathcal{A} = \mathcal{A}_{1} \vee \mathcal{A}_{2} $} is strong.
	\hfill $ \square $
\end{lemma}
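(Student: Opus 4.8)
The plan is to reduce the statement to the characterization already established in \cref{lemma:strong_iff_finer_scc}, namely that a partition is strong if and only if it is finer than the partition of strongly connected components. Let $\partition$ denote the partition of SCCs. Since $\mathcal{A}_1$ and $\mathcal{A}_2$ are both strong, \cref{lemma:strong_iff_finer_scc} gives $\mathcal{A}_1 \leq \partition$ and $\mathcal{A}_2 \leq \partition$. The key observation is that $\mathcal{A} = \mathcal{A}_1 \vee \mathcal{A}_2$ is, by definition, the least upper bound of $\mathcal{A}_1$ and $\mathcal{A}_2$ in the lattice of all partitions on $\mathcal{C}$. Because $\partition$ is an upper bound of the pair $\{\mathcal{A}_1,\mathcal{A}_2\}$, the least upper bound must satisfy $\mathcal{A} \leq \partition$. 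Applying \cref{lemma:strong_iff_finer_scc} in the reverse direction then yields that $\mathcal{A}$ is strong, which is exactly the claim.

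An equivalent, more self-contained route that I would use if I wanted to avoid leaning on the SCC characterization is to argue directly from the chain description of the join in \cref{lemma:join_chain_def}. Take any two cells $c,d$ with $\mathcal{A}(c) = \mathcal{A}(d)$. By \cref{lemma:join_chain_def} there is a chain $c = c_1, \ldots, c_k = d$ such that for each consecutive pair $c_i, c_{i+1}$ we have either $\mathcal{A}_1(c_i) = \mathcal{A}_1(c_{i+1})$ or $\mathcal{A}_2(c_i) = \mathcal{A}_2(c_{i+1})$. In the first case strongness of $\mathcal{A}_1$ gives $\mathcal{R}^{-}(c_i) = \mathcal{R}^{-}(c_{i+1})$, and in the second case strongness of $\mathcal{A}_2$ gives the same equality; so in either case the two endpoints of each link of the chain have identical in-reachability sets. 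Chaining these equalities together yields $\mathcal{R}^{-}(c) = \mathcal{R}^{-}(d)$, which by \cref{def:color_connect_classif} means every color of $\mathcal{A}$ is strong, hence $\mathcal{A}$ is strong by \cref{def:partition_connect_classif}.

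I would present the first argument as the main proof, since it is the shortest, with the chain-based version serving as an intuitive cross-check. There is no genuine obstacle here: the only point requiring a moment of care is the transitivity step in the second argument, i.e.\ recognizing that equality of in-reachability sets is an equivalence relation (it is precisely the relation defining the SCCs), so that the per-link equalities compose across the whole chain regardless of which of $\mathcal{A}_1$ or $\mathcal{A}_2$ is responsible for each individual link. In the first argument the analogous subtlety is simply the standard lattice fact that a common upper bound of two elements dominates their join, which is immediate from the definition of least upper bound.
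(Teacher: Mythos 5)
Your main argument is exactly the paper's proof: both invoke \cref{lemma:strong_iff_finer_scc} to reduce strongness to being finer than the SCC partition, note that the join of two partitions finer than the SCC partition is itself finer (the join being the least upper bound), and apply the characterization in reverse. The chain-based cross-check via \cref{lemma:join_chain_def} is also correct, but the proof you present as primary coincides with the paper's.
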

\begin{proof}
	Since $ \mathcal{A}_1, \mathcal{A}_2 $ are strong, from \cref{lemma:strong_iff_finer_scc}, they are finer than the partition of SCCs. Then $ \mathcal{A} = \mathcal{A}_{1} \vee \mathcal{A}_{2} $ is also finer than the partition of SCCs. From \cref{lemma:strong_iff_finer_scc} again, $ \mathcal{A} $ is strong. 
\end{proof}
This result, together with \cref{lemma:partition_comparison_lemma}, allows us to understand how the join operation affects the connectivity-based classification of general partitions. This is summarized in \tref{table:join_general}, where $ S $, $ R $ and $ W $ denote the partition classifications of strong, rooted and weak, respectively.
\begin{table}[h]
	\caption{Join table for general partitions.}
	\begin{center}
		\begin{tabular}{c | c c c}
			$ \vee $ & S & R & W  \\
			\cline{1-4}
			S & S & R/W & W \\
			R & R/W & R/W & W \\
			W & W & W & W
		\end{tabular}
	\end{center}
	\label{table:join_general}
\end{table}
So far we have not made any assumptions about the partitions. Moreover, we see in \tref{table:join_general} that there are entries in which the class is not completely defined. Specifically, there are cases where the result of the join could be either rooted or weak ($ R/W $).\\
Denote the subset of strong partitions in a lattice $ L_F $ by $ L_F^{S} $ and the subset of non-weak partitions by $
L_F^{NW} $. Then we have
\begin{eqnarray}
L_F^{S}
\subseteq 
L_F^{NW}
\subseteq
L_F.
\end{eqnarray}
From \cref{lemma:strong_join}, together with the fact that the trivial partition $ \bot $ is strong, we know that $
L_F^{S} $ always forms a sublattice of $ L_F $ with a top element $ \top_F^{S} $. On the other hand, $ L_F^{NW} $ might or might not be a lattice. This is illustrated in the following example.
\begin{exmp}
	\label{exmp:classification_example4}
	Consider the network in \fref{fig:class_net_example4} and its respective lattice of balanced partitions $ \Lambda $ in \fref{fig:class_lattice_example4}. Consider the full edges to have a weight of $ 1 $ and the dashed edges to have weights of $ -1 $.\\
	In the lattice schematics, the partitions are colored according to their type such that strong partitions are in white, rooted ones are light gray and weak ones are in dark gray.\\
	Note that $ \Lambda^{S} $, consisting of partitions in white, forms a sublattice of $ \Lambda $ with top partition $ \top^{S} = 12/34 $. On the other hand, $ \Lambda^{NW} $ does not form a lattice. In particular, if we join one of $ 12/45 $, $ 12/345 $ with one of $ 25/34 $, $ 125/34 $, we get $ 12345 $, which is a weak partition. 
	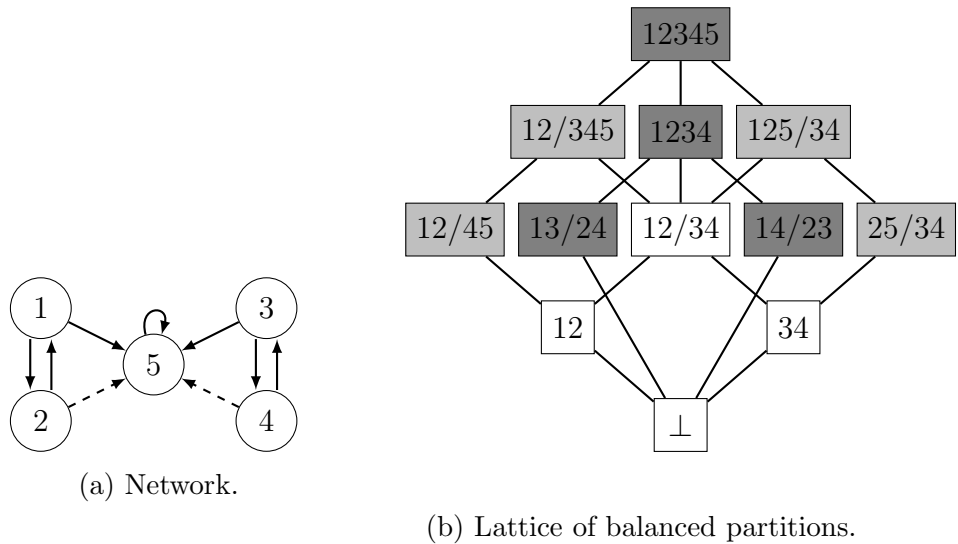
\begin{figure}[h]
		\centering
		\begin{subfigure}[t]{0.4\textwidth}
			\centering
			\begin{tikzpicture}[
node1/.style = {circle,minimum size=23,draw},
node2/.style = {circle,minimum size=23,draw,fill=white!75!black},
node3/.style = {circle,minimum size=23,draw,fill=white!50!black},
noderect/.style = {rectangle,minimum size=20,draw},
edge1/.style = {>=latex,thick},
edgedashed/.style = {>=latex,thick,dashed},
edge2/.style = {>=latex,thick,blue},
edge3/.style = {>=latex,thick,red}
]

\node[node1] at (-1.5,0.75)(n1){1};
\node[node1] at (-1.5,-0.75)(n2){2};
\node[node1] at (1.5,0.75)(n3){3};
\node[node1] at (1.5,-0.75)(n4){4};
\node[node1] at (0,0)(n5){5};

\draw [->,edge1](n1) -- (n5);
\draw [->,edgedashed](n2) -- (n5);
\draw [->,edge1](n3) -- (n5);
\draw [->,edgedashed](n4) -- (n5);

\DoubleLine{n1}{n2}{<-,edge1}{}{->,edge1}{}
\DoubleLine{n3}{n4}{<-,edge1}{}{->,edge1}{}

%\node (minus25) at ($(n2)!0.4!(n5) + (0.1,-0.2)$) {(-)};
%\node (minus45) at ($(n4)!0.4!(n5) + (-0.1,-0.2)$) {(-)};

\draw [->,edge1] (n5) edge[loop above,looseness=5] (n5);

\end{tikzpicture}
			\caption{Network.}
			\label{fig:class_net_example4}
		\end{subfigure}
		\begin{subfigure}[t]{0.4\textwidth}
			\centering
			\begin{tikzpicture}[
node1/.style = {circle,minimum size=23,draw},
node2/.style = {circle,minimum size=23,draw,fill=white!75!black},
node3/.style = {circle,minimum size=23,draw,fill=white!50!black},
pweak/.style = {rectangle,minimum size=20,draw,draw,fill=white!50!black},
proot/.style = {rectangle,minimum size=20,draw,draw,fill=white!75!black},
pstrong/.style = {rectangle,minimum size=20,draw},
noderect/.style = {rectangle,minimum size=20,draw},
edge1/.style = {>=latex,thick},
edgedash/.style = {>=latex,thick,dashed},
edge2/.style = {>=latex,thick,blue},
edge3/.style = {>=latex,thick,red}
]

\def\layer{1.3}
\def\del{0.75}

\node[pstrong] at (0,0) (bot){$ \bot $};

\node[pstrong] at ({-2*\del},{\layer} ) (12){$ 12 $};
\node[pstrong] at ({2*\del},{\layer} ) (34){$ 34 $};

\node[proot] at ({-4*\del},{2*\layer} ) (12_45){$ 12/45 $};
\node[pweak] at ({-2*\del},{2*\layer} ) (13_24){$ 13/24 $};
\node[pstrong] at (0,{2*\layer} ) (12_34){$ 12/34 $};
\node[pweak] at ({2*\del},{2*\layer} ) (14_23){$ 14/23 $};
\node[proot] at ({4*\del},{2*\layer} ) (25_34){$ 25/34 $};

\node[proot] at ({-2*\del},{3*\layer} ) (12_345){$ 12/345 $};
\node[pweak] at (0,{3*\layer} ) (1234){$ 1234 $};
\node[proot] at ({2*\del},{3*\layer} ) (125_34){$ 125/34 $};

\node[pweak] at (0,{4*\layer}) (12345){$ 12345 $};

\draw [-,edge1](bot) -- (12);
\draw [-,edge1](bot) -- (34);

\draw [-,edge1](bot) -- (13_24);
\draw [-,edge1](bot) -- (14_23);

\draw [-,edge1](12) -- (12_45);
\draw [-,edge1](12) -- (12_34);

\draw [-,edge1](34) -- (12_34);
\draw [-,edge1](34) -- (25_34);

\draw [-,edge1](12_45) -- (12_345);

\draw [-,edge1](13_24) -- (1234);

\draw [-,edge1](12_34) -- (12_345);
\draw [-,edge1](12_34) -- (1234);
\draw [-,edge1](12_34) -- (125_34);

\draw [-,edge1](14_23) -- (1234);

\draw [-,edge1](25_34) -- (125_34);

\draw [-,edge1](12345) -- (12_345);
\draw [-,edge1](12345) -- (1234);
\draw [-,edge1](12345) -- (125_34);

\end{tikzpicture} 
			\caption{Lattice of balanced partitions.}
			\label{fig:class_lattice_example4}
		\end{subfigure}
		\caption{A network and its lattice of balanced partitions.}
		\label{fig:classification_example4}
	\end{figure}
	\hfill $ \square $
\end{exmp}
By \cref{lemma:partition_comparison_lemma}, knowledge of the top partition $ \top_F $ of a lattice $ L_F $, with $ F\subseteq \mathcal{F}_{\mathcal{G}}$, can give us important information about the whole lattice.
\begin{corollary}
	If the top partition $\top_F $ of a lattice $  L_F $, with $ F\subseteq \mathcal{F}_{\mathcal{G}}$, is non-weak, then all of its partitions are non-weak. Moreover, if $ \top_F $ is strong, then all partitions are also strong.
	\hfill $ \square $
\end{corollary}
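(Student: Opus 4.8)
The plan is to observe that $\top_F$ is by definition the maximum element of the lattice $L_F$, so every partition $\mathcal{A}\in L_F$ satisfies $\mathcal{A}\leq \top_F$. This places us exactly in the hypotheses of the first two items of \cref{lemma:partition_comparison_lemma}, which describe how the classification propagates downward under the refinement relation $\leq$. The entire argument is therefore a direct case analysis on the type of $\top_F$, with no residual computation.

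First I would prove the ``non-weak'' claim. Suppose $\top_F$ is non-weak, that is, strong or rooted, and take an arbitrary $\mathcal{A}\in L_F$, so that $\mathcal{A}\leq \top_F$. If $\top_F$ is strong, then the first item of \cref{lemma:partition_comparison_lemma} gives that $\mathcal{A}$ is strong, hence non-weak. If $\top_F$ is rooted, then the second item of \cref{lemma:partition_comparison_lemma} gives that $\mathcal{A}$ is either rooted or strong, hence again non-weak. Since $\mathcal{A}\in L_F$ was arbitrary, every partition of $L_F$ is non-weak.

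The ``moreover'' part is then immediate: if $\top_F$ is in particular strong, the strong case already treated shows that every $\mathcal{A}\leq \top_F$ is strong, so all partitions of $L_F$ are strong. I do not expect any genuine obstacle here; the only point requiring care is the explicit use of $\mathcal{A}\leq \top_F$ for every element of the lattice, which is exactly the defining property of the top element $\top_F$ recalled earlier in \cref{subsec:lattice_partitions}, together with correctly matching each type of $\top_F$ to the appropriate item of \cref{lemma:partition_comparison_lemma}.
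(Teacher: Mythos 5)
Your proposal is correct and follows exactly the route the paper intends: the corollary is stated as an immediate consequence of \cref{lemma:partition_comparison_lemma}, applied with $\mathcal{A}\leq\top_F$ for every $\mathcal{A}\in L_F$, with the same case split on whether $\top_F$ is strong or rooted. Nothing is missing; your explicit appeal to the defining property of the top element is precisely the step the paper leaves implicit.
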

We now show how the top strong partition $ \top_{F}^{S} $ is given in terms of the $ cir_F $ function.
\begin{corollary}\label{lemma:maximal_strong}
	Consider a network $ \mathcal{G} $ with cell type partition $ \type{} $. Represent its SCCs according to a partition $ \mathcal{A} $. Then $ \top_F^{S} = cir_{F}(\type{} \wedge \mathcal{A})$.
	\hfill $ \square $
\end{corollary}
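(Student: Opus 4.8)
The plan is to identify $ \top_F^{S} $ as the coarsest element of the set of strong $ F $-invariant partitions, and then to recognize that this set is exactly the collection on which $ cir_F(\type{} \wedge \mathcal{A}) $ is the coarsest member. The existence of $ \top_F^{S} $ is already guaranteed by the discussion preceding the statement: since $ \bot $ is strong (\cref{lemma:trivia_strong}) and the join of strong partitions is strong (\cref{lemma:strong_join}), the set $ L_F^{S} $ is a finite sublattice of $ L_F $ and hence has a top element, which is its coarsest member.

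First I would characterize membership in $ L_F^{S} $. A partition $ \mathcal{B} $ lies in $ L_F^{S} $ precisely when it is $ F $-invariant (so $ \mathcal{B}\in L_F \subseteq L_{\type{}} $, which gives $ \mathcal{B}\leq \type{} $) and strong. By \cref{lemma:strong_iff_finer_scc}, being strong is equivalent to $ \mathcal{B}\leq \mathcal{A} $, where $ \mathcal{A} $ represents the SCCs. Combining the two refinement conditions $ \mathcal{B}\leq \type{} $ and $ \mathcal{B}\leq \mathcal{A} $ into the single condition $ \mathcal{B}\leq \type{}\wedge \mathcal{A} $ (immediate from \cref{lemma:meet_def}, the meet being the greatest lower bound) yields
\begin{equation*}
L_F^{S} = \{\mathcal{B}\in L_F : \mathcal{B}\leq \type{}\wedge \mathcal{A}\}.
\end{equation*}

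Next I would invoke the definition of $ cir_F $. Since $ \type{}\wedge \mathcal{A} \leq \type{} $, it belongs to $ L_{\type{}} $, so $ cir_F(\type{}\wedge \mathcal{A}) $ is well-defined and, by \cref{lemma:cir_F_invariant}, is the coarsest $ F $-invariant partition finer than $ \type{}\wedge \mathcal{A} $, that is, the coarsest element of the set displayed above. To close the loop I would check that this element genuinely sits in $ L_F^{S} $: writing $ \mathcal{B}^{\ast} := cir_F(\type{}\wedge \mathcal{A}) $, we have $ \mathcal{B}^{\ast}\leq \type{}\wedge \mathcal{A}\leq \mathcal{A} $, so $ \mathcal{B}^{\ast} $ is strong by \cref{lemma:strong_iff_finer_scc}, and it is $ F $-invariant by construction. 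As the coarsest member of $ L_F^{S} $, it must coincide with $ \top_F^{S} $.

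I do not anticipate a genuine obstacle; the result is essentially a bookkeeping exercise in the definitions. The only points requiring care are the reduction of the two constraints (finer than $ \type{} $ and finer than the SCC partition $ \mathcal{A} $) to the single meet $ \type{}\wedge \mathcal{A} $, and the verification that the output of $ cir_F $ automatically remains strong, so that $ \mathcal{B}^{\ast} $ lands \emph{inside} $ L_F^{S} $ rather than merely bounding it from above.
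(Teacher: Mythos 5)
Your proposal is correct and follows essentially the same route the paper intends (the corollary is stated without proof precisely because it falls out of the cited results): you combine \cref{lemma:strong_iff_finer_scc} to convert ``strong'' into $ \mathcal{B}\leq\mathcal{A} $, fold this with the automatic constraint $ \mathcal{B}\leq\type{} $ into the single meet condition $ \mathcal{B}\leq\type{}\wedge\mathcal{A} $, and then apply the defining property of $ cir_F $ from \cref{lemma:cir_F_invariant}. Your closing check that $ cir_F(\type{}\wedge\mathcal{A}) $ is itself strong, so that it lies inside $ L_F^{S} $ rather than merely bounding it, is exactly the right point of care and completes the identification with $ \top_F^{S} $.
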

Note that $ L_F^{NW} $ is not necessarily a lattice and there might exist multiple locally maximal non-weak partitions, as in \cref{exmp:classification_example4}. In the following section we see that under some relatively tame assumptions, the resulting join table becomes much cleaner and we can guarantee that $ L_F^{NW} $ is a lattice with some top partition $ \top_F^{NW} $.
\subsection{Neighborhood color matching}
\label{subsec:neigh_color_matching}
In this section we present a sequence of progressively weaker assumptions about a partition on a network. We show that the weakest of them is enough to fix the remaining uncertain entries of \tref{table:join_general} into \tref{table:join_r_matched}.\\
We use the notational convention $ \mathcal{A}(\mathbf{s}) := \bigcup_{c\in\mathbf{s}}\mathcal{A}(c) $. That is, $ \mathcal{A}(\mathbf{s}) $ denotes a subset of colors that are present in the set of cells $ \mathbf{s} \subseteq \mathcal{C} $, according to the coloring assigned by $ \mathcal{A} $.
\begin{defi} \label{defi:U_matched}
	Consider a function $ \mathcal{U} $ that assigns to each cell a subset of cells, that is, $ \mathcal{U} \colon \mathcal{C} \to 2^{\mathcal{C}}$. Then a partition $ \mathcal{A} $ on $ \mathcal{C} $ is $ \mathcal{U} $-\textit{matched} if when we apply $ \mathcal{U} $ to cells of the same color, the	resulting subsets share the exact same subset of colors. That is,
	\begin{eqnarray}
	\mathcal{A}(c) = \mathcal{A}(d) \implies \mathcal{A}(\mathcal{U}(c)) = \mathcal{A}(\mathcal{U}(d)).
	\end{eqnarray}
	\hfill $ \square $
\end{defi}
\begin{corollary}
	\label{lemma:trivial_U_matched}
	The trivial partition $ \bot $ is $ \mathcal{U} $-matched for every function $ \mathcal{U} $.
	\hfill $ \square $
\end{corollary}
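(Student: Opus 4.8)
The plan is to observe that for the trivial partition the hypothesis of the $\mathcal{U}$-matching implication can be satisfied only in the degenerate case $c = d$, whereupon the conclusion is immediate. The entire argument rests on the fact, recalled in \cref{subsec:partition_representation}, that $\bot$ is the finest partition on $\mathcal{C}$: each of its colors is a singleton, so $\bot(c) = \bot(d)$ holds if and only if $c = d$.

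Concretely, to verify \cref{defi:U_matched} for $\mathcal{A} = \bot$, I only need to check the implication
$$\bot(c) = \bot(d) \implies \bot(\mathcal{U}(c)) = \bot(\mathcal{U}(d))$$
for all pairs $c, d \in \mathcal{C}$. For any pair with $c \neq d$ the antecedent fails (distinct cells receive distinct colors under $\bot$), so the implication holds vacuously. For the remaining pairs we have $c = d$, and then $\mathcal{U}(c) = \mathcal{U}(d)$ as subsets of $\mathcal{C}$; applying the notation $\mathcal{A}(\mathbf{s}) := \bigcup_{e\in\mathbf{s}}\mathcal{A}(e)$ to the same set yields $\bot(\mathcal{U}(c)) = \bot(\mathcal{U}(d))$ trivially. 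Since $\mathcal{U}$ was an arbitrary function $\mathcal{C}\to 2^{\mathcal{C}}$, this shows $\bot$ is $\mathcal{U}$-matched for every $\mathcal{U}$.

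There is no substantive obstacle here: the result is a direct consequence of the trivial partition separating every pair of distinct cells, which collapses the matching condition to the reflexive case $c = d$. The only point worth making explicit is that $\mathcal{A}(\mathbf{s})$ depends on the set $\mathbf{s}$ alone, so equal input sets $\mathcal{U}(c) = \mathcal{U}(d)$ automatically produce equal color images; this is what allows the $c=d$ case to close immediately without any further hypothesis on $\mathcal{U}$.
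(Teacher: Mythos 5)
Your proof is correct and is exactly the argument the paper leaves implicit (the corollary is stated without proof as an immediate consequence of \cref{defi:U_matched}): since $\bot$ assigns distinct colors to distinct cells, the antecedent $\bot(c)=\bot(d)$ forces $c=d$, and the conclusion then holds trivially. Your explicit remark that $\mathcal{A}(\mathbf{s})$ depends only on the set $\mathbf{s}$ is a fine touch but adds nothing beyond what the paper takes for granted.
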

In this work, we are interested in the situation where the function $ \mathcal{U} $ in \cref {defi:U_matched} denotes a neighborhood as described in \cref{subsec:neigh_reach}, such as $ \mathcal{N}^{-} $, $ \mathcal{V}^{-} $, $ \mathcal{V}^{-}_{k} $ or $ \mathcal{R}^{-} $.
\begin{corollary}\label{lemma:n_matched_implies_v_matched}
	If a partition $ \mathcal{A} $ is $ \mathcal{N}^{-} $-matched then it is $ \mathcal{V}^{-} $-matched.
	\hfill $ \square $
\end{corollary}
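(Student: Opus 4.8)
The plan is to reduce $\mathcal{V}^{-}$-matching to $\mathcal{N}^{-}$-matching by exploiting the single structural fact that the color map $\mathbf{s}\mapsto\mathcal{A}(\mathbf{s})$ distributes over unions of cell sets: since $\mathcal{A}(\mathbf{s})$ is by definition the set of colors present in $\mathbf{s}$, we have $\mathcal{A}(\mathbf{s}_1\cup\mathbf{s}_2)=\mathcal{A}(\mathbf{s}_1)\cup\mathcal{A}(\mathbf{s}_2)$ for any cell sets $\mathbf{s}_1,\mathbf{s}_2$. The cumulative in-neighborhood decomposes as $\mathcal{V}^{-}(c)=\{c\}\cup\mathcal{N}^{-}(c)$, so this distributivity lets me split the color set of $\mathcal{V}^{-}(c)$ into the contribution of $c$ itself and the contribution of its in-neighbors.

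First I would assume $\mathcal{A}$ is $\mathcal{N}^{-}$-matched and fix two cells $c,d\in\mathcal{C}$ with $\mathcal{A}(c)=\mathcal{A}(d)$; this is precisely the hypothesis under which the $\mathcal{V}^{-}$-matching condition of \cref{defi:U_matched} must be verified. Applying the distributivity above to the decomposition $\mathcal{V}^{-}(c)=\{c\}\cup\mathcal{N}^{-}(c)$ gives
\[
\mathcal{A}(\mathcal{V}^{-}(c))=\mathcal{A}(\{c\})\cup\mathcal{A}(\mathcal{N}^{-}(c)),
\]
and the analogous identity holds for $d$.

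Next I would match the two parts separately. The singleton contributions agree, $\mathcal{A}(\{c\})=\mathcal{A}(\{d\})$, directly from the hypothesis $\mathcal{A}(c)=\mathcal{A}(d)$. The neighborhood contributions agree, $\mathcal{A}(\mathcal{N}^{-}(c))=\mathcal{A}(\mathcal{N}^{-}(d))$, because $\mathcal{A}$ is $\mathcal{N}^{-}$-matched and $\mathcal{A}(c)=\mathcal{A}(d)$. Taking the union of the two matched contributions then yields $\mathcal{A}(\mathcal{V}^{-}(c))=\mathcal{A}(\mathcal{V}^{-}(d))$, which is exactly the statement that $\mathcal{A}$ is $\mathcal{V}^{-}$-matched.

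There is no genuine obstacle in this argument; the only point requiring care is to make explicit that $\mathcal{A}(\cdot)$ respects unions, so that the decomposition $\mathcal{V}^{-}(c)=\{c\}\cup\mathcal{N}^{-}(c)$ can be pushed through the color map. Everything else is an immediate combination of the two available hypotheses.
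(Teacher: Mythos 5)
Your proof is correct and takes essentially the same route as the paper's: both decompose $\mathcal{V}^{-}(c)=c\cup\mathcal{N}^{-}(c)$, use the fact that $\mathcal{A}(\cdot)$ distributes over unions (built into the convention $\mathcal{A}(\mathbf{s})=\bigcup_{c\in\mathbf{s}}\mathcal{A}(c)$), and combine the hypothesis $\mathcal{A}(c)=\mathcal{A}(d)$ with the $\mathcal{N}^{-}$-matching assumption to conclude $\mathcal{A}(\mathcal{V}^{-}(c))=\mathcal{A}(\mathcal{V}^{-}(d))$. The only difference is presentational: you state the union-distributivity step explicitly, whereas the paper leaves it implicit in its chain of implications.
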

\begin{proof}
	If $ \mathcal{A}(c) = \mathcal{A}(d)$, by assumption we have $ \mathcal{A}(\mathcal{N}^{-}(c)) = \mathcal{A}(\mathcal{N}^{-}(d)) $. Then we have
	\begin{eqnarray*}
	&
	\mathcal{A}(c)\cup \mathcal{A}(\mathcal{N}^{-}(c)) &= \mathcal{A}(d)\cup \mathcal{A}(\mathcal{N}^{-}(d))
	\\
	\implies \quad
	&
	\mathcal{A}(c\cup\mathcal{N}^{-}(c)) &=
	\mathcal{A}(d\cup\mathcal{N}^{-}(d))
	\\
	\implies \quad
	&
	\mathcal{A}(\mathcal{V}^{-}(c)) &=
	\mathcal{A}(\mathcal{V}^{-}(d)).
	\end{eqnarray*}
\end{proof}
\begin{lemma}\label{lemma:v_matched_implies_v_k_matched}
	If a partition $ \mathcal{A} $ is $ \mathcal{V}^{-} $-matched then it is $ \mathcal{V}_{k}^{-} $-matched for every $ k\geq1 $.
	\hfill $ \square $
\end{lemma}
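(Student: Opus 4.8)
The plan is to proceed by induction on $k$, exploiting the recursive definition of the cumulative in-neighborhoods in \eref{eq:in_cum_neigh_recurs}. The base case $k=1$ is immediate: since $\mathcal{V}_1^{-} = \mathcal{V}^{-}$, being $\mathcal{V}_1^{-}$-matched is by definition the hypothesis that $\mathcal{A}$ is $\mathcal{V}^{-}$-matched. For the inductive step I would assume $\mathcal{A}$ is $\mathcal{V}_k^{-}$-matched and deduce that it is $\mathcal{V}_{k+1}^{-}$-matched.

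The key observation I would isolate first is that, for a $\mathcal{V}^{-}$-matched partition $\mathcal{A}$, the color set $\mathcal{A}(\mathcal{V}^{-}(e))$ depends only on the color $\mathcal{A}(e)$ of the cell $e$, and not on $e$ itself. This is exactly \cref{defi:U_matched} applied to $\mathcal{U} = \mathcal{V}^{-}$, and it lets me define a map $g$ sending each color $\alpha$ to the common value $\mathcal{A}(\mathcal{V}^{-}(e))$ taken over all cells $e$ with $\mathcal{A}(e) = \alpha$; well-definedness of $g$ is precisely the $\mathcal{V}^{-}$-matched property.

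With $g$ in hand, I would expand, for an arbitrary cell $c$, using that the operator $\mathcal{A}(\mathbf{s}) = \bigcup_{c\in\mathbf{s}}\mathcal{A}(c)$ distributes over unions together with \eref{eq:in_cum_neigh_recurs},
\begin{eqnarray*}
\mathcal{A}(\mathcal{V}_{k+1}^{-}(c))
&=&
\bigcup_{e\in\mathcal{V}_k^{-}(c)} \mathcal{A}(\mathcal{V}^{-}(e))
\\
&=&
\bigcup_{\alpha\in\mathcal{A}(\mathcal{V}_k^{-}(c))} g(\alpha),
\end{eqnarray*}
where the second equality regroups the union over cells $e$ into a union over the colors they realize, which is legitimate exactly because each term $\mathcal{A}(\mathcal{V}^{-}(e))$ depends only on $\mathcal{A}(e)$. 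This exhibits $\mathcal{A}(\mathcal{V}_{k+1}^{-}(c))$ as a function of $\mathcal{A}(\mathcal{V}_k^{-}(c))$ alone. Consequently, if $\mathcal{A}(c) = \mathcal{A}(d)$, the inductive hypothesis gives $\mathcal{A}(\mathcal{V}_k^{-}(c)) = \mathcal{A}(\mathcal{V}_k^{-}(d))$, and applying the displayed identity to both $c$ and $d$ yields $\mathcal{A}(\mathcal{V}_{k+1}^{-}(c)) = \mathcal{A}(\mathcal{V}_{k+1}^{-}(d))$, which closes the induction.

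I expect the only delicate point to be the regrouping in the displayed computation: one must verify that passing from the union over cells $e$ to the union over realized colors $\alpha$ neither drops nor introduces colors, which rests on each color of $\mathcal{A}(\mathcal{V}_k^{-}(c))$ being attained by at least one $e\in\mathcal{V}_k^{-}(c)$ and on the well-definedness of $g$. Everything else is routine bookkeeping with the color-set operator and the recursion \eref{eq:in_cum_neigh_recurs}.
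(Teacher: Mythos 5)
Your proof is correct and follows essentially the same route as the paper: induction on $k$ using the recursion \eref{eq:in_cum_neigh_recurs} to write $\mathcal{A}(\mathcal{V}_{k+1}^{-}(c)) = \bigcup_{c^{\star}\in\mathcal{V}_{k}^{-}(c)} \mathcal{A}(\mathcal{V}^{-}(c^{\star}))$ and then observing that each term depends only on the color of $c^{\star}$ while the inductive hypothesis makes the realized color sets over $\mathcal{V}_{k}^{-}(c)$ and $\mathcal{V}_{k}^{-}(d)$ coincide. Your explicit color-to-color-set map $g$ is simply a formalization of the paper's phrase that $\mathcal{A}(\mathcal{V}^{-}(c^{\star}))$ ``depends only on the color of the cell $c^{\star}$,'' and your regrouping of the union over cells into a union over realized colors is exactly the paper's final step.
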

\begin{proof}
	The proof is by induction. The base case $ k=1 $ is trivial since $ \mathcal{V}_{1}^{-} = \mathcal{V}^{-} $. We assume that the statement applies to a given $ k $. That is, $ \mathcal{A} $ is both $ \mathcal{V}^{-} $-matched and $ \mathcal{V}_{k}^{-} $-matched. Then
	\begin{eqnarray*}
	\mathcal{A}(\mathcal{V}_{k+1}^{-}(c)) = 
	\mathcal{A}\left(\bigcup_{c^{\star}\in\mathcal{V}_{k}^{-}(c)} \mathcal{V}^{-}(c^{\star})\right) 
	=
	\bigcup_{c^{\star}\in\mathcal{V}_{k}^{-}(c)} \mathcal{A}(\mathcal{V}^{-}(c^{\star})),
	\end{eqnarray*}
	where the first equality comes from \eref{eq:in_cum_neigh_recurs} and the second from how we defined the notation of applying $ \mathcal{A} $ to a set. Since $ \mathcal{A} $ is $ \mathcal{V}^{-} $-matched, $ \mathcal{A}(\mathcal{V}^{-}(c^{\star})) $ depends only on the color of the cell $ c^{\star} $. Moreover, since $ \mathcal{A} $ is also $ \mathcal{V}_{k}^{-} $-matched $ \mathcal{A}(c) = \mathcal{A}(d) $ implies $ \mathcal{A}(\mathcal{V}_{k}^{-}(c)) = \mathcal{A}(\mathcal{V}_{k}^{-}(d)) $, which means that $ c^{\star}\in\mathcal{V}_{k}^{-}(c) $ and $ d^{\star}\in\mathcal{V}_{k}^{-}(d) $ index the exact same set of colors. Therefore $ \mathcal{A}(\mathcal{V}_{k+1}^{-}(c)) = \mathcal{A}(\mathcal{V}_{k+1}^{-}(d)) $.\\
\end{proof}
\begin{corollary}
	If a partition defined on a finite set of cells is $ \mathcal{V}^{-} $-matched, then it is also $ \mathcal{R}^{-} $-matched.
	\label{lemma:v_matched_implies_r_matched}
	\hfill $ \square $
\end{corollary}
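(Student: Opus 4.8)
The plan is to combine the already-established fact that $\mathcal{V}^-$-matching propagates to all $\mathcal{V}_k^-$ (\cref{lemma:v_matched_implies_v_k_matched}) with the finiteness-driven stabilization of the cumulative in-neighborhoods into the in-reachability set (\cref{cor:cumulative_neigh_k_inside_reachable}). The strategy is to observe that on a finite cell set the in-reachability set of every cell is reached by some \emph{finite} cumulative in-neighborhood, so $\mathcal{R}^-$-matching reduces to $\mathcal{V}_k^-$-matching for a suitably large uniform $k$.

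First I would fix a single index that works for all cells simultaneously. By \cref{cor:cumulative_neigh_k_inside_reachable}, for each cell $c$ there is a finite $k_c$ with $\mathcal{V}_{k_c}^-(c) = \mathcal{R}^-(c)$; since $\mathcal{C}$ is finite I set $K := \max_{c\in\mathcal{C}} k_c$. By the monotonicity and fixed-point behavior recorded in \cref{lemma:cum_sum_monotonic_conv}, once the recursion has stabilized at a cell it stays stable, so $\mathcal{V}_{K}^-(c) = \mathcal{R}^-(c)$ holds \emph{for every} cell $c$ at this common $K$.

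Next I would invoke $\mathcal{V}_K^-$-matching. Since $\mathcal{A}$ is assumed $\mathcal{V}^-$-matched, \cref{lemma:v_matched_implies_v_k_matched} gives that $\mathcal{A}$ is $\mathcal{V}_{K}^-$-matched. Then for any pair $c,d$ with $\mathcal{A}(c) = \mathcal{A}(d)$, the chain
\begin{eqnarray*}
\mathcal{A}(\mathcal{R}^-(c))
=
\mathcal{A}(\mathcal{V}_{K}^-(c))
=
\mathcal{A}(\mathcal{V}_{K}^-(d))
=
\mathcal{A}(\mathcal{R}^-(d))
\end{eqnarray*}
holds, where the outer equalities use $\mathcal{V}_{K}^-(\cdot) = \mathcal{R}^-(\cdot)$ and the middle one is exactly the $\mathcal{V}_{K}^-$-matched property. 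By \cref{defi:U_matched} with $\mathcal{U} = \mathcal{R}^-$, this is precisely the statement that $\mathcal{A}$ is $\mathcal{R}^-$-matched.

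There is no genuinely hard step here; the only point requiring a little care is the passage from the \emph{per-cell} finite stabilization indices to a single uniform $K$, which is what makes the finiteness hypothesis essential. Once that uniformization is justified by \cref{lemma:cum_sum_monotonic_conv}, the result falls out immediately from \cref{lemma:v_matched_implies_v_k_matched}.
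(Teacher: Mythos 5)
Your proof is correct and takes essentially the same approach as the paper, which derives the corollary directly from \cref{lemma:v_matched_implies_v_k_matched} together with \cref{cor:cumulative_neigh_k_inside_reachable}. Your only addition is to make explicit the uniformization of the per-cell stabilization indices into a single $K$ via \cref{lemma:cum_sum_monotonic_conv}, a step the paper leaves implicit (and which could even be avoided by taking $K=\max(k_c,k_d)$ for each pair separately).
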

\begin{proof}
	This is direct from \cref{lemma:v_matched_implies_v_k_matched} and 	\cref{cor:cumulative_neigh_k_inside_reachable}.
\end{proof}
We have seen in \cref{lemma:n_matched_implies_v_matched} that a $ \mathcal{N}^{-} $-matched partition is also $ \mathcal{V}^{-} $-matched. The next very trivial example shows that the converse is not necessarily true.
\begin{exmp}\label{exmp:not_n_but_v_matched_example}
	Consider the network in \fref{fig:not_n_but_v_matched}, which is colored with only one color (white). 
	Note that $ \mathcal{N}^{-}(1) = \{\}$ is empty and $ \mathcal{N}^{-}(2) = \{1\}$ contains the white color. Therefore the partition is not $ \mathcal{N}^{-} $-matched. On the other hand, $ \mathcal{V}^{-}(1) = \{1\}$ and $ \mathcal{V}^{-}(2) = \{1,2\}$, so the partition is $ \mathcal{V}^{-} $-matched.
	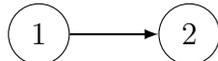
\begin{figure}[h]
		\centering
		\begin{tikzpicture}[
node1/.style = {circle,minimum size=23,draw},
node2/.style = {circle,minimum size=23,draw,fill=white!75!black},
node3/.style = {circle,minimum size=23,draw,fill=white!50!black},
noderect/.style = {rectangle,minimum size=20,draw},
edge1/.style = {>=latex,thick},
edgedashed/.style = {>=latex,thick,dashed},
edge2/.style = {>=latex,thick,blue},
edge3/.style = {>=latex,thick,red}
]

\node[node1] at (0,0)(n1){1};
\node[node1] at (2.0,0)(n2){2};

\draw [->,edge1](n1) -- (n2);

%\node (minus123) at ($(n3)!0.4!(n4) + (0.2,-0.2)$) {(-)};
%\DoubleLine{n2}{n3}{<-,edge1}{}{->,edge1}{}

%\draw [->,edge1] (n1) edge[loop left,looseness=5] (n1);
%\draw [->,edge1] (n3) edge[loop above,looseness=5] (n3);

%\node (minus123) at ($(n1)!0.4!(n2) + (0,0.3)$) {(-)};

%\DoubleLine{n1}{n2}{<-,edge1}{}{->,edge1}{}
%\DoubleLine{n1}{n3}{<-,edge1}{}{->,edge1}{}
%\DoubleLine{n2}{n3}{<-,edge1}{}{->,edge1}{}

\end{tikzpicture} 
		\caption{Partition that is not $ \mathcal{N}^{-} $-matched but is $ \mathcal{V}^{-} $-matched.}
		\label{fig:not_n_but_v_matched}
	\end{figure}
	\hfill $ \square $
\end{exmp}
We have also seen in \cref{lemma:v_matched_implies_r_matched} that a $ \mathcal{V}^{-} $-matched partition is also $ \mathcal{R}^{-} $-matched. In the next example we disprove the converse statement.
\begin{exmp}\label{exmp:not_v_but_r_matched_example}
	Consider the network in \fref{fig:not_v_but_r_matched}.
	Note that $ \mathcal{V}^{-}(1) = \{1,4\}$ contains only white colors and $ \mathcal{V}^{-}(4) = \{2,3,4\}$ contains white and gray colors. Therefore the partition is not $ \mathcal{V}^{-} $-matched. On the other hand, $ \mathcal{R}^{-}(1) = \mathcal{R}^{-}(4)$ and $ \mathcal{R}^{-}(2) = \mathcal{R}^{-}(3)$, so the partition is $ \mathcal{R}^{-} $-matched.
	\begin{figure}[h]
		\centering
		\begin{tikzpicture}[
node1/.style = {circle,minimum size=23,draw},
node2/.style = {circle,minimum size=23,draw,fill=white!75!black},
node3/.style = {circle,minimum size=23,draw,fill=white!50!black},
noderect/.style = {rectangle,minimum size=20,draw},
edge1/.style = {>=latex,thick},
edgedashed/.style = {>=latex,thick,dashed},
edge2/.style = {>=latex,thick,blue},
edge3/.style = {>=latex,thick,red}
]

\node[node1] at (-1,0)(n1){1};
\node[node2] at (0,1.0)(n2){2};
\node[node2] at (0,-1.0)(n3){3};
\node[node1] at (1.0,0)(n4){4};

\draw [->,edge1](n1) -- (n2);
\draw [->,edge1](n1) -- (n3);
\draw [->,edge1](n2) -- (n4);
\draw [->,edgedashed](n3) -- (n4);
\draw [->,edge1] (n4) edge[loop right,looseness=5] (n4);
\draw [->,edge1](n4) -- (n1);

%\node (minus123) at ($(n3)!0.4!(n4) + (0.2,-0.2)$) {(-)};
%\DoubleLine{n2}{n3}{<-,edge1}{}{->,edge1}{}

%\draw [->,edge1] (n1) edge[loop left,looseness=5] (n1);
%\draw [->,edge1] (n3) edge[loop above,looseness=5] (n3);

%\node (minus123) at ($(n1)!0.4!(n2) + (0,0.3)$) {(-)};

%\DoubleLine{n1}{n2}{<-,edge1}{}{->,edge1}{}
%\DoubleLine{n1}{n3}{<-,edge1}{}{->,edge1}{}
%\DoubleLine{n2}{n3}{<-,edge1}{}{->,edge1}{}

\end{tikzpicture} 
		\caption{Partition that is not $ \mathcal{V}^{-} $-matched but is $ \mathcal{R}^{-} $-matched.}
		\label{fig:not_v_but_r_matched}
	\end{figure}
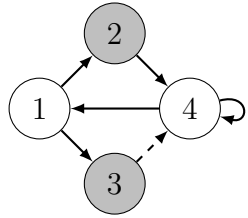
	\hfill $ \square $
\end{exmp}
In summary, we have shown that the sequence: $ \mathcal{N}^{-} $-matched , $ \mathcal{V}^{-} $-matched and $ \mathcal{R}^{-} $-matched lists progressively weaker assumptions. In \cref{exmp:not_v_but_r_matched_example} the in-reachability sets are, in fact, all the same. The following result should be obvious from the definitions.
\begin{corollary}\label{lemma:SCCs_are_r_matched}
	In a network that is a SCC, every partition is $ \mathcal{R}^{-} $-matched.
	\hfill $ \square $
\end{corollary}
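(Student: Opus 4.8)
The plan is to reduce the $\mathcal{R}^{-}$-matching condition of \cref{defi:U_matched} (with $\mathcal{U} = \mathcal{R}^{-}$) to a triviality by first establishing that in a network consisting of a single SCC the in-reachability set of every cell is the entire cell set $\mathcal{C}$. Concretely, I would argue as follows. Since the whole network is one SCC, by the definition of strong connectedness any two cells $c,d \in \mathcal{C}$ satisfy $\mathcal{R}^{-}(c) = \mathcal{R}^{-}(d)$. Because each cell always lies in its own in-reachability set (as $\mathcal{V}_{0}^{-}(c) = c \subseteq \mathcal{R}^{-}(c)$), every cell $d$ belongs to this common set $\mathcal{R}^{-}(c)$, and therefore $\mathcal{R}^{-}(c) = \mathcal{C}$ for all $c \in \mathcal{C}$.

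With this in hand, I would fix an arbitrary partition $\mathcal{A}$ and take any two cells $c,d$ with $\mathcal{A}(c) = \mathcal{A}(d)$. Using the convention $\mathcal{A}(\mathbf{s}) := \bigcup_{e \in \mathbf{s}} \mathcal{A}(e)$ together with the previous step, I obtain $\mathcal{A}(\mathcal{R}^{-}(c)) = \mathcal{A}(\mathcal{C}) = \mathcal{A}(\mathcal{R}^{-}(d))$, which is exactly the conclusion required by the $\mathcal{R}^{-}$-matching condition. Notice that the equality of colors on the in-reachability sets holds for \emph{every} pair $c,d$, so the hypothesis $\mathcal{A}(c) = \mathcal{A}(d)$ is never actually used; the implication is satisfied because its consequent is unconditionally true.

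There is essentially no obstacle in this argument. The only mildly substantive point is the justification that $\mathcal{R}^{-}(c) = \mathcal{C}$ for all $c$, and this is immediate from the definition of an SCC combined with the reflexivity $c \in \mathcal{R}^{-}(c)$; the remainder is a direct substitution into the definition of matching. This is why the statement is labelled a corollary that is obvious from the definitions, and the written proof can be confined to these two observations.
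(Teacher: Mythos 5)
Your proof is correct and is precisely the ``obvious from the definitions'' argument the paper has in mind (the paper states this corollary without proof): in a single-SCC network all cells share one in-reachability set, which by reflexivity ($c \in \mathcal{V}_{0}^{-}(c) \subseteq \mathcal{R}^{-}(c)$) must be all of $\mathcal{C}$, so $\mathcal{A}(\mathcal{R}^{-}(c)) = \mathcal{A}(\mathcal{C})$ holds for every cell and the matching implication is vacuously strong. The only remark worth making is that the detour through $\mathcal{R}^{-}(c) = \mathcal{C}$ is not even needed, since $\mathcal{R}^{-}(c) = \mathcal{R}^{-}(d)$ for all pairs already gives $\mathcal{A}(\mathcal{R}^{-}(c)) = \mathcal{A}(\mathcal{R}^{-}(d))$ directly.
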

More generally,
\begin{corollary}\label{lemma:strong_are_r_matched}
	Every strong partition is $ \mathcal{R}^{-} $-matched.
	\hfill $ \square $
\end{corollary}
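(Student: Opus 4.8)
The plan is to unfold the definition of a strong partition and observe that the $\mathcal{R}^{-}$-matching condition collapses to an equality between two identical sets. First I would take any two cells $c,d\in\mathcal{C}$ with $\mathcal{A}(c)=\mathcal{A}(d)$, which by definition means that $c$ and $d$ belong to a common color $A$ of the partition $\mathcal{A}$. Since $\mathcal{A}$ is strong, \cref{def:partition_connect_classif} tells us that every color of $\mathcal{A}$ is strong, and in particular the color $A$ containing $c$ and $d$ is strong.

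The second step exploits the definition of a strong color. By \cref{def:color_connect_classif}, all cells of a strong color lie in the same SCC, which by the definition of strongly connected cells is precisely the statement that $\mathcal{R}^{-}(c)=\mathcal{R}^{-}(d)$. Thus the hypothesis $\mathcal{A}(c)=\mathcal{A}(d)$ forces the two in-reachability sets to coincide as subsets of $\mathcal{C}$.

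It then remains only to apply $\mathcal{A}$ to both sides. Because $\mathcal{R}^{-}(c)$ and $\mathcal{R}^{-}(d)$ are literally the same subset of cells, we trivially obtain $\mathcal{A}(\mathcal{R}^{-}(c))=\mathcal{A}(\mathcal{R}^{-}(d))$, using the convention $\mathcal{A}(\mathbf{s}):=\bigcup_{e\in\mathbf{s}}\mathcal{A}(e)$. This is exactly the $\mathcal{R}^{-}$-matching condition of \cref{defi:U_matched} specialized to $\mathcal{U}=\mathcal{R}^{-}$, so $\mathcal{A}$ is $\mathcal{R}^{-}$-matched.

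I do not expect any genuine obstacle here: unlike the earlier matching results (e.g.\ \cref{lemma:v_matched_implies_v_k_matched}), which required propagating a color-matching property through the recursive structure of the cumulative neighborhoods by induction, strongness gives the much stronger conclusion that the in-reachability sets of same-colored cells are \emph{equal} rather than merely color-equivalent. The matching condition is then satisfied vacuously, and the only point worth stating explicitly is the chain of definitional equivalences: same color $\Rightarrow$ strong color $\Rightarrow$ same SCC $\Rightarrow$ equal $\mathcal{R}^{-}$.
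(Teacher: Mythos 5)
Your proof is correct and follows exactly the definitional unfolding the paper intends when it leaves this corollary as immediate: strong partition $\Rightarrow$ each color strong $\Rightarrow$ same-colored cells have literally equal in-reachability sets, so the $\mathcal{R}^{-}$-matching condition $\mathcal{A}(\mathcal{R}^{-}(c))=\mathcal{A}(\mathcal{R}^{-}(d))$ holds trivially. Your closing observation that the sets coincide outright, rather than merely being color-equivalent as in the inductive matching lemmas, is precisely why the paper states this without proof.
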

We now show that the tamest assumption we described ($ \mathcal{R}^{-} $-matched) is enough to allow the following results. 
\begin{lemma}\label{lemma:nw_r_matched_finer_than_rdc}
	If a non-weak partition is $ \mathcal{R}^{-} $-matched, then it is finer than the partition of RDCs.
	\hfill $ \square $
\end{lemma}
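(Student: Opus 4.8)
The plan is to unwind the goal into a statement purely about roots. Since the RDC partition groups two cells together exactly when their in-reachability sets contain the same set of roots, proving that $\mathcal{A}$ is finer than the RDC partition amounts to showing that $\mathcal{A}(c)=\mathcal{A}(d)$ forces $c$ and $d$ to depend on the same roots. So I would fix cells $c,d$ with $\mathcal{A}(c)=\mathcal{A}(d)$ and argue by contradiction: assume their root dependencies differ, so some root SCC $\mathcal{S}$ lies in the in-reachability of one but not the other. Because $\mathcal{S}$ is a single SCC, it is all-in-or-all-out of any $\mathcal{R}^{-}(d)$ (if one cell of $\mathcal{S}$ reaches $d$, strong connectedness sends all of $\mathcal{S}$ to $d$), so the failure can be phrased cleanly as $\mathcal{S}\subseteq\mathcal{R}^{-}(c)$ but $\mathcal{S}\cap\mathcal{R}^{-}(d)=\emptyset$.

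The core of the argument then chains the two hypotheses together. Picking any $r\in\mathcal{S}$, \cref{defi:root} gives $\mathcal{R}^{-}(r)=\mathcal{S}$ exactly. Since $r\in\mathcal{S}\subseteq\mathcal{R}^{-}(c)$, its color lies in $\mathcal{A}(\mathcal{R}^{-}(c))$; applying the $\mathcal{R}^{-}$-matching property of \cref{defi:U_matched} to $c$ and $d$ yields $\mathcal{A}(\mathcal{R}^{-}(c))=\mathcal{A}(\mathcal{R}^{-}(d))$, so there is a cell $r'\in\mathcal{R}^{-}(d)$ with $\mathcal{A}(r')=\mathcal{A}(r)$. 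Now I invoke non-weakness: the color $A:=\mathcal{A}(r)=\mathcal{A}(r')$ is non-weak, so by \cref{def:color_connect_classif} we have $\bigcap_{e\in A}\mathcal{R}^{-}(e)\neq\emptyset$, and in particular $\mathcal{R}^{-}(r)\cap\mathcal{R}^{-}(r')\neq\emptyset$, i.e.\ $\mathcal{S}\cap\mathcal{R}^{-}(r')\neq\emptyset$. Finally, from $r'\in\mathcal{R}^{-}(d)$ and \cref{lemma:reachable_inclusion} we get $\mathcal{R}^{-}(r')\subseteq\mathcal{R}^{-}(d)$, whence $\mathcal{S}\cap\mathcal{R}^{-}(d)\neq\emptyset$, contradicting the assumption and completing the proof.

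The step I expect to be the crux is this bridge from a color equality back to a reachability intersection: recognizing that $\mathcal{R}^{-}$-matching \emph{relocates} the color of a root cell into $\mathcal{R}^{-}(d)$, and that non-weakness of \emph{precisely that color} is what forces the relocated cell's upstream set to reach back into the specific root $\mathcal{S}$. The identity $\mathcal{R}^{-}(r)=\mathcal{S}$ for root cells is what makes this work, because it converts the abstract non-empty intersection $\bigcap_{e\in A}\mathcal{R}^{-}(e)\neq\emptyset$ into a statement that pins down the root $\mathcal{S}$ itself rather than some unrelated common ancestor. Everything else (the all-in-or-all-out dichotomy for an SCC and the monotonicity of reachability) is routine bookkeeping once this insight is in place.
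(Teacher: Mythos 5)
Your proof is correct, and it reaches the conclusion by a noticeably different route than the paper's. The paper argues directly and globally: it first uses non-weakness (applied contrapositively to pairs of root cells) to show that distinct roots carry pairwise disjoint sets of colors; it then shows that any cell sharing a color with a root has that root, and no other, inside its in-reachability set --- a step that additionally invokes the fact that no cell can depend on zero roots; and only then does it apply $\mathcal{R}^{-}$-matching to the pair $c,d$ to conclude. You instead run a single localized contradiction: transport the color of a root cell $r$ (with $\mathcal{R}^{-}(r)=\mathcal{S}$, by \cref{defi:root}) from $\mathcal{R}^{-}(c)$ into $\mathcal{R}^{-}(d)$ via one application of $\mathcal{R}^{-}$-matching, then use non-weakness of that one transported color, together with \cref{lemma:reachable_inclusion}, to force $\mathcal{S}\cap\mathcal{R}^{-}(d)\neq\emptyset$. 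Your version is leaner: it needs non-weakness only for a single color (and your implicit check is sound, since strong colors also satisfy $\bigcap_{e\in A}\mathcal{R}^{-}(e)\neq\emptyset$, the intersection being equal to the union and each $\mathcal{R}^{-}(e)$ containing $e$), and it dispenses entirely with both the disjointness-of-root-colors claim and the appeal to every cell depending on at least one root; the all-in-or-all-out reduction you use to set up the contradiction is likewise valid, because one cell of an SCC reaching $d$ implies the whole SCC does. What the paper's longer route buys is reusable intermediate structure --- in particular the standalone statement that a cell sharing a root's color depends on exactly that root --- which makes the mechanism more transparent and is of independent descriptive interest. Both arguments ultimately hinge on the crux you correctly identified: $\mathcal{R}^{-}(r)=\mathcal{S}$ for root cells converts the abstract non-empty intersection in \cref{def:color_connect_classif} into dependence on the specific root $\mathcal{S}$ rather than some unrelated common ancestor.
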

\begin{proof}
	Consider some network with $ n $ roots $ \mathcal{S}_1,\ldots,\mathcal{S}_n $ and a partition $ \mathcal{A} $ that is non-weak and $ \mathcal{R}^{-} $-matched. Since they are roots, we have $\mathcal{S}_i = \mathcal{R}^{-}(\mathcal{S}_i) $ and $ \mathcal{S}_i \cap \mathcal{S}_j = \emptyset $ for all $ i\neq j $, which implies $ \mathcal{R}^{-}(\mathcal{S}_i) \cap \mathcal{R}^{-}(\mathcal{S}_j) = \emptyset $ for all $ i\neq j $. 
	Since $ \mathcal{A} $ is non-weak, $ \mathcal{R}^{-}(\mathcal{S}_i) \cap \mathcal{R}^{-}(\mathcal{S}_j) = \emptyset $ implies $ \mathcal{A}(\mathcal{S}_i) \cap \mathcal{A}(\mathcal{S}_j) = \emptyset $, for all $ i\neq j $. That is, each root in the network contains a set of colors distinct from every other root.\\
	Consider a cell $ c $ that is of a color that is present in one of the roots. That is, $ \mathcal{A}(c) =k  $ for some $ k\in\mathcal{A}(\mathcal{S}_i) $. Since $ \mathcal{A} $ is $ \mathcal{R}^{-} $-matched, $ \mathcal{A}(\mathcal{R}^{-}(c) ) = \mathcal{A}(\mathcal{S}_i)$. 
	Since each root has a set of colors distinct from every other root, $ \mathcal{A}(\mathcal{R}^{-}(c) ) \cap \mathcal{A}(\mathcal{S}_j) = \emptyset$ for all $ j\neq i $. 
	This implies $ \mathcal{R}^{-}(c)  \cap \mathcal{S}_j = \emptyset$ for all $ j\neq i $. That is, if some cell in the network shares its color with a root, then that cell cannot depend (in the $ \mathcal{R}^{-} $ sense) on any other roots. Since it is impossible not to depend on any roots at all, this implies $  \mathcal{R}^{-}(c) \supseteq \mathcal{S}_i $. That is, if a cell shares its color with a root, then it depends (in the $ \mathcal{R}^{-} $ sense) on that root (and no other roots).\\
	Finally, consider $ c,d $ such that $ \mathcal{A}(c) = \mathcal{A}(d) $. Then since $ \mathcal{A} $ is $ \mathcal{R}^{-} $-matched, $ \mathcal{A}(\mathcal{R}^{-}(c)) = \mathcal{A}(\mathcal{R}^{-}(d))$.
	Since $ \mathcal{R}^{-}(c) $ and $ \mathcal{R}^{-}(d) $ share the exact same set of colors, they also share the same subset of colors that are present in roots. 
	From what we have shown before, depending on a color shared by a root implies depending on the root itself.
	Therefore cells of the same color depend on exactly the same roots, so $ \mathcal{A} $ is finer than the partition of RDCs.	
\end{proof}
We now show how the top non-weak partition $ \top_F^{NW} $ is given in terms of the $ cir_F $ function for the case where all rooted partitions are $ \mathcal{R}^{-} $-matched. In \cref{exmp:R_matched_lattice_example} the partitions of interest satisfy this assumption. In the following section we understand why that is the case.
\begin{corollary}\label{lemma:maximal_ns_rooted}
	Consider a network $ \mathcal{G} $ with cell type partition $ \type{} $. Represent its RDCs according to a partition $ \mathcal{B} $. Assume all its rooted partitions are $ \mathcal{R}^{-} $-matched. Then $ \top_F^{NW} = cir_F(\type{} \wedge \mathcal{B})$. 
	\hfill $ \square $
\end{corollary}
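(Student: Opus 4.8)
The plan is to mirror the proof of the analogous strong case in \cref{lemma:maximal_strong}, replacing the partition of SCCs with the partition of RDCs $\mathcal{B}$, and using the $\mathcal{R}^{-}$-matching hypothesis to recover the relationship between non-weak partitions and $\mathcal{B}$ that holds unconditionally between strong partitions and the SCCs. I would argue in two directions: first that $cir_F(\type{} \wedge \mathcal{B})$ is itself a non-weak partition lying in $L_F$, and second that it dominates every non-weak partition in $L_F$, so that it is the maximum element $\top_F^{NW}$.

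For the first direction, I would recall from \cref{lemma:cir_F_invariant} that $cir_F(\type{} \wedge \mathcal{B}) \in L_F$ and that $cir_F(\type{} \wedge \mathcal{B}) \leq \type{} \wedge \mathcal{B} \leq \mathcal{B}$. Since this partition is finer than the partition of RDCs, \cref{lemma:finer_rdc_implies_nw} immediately gives that it is non-weak, and hence $cir_F(\type{} \wedge \mathcal{B}) \in L_F^{NW}$.

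For the second direction, I would take an arbitrary non-weak partition $\mathcal{P} \in L_F$. The key observation is that under the stated assumption every non-weak partition is $\mathcal{R}^{-}$-matched: if $\mathcal{P}$ is strong this follows from \cref{lemma:strong_are_r_matched}, and if $\mathcal{P}$ is rooted it follows directly from the hypothesis. With $\mathcal{P}$ non-weak and $\mathcal{R}^{-}$-matched, \cref{lemma:nw_r_matched_finer_than_rdc} yields $\mathcal{P} \leq \mathcal{B}$. Since $\mathcal{P} \in L_F \subseteq L_{\type{}}$ also forces $\mathcal{P} \leq \type{}$, I would conclude $\mathcal{P} \leq \type{} \wedge \mathcal{B}$. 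As $cir_F(\type{} \wedge \mathcal{B})$ is the coarsest element of $L_F$ lying below $\type{} \wedge \mathcal{B}$ (it is the join of all such elements, by the construction in \cref{lemma:bot_join_implies_cir}), and $\mathcal{P}$ is one such element, I would obtain $\mathcal{P} \leq cir_F(\type{} \wedge \mathcal{B})$. Combining the two directions, $cir_F(\type{} \wedge \mathcal{B})$ is a non-weak partition of $L_F$ that is coarser than every non-weak partition in $L_F$, so it is exactly $\top_F^{NW}$.

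The argument is essentially routine once the hypotheses are assembled; the one point requiring care is confirming that the $\mathcal{R}^{-}$-matching assumption really covers all non-weak partitions and not merely the explicitly rooted ones — this is precisely where \cref{lemma:strong_are_r_matched} is needed to handle the strong case, so that \cref{lemma:nw_r_matched_finer_than_rdc} applies uniformly. A secondary subtlety I would be careful to address is that the statement asserts the existence of a well-defined top $\top_F^{NW}$ even though $L_F^{NW}$ need not be a lattice in general; this is not assumed but rather established by the two-directional argument, which exhibits a genuine maximum element of the poset $L_F^{NW}$.
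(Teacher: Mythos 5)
Your proof is correct and follows exactly the route the paper intends for this corollary: it is the direct assembly of \cref{lemma:strong_are_r_matched}, \cref{lemma:nw_r_matched_finer_than_rdc}, \cref{lemma:finer_rdc_implies_nw} and the defining property of $cir_F$ from \cref{lemma:cir_F_invariant}, mirroring the strong case of \cref{lemma:maximal_strong} with RDCs in place of SCCs. Your two noted subtleties — that $\mathcal{R}^{-}$-matching must be secured for strong as well as rooted partitions, and that the maximum $\top_F^{NW}$ exists even though $L_F^{NW}$ need not be a lattice in general — are exactly the right points of care and are handled correctly.
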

We are now ready to prove the following result.
\begin{lemma}\label{lemma:non_weak_r_matched_join}
	For any pair of non-weak $ \mathcal{R}^{-} $-matched partitions $ \mathcal{A}_{nw_1},\mathcal{A}_{nw_2} $, their join $ \mathcal{A}_{nw} = \mathcal{A}_{nw_1} \vee \mathcal{A}_{nw_2} $ is also non-weak.
	\hfill $ \square $
\end{lemma}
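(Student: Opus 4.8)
The plan is to reduce this to the two earlier characterizations of non-weak partitions via the partition of RDCs, thereby avoiding any direct chain argument from \cref{lemma:join_chain_def}. The key observation is that both hypotheses (\emph{non-weak} and $\mathcal{R}^{-}$\emph{-matched}) are exactly what \cref{lemma:nw_r_matched_finer_than_rdc} needs to place each input partition below the RDC partition, and that being below a fixed partition is automatically preserved under the join.

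First I would denote the partition of RDCs by $\mathcal{B}$. Since $\mathcal{A}_{nw_1}$ and $\mathcal{A}_{nw_2}$ are each non-weak and $\mathcal{R}^{-}$-matched, \cref{lemma:nw_r_matched_finer_than_rdc} applies to both, giving $\mathcal{A}_{nw_1} \leq \mathcal{B}$ and $\mathcal{A}_{nw_2} \leq \mathcal{B}$. Thus $\mathcal{B}$ is an upper bound of the pair $\{\mathcal{A}_{nw_1},\mathcal{A}_{nw_2}\}$ in the lattice of all partitions on $\mathcal{C}$.

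Next, because the join $\vee$ is by definition the \emph{least} upper bound (it is the finest partition coarser than both $\mathcal{A}_{nw_1}$ and $\mathcal{A}_{nw_2}$), and $\mathcal{B}$ is one such upper bound, it follows immediately that $\mathcal{A}_{nw} = \mathcal{A}_{nw_1} \vee \mathcal{A}_{nw_2} \leq \mathcal{B}$. Finally, \cref{lemma:finer_rdc_implies_nw} states that any partition finer than the partition of RDCs is non-weak, so applying it to $\mathcal{A}_{nw} \leq \mathcal{B}$ yields that $\mathcal{A}_{nw}$ is non-weak, which is the desired conclusion.

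The only point requiring care is the middle step: one must recognize that the property ``finer than the fixed partition $\mathcal{B}$'' is closed under the join, which is immediate from the universal property of the least upper bound and does \emph{not} require unwinding the explicit chain characterization of \cref{lemma:join_chain_def}. I do not expect a genuine obstacle here; the substance of the argument has already been carried by \cref{lemma:nw_r_matched_finer_than_rdc}, and the present lemma is essentially the observation that both directions of the RDC characterization sandwich the join between the inputs and $\mathcal{B}$. It is worth noting that the conclusion only asserts non-weakness of the join, not that the join is itself $\mathcal{R}^{-}$-matched, so no additional matching argument is needed.
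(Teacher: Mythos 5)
Your proposal is correct and follows exactly the paper's own argument: apply \cref{lemma:nw_r_matched_finer_than_rdc} to place both partitions below the RDC partition, note the join (as least upper bound in $L_{\mathcal{C}}$) stays below it, and conclude via \cref{lemma:finer_rdc_implies_nw}. The only difference is presentational --- you make explicit the universal-property step that the paper leaves implicit.
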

\begin{proof}
	From \cref{lemma:nw_r_matched_finer_than_rdc}, $ \mathcal{A}_{nw_1},\mathcal{A}_{nw_2} $ are both finer than the partition of RDCs. Therefore their join $ \mathcal{A}_{nw} $ is also going to be finer. From \cref{lemma:finer_rdc_implies_nw}, it is also non-weak.
\end{proof}
The following result is straightforward from the general case illustrated in \tref{table:join_general}, together with \cref{lemma:non_weak_r_matched_join}.
\begin{corollary}\label{lemma:S_R_nspurious_join}
	Consider partitions $ \mathcal{A}_{s},\mathcal{A}_{r_1},\mathcal{A}_{r_2} $ such that $ \mathcal{A}_{s} $ is strong and $ \mathcal{A}_{r_1},\mathcal{A}_{r_2} $ are rooted and $ \mathcal{R}^{-} $-matched. Then $ \mathcal{A}_{s} \vee \mathcal{A}_{r_1} $ and $ \mathcal{A}_{r_1} \vee \mathcal{A}_{r_2} $ are rooted.
	\hfill $ \square $
\end{corollary}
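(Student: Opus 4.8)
The plan is to pin down the two uncertain ($R/W$) entries of \tref{table:join_general} by combining the non-weakness guarantee of \cref{lemma:non_weak_r_matched_join} with the monotonicity facts of \cref{lemma:partition_comparison_lemma}. The strategy for both joins is identical: on the one hand show the result is non-weak, and on the other show it cannot be strong, so that \emph{rooted} is the only surviving possibility.

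First I would treat $\mathcal{A}_{s}\vee\mathcal{A}_{r_1}$. Since $\mathcal{A}_{s}$ is strong, \cref{lemma:strong_are_r_matched} gives that it is $\mathcal{R}^{-}$-matched, and being strong it is non-weak; by hypothesis $\mathcal{A}_{r_1}$ is rooted (hence non-weak) and $\mathcal{R}^{-}$-matched. Thus both operands are non-weak and $\mathcal{R}^{-}$-matched, so \cref{lemma:non_weak_r_matched_join} yields that $\mathcal{A}_{s}\vee\mathcal{A}_{r_1}$ is non-weak, that is, strong or rooted. To rule out the strong case I would use that the join is coarser than each operand, in particular $\mathcal{A}_{r_1}\leq\mathcal{A}_{s}\vee\mathcal{A}_{r_1}$; since $\mathcal{A}_{r_1}$ is rooted, the third item of \cref{lemma:partition_comparison_lemma} forces $\mathcal{A}_{s}\vee\mathcal{A}_{r_1}$ to be rooted or weak. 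Intersecting the two conclusions (non-weak, and rooted-or-weak) leaves exactly rooted.

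The argument for $\mathcal{A}_{r_1}\vee\mathcal{A}_{r_2}$ is the same in shape: both operands are rooted and $\mathcal{R}^{-}$-matched, hence non-weak and $\mathcal{R}^{-}$-matched, so \cref{lemma:non_weak_r_matched_join} makes the join non-weak; and since the join is coarser than the rooted partition $\mathcal{A}_{r_1}$, \cref{lemma:partition_comparison_lemma} again makes it rooted or weak. Once more only rooted survives.

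I do not expect a serious obstacle here, since this is the promised straightforward consequence; the only point needing care is the logical bookkeeping. \cref{lemma:non_weak_r_matched_join} removes the $W$ alternative from the general $R/W$ entries, while the coarseness observation fed through \cref{lemma:partition_comparison_lemma} removes the $S$ alternative, and neither removal alone suffices. It is precisely their conjunction that collapses both ambiguous entries of \tref{table:join_general} to $R$, yielding the cleaner table \tref{table:join_r_matched}.
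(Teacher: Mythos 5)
Your proposal is correct and is essentially the paper's own argument: the paper derives the result from \tref{table:join_general} (whose $R/W$ entries were themselves obtained from \cref{lemma:partition_comparison_lemma}, exactly your coarseness step ruling out strong) combined with \cref{lemma:non_weak_r_matched_join} ruling out weak. You merely unpack the one-line justification explicitly, and your invocation of \cref{lemma:strong_are_r_matched} to verify that $\mathcal{A}_{s}$ is $\mathcal{R}^{-}$-matched is precisely the detail the paper leaves implicit when applying \cref{lemma:non_weak_r_matched_join} to the $\mathcal{A}_{s}\vee\mathcal{A}_{r_1}$ case.
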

This means that for the case where rooted partitions are $ \mathcal{R}^{-} $-matched, \tref{table:join_general} simplifies into \tref{table:join_r_matched}. Furthermore, under such conditions $ L_F^{NW} $ is a sublattice of $ L_F $. This is illustrated in the following example.
\begin{table}[h]
	\caption{Join table when rooted partitions are $ \mathcal{R}^{-} $-matched.}
	\begin{center}
		\begin{tabular}{c | c c c}
			$ \vee $ & S & R & W  \\
			\cline{1-4}
			S & S & R & W \\
			R & R & R & W \\
			W & W & W & W
		\end{tabular}
	\end{center}
	\label{table:join_r_matched}
\end{table}
\begin{exmp}
	\label{exmp:R_matched_lattice_example}
	Consider the network in \fref{fig:class_net_example1} and its respective lattice of balanced partitions $ \Lambda $ in \fref{fig:class_lattice_example1}. Note that $ \Lambda^{S} $ and $ \Lambda^{NW} $ are both lattices with top partitions $ \top^{S} = \bot $ and $ \top^{NW} = 13/24 $, respectively. Every balanced partition in this network is $ \mathcal{R}^{-} $-matched, therefore \tref{table:join_r_matched} applies. In the following section we will see that this fact is immediate from the network not allowing edge cancelings.
	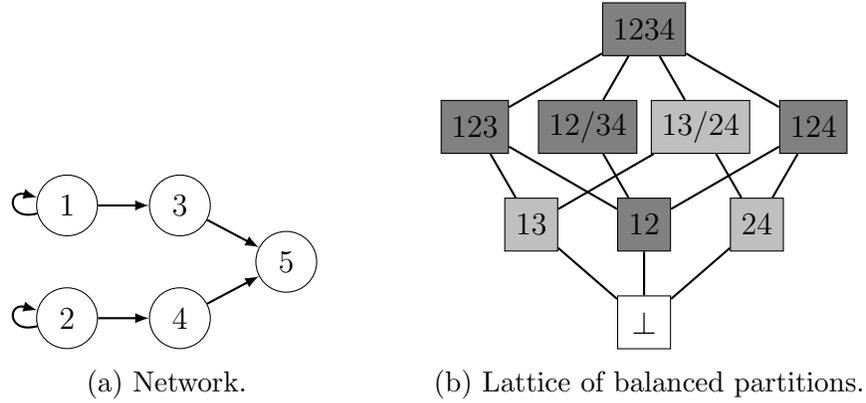
\begin{figure}[h]
		\centering
		\begin{subfigure}[t]{0.4\textwidth}
			\centering
			\begin{tikzpicture}[
node1/.style = {circle,minimum size=23,draw},
node2/.style = {circle,minimum size=23,draw,fill=white!75!black},
node3/.style = {circle,minimum size=23,draw,fill=white!50!black},
noderect/.style = {rectangle,minimum size=20,draw},
edge1/.style = {>=latex,thick},
edge2/.style = {>=latex,thick,blue},
edge3/.style = {>=latex,thick,red}
]
\node[node1] at (0,1.5)(n1){1};
\node[node1] at (0,0)(n2){2};
\node[node1] at (1.5,1.5)(n3){3};
\node[node1] at (1.5,0)(n4){4};
\node[node1] at ({1.5+sqrt(2)},0.75)(n5){5};

\draw [->,edge1] (n1) edge[loop left,looseness=5] (n1);
\draw [->,edge1] (n2) edge[loop left,looseness=5] (n2);

\draw [->,edge1](n1) -- (n3);
\draw [->,edge1](n2) -- (n4);
\draw [->,edge1](n3) -- (n5);
\draw [->,edge1](n4) -- (n5);

%\node (minus123) at ($(n1)!0.4!(n2) + (0,0.3)$) {(-)};

%\DoubleLine{n1}{n2}{<-,edge1}{}{->,edge1}{}
%\DoubleLine{n1}{n3}{<-,edge1}{}{->,edge1}{}
%\DoubleLine{n2}{n3}{<-,edge1}{}{->,edge1}{}

\end{tikzpicture}
			\caption{Network.}
			\label{fig:class_net_example1}
		\end{subfigure}
		\begin{subfigure}[t]{0.4\textwidth}
			\centering
			\begin{tikzpicture}[
node1/.style = {circle,minimum size=23,draw},
node2/.style = {circle,minimum size=23,draw,fill=white!75!black},
node3/.style = {circle,minimum size=23,draw,fill=white!50!black},
pweak/.style = {rectangle,minimum size=20,draw,draw,fill=white!50!black},
proot/.style = {rectangle,minimum size=20,draw,draw,fill=white!75!black},
pstrong/.style = {rectangle,minimum size=20,draw},
noderect/.style = {rectangle,minimum size=20,draw},
edge1/.style = {>=latex,thick},
edgedash/.style = {>=latex,thick,dashed},
edge2/.style = {>=latex,thick,blue},
edge3/.style = {>=latex,thick,red}
]

\def\layer{1.3}
\def\del{0.75}

\node[pstrong] at (0,0) (bot){$ \bot $};

\node[proot] at ({-2*\del},{\layer} ) (13){$ 13 $};
\node[pweak] at (0,{\layer} ) (12){$ 12 $};
\node[proot] at ({2*\del},{\layer} ) (24){$ 24 $};

\node[pweak] at ({-3*\del},{2*\layer} ) (123){$ 123 $};
\node[pweak] at ({-\del},{2*\layer} ) (12_34){$ 12/34 $};
\node[proot] at ({\del},{2*\layer} ) (13_24){$ 13/24 $};
\node[pweak] at ({3*\del},{2*\layer} ) (124){$ 124 $};

\node[pweak] at (0,{3*\layer}) (1234){$ 1234 $};

\draw [-,edge1](bot) -- (13);
\draw [-,edge1](bot) -- (12);
\draw [-,edge1](bot) -- (24);

\draw [-,edge1](13) -- (123);;
\draw [-,edge1](13) -- (13_24);

\draw [-,edge1](12) -- (123);
\draw [-,edge1](12) -- (12_34);
\draw [-,edge1](12) -- (124);

\draw [-,edge1](24) -- (13_24);
\draw [-,edge1](24) -- (124);

\draw [-,edge1](1234) -- (123);
\draw [-,edge1](1234) -- (12_34);
\draw [-,edge1](1234) -- (13_24);
\draw [-,edge1](1234) -- (124);

\end{tikzpicture} 
			\caption{Lattice of balanced partitions.}
			\label{fig:class_lattice_example1}
		\end{subfigure}
		\caption{A network and its lattice of balanced partitions.}
		\label{fig:classification_example1}
	\end{figure}
	\hfill $ \square $
\end{exmp}
\subsection{Neighborhood color invariance}
\label{subsec:neigh_color_invariance}
We now introduce a property that is stronger than \cref{defi:U_matched}, that applies only to balanced partitions, since it is related to the respective quotient network.
\begin{defi} \label{defi:U_invariant}
	Consider a balanced partition $ \bp \in \Lambda_{\mathcal{G}}$ on a network $ \mathcal{G} $ and its respective quotient network $ \mathcal{Q} = \mathcal{G}/\bp $. Take a particular type of neighborhood $ \mathcal{U}\in \{\mathcal{N}^{-},\mathcal{V}^{-},\mathcal{V}_{k}^{-},\mathcal{R}^{-}\} $ such that $ \mathcal{U}_{\mathcal{G}}$ and $\mathcal{U}_{\mathcal{Q}} $ are the corresponding functions on $ \mathcal{G} $ and $ \mathcal{Q} $, respectively.
	Then $ \bp $ is $ \mathcal{U} $-\textit{invariant} if
	\begin{eqnarray}
	d\in \mathcal{U}_{\mathcal{G}}(c)
	\implies
	\bp (d) \in \mathcal{U}_{\mathcal{Q}}(\bp (c)),
	\end{eqnarray}
	or equivalently, 
	\begin{eqnarray}
	\bp(\mathcal{U}_{\mathcal{G}}(c))
	\subseteq
	\mathcal{U}_{\mathcal{Q}}(\bp (c))
	\label{eq:U_invariant_subset}
	\end{eqnarray}
	for all $ c \in \mathcal{C}_{\mathcal{G}} $.
	\hfill $ \square $
\end{defi}
The converse inclusion in \eref{eq:U_invariant_subset} is always satisfied.
\begin{lemma}\label{lemma:U_invariant_converse}
	Consider a balanced partition $ \bp \in \Lambda_{\mathcal{G}}$ on a network $ \mathcal{G} $ and its respective quotient network $ \mathcal{Q} = \mathcal{G}/\bp $. 	
	Then for each color $ A\in\bp $, which is mapped into the cell $ k_A\in \mathcal{C}_{\mathcal{Q}} $, we have
	\begin{eqnarray}
	\label{lemma:U_invariant_converse_explicit_eq}
	k_A \in \mathcal{U}_{\mathcal{Q}}(\bp (c))
	\implies
	A\cap \mathcal{U}_{\mathcal{G}}(c)
	\neq \emptyset,
	\end{eqnarray}
	or equivalently,
	\begin{eqnarray}
	\mathcal{U}_{\mathcal{Q}}(\bp (c))
	\subseteq
	\bp(\mathcal{U}_{\mathcal{G}}(c))
	\end{eqnarray}
	for all $ c \in \mathcal{C}_{\mathcal{G}} $.
	\hfill $ \square $	
\end{lemma}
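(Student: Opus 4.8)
The plan is to establish the inclusion for the four admissible neighborhood types in increasing order of complexity, $\mathcal{N}^{-}$, $\mathcal{V}^{-}$, $\mathcal{V}_{k}^{-}$ and finally $\mathcal{R}^{-}$, with each case feeding into the next. Throughout I would work with the pointwise form $k_A \in \mathcal{U}_{\mathcal{Q}}(\bp(c)) \implies A \cap \mathcal{U}_{\mathcal{G}}(c) \neq \emptyset$, first observing that this is merely an unfolding of the notation $\bp(\mathcal{U}_{\mathcal{G}}(c)) = \bigcup_{d\in\mathcal{U}_{\mathcal{G}}(c)} \bp(d)$: the color $k_A$ lies in the set $\bp(\mathcal{U}_{\mathcal{G}}(c))$ precisely when some cell of color $A$ lies in $\mathcal{U}_{\mathcal{G}}(c)$, so the pointwise form and the stated set inclusion are equivalent.

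The crux is the base case $\mathcal{U} = \mathcal{N}^{-}$, which I would resolve directly from the balanced matrix equation $MP = PQ$ of \eref{eq:balanced_cond_matrix_form}. Reading off the $(\bp(c),k_A)$ entry of both sides gives $q_{\bp(c)\,k_A} = \sum_{d\in A} m_{cd}$, where the sum is the monoid operation $\|$ over the cells $d$ of color $A$ (these share a single type, so the sum lives in one monoid $\mathcal{M}_{ij}$ and is well-defined). Now $k_A \in \mathcal{N}^{-}_{\mathcal{Q}}(\bp(c))$ means exactly $q_{\bp(c)\,k_A} \neq 0_{ij}$; since a monoid sum all of whose summands equal the zero element is itself zero, a nonzero sum forces at least one summand $m_{cd}$ with $d\in A$ to be nonzero, i.e. some $d \in A \cap \mathcal{N}^{-}_{\mathcal{G}}(c)$. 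This is precisely the asymmetry isolated by \cref{defi:U_invariant}: the forward inclusion can fail because nonzero parallel edges may cancel to a zero element, whereas the converse proved here can never fail, as cancellation cannot manufacture an edge out of nothing.

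With the $\mathcal{N}^{-}$ case in hand the cumulative cases follow by elementary set manipulation. For $\mathcal{V}^{-} = c \cup \mathcal{N}^{-}(c)$ I would split $k_A \in \mathcal{V}^{-}_{\mathcal{Q}}(\bp(c))$ into the alternative $k_A = \bp(c)$ (whence $c \in A \cap \mathcal{V}^{-}_{\mathcal{G}}(c)$) and $k_A \in \mathcal{N}^{-}_{\mathcal{Q}}(\bp(c))$ (whence the $\mathcal{N}^{-}$ case applies). For $\mathcal{V}_{k}^{-}$ I would induct on $k$ via the recursion \eref{eq:in_cum_neigh_recurs}: given $k_A \in \mathcal{V}^{-}_{k,\mathcal{Q}}(\bp(c))$, choose a color $l \in \mathcal{V}^{-}_{k-1,\mathcal{Q}}(\bp(c))$ with $k_A \in \mathcal{V}^{-}_{\mathcal{Q}}(l)$, lift $l$ to a cell $e \in \mathcal{V}^{-}_{k-1,\mathcal{G}}(c)$ with $\bp(e)=l$ by the inductive hypothesis, then apply the $\mathcal{V}^{-}$ case at $e$ to obtain $d \in A \cap \mathcal{V}^{-}_{\mathcal{G}}(e) \subseteq A \cap \mathcal{V}^{-}_{k,\mathcal{G}}(c)$. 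Finally, since on a finite cell set $\mathcal{R}^{-}$ coincides with $\mathcal{V}_{k}^{-}$ for $k$ large enough in each network (\cref{cor:cumulative_neigh_k_inside_reachable}) and $\mathcal{R}^{-}$ is the union of the $\mathcal{V}_{k}^{-}$, the $\mathcal{R}^{-}$ case is immediate: $k_A \in \mathcal{R}^{-}_{\mathcal{Q}}(\bp(c))$ places $k_A$ in some $\mathcal{V}^{-}_{k,\mathcal{Q}}(\bp(c))$, and the cumulative case yields $A \cap \mathcal{V}^{-}_{k,\mathcal{G}}(c) \subseteq A \cap \mathcal{R}^{-}_{\mathcal{G}}(c) \neq \emptyset$.

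I expect no deep obstacle. The one point requiring care is the bookkeeping in the inductive step for $\mathcal{V}_{k}^{-}$, where the lift of a quotient color $l$ to a genuine cell $e$ of $\mathcal{G}$ must be tracked consistently so that the $\mathcal{V}^{-}$ base case is invoked at the correct cell; and one should note that the reachability recursions in $\mathcal{G}$ and $\mathcal{Q}$ may stabilize at different indices, which is harmless since only the union over all $k$ is used.
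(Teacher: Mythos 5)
Your proposal is correct and follows essentially the same route as the paper's proof: a base case for $\mathcal{N}^{-}$ resting on the fact that a nonzero entry of $Q$ in $MP=PQ$ forces a nonzero monoid summand $m_{cd}$ with $d\in A$ (cancellation can zero out nonzero edges but cannot create an edge from zeros), followed by the case split for $\mathcal{V}^{-}$, induction on $k$ with the same lift-a-color-to-a-cell step for $\mathcal{V}_{k}^{-}$, and \cref{cor:cumulative_neigh_k_inside_reachable} for $\mathcal{R}^{-}$. The only cosmetic difference is that you read the base-case identity entrywise from \eref{eq:balanced_cond_matrix_form} (summing over all of $A$, with the off-neighborhood terms contributing $0_{ij}$), while the paper states the equivalent sum restricted to $A\cap\mathcal{N}^{-}_{\mathcal{G}}(c)$ directly from \cref{defi:quotient_network}.
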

\begin{proof}
	Firstly, we define $ B\in\bp $ to be the color of $ c $, mapping into the cell $ k_B\in\mathcal{C}_{\mathcal{Q}} $.\\
	Assume $ k_A \in \mathcal{N}_{\mathcal{Q}}^{-}(k_B) $ . Then from the definition of $ \mathcal{N}^{-} $, we have a non-zero entry $ q_{k_B k_A} \neq 0_{ij}$, with $ i =\type{Q}(k_B) =\type{G}(B) $ and $ j =\type{Q}(k_A) = \type{G}(A)$ in the in-adjacency matrix $ Q $ associated with the quotient network $ \mathcal{Q} $. Then from the definition of quotient network, $ \sum_{d\in A \cap  \mathcal{N}_{\mathcal{G}}^{-}(c)} w_{cd} = q_{k_B k_A} $. Since $  q_{k_B k_A}\neq 0_{ij} $, this means that $ A \cap  \mathcal{N}_{\mathcal{G}}^{-}(c) $ is non-empty. That is, the statement is true for the case $ \mathcal{U} = \mathcal{N}^{-} $.\\
	We now prove the case $ \mathcal{U} = \mathcal{V}^{-} $.
	Assume $ k_A \in \mathcal{V}_{\mathcal{Q}}^{-}(\bp (c)) $. Then $ k_A \in \{k_B \cup \mathcal{N}_{\mathcal{Q}}^{-}(k_B)\} $. Consider the case $ k_A = k_B $. Then $ c \in A \cap  \mathcal{V}_{\mathcal{G}}^{-}(c) $, which makes the set non-empty. Consider now the case $ k_A \in \mathcal{N}_{\mathcal{Q}}^{-}(k_B) $. Then since the statement is true for $ \mathcal{U} = \mathcal{N}^{-} $, $ A \cap \mathcal{N}_{\mathcal{G}}^{-}(c) $ is non-empty. Therefore $ A \cap \mathcal{V}_{\mathcal{G}}^{-}(c)  \supseteq A \cap \mathcal{N}_{\mathcal{G}}^{-}(c) $ is also non-empty, which concludes the proof for $ \mathcal{U} = \mathcal{V}^{-} $.\\
	We now prove the case $ \mathcal{U} = \mathcal{V}_{k}^{-} $ for every $ k\geq1 $. The proof is by induction. The base case $ k=1 $ is trivial since $ \mathcal{V}_{1}^{-} = \mathcal{V}^{-} $. Assume it to be true for a given $ k $.
	Consider $ k_A \in \mathcal{V}_{k+1 \,\mathcal{Q}}^{-}(k_B) $. Then $ k_A \in \bigcup_{k_C\in\mathcal{V}_{k\,\mathcal{Q}}^{-}(k_B)} \mathcal{V}^{-}_{\mathcal{Q}}(k_C) $. That is, $ k_A\in \mathcal{V}^{-}_{\mathcal{Q}}(k_C) $ for at least one particular $ k_C\in\mathcal{V}_{k\,\mathcal{Q}}^{-}(k_B) $. Then since the case $ \mathcal{U} = \mathcal{V}_{k}^{-} $ is true by assumption, we have $ C\cap \mathcal{V}_{k\,\mathcal{G}}^{-}(c) \neq \emptyset $, where $ C\in\bp $ is the color that maps into cell $ k_C $. We choose a particular cell $ d \in C\cap \mathcal{V}_{k\,\mathcal{G}}^{-}(c)$. Then $ \bp(d) = k_C $. Furthermore, since the case $ \mathcal{U} = \mathcal{V}^{-} $ is true, $ k_A\in \mathcal{V}^{-}_{\mathcal{Q}}(k_C) $ implies $ A\cap \mathcal{V}^{-}_{\mathcal{G}}(d)
	\neq \emptyset $. Finally, note that $ A \cap \mathcal{V}_{k+1 \,\mathcal{G}}^{-}(c) \supseteq A\cap \mathcal{V}^{-}_{\mathcal{G}}(d)$ since $ d \in \mathcal{V}_{k\,\mathcal{G}}^{-}(c)$, which means that $ A \cap \mathcal{V}_{k+1 \,\mathcal{G}}^{-}(c) $ is also non-empty. This concludes the induction step.\\
	Finally, the case $ \mathcal{U} = \mathcal{R}^{-}$ is immediate from \cref{cor:cumulative_neigh_k_inside_reachable}.
\end{proof}
The following is immediate from \cref{defi:U_invariant} and \cref{lemma:U_invariant_converse}.
\begin{corollary}\label{lemma:U_invariant_equiv}
	Consider a balanced partition $ \bp \in \Lambda_{\mathcal{G}}$ on a network $ \mathcal{G} $ and its respective quotient network $ \mathcal{Q} = \mathcal{G}/\bp $.	
	Then $ \bp $ is $ \mathcal{U} $-invariant if and only if
	\begin{eqnarray}
	\bp(\mathcal{U}_{\mathcal{G}}(c))
	=
	\mathcal{U}_{\mathcal{Q}}(\bp (c))
	\end{eqnarray}
	for all $ c \in \mathcal{C}_{\mathcal{G}} $.
	\hfill $ \square $	
\end{corollary}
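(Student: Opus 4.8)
The plan is to obtain the claimed equality by establishing the two set inclusions separately and then observing that one of them is precisely the definition of $\mathcal{U}$-invariance, while the other holds unconditionally. Since two sets coincide exactly when each is contained in the other, the biconditional follows at once.

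First I would fix an arbitrary cell $c \in \mathcal{C}_{\mathcal{G}}$ and recall from \cref{defi:U_invariant} that $\bp$ being $\mathcal{U}$-invariant is, by definition, equivalent to the inclusion $\bp(\mathcal{U}_{\mathcal{G}}(c)) \subseteq \mathcal{U}_{\mathcal{Q}}(\bp(c))$ holding for every such $c$. Next I would invoke \cref{lemma:U_invariant_converse}, which supplies the reverse inclusion $\mathcal{U}_{\mathcal{Q}}(\bp(c)) \subseteq \bp(\mathcal{U}_{\mathcal{G}}(c))$ for all $c$ with no hypothesis on $\bp$ beyond its being balanced. Combining the two, the equality $\bp(\mathcal{U}_{\mathcal{G}}(c)) = \mathcal{U}_{\mathcal{Q}}(\bp(c))$ holds for all $c$ if and only if the forward inclusion does, that is, if and only if $\bp$ is $\mathcal{U}$-invariant.

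There is essentially no obstacle here: all the substantive work lives in \cref{lemma:U_invariant_converse}, whose proof carries out the induction over the neighborhood types $\mathcal{N}^{-}$, $\mathcal{V}^{-}$, $\mathcal{V}_{k}^{-}$, $\mathcal{R}^{-}$ to guarantee that the reverse inclusion always holds. Once that lemma is in hand, the corollary is a one-line combination of mutual inclusions. The only point to be mindful of is that the quantifier ``for all $c$'' must be preserved throughout, so that the equivalence is between the two global statements rather than a merely pointwise one; this is automatic, since both inclusions are asserted for every $c \in \mathcal{C}_{\mathcal{G}}$ in their respective sources.
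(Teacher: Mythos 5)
Your proposal is correct and matches the paper's own (implicit) argument exactly: the paper declares the corollary immediate from \cref{defi:U_invariant}, which supplies the forward inclusion as the definition of $\mathcal{U}$-invariance, and \cref{lemma:U_invariant_converse}, which gives the unconditional reverse inclusion. Your additional remark about preserving the universal quantifier over $c$ is sound and does not alter the route.
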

We now show that \cref{defi:U_invariant} is a stronger property than the one in \cref{defi:U_matched}.
\begin{lemma}\label{lemma:u_invariant_implies_u_matched}
	If a balanced partition $ \bp $ is $ \mathcal{U} $-invariant,
	then it is $ \mathcal{U} $-matched.
	\hfill $ \square $
\end{lemma}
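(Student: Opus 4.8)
The plan is to derive the $\mathcal{U}$-matched condition directly from the equality characterization of $\mathcal{U}$-invariance established in \cref{lemma:U_invariant_equiv}. The essential observation is that $\mathcal{U}$-matching (\cref{defi:U_matched}) is an intrinsic condition on the network $\mathcal{G}$---it asks that $\bp(\mathcal{U}_{\mathcal{G}}(c))$ depend only on the color $\bp(c)$---whereas $\mathcal{U}$-invariance is a statement relating $\mathcal{G}$ to its quotient $\mathcal{Q}$. The bridge between the two is precisely the identity $\bp(\mathcal{U}_{\mathcal{G}}(c)) = \mathcal{U}_{\mathcal{Q}}(\bp(c))$, whose right-hand side manifestly factors through the color $\bp(c)$.

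First I would assume that $\bp$ is $\mathcal{U}$-invariant, so that \cref{lemma:U_invariant_equiv} supplies $\bp(\mathcal{U}_{\mathcal{G}}(c)) = \mathcal{U}_{\mathcal{Q}}(\bp(c))$ for all $c\in\mathcal{C}_{\mathcal{G}}$. Then I would take any two cells $c,d$ with $\bp(c) = \bp(d)$ and chase equalities: applying the identity at $c$ and at $d$ gives $\bp(\mathcal{U}_{\mathcal{G}}(c)) = \mathcal{U}_{\mathcal{Q}}(\bp(c))$ and $\bp(\mathcal{U}_{\mathcal{G}}(d)) = \mathcal{U}_{\mathcal{Q}}(\bp(d))$; since $\bp(c) = \bp(d)$, the two right-hand sides coincide, whence $\bp(\mathcal{U}_{\mathcal{G}}(c)) = \bp(\mathcal{U}_{\mathcal{G}}(d))$. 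This is exactly the implication required by \cref{defi:U_matched} with $\mathcal{A} = \bp$ and $\mathcal{U} = \mathcal{U}_{\mathcal{G}}$, so $\bp$ is $\mathcal{U}$-matched.

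There is no genuine obstacle here; the content of the lemma lies entirely in the reformulation recorded in \cref{lemma:U_invariant_equiv}, which upgrades the one-sided inclusion of \cref{defi:U_invariant} to a full equality by combining it with the converse inclusion of \cref{lemma:U_invariant_converse}. The only point deserving care is notational: one must keep the neighborhood function $\mathcal{U}_{\mathcal{G}}$ on the original network distinct from $\mathcal{U}_{\mathcal{Q}}$ on the quotient, and recognize that the matched condition is formulated for the single function $\mathcal{U}_{\mathcal{G}}$ acting on $\mathcal{C}_{\mathcal{G}}$. Once the equality characterization is in hand, matching follows immediately, since $\mathcal{U}_{\mathcal{Q}}(\bp(c))$ depends on $c$ only through $\bp(c)$.
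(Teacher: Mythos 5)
Your proof is correct and follows essentially the same route as the paper's: both invoke \cref{lemma:U_invariant_equiv} to get the equality $\bp(\mathcal{U}_{\mathcal{G}}(c)) = \mathcal{U}_{\mathcal{Q}}(\bp(c))$ and then observe that for $\bp(c)=\bp(d)$ the right-hand sides coincide, forcing $\bp(\mathcal{U}_{\mathcal{G}}(c)) = \bp(\mathcal{U}_{\mathcal{G}}(d))$, which is exactly the $\mathcal{U}$-matched condition. No gaps; your additional remark that $\mathcal{U}_{\mathcal{Q}}(\bp(c))$ factors through the color is precisely the (implicit) heart of the paper's argument.
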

\begin{proof}
	Consider cells $ c,d\in\mathcal{C}_{\mathcal{G}} $ in a network $ \mathcal{G} $ such that $ \bp(c) = \bp(d) $. Then $ 	\mathcal{U}_{\mathcal{Q}}(\bp (c))
	=
	\mathcal{U}_{\mathcal{Q}}(\bp (d)) $ in the quotient network $ \mathcal{Q} = \mathcal{G}/\bp $. Since $ \bp $ is $ \mathcal{U} $-invariant, from \cref{lemma:U_invariant_equiv} we have $ \bp(\mathcal{U}_{\mathcal{G}}(c)) = \bp(\mathcal{U}_{\mathcal{G}}(d)) $. Therefore $ \bp $ is $ \mathcal{U} $-matched.
\end{proof}
\begin{lemma}\label{lemma:A_invariant_quotient_relation}
	Consider a $ \mathcal{U} $-invariant balanced partition $ \bp \in \Lambda_{\mathcal{G}}$ on a network $ \mathcal{G} $ and its respective quotient network $ \mathcal{Q} = \mathcal{G}/\bp $.
	Then for a partition $ \mathcal{A} \geq \bp $, we have
	\begin{eqnarray}
	\mathcal{A}(\mathcal{U}_{\mathcal{G}}(c)) = \mathcal{A}/\bp (\mathcal{U}_{\mathcal{Q}}(\bp(c)))
	\end{eqnarray}
	for all $ c \in \mathcal{C}_{\mathcal{G}} $.
	\hfill $ \square $
\end{lemma}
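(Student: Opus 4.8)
The plan is to reduce the identity to the single compositional fact $\mathcal{A} = (\mathcal{A}/\bp)\circ\bp$ and then feed in the $\mathcal{U}$-invariance of $\bp$ exactly once, in the form given by \cref{lemma:U_invariant_equiv}. Since $\bp \leq \mathcal{A}$ by hypothesis, \cref{lemma:quotient_partition} guarantees that $\mathcal{A}/\bp$ is well defined and satisfies $\mathcal{A}/\bp(\bp(d)) = \mathcal{A}(d)$ for every $d\in\mathcal{C}_{\mathcal{G}}$. This is the one algebraic fact linking colors of $\mathcal{A}$ on $\mathcal{C}_{\mathcal{G}}$ to colors of $\mathcal{A}/\bp$ on $\mathcal{C}_{\mathcal{Q}}=\mathcal{C}_{\mathcal{G}}/\bp$, both of which take values in the common image set $\mathcal{C}_{\mathcal{G}}/\mathcal{A}$.

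First I would expand the left-hand side via the set-application convention $\mathcal{A}(\mathbf{s})=\bigcup_{d\in\mathbf{s}}\mathcal{A}(d)$ and substitute the factorization above:
\[
\mathcal{A}(\mathcal{U}_{\mathcal{G}}(c)) = \bigcup_{d\in\mathcal{U}_{\mathcal{G}}(c)}\mathcal{A}(d) = \bigcup_{d\in\mathcal{U}_{\mathcal{G}}(c)}\mathcal{A}/\bp(\bp(d)).
\]
Next I would re-index this union over the $\bp$-colors met by the cells of $\mathcal{U}_{\mathcal{G}}(c)$: as $d$ ranges over $\mathcal{U}_{\mathcal{G}}(c)$, the values $\bp(d)$ range exactly over the subset $\bp(\mathcal{U}_{\mathcal{G}}(c))\subseteq\mathcal{C}_{\mathcal{Q}}$ (repetitions being harmless under a union), so the right-hand side equals $\bigcup_{k\in\bp(\mathcal{U}_{\mathcal{G}}(c))}\mathcal{A}/\bp(k) = \mathcal{A}/\bp\bigl(\bp(\mathcal{U}_{\mathcal{G}}(c))\bigr)$, again by the set-application convention, now read on $\mathcal{C}_{\mathcal{Q}}$.

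Finally I would invoke that $\bp$ is $\mathcal{U}$-invariant. By \cref{lemma:U_invariant_equiv} this is equivalent to $\bp(\mathcal{U}_{\mathcal{G}}(c)) = \mathcal{U}_{\mathcal{Q}}(\bp(c))$ for all $c$, and substituting this into the expression from the previous step yields $\mathcal{A}(\mathcal{U}_{\mathcal{G}}(c)) = \mathcal{A}/\bp(\mathcal{U}_{\mathcal{Q}}(\bp(c)))$, which is the claim.

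I do not anticipate a genuine obstacle: the entire content is the interplay between the quotient factorization $\mathcal{A}=(\mathcal{A}/\bp)\circ\bp$ and the commutation of $\bp$ with the neighborhood operator $\mathcal{U}$. The one point deserving care is the re-indexing in the middle step, where one must check that applying $\mathcal{A}/\bp$ cell-by-cell to $\mathcal{U}_{\mathcal{G}}(c)$ agrees with first collapsing by $\bp$ and then applying $\mathcal{A}/\bp$; this is immediate once the notation $\mathcal{A}(\mathbf{s})$ is unwound, but it is precisely the place where a careless argument might smuggle in $\mathcal{U}$-invariance prematurely, so I would keep the purely set-theoretic re-indexing cleanly separated from the single appeal to \cref{lemma:U_invariant_equiv}.
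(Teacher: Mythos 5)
Your proof is correct and follows essentially the same route as the paper's: both rewrite $\mathcal{A}(\mathcal{U}_{\mathcal{G}}(c))$ as $\mathcal{A}/\bp\bigl(\bp(\mathcal{U}_{\mathcal{G}}(c))\bigr)$ using the factorization $\mathcal{A} = (\mathcal{A}/\bp)\circ\bp$ from \cref{lemma:quotient_partition}, then apply \cref{lemma:U_invariant_equiv} once to replace $\bp(\mathcal{U}_{\mathcal{G}}(c))$ with $\mathcal{U}_{\mathcal{Q}}(\bp(c))$. Your explicit re-indexing of the union is simply a more careful spelling-out of the first step that the paper states in one line.
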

\begin{proof}
	Because $ \mathcal{A} \geq \bp $, we have $ \mathcal{A}(\mathcal{U}_{\mathcal{G}}(c)) 
	= 
	\mathcal{A}/\bp(\bp(\mathcal{U}_{\mathcal{G}}(c)))  $. Since $ \bp $ is $ \mathcal{U} $-invariant, from \cref{lemma:U_invariant_equiv}, this becomes $ \mathcal{A}/\bp (\mathcal{U}_{\mathcal{Q}}(\bp(c))) $.
\end{proof}
\begin{lemma}
	Consider a $ \mathcal{U} $-invariant balanced partition $ \bp \in \Lambda_{\mathcal{G}}$ on a network $ \mathcal{G} $ and its corresponding quotient network $ \mathcal{Q} = \mathcal{G}/\bp $.
	Then a partition $ \mathcal{A} \geq \bp $ is $ \mathcal{U} $-matched in $ \mathcal{G} $ if and only if $ \mathcal{A}/\bp $ is $ \mathcal{U} $-matched in $ \mathcal{Q} $.
	\hfill $ \square $
\end{lemma}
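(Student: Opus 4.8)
The plan is to reduce everything to \cref{lemma:A_invariant_quotient_relation}, which already does the heavy lifting by translating the colored neighborhood $\mathcal{A}(\mathcal{U}_{\mathcal{G}}(c))$ in $\mathcal{G}$ into the colored neighborhood $(\mathcal{A}/\bp)(\mathcal{U}_{\mathcal{Q}}(\bp(c)))$ in $\mathcal{Q}$. Writing $\mathcal{B} := \mathcal{A}/\bp$, I would first recall from \cref{lemma:quotient_partition} that $\mathcal{B}\circ\bp = \mathcal{A}$, i.e.\ $\mathcal{B}(\bp(c)) = \mathcal{A}(c)$ for every $c\in\mathcal{C}_{\mathcal{G}}$, and that $\bp\colon\mathcal{C}_{\mathcal{G}}\to\mathcal{C}_{\mathcal{Q}}$ is surjective. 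These two facts, together with the identity of \cref{lemma:A_invariant_quotient_relation} (which applies because $\bp$ is $\mathcal{U}$-invariant and $\mathcal{A}\geq\bp$), are all that the argument needs, so both directions become one-line translations through a fixed dictionary.

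For the forward direction, I would start from a pair $k,l\in\mathcal{C}_{\mathcal{Q}}$ with $\mathcal{B}(k)=\mathcal{B}(l)$, pull them back along the surjection $\bp$ to cells $c,d\in\mathcal{C}_{\mathcal{G}}$ with $\bp(c)=k$ and $\bp(d)=l$, and observe that $\mathcal{A}(c)=\mathcal{B}(\bp(c))=\mathcal{B}(k)=\mathcal{B}(l)=\mathcal{A}(d)$. The $\mathcal{U}$-matchedness of $\mathcal{A}$ in $\mathcal{G}$ then yields $\mathcal{A}(\mathcal{U}_{\mathcal{G}}(c))=\mathcal{A}(\mathcal{U}_{\mathcal{G}}(d))$, and applying \cref{lemma:A_invariant_quotient_relation} to each side rewrites this equality as $\mathcal{B}(\mathcal{U}_{\mathcal{Q}}(k))=\mathcal{B}(\mathcal{U}_{\mathcal{Q}}(l))$, which is exactly the $\mathcal{U}$-matchedness of $\mathcal{B}=\mathcal{A}/\bp$ in $\mathcal{Q}$.

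For the backward direction, I would run the same chain in reverse: starting from $c,d\in\mathcal{C}_{\mathcal{G}}$ with $\mathcal{A}(c)=\mathcal{A}(d)$, push forward to $k:=\bp(c)$ and $l:=\bp(d)$, note that $\mathcal{B}(k)=\mathcal{A}(c)=\mathcal{A}(d)=\mathcal{B}(l)$, invoke the $\mathcal{U}$-matchedness of $\mathcal{B}$ to get $\mathcal{B}(\mathcal{U}_{\mathcal{Q}}(k))=\mathcal{B}(\mathcal{U}_{\mathcal{Q}}(l))$, and translate back via \cref{lemma:A_invariant_quotient_relation} to conclude $\mathcal{A}(\mathcal{U}_{\mathcal{G}}(c))=\mathcal{A}(\mathcal{U}_{\mathcal{G}}(d))$, giving $\mathcal{U}$-matchedness of $\mathcal{A}$ in $\mathcal{G}$.

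There is essentially no genuine obstacle here; the content has already been isolated in \cref{lemma:A_invariant_quotient_relation}. The only points demanding care are the use of surjectivity of $\bp$ in the forward direction, so that an arbitrary same-colored pair in $\mathcal{Q}$ really does arise as the image of a same-colored pair in $\mathcal{G}$, and the awareness that the $\mathcal{U}$-invariance of $\bp$ cannot be dropped: it is precisely the hypothesis under which the colored neighborhoods on the two sides coincide, so without it the dictionary provided by \cref{lemma:A_invariant_quotient_relation} fails and the equivalence need not hold.
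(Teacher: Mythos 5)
Your proof is correct and follows essentially the same route as the paper: both reduce the statement to \cref{lemma:A_invariant_quotient_relation} together with the identity $\mathcal{A} = (\mathcal{A}/\bp)\circ\bp$, with the paper's proof compressing your two explicit directions into one ``equivalent'' statement. Your explicit appeal to the surjectivity of $\bp$ in the forward direction is a point the paper leaves implicit, but it is the same argument.
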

\begin{proof}
	Firstly, $ \mathcal{A}/\bp $ being $ \mathcal{U} $-matched in $ \mathcal{Q} $, by definition, means that $ \mathcal{A}/\bp(\bp(c)) = \mathcal{A}/\bp(\bp(d)) $ implies $ \mathcal{A}/\bp(\mathcal{U}_{\mathcal{Q}}(\bp(c))) = \mathcal{A}/\bp(\mathcal{U}_{\mathcal{Q}}(\bp(d))) $. This simplifies into $ \mathcal{A}(c) = \mathcal{A}(d) $ implies $ \mathcal{A}/\bp(\mathcal{U}_{\mathcal{Q}}(\bp(c))) = \mathcal{A}/\bp(\mathcal{U}_{\mathcal{Q}}(\bp(d))) $. Therefore we have to prove that if $ \mathcal{A}(c) = \mathcal{A}(d) $, then $ \mathcal{A}(\mathcal{U}_{\mathcal{G}}(c)) = \mathcal{A}(\mathcal{U}_{\mathcal{G}}(d)) $ is equivalent to
	$ \mathcal{A}/\bp(\mathcal{U}_{\mathcal{Q}}(\bp(c))) = \mathcal{A}/\bp(\mathcal{U}_{\mathcal{Q}}(\bp(d))) $. Since $ \bp $ is $ \mathcal{U} $-invariant, this is immediate from \cref{lemma:A_invariant_quotient_relation}.
\end{proof}
\begin{lemma}
	Consider a $ \mathcal{U} $-invariant balanced partition $ \bp_{01}\in \Lambda_{\mathcal{G}}$ on a network $ \mathcal{G} $ and its corresponding quotient network $ \mathcal{Q}_1 = \mathcal{G}/\bp_{01} $. Then a balanced partition $ \bp_{02} \geq \bp_{01}$ is $ \mathcal{U} $-invariant in $ \mathcal{G} $ if and only if $ \bp_{12} :=\bp_{02} /\bp_{01} $ is $ \mathcal{U} $-invariant in $ \mathcal{Q}_{1} $.
	\hfill $ \square $
\end{lemma}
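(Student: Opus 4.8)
The plan is to reduce both invariance statements to the equivalent characterization of \cref{lemma:U_invariant_equiv} and then translate one into the other using the standing hypothesis that $\bp_{01}$ is itself $\mathcal{U}$-invariant. First I would fix the relevant networks by setting $\mathcal{Q}_2 := \mathcal{G}/\bp_{02}$. Since $\bp_{02}\geq \bp_{01}$ and $\bp_{02}$ is balanced on $\mathcal{G}$, \cref{lemma:balanced_quotient_invariance} guarantees that $\bp_{12}=\bp_{02}/\bp_{01}$ is balanced on $\mathcal{Q}_1$ (so that $\mathcal{U}$-invariance of $\bp_{12}$ is even meaningful) and, crucially, that $\mathcal{Q}_1/\bp_{12} = \mathcal{G}/\bp_{02} = \mathcal{Q}_2$. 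This identification is exactly what lets the \emph{same} quotient network $\mathcal{Q}_2$ be the target of $\mathcal{U}$ in both conditions, so that the operator $\mathcal{U}_{\mathcal{Q}_2}$ is unambiguous.

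Next I would write out both conditions explicitly. By \cref{lemma:U_invariant_equiv}, $\mathcal{U}$-invariance of $\bp_{02}$ in $\mathcal{G}$ reads $\bp_{02}(\mathcal{U}_{\mathcal{G}}(c)) = \mathcal{U}_{\mathcal{Q}_2}(\bp_{02}(c))$ for all $c\in\mathcal{C}_{\mathcal{G}}$, whereas $\mathcal{U}$-invariance of $\bp_{12}$ in $\mathcal{Q}_1$ reads $\bp_{12}(\mathcal{U}_{\mathcal{Q}_1}(k)) = \mathcal{U}_{\mathcal{Q}_2}(\bp_{12}(k))$ for all $k\in\mathcal{C}_{\mathcal{Q}_1}$. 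The goal is to show that these two families of equalities coincide.

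The core step invokes the $\mathcal{U}$-invariance of $\bp_{01}$. Applying \cref{lemma:A_invariant_quotient_relation} with $\mathcal{A}=\bp_{02}\geq\bp_{01}$ rewrites the left-hand side of the first condition as $\bp_{02}(\mathcal{U}_{\mathcal{G}}(c)) = \bp_{12}(\mathcal{U}_{\mathcal{Q}_1}(\bp_{01}(c)))$, using $\bp_{02}/\bp_{01}=\bp_{12}$. For the right-hand side, the factorization $\bp_{02}=\bp_{12}\circ\bp_{01}$ from \cref{lemma:quotient_partition} gives $\bp_{02}(c)=\bp_{12}(\bp_{01}(c))$. Writing $k:=\bp_{01}(c)$, the first condition becomes precisely $\bp_{12}(\mathcal{U}_{\mathcal{Q}_1}(k)) = \mathcal{U}_{\mathcal{Q}_2}(\bp_{12}(k))$, i.e.\ the second condition. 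Finally, since $\bp_{01}$ is surjective, as $c$ ranges over $\mathcal{C}_{\mathcal{G}}$ the value $k=\bp_{01}(c)$ ranges over all of $\mathcal{C}_{\mathcal{Q}_1}$; hence the quantifier ``for all $c\in\mathcal{C}_{\mathcal{G}}$'' transfers to ``for all $k\in\mathcal{C}_{\mathcal{Q}_1}$'' with no loss, yielding the equivalence in both directions at once.

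The main obstacle is not any single difficult estimate but the bookkeeping of the three networks $\mathcal{G}$, $\mathcal{Q}_1=\mathcal{G}/\bp_{01}$, and $\mathcal{Q}_2=\mathcal{G}/\bp_{02}=\mathcal{Q}_1/\bp_{12}$: one must confirm that the common operator $\mathcal{U}_{\mathcal{Q}_2}$ legitimately appears on the right of both invariance conditions, and that the hypotheses of \cref{lemma:A_invariant_quotient_relation} (balancedness of $\bp_{02}$ together with $\mathcal{U}$-invariance of $\bp_{01}$) are genuinely available. Once that identification is in place, the equivalence follows by direct substitution and the surjectivity of $\bp_{01}$.
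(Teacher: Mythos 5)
Your proof is correct and follows essentially the same route as the paper's: both reduce each invariance condition to the equalities of \cref{lemma:U_invariant_equiv}, use the $\mathcal{U}$-invariance of $\bp_{01}$ to convert $\bp_{01}(\mathcal{U}_{\mathcal{G}}(c))$ into $\mathcal{U}_{\mathcal{Q}_1}(\bp_{01}(c))$, identify $\mathcal{Q}_2=\mathcal{Q}_1/\bp_{12}$ via \cref{lemma:balanced_quotient_invariance}, and transfer quantifiers through the surjective substitution $k=\bp_{01}(c)$. The only cosmetic difference is that you cite \cref{lemma:A_invariant_quotient_relation} for the left-hand-side rewriting, whereas the paper carries out that same two-step manipulation inline.
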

\begin{proof}
	From \cref{lemma:U_invariant_equiv}, $ \bp_{02} $ being $ \mathcal{U} $-invariant in $ \mathcal{G} $ means that $ 	\bp_{02}(\mathcal{U}_{\mathcal{G}}(c))
	= \mathcal{U}_{\mathcal{Q}_2}(\bp_{02} (c)) $, with $ \mathcal{Q}_2 := \mathcal{G}/\bp_{02} $, for all $ c\in\mathcal{C}_{\mathcal{G}} $. 
	This can be rewritten, using $ \bp_{12} $ as $ 	\bp_{12}(\bp_{01}(\mathcal{U}_{\mathcal{G}}(c)))
	= \mathcal{U}_{\mathcal{Q}_2}(\bp_{12}(\bp_{01} (c))) $. Since by assumption, $ \bp_{01} $ is $ \mathcal{U} $-invariant, this can be equivalently written as $ 	\bp_{12}(\mathcal{U}_{\mathcal{Q}_1}(\bp_{01}(c))
	= \mathcal{U}_{\mathcal{Q}_2}(\bp_{12}(\bp_{01} (c))) $, for all $ c\in\mathcal{C}_{\mathcal{G}} $. Using the mapping $ d = \bp_{01}(c) $, it is easy to see that this is equivalent to $ 	\bp_{12}(\mathcal{U}_{\mathcal{Q}_1}(d)
	= \mathcal{U}_{\mathcal{Q}_2}(\bp_{12}(d)) $, for all $ d \in \mathcal{C}_{\mathcal{Q}_1} $.
	Since from \cref{lemma:balanced_quotient_invariance}, $ \mathcal{Q}_2 = \mathcal{Q}_1 / \bp_{12} $, this is equivalent to $ \bp_{12} $ being $ \mathcal{U} $-invariant in $ \mathcal{Q}_{1} $.
\end{proof}
Similarly to the $ \mathcal{U} $-matched case, we have the following results.
\begin{corollary}
	\label{lemma:trivial_U_invariant}
	The trivial partition $ \bot $ is $ \mathcal{U} $-invariant for every $ \mathcal{U}\in \{\mathcal{N}^{-},\mathcal{V}^{-},\mathcal{V}_{k}^{-},\mathcal{R}^{-}\} $.
	\hfill $ \square $
\end{corollary}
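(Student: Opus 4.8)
The plan is to exploit the fact that quotienting by the trivial partition returns the original network essentially unchanged, so that $\mathcal{U}$-invariance collapses to a tautology.

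First I would invoke \cref{lemma:trivial_balanced} to note that $\bot$ is balanced, hence $\bot \in \Lambda_{\mathcal{G}}$ and \cref{defi:U_invariant} genuinely applies to it. Next I would pin down the quotient network $\mathcal{Q} = \mathcal{G}/\bot$. Since $\bot$ sends each cell to its own singleton color, its partition matrix may be taken to be the identity $I$ (one of the permutation matrices that represent $\bot$, as remarked after the partition-matrix discussion). The quotient in-adjacency matrix $Q$ is defined by $M P = P Q$ in \cref{defi:quotient_network}; with $P = I$ this reads $Q = M$. Thus $\mathcal{Q}$ and $\mathcal{G}$ carry the same in-adjacency matrix, and under the canonical identification of $\mathcal{C}_{\mathcal{G}}/\bot$ with $\mathcal{C}_{\mathcal{G}}$ (where $\bot(c) = c$) the two networks are literally the same object.

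The key step is then to observe that each neighborhood function in $\{\mathcal{N}^{-},\mathcal{V}^{-},\mathcal{V}_{k}^{-},\mathcal{R}^{-}\}$ is determined entirely by the in-adjacency matrix: $\mathcal{N}^{-}$ is read off from its nonzero entries, while $\mathcal{V}^{-}$, $\mathcal{V}_{k}^{-}$ and $\mathcal{R}^{-}$ are built recursively from $\mathcal{N}^{-}$ through \cref{eq:in_cum_neigh_recurs,eq:in_reachable}. Because $Q = M$, this forces $\mathcal{U}_{\mathcal{Q}}(\bot(c)) = \mathcal{U}_{\mathcal{G}}(c)$ for every such $\mathcal{U}$, with no case-by-case verification required. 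Substituting into the defining implication of \cref{defi:U_invariant}, the condition $d \in \mathcal{U}_{\mathcal{G}}(c) \implies \bot(d) \in \mathcal{U}_{\mathcal{Q}}(\bot(c))$ reduces to $d \in \mathcal{U}_{\mathcal{G}}(c) \implies d \in \mathcal{U}_{\mathcal{G}}(c)$, which holds trivially for all $c \in \mathcal{C}_{\mathcal{G}}$.

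I do not expect any genuine obstacle here; the only point requiring care is the bookkeeping of the identification $\mathcal{C}_{\mathcal{G}}/\bot \cong \mathcal{C}_{\mathcal{G}}$ together with the observation $Q = M$, which together make $\mathcal{U}_{\mathcal{Q}}$ and $\mathcal{U}_{\mathcal{G}}$ agree. Once this is made explicit the result is immediate, and it is consistent with the earlier statement that $\bot$ is $\mathcal{U}$-matched for every $\mathcal{U}$ (\cref{lemma:trivial_U_matched}), via \cref{lemma:u_invariant_implies_u_matched}.
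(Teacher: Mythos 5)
Your proposal is correct and follows exactly the reasoning the paper leaves implicit: the corollary is stated without proof because quotienting over $\bot$ (with partition matrix $P = I$) returns the same network ($Q = M$ under the identification $\mathcal{C}_{\mathcal{G}}/\bot \cong \mathcal{C}_{\mathcal{G}}$), so each $\mathcal{U}_{\mathcal{Q}}$ coincides with $\mathcal{U}_{\mathcal{G}}$ and the defining implication of $\mathcal{U}$-invariance becomes a tautology. Your explicit bookkeeping of the identification and the observation that all four neighborhood functions are determined by the in-adjacency matrix is precisely the missing detail, so nothing further is needed.
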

\begin{corollary}\label{lemma:n_invariant_implies_v_invariant}
	If a balanced partition $ \bp $ is $ \mathcal{N}^{-} $-invariant, then it is $ \mathcal{V}^{-} $-invariant.
	\hfill $ \square $
\end{corollary}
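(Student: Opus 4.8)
The plan is to mirror the proof of \cref{lemma:n_matched_implies_v_matched} almost verbatim, replacing the $\mathcal{U}$-matched condition by the $\mathcal{U}$-invariant one and working between the network $\mathcal{G}$ and its quotient $\mathcal{Q} = \mathcal{G}/\bp$. First I would fix an arbitrary cell $c \in \mathcal{C}_{\mathcal{G}}$ and unfold the cumulative in-neighborhood by its definition, $\mathcal{V}^{-}_{\mathcal{G}}(c) = c \cup \mathcal{N}^{-}_{\mathcal{G}}(c)$, and likewise record that, in the quotient, $\mathcal{V}^{-}_{\mathcal{Q}}(\bp(c)) = \bp(c) \cup \mathcal{N}^{-}_{\mathcal{Q}}(\bp(c))$, noting that $\bp(c) \in \mathcal{C}_{\mathcal{Q}}$ so this is well-defined.

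The key step is to apply $\bp$ to the first identity and distribute it over the union. Using the set-valued convention $\bp(\mathbf{s}) = \bigcup_{d \in \mathbf{s}} \bp(d)$ introduced in \cref{subsec:neigh_color_matching}, we get $\bp(\mathcal{V}^{-}_{\mathcal{G}}(c)) = \bp(c) \cup \bp(\mathcal{N}^{-}_{\mathcal{G}}(c))$. The hypothesis that $\bp$ is $\mathcal{N}^{-}$-invariant, in the subset form \eref{eq:U_invariant_subset}, gives $\bp(\mathcal{N}^{-}_{\mathcal{G}}(c)) \subseteq \mathcal{N}^{-}_{\mathcal{Q}}(\bp(c))$. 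Substituting this into the previous line yields $\bp(\mathcal{V}^{-}_{\mathcal{G}}(c)) \subseteq \bp(c) \cup \mathcal{N}^{-}_{\mathcal{Q}}(\bp(c)) = \mathcal{V}^{-}_{\mathcal{Q}}(\bp(c))$, which is precisely the defining inclusion \eref{eq:U_invariant_subset} of $\mathcal{V}^{-}$-invariance (the case $\mathcal{U} = \mathcal{V}^{-}$ of \cref{defi:U_invariant}). Since $c$ was arbitrary, this establishes the claim.

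I expect essentially no obstacle here: once the cumulative neighborhood is written as a union, the argument collapses to a single set inclusion. The only point requiring minor care is bookkeeping about which network each operator lives in---$\mathcal{N}^{-}_{\mathcal{G}}$ and $\mathcal{V}^{-}_{\mathcal{G}}$ act on cells of $\mathcal{G}$, whereas $\mathcal{N}^{-}_{\mathcal{Q}}$ and $\mathcal{V}^{-}_{\mathcal{Q}}$ act on cells of $\mathcal{Q} = \mathcal{G}/\bp$---and ensuring the distribution of $\bp$ over the union of $c$ with its in-neighborhood is applied consistently. If one prefers to argue with equalities rather than inclusions, \cref{lemma:U_invariant_equiv} lets us replace each inclusion above by an equality, but the inclusion version already matches the statement of \cref{defi:U_invariant} and suffices.
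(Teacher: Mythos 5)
Your proposal is correct and takes essentially the same route as the paper: the paper's proof argues elementwise, splitting $ c\in\mathcal{V}^{-}_{\mathcal{G}}(d) $ into the cases $ c=d $ and $ c\in\mathcal{N}^{-}_{\mathcal{G}}(d) $ and applying $ \mathcal{N}^{-} $-invariance in the latter, which is exactly your set-level decomposition $ \bp(\mathcal{V}^{-}_{\mathcal{G}}(c)) = \bp(c)\cup\bp(\mathcal{N}^{-}_{\mathcal{G}}(c)) \subseteq \mathcal{V}^{-}_{\mathcal{Q}}(\bp(c)) $ phrased pointwise. There is no gap.
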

\begin{proof}
	Consider cells $ c,d\in\mathcal{C}_{\mathcal{G}} $ in a network $ \mathcal{G} $ such that $ c\in \mathcal{V}^{-}_{\mathcal{G}}(d) $. Then $ c\in \{d \cup \mathcal{N}^{-}_{\mathcal{G}}(d)\} $. Consider the case $ c=d $. Then $ \bp (c) \in \mathcal{V}^{-}_{\mathcal{Q}}(\bp (c)) $ in the quotient network $ \mathcal{Q}=\mathcal{G}/\bp $ is immediate from the definition of $ \mathcal{V}^{-} $. Consider now that $ c\in  \mathcal{N}^{-}_{\mathcal{G}}(d) $. Then since $ \bp $ is $ \mathcal{N}^{-} $-invariant, $ \bp (c) \in \mathcal{N}^{-}_{\mathcal{Q}}(\bp (d)) $, which implies $ \bp (c) \in \mathcal{V}^{-}_{\mathcal{Q}}(\bp (d)) $.
\end{proof}
\begin{lemma}\label{lemma:v_invariant_implies_v_k_invariant}
	If a balanced partition $ \bp $ is $ \mathcal{V}^{-} $-invariant, then it is $ \mathcal{V}_{k}^{-} $-invariant for every $ k\geq1 $.
	\hfill $ \square $
\end{lemma}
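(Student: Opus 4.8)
The plan is to proceed by induction on $k$, exactly mirroring the structure of the proof of \cref{lemma:v_matched_implies_v_k_matched} but now chaining the invariance inclusions through the quotient network $\mathcal{Q} = \mathcal{G}/\bp$. The base case $k=1$ is immediate, since $\mathcal{V}_{1}^{-} = \mathcal{V}^{-}$ and so the hypothesis is literally the conclusion. For the inductive step I would assume that $\bp$ is simultaneously $\mathcal{V}^{-}$-invariant and $\mathcal{V}_{k}^{-}$-invariant, and aim to establish $\mathcal{V}_{k+1}^{-}$-invariance using the membership form of \cref{defi:U_invariant}, which is the cleanest formulation for this kind of bookkeeping.

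First I would take any $d \in \mathcal{V}_{k+1\,\mathcal{G}}^{-}(c)$ and unfold the recursion \eref{eq:in_cum_neigh_recurs} in $\mathcal{G}$, which produces an intermediate cell $e \in \mathcal{V}_{k\,\mathcal{G}}^{-}(c)$ with $d \in \mathcal{V}^{-}_{\mathcal{G}}(e)$. The two invariance assumptions then apply separately: the inductive hypothesis, applied to $e \in \mathcal{V}_{k\,\mathcal{G}}^{-}(c)$, gives $\bp(e) \in \mathcal{V}_{k\,\mathcal{Q}}^{-}(\bp(c))$, while $\mathcal{V}^{-}$-invariance, applied to $d \in \mathcal{V}^{-}_{\mathcal{G}}(e)$, gives $\bp(d) \in \mathcal{V}^{-}_{\mathcal{Q}}(\bp(e))$.

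Finally I would reassemble these facts inside the quotient network by running the recursion \eref{eq:in_cum_neigh_recurs} forward in $\mathcal{Q}$: since $\bp(e)$ lies in $\mathcal{V}_{k\,\mathcal{Q}}^{-}(\bp(c))$, the set $\mathcal{V}^{-}_{\mathcal{Q}}(\bp(e))$ is one of the terms in the union defining $\mathcal{V}_{k+1\,\mathcal{Q}}^{-}(\bp(c))$, and because $\bp(d)$ belongs to that term, it follows that $\bp(d) \in \mathcal{V}_{k+1\,\mathcal{Q}}^{-}(\bp(c))$. This yields the implication $d \in \mathcal{V}_{k+1\,\mathcal{G}}^{-}(c) \implies \bp(d) \in \mathcal{V}_{k+1\,\mathcal{Q}}^{-}(\bp(c))$, which is precisely $\mathcal{V}_{k+1}^{-}$-invariance, completing the induction.

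I do not anticipate a genuine obstacle here; the argument is a straightforward two-step chaining of inclusions. The only points requiring care are keeping the recursion indices consistent across the two networks $\mathcal{G}$ and $\mathcal{Q}$, and using the correct direction of the invariance inclusion, namely the forward inclusion \eref{eq:U_invariant_subset} of \cref{defi:U_invariant} rather than the converse inclusion supplied by \cref{lemma:U_invariant_converse}. Phrasing everything via membership of individual cells, instead of via the set-level inclusions, keeps this verification transparent.
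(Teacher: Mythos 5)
Your proposal is correct and is essentially identical to the paper's own proof: the same induction on $k$, unfolding the recursion \eref{eq:in_cum_neigh_recurs} in $\mathcal{G}$ to extract an intermediate cell, applying the inductive hypothesis and $\mathcal{V}^{-}$-invariance to its $\bp$-image, and reassembling via the recursion in $\mathcal{Q}$ (the paper's $d^{\star}$ plays the role of your $e$, with $c$ and $d$ swapped). No gaps; your remark about using the forward inclusion of \cref{defi:U_invariant} rather than \cref{lemma:U_invariant_converse} is exactly the right point of care.
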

\begin{proof}
	The proof is by induction. The base case $ k=1 $ is trivial since $ \mathcal{V}_{1}^{-} = \mathcal{V}^{-} $. We assume that the statement applies to a given $ k $. That is, $ \bp $ is both $ \mathcal{V}^{-} $-invariant and $ \mathcal{V}_{k}^{-} $-invariant.
	Consider cells $ c,d\in\mathcal{C}_{\mathcal{G}} $ in a network $ \mathcal{G} $ such that $ c\in \mathcal{V}^{-}_{k+1 \, \mathcal{G}}(d) $. Then $ c \in \bigcup_{d^{\star}\in\mathcal{V}_{k\,\mathcal{G}}^{-}(d)} \mathcal{V}^{-}_{\mathcal{G}}(d^{\star}) $. That is, $ c\in \mathcal{V}^{-}_{\mathcal{G}}(d^{\star}) $ for at least one particular $ d^{\star}\in\mathcal{V}_{k\,\mathcal{G}}^{-}(d) $.\\
	Then from $ \bp $ being $ \mathcal{V}^{-} $-invariant and $ \mathcal{V}_{k}^{-} $-invariant, we have $ \bp(c)\in \mathcal{V}^{-}_{\mathcal{Q}}(\bp(d^{\star})) $	and $ \bp(d^{\star})\in\mathcal{V}_{k\,\mathcal{Q}}^{-}(\bp(d)) $, respectively, in the quotient network $ \mathcal{Q}=\mathcal{G}/\bp $. Therefore $ \bp(c)\in \mathcal{V}_{k+1\,\mathcal{Q}}^{-}(\bp(d))  $, which means that $ \bp $ is $ \mathcal{V}_{k+1}^{-} $-invariant.
\end{proof}
\begin{corollary}
	If a balanced partition $ \bp $ defined on a finite set of cells is $ \mathcal{V}^{-} $-invariant, then it is also $ \mathcal{R}^{-} $-invariant.
	\label{lemma:v_invariant_implies_r_invariant}
	\hfill $ \square $
\end{corollary}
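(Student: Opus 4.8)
The plan is to mirror the argument used for the matched case in \cref{lemma:v_matched_implies_r_matched}, replacing the color-matching statements by their invariance counterparts. Unwinding \cref{defi:U_invariant} with $\mathcal{U} = \mathcal{R}^{-}$, what must be shown is that
$$d \in \mathcal{R}_{\mathcal{G}}^{-}(c) \implies \bp(d) \in \mathcal{R}_{\mathcal{Q}}^{-}(\bp(c))$$
for every $c \in \mathcal{C}_{\mathcal{G}}$, where $\mathcal{Q} = \mathcal{G}/\bp$. The idea is to approximate each in-reachability set by a sufficiently high cumulative in-neighborhood, for which invariance is already available.

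First I would invoke \cref{lemma:v_invariant_implies_v_k_invariant}: since $\bp$ is $\mathcal{V}^{-}$-invariant by hypothesis, it is $\mathcal{V}_{k}^{-}$-invariant for every $k \geq 1$. Next, because the cell set is finite, both $\mathcal{G}$ and $\mathcal{Q}$ have finitely many cells, so \cref{cor:cumulative_neigh_k_inside_reachable} guarantees that the monotone sequences of cumulative in-neighborhoods stabilize after finitely many steps in each network. I would therefore fix a single integer $K$ large enough that $\mathcal{V}_{K\,\mathcal{G}}^{-}(c) = \mathcal{R}_{\mathcal{G}}^{-}(c)$ for all $c \in \mathcal{C}_{\mathcal{G}}$ and simultaneously $\mathcal{V}_{K\,\mathcal{Q}}^{-}(k) = \mathcal{R}_{\mathcal{Q}}^{-}(k)$ for all $k \in \mathcal{C}_{\mathcal{Q}}$; such a $K$ exists since one may take the maximum of the finitely many individual stabilization indices across both networks.

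With this $K$ in hand, the conclusion is a direct substitution. Given $d \in \mathcal{R}_{\mathcal{G}}^{-}(c) = \mathcal{V}_{K\,\mathcal{G}}^{-}(c)$, the $\mathcal{V}_{K}^{-}$-invariance of $\bp$ yields $\bp(d) \in \mathcal{V}_{K\,\mathcal{Q}}^{-}(\bp(c))$, and by the choice of $K$ the right-hand side equals $\mathcal{R}_{\mathcal{Q}}^{-}(\bp(c))$. This establishes the required implication, hence $\bp$ is $\mathcal{R}^{-}$-invariant.

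The argument is almost entirely routine once the preceding lemmas are in place; the only point demanding care is that the stabilization of cumulative neighborhoods into reachability sets must be arranged to hold at one common level $K$ in both the original network $\mathcal{G}$ and its quotient $\mathcal{Q}$, rather than only pointwise in a single network. Finiteness of both cell sets makes this uniform choice possible, and this is the step I would expect to be the (mild) crux of the proof.
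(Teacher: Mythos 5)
Your proof is correct and takes essentially the same approach as the paper, whose own proof simply cites the same two results (\cref{lemma:v_invariant_implies_v_k_invariant} and \cref{cor:cumulative_neigh_k_inside_reachable}) and leaves the substitution to the reader. One minor remark: your ``crux'' about choosing a common stabilization level $K$ in both networks is harmless but unnecessary, since on the quotient side you only need the inclusion $\mathcal{V}_{K\,\mathcal{Q}}^{-}(\bp(c)) \subseteq \mathcal{R}_{\mathcal{Q}}^{-}(\bp(c))$, which \cref{cor:cumulative_neigh_k_inside_reachable} gives for every $K$, so stabilization is only required in $\mathcal{G}$.
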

\begin{proof}
	This is direct from \cref{lemma:v_invariant_implies_v_k_invariant} and 	\cref{cor:cumulative_neigh_k_inside_reachable}.
\end{proof}
The concept of $ \mathcal{U} $-invariance generalizes the concept of spurious partitions, which was defined in \cite{aguiar2017patterns}. In particular, it corresponds to partitions not being $ \mathcal{N}^{-} $-invariant. This is illustrated in the following example.
\begin{exmp}\label{exmp:spurious_partition_example}
	Consider the network in \fref{fig:quotient_cancelinga}. For a general admissible function $ f\in\mathcal{F}_{\mathcal{G}} $, $ f_3 $ depends on the states of cells $ 1, 2 $. However, when the state is in $ \Delta_\bp^{\xset} $ with \mbox{$ \bp = \{ \{1,2\},\{3\},\{4\} \} $}, the total effect of cells $ 1,2 $ on cell $ 3 $ cancels and $ 3 $ acts as if there were no edges coming from those cells. 
	\begin{figure}[h]
		\centering
		\begin{subfigure}[t]{0.3\textwidth}
			\centering
			\begin{tikzpicture}[
node1/.style = {circle,minimum size=23,draw},
node2/.style = {circle,minimum size=23,draw,fill=white!75!black},
node3/.style = {circle,minimum size=23,draw,fill=white!50!black},
noderect/.style = {rectangle,minimum size=20,draw},
edge1/.style = {>=latex,thick},
edgedashed/.style = {>=latex,thick,dashed},
edge2/.style = {>=latex,thick,blue},
edge3/.style = {>=latex,thick,red}
]
\node[node1] at (0,0)(n1){1};
\node[node1] at (2,0)(n2){2};
\node[node2] at (1,{sqrt(2)})(n3){3};
\node[node3] at (1,{2*sqrt(2)})(n4){4};

\draw [->,edge1](n1) -- (n3);
%\draw [->,edge1](n2) -- (n3);
\draw [->,edgedashed](n2) -- (n3);
%\node (minus23) at ($(n2)!0.4!(n3) + (-0.2,-0.2)$) {(-)};
\draw [->,edge1](n3) -- (n4);

\draw [->,edge1] (n4) edge[bend right,looseness=0.8] (n1);
\draw [->,edge1] (n4) edge[bend left,looseness=0.8] (n2);

%\DoubleLine{n1}{n2}{<-,edge1}{}{->,edge1}{}
%\DoubleLine{n1}{n3}{<-,edge1}{}{->,edge1}{}
%\DoubleLine{n2}{n3}{<-,edge1}{}{->,edge1}{}

\end{tikzpicture} 
			\caption{Original network.}
			\label{fig:quotient_cancelinga}
		\end{subfigure}
		\begin{subfigure}[t]{0.3\textwidth}
			\centering
			\begin{tikzpicture}[
node1/.style = {circle,minimum size=23,draw},
node2/.style = {circle,minimum size=23,draw,fill=white!75!black},
node3/.style = {circle,minimum size=23,draw,fill=white!50!black},
noderect/.style = {rectangle,minimum size=20,draw},
edge1/.style = {>=latex,thick},
edgedashed/.style = {>=latex,thick,dashed},
edge2/.style = {>=latex,thick,blue},
edge3/.style = {>=latex,thick,red}
]
\node[node1] at (0,0)(n1){1 2};
\node[node2] at (2,0)(n2){3};
\node[node3] at (1,{sqrt(2)})(n3){4};

\draw [->,edge1](n2) -- (n3);
\draw [->,edge1](n3) -- (n1);
\DoubleLine{n1}{n2}{->,edgedashed}{}{->,edge1}{}

%\node (minus123) at ($(n1)!0.4!(n2) + (0,0.3)$) {(-)};

%\DoubleLine{n1}{n2}{<-,edge1}{}{->,edge1}{}
%\DoubleLine{n1}{n3}{<-,edge1}{}{->,edge1}{}
%\DoubleLine{n2}{n3}{<-,edge1}{}{->,edge1}{}

\end{tikzpicture} 
			\caption{Edge canceling.}
			\label{fig:quotient_cancelingb}
		\end{subfigure}
		\begin{subfigure}[t]{0.3\textwidth}
			\centering
			\begin{tikzpicture}[
node1/.style = {circle,minimum size=23,draw},
node2/.style = {circle,minimum size=23,draw,fill=white!75!black},
node3/.style = {circle,minimum size=23,draw,fill=white!50!black},
noderect/.style = {rectangle,minimum size=20,draw},
edge1/.style = {>=latex,thick},
edge2/.style = {>=latex,thick,blue},
edge3/.style = {>=latex,thick,red}
]
\node[node1] at (0,0)(n1){1 2};
\node[node2] at (2,0)(n2){3};
\node[node3] at (1,{sqrt(2)})(n3){4};

\draw [->,edge1](n2) -- (n3);
\draw [->,edge1](n3) -- (n1);
%\DoubleLine{n1}{n2}{->,edge1}{}{->,edge1}{(-)}

%\DoubleLine{n1}{n2}{<-,edge1}{}{->,edge1}{}
%\DoubleLine{n1}{n3}{<-,edge1}{}{->,edge1}{}
%\DoubleLine{n2}{n3}{<-,edge1}{}{->,edge1}{}

\end{tikzpicture} 
			\caption{Quotient network.}
			\label{fig:quotient_cancelingc}
		\end{subfigure}
		\caption{Example of a spurious (not $ \mathcal{N}^{-} $-invariant) partition.}
		\label{fig:quotient_canceling_example}
	\end{figure}
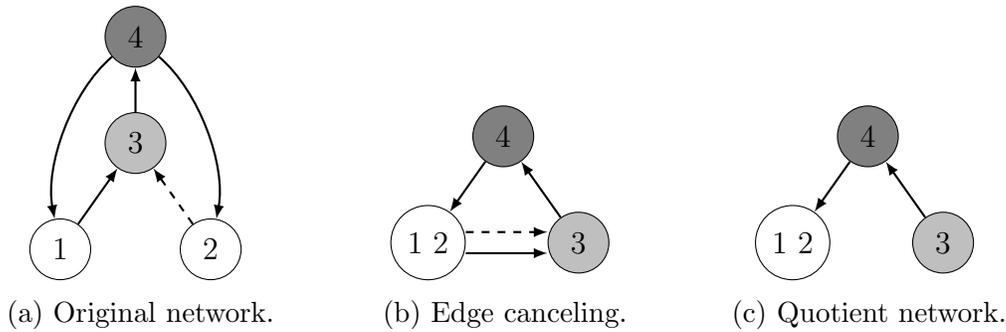
	\hfill $ \square $
\end{exmp}
The partition in \cref{exmp:spurious_partition_example} is $ \mathcal{N}^{-} $-matched despite not being $ \mathcal{N}^{-} $-invariant. That is, while being $ \mathcal{N}^{-} $-invariant is a sufficient condition for a partition to be $ \mathcal{N}^{-} $-matched, it is not a necessary one. We now present an example that clarifies why edge canceling in a balanced partition that is not $ \mathcal{N}^{-} $-invariant might lead to it not being $ \mathcal{N}^{-} $-matched.
\begin{exmp}\label{exmp:not_n_invariant_not_n_matched}
	Consider the network in \fref{fig:not_n_invariant_not_n_matched}, which is colored according to a balanced partition that is not $ \mathcal{N}^{-} $-invariant (that is, it is spurious). Note that \mbox{$ \mathcal{N}^{-}(1) = \{1\}$} contains only white colors and $ \mathcal{N}^{-}(4) = \{1,2,3\}$ contains white and gray colors. The fact that the edges coming from cells $ 2 $ and $ 3 $ cancel each other is exactly what allows this partition to be balanced despite this difference.
	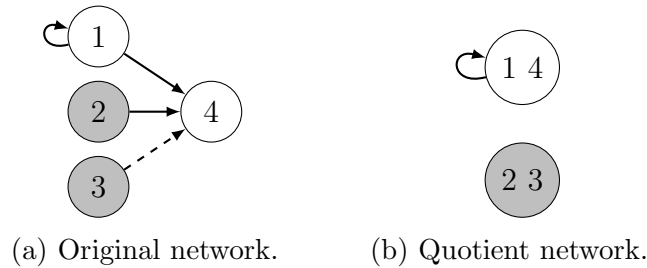
\begin{figure}[h]
		\centering
		\begin{subfigure}[t]{0.3\textwidth}
			\centering
			\begin{tikzpicture}[
node1/.style = {circle,minimum size=23,draw},
node2/.style = {circle,minimum size=23,draw,fill=white!75!black},
node3/.style = {circle,minimum size=23,draw,fill=white!50!black},
noderect/.style = {rectangle,minimum size=20,draw},
edge1/.style = {>=latex,thick},
edgedashed/.style = {>=latex,thick,dashed},
edge2/.style = {>=latex,thick,blue},
edge3/.style = {>=latex,thick,red}
]
\node[node1] at (0,1.0)(n1){1};
\node[node2] at (0,0)(n2){2};
\node[node2] at (0,-1.0)(n3){3};
\node[node1] at (1.5,0)(n4){4};

%\DoubleLine{n1}{n3}{<-,edge1}{}{->,edge1}{}
\draw [->,edge1] (n1) edge[loop left,looseness=5] (n1);
\draw [->,edge1](n1) -- (n4);
\draw [->,edge1](n2) -- (n4);
\draw [->,edgedashed](n3) -- (n4);

%\node (minus123) at ($(n3)!0.4!(n4) + (0.2,-0.2)$) {(-)};
%\DoubleLine{n2}{n3}{<-,edge1}{}{->,edge1}{}

%\draw [->,edge1] (n1) edge[loop left,looseness=5] (n1);
%\draw [->,edge1] (n3) edge[loop above,looseness=5] (n3);

%\node (minus123) at ($(n1)!0.4!(n2) + (0,0.3)$) {(-)};

%\DoubleLine{n1}{n2}{<-,edge1}{}{->,edge1}{}
%\DoubleLine{n1}{n3}{<-,edge1}{}{->,edge1}{}
%\DoubleLine{n2}{n3}{<-,edge1}{}{->,edge1}{}

\end{tikzpicture} 
			\caption{Original network.}
			\label{fig:not_n_invariant_not_n_matched}
		\end{subfigure}
		\begin{subfigure}[t]{0.3\textwidth}
			\centering
			\begin{tikzpicture}[
node1/.style = {circle,minimum size=23,draw},
node2/.style = {circle,minimum size=23,draw,fill=white!75!black},
node3/.style = {circle,minimum size=23,draw,fill=white!50!black},
noderect/.style = {rectangle,minimum size=20,draw},
edge1/.style = {>=latex,thick},
edgedashed/.style = {>=latex,thick,dashed},
edge2/.style = {>=latex,thick,blue},
edge3/.style = {>=latex,thick,red}
]
\node[node1] at (0,1.0)(n14){1 4};
\node[node2] at (0,-0.5)(n23){2 3};

%\DoubleLine{n1}{n3}{<-,edge1}{}{->,edge1}{}
\draw [->,edge1] (n14) edge[loop left,looseness=5] (n14);

%\node (minus123) at ($(n3)!0.4!(n4) + (0.2,-0.2)$) {(-)};
%\DoubleLine{n2}{n3}{<-,edge1}{}{->,edge1}{}

%\draw [->,edge1] (n1) edge[loop left,looseness=5] (n1);
%\draw [->,edge1] (n3) edge[loop above,looseness=5] (n3);

%\node (minus123) at ($(n1)!0.4!(n2) + (0,0.3)$) {(-)};

%\DoubleLine{n1}{n2}{<-,edge1}{}{->,edge1}{}
%\DoubleLine{n1}{n3}{<-,edge1}{}{->,edge1}{}
%\DoubleLine{n2}{n3}{<-,edge1}{}{->,edge1}{}

\end{tikzpicture} 
			\caption{Quotient network.}
			\label{fig:not_n_invariant_not_n_matched_quotient}
		\end{subfigure}
		\caption{Example of a partition that is neither $ \mathcal{N}^{-} $-invariant nor $ \mathcal{N}^{-} $-matched.}
		\label{fig:not_n_invariant_not_n_matched_exmp}
	\end{figure}
	\hfill $ \square $
\end{exmp}
We have seen in \cref{lemma:n_invariant_implies_v_invariant} that a $ \mathcal{N}^{-} $-invariant partition is also $ \mathcal{V}^{-} $-invariant. The next example shows that the converse is not necessarily true.
\begin{exmp}\label{exmp:not_n_but_v_invariant}
	Consider the network in \fref{fig:not_n_but_v_invariant}, which is colored with a single color, according to the balanced partition $ \{\{1,2,3\}\} $. In this network, both the $ \mathcal{N}^{-} $ and $ \mathcal{V}^{-} $ in-neighborhoods of white cells contain white cells. On the other hand, in the quotient network in \fref{fig:not_n_but_v_invariant_quotient}, we see that $ \mathcal{N}^{-} $ of its only existing cell is empty. Therefore this partition is not  $\mathcal{N}^{-} $-invariant. It is, however, $\mathcal{V}^{-} $-invariant.
	\begin{figure}[h]
		\centering
		\begin{subfigure}[t]{0.3\textwidth}
			\centering
			\begin{tikzpicture}[
node1/.style = {circle,minimum size=23,draw},
node2/.style = {circle,minimum size=23,draw,fill=white!75!black},
node3/.style = {circle,minimum size=23,draw,fill=white!50!black},
noderect/.style = {rectangle,minimum size=20,draw},
edge1/.style = {>=latex,thick},
edgedashed/.style = {>=latex,thick,dashed},
edge2/.style = {>=latex,thick,blue},
edge3/.style = {>=latex,thick,red}
]
\node[node1] at (0,0)(n1){1};
\node[node1] at (2,0)(n2){2};
\node[node1] at (1,{sqrt(2)})(n3){3};

\DoubleLine{n1}{n2}{<-,edgedashed}{}{->,edge1}{}
\DoubleLine{n2}{n3}{<-,edgedashed}{}{->,edge1}{}
\DoubleLine{n3}{n1}{<-,edgedashed}{}{->,edge1}{}

%\draw [->,edge1](n2) -- (n3);
%\draw [->,edge1](n3) -- (n1);

%\node (minus123) at ($(n1)!0.4!(n2) + (0,0.3)$) {(-)};

%\DoubleLine{n1}{n2}{<-,edge1}{}{->,edge1}{}
%\DoubleLine{n1}{n3}{<-,edge1}{}{->,edge1}{}
%\DoubleLine{n2}{n3}{<-,edge1}{}{->,edge1}{}

\end{tikzpicture} 
			\caption{Original network.}
			\label{fig:not_n_but_v_invariant}
		\end{subfigure}
		\begin{subfigure}[t]{0.3\textwidth}
			\centering
			\begin{tikzpicture}[
node1/.style = {circle,minimum size=23,draw},
node2/.style = {circle,minimum size=23,draw,fill=white!75!black},
node3/.style = {circle,minimum size=23,draw,fill=white!50!black},
noderect/.style = {rectangle,minimum size=20,draw},
edge1/.style = {>=latex,thick},
edgedashed/.style = {>=latex,thick,dashed},
edge2/.style = {>=latex,thick,blue},
edge3/.style = {>=latex,thick,red}
]

\node[node1] at (0,0)(n1){1 2 3};

%\draw [->,edge1](n2) -- (n3);
%\draw [->,edge1](n3) -- (n1);

%\node (minus123) at ($(n1)!0.4!(n2) + (0,0.3)$) {(-)};

%\DoubleLine{n1}{n2}{<-,edge1}{}{->,edge1}{}
%\DoubleLine{n1}{n3}{<-,edge1}{}{->,edge1}{}
%\DoubleLine{n2}{n3}{<-,edge1}{}{->,edge1}{}

\end{tikzpicture} 
			\caption{Quotient network.}
			\label{fig:not_n_but_v_invariant_quotient}
		\end{subfigure}
		\caption{Example of a partition that is not $ \mathcal{N}^{-} $-invariant but is $ \mathcal{V}^{-} $-invariant.}	
		\label{fig:not_n_but_v_invariant_exmp}	
	\end{figure}
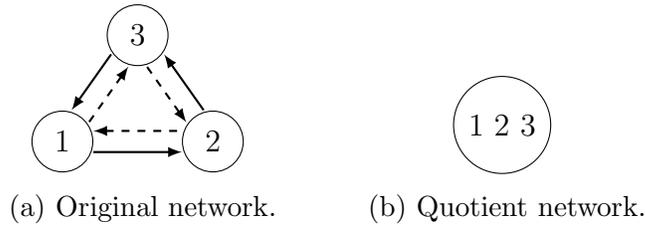	
	\hfill $ \square $
\end{exmp}
We have also seen in \cref{lemma:v_invariant_implies_r_invariant} that a $ \mathcal{V}^{-} $-invariant partition is also $ \mathcal{R}^{-} $-invariant. In the next example we disprove the converse statement.
\begin{exmp}\label{exmp:not_v_but_r_invariant}
	Consider the network in \fref{fig:not_v_but_r_invariant}, which is colored according to the balanced partition $ \{\{1\},\{2,3\},\{4\}\} $. In the original network $ \mathcal{G} $, we have $ \mathcal{V}^{-}(1) = \{1,2,3,4\} $. That is, white cells have white, light gray and dark gray colors in its $ \mathcal{V}^{-} $ neighborhood. On the other hand, in the quotient, the white cell has only white and dark gray colors in its $ \mathcal{V}^{-} $ neighborhood, so the partition is not $\mathcal{V}^{-} $-invariant. However, it is clear that the partition is $\mathcal{R}^{-} $-invariant, since in both the original network and in the quotient, all in-reachability sets $\mathcal{R}^{-} $ contain all the three colors of the partition.
	\begin{figure}[h]
		\centering
		\begin{subfigure}[t]{0.3\textwidth}
			\centering
			\begin{tikzpicture}[
node1/.style = {circle,minimum size=23,draw},
node2/.style = {circle,minimum size=23,draw,fill=white!75!black},
node3/.style = {circle,minimum size=23,draw,fill=white!50!black},
noderect/.style = {rectangle,minimum size=20,draw},
edge1/.style = {>=latex,thick},
edgedashed/.style = {>=latex,thick,dashed},
edge2/.style = {>=latex,thick,blue},
edge3/.style = {>=latex,thick,red}
]
\node[node1] at (0,0)(n1){1};
\node[node2] at (1.7,0)(n2){2};
\node[node2] at (0,-1.7)(n3){3};
\node[node3] at (1.7,-1.7)(n4){4};

\DoubleLine{n1}{n2}{->,edge1}{}{<-,edge1}{}
\DoubleLine{n1}{n3}{<-,edgedashed}{}{->,edge1}{}

\draw [->,edge1](n2) -- (n4);
\draw [->,edge1](n3) -- (n4);
\draw [->,edge1](n4) -- (n1);

%\draw [->,edge1](n2) -- (n3);
%\draw [->,edge1](n3) -- (n1);

%\node (minus123) at ($(n1)!0.4!(n2) + (0,0.3)$) {(-)};

%\DoubleLine{n1}{n2}{<-,edge1}{}{->,edge1}{}
%\DoubleLine{n1}{n3}{<-,edge1}{}{->,edge1}{}
%\DoubleLine{n2}{n3}{<-,edge1}{}{->,edge1}{}

\end{tikzpicture} 
			\caption{Original network.}
			\label{fig:not_v_but_r_invariant}
		\end{subfigure}
		\begin{subfigure}[t]{0.3\textwidth}
			\centering
			\begin{tikzpicture}[
node1/.style = {circle,minimum size=23,draw},
node2/.style = {circle,minimum size=23,draw,fill=white!75!black},
node3/.style = {circle,minimum size=23,draw,fill=white!50!black},
noderect/.style = {rectangle,minimum size=20,draw},
edge1/.style = {>=latex,thick},
edgedashed/.style = {>=latex,thick,dashed},
edge2/.style = {>=latex,thick,blue},
edge3/.style = {>=latex,thick,red}
]
\node[node1] at (0,0)(n1){1};
\node[node2] at (1,{sqrt(2)})(n23){2 3};
\node[node3] at (2,0)(n4){4};

\draw [->,edge1](n1) -- (n23);
\draw [->,edge1](n23) -- (n4);
\draw [->,edge1](n4) -- (n1);
%\DoubleLine{n1}{n2}{->,edgedashed}{}{->,edge1}{}

%\node (minus123) at ($(n1)!0.4!(n2) + (0,0.3)$) {(-)};

%\DoubleLine{n1}{n2}{<-,edge1}{}{->,edge1}{}
%\DoubleLine{n1}{n3}{<-,edge1}{}{->,edge1}{}
%\DoubleLine{n2}{n3}{<-,edge1}{}{->,edge1}{}

\end{tikzpicture} 
			\caption{Quotient network.}
			\label{fig:not_v_but_r_invariant_quotient}
		\end{subfigure}
		\caption{Example of a partition that is not $ \mathcal{V}^{-} $-invariant but is $ \mathcal{R}^{-} $-invariant.}	
		\label{fig:not_v_but_r_invariant_exmp}	
	\end{figure}
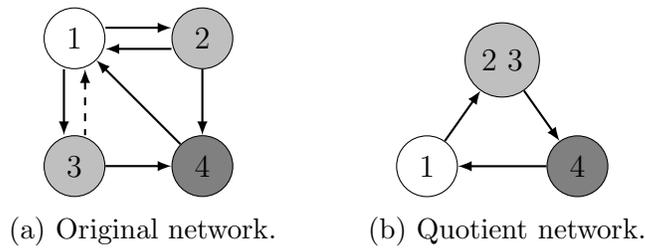	
	\hfill $ \square $
\end{exmp}
In summary, we have shown that the sequence: $ \mathcal{N}^{-} $-invariant , $ \mathcal{V}^{-} $-invariant and $ \mathcal{R}^{-} $-invariant lists progressively weaker assumptions.
\begin{remark}
	Refer back to \cref{exmp:R_matched_lattice_example}. The network contains only positive weights, so no matter which quotient we apply, there is no edge canceling. It follows that every balanced partition in that network is $ \mathcal{N}^{-} $-invariant. Then it is also $ \mathcal{R}^{-} $-invariant, which from \cref{lemma:u_invariant_implies_u_matched} means that they are $ \mathcal{R}^{-} $-matched.
	\hfill $ \square $
\end{remark}
We now show that the tamest assumption we defined in this section ($ \mathcal{R}^{-} $-invariant) is enough to allow the following results.
\begin{lemma}
	Consider a $ \mathcal{R}^{-} $-invariant balanced partition $ \bp \in \Lambda_{\mathcal{G}}$ on a network $ \mathcal{G} $ and its corresponding quotient network $ \mathcal{Q} = \mathcal{G}/\bp $. If a partition $ \mathcal{A} \geq \bp $ is strong in $ \mathcal{G} $, then $ \mathcal{A}/\bp $ is strong in $ \mathcal{Q} $.
	\hfill $ \square $
\end{lemma}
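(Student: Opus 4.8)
The plan is to verify the defining condition of strongness directly at the level of in-reachability sets, transporting equalities from $\mathcal{G}$ to $\mathcal{Q}$ through the $\mathcal{R}^{-}$-invariance of $\bp$. Recall from \cref{def:color_connect_classif,def:partition_connect_classif} that $\mathcal{A}/\bp$ is strong in $\mathcal{Q}$ precisely when any two cells of $\mathcal{C}_{\mathcal{Q}}$ sharing a color under $\mathcal{A}/\bp$ have identical in-reachability sets in $\mathcal{Q}$. So I would fix $k,l\in\mathcal{C}_{\mathcal{Q}}$ with $\mathcal{A}/\bp(k)=\mathcal{A}/\bp(l)$ and aim to conclude $\mathcal{R}^{-}_{\mathcal{Q}}(k)=\mathcal{R}^{-}_{\mathcal{Q}}(l)$.

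First I would lift $k,l$ back to $\mathcal{G}$: since $\bp$ is surjective, choose $c,d\in\mathcal{C}_{\mathcal{G}}$ with $\bp(c)=k$ and $\bp(d)=l$. Because $\mathcal{A}\geq\bp$, \cref{lemma:quotient_partition} gives $\mathcal{A}/\bp\circ\bp=\mathcal{A}$, so $\mathcal{A}(c)=\mathcal{A}/\bp(k)=\mathcal{A}/\bp(l)=\mathcal{A}(d)$. Since $\mathcal{A}$ is strong in $\mathcal{G}$, the two cells lie in the same SCC, i.e. $\mathcal{R}^{-}_{\mathcal{G}}(c)=\mathcal{R}^{-}_{\mathcal{G}}(d)$.

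The key step is to push this equality down to the quotient. Applying $\bp$ to both sides of $\mathcal{R}^{-}_{\mathcal{G}}(c)=\mathcal{R}^{-}_{\mathcal{G}}(d)$ yields $\bp(\mathcal{R}^{-}_{\mathcal{G}}(c))=\bp(\mathcal{R}^{-}_{\mathcal{G}}(d))$. Now, since $\bp$ is $\mathcal{R}^{-}$-invariant, \cref{lemma:U_invariant_equiv} (with $\mathcal{U}=\mathcal{R}^{-}$) provides the identity $\bp(\mathcal{R}^{-}_{\mathcal{G}}(c))=\mathcal{R}^{-}_{\mathcal{Q}}(\bp(c))$ for every cell, and likewise for $d$. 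Substituting, I obtain $\mathcal{R}^{-}_{\mathcal{Q}}(k)=\mathcal{R}^{-}_{\mathcal{Q}}(\bp(c))=\mathcal{R}^{-}_{\mathcal{Q}}(\bp(d))=\mathcal{R}^{-}_{\mathcal{Q}}(l)$, which is exactly the condition for $k$ and $l$ to be strongly connected in $\mathcal{Q}$. As $k,l$ were arbitrary cells of a common color, every color of $\mathcal{A}/\bp$ is strong, hence $\mathcal{A}/\bp$ is strong.

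I expect the only delicate point to be the careful handling of the two readings of $\mathcal{R}^{-}_{\mathcal{Q}}(\bp(c))$ versus $\bp(\mathcal{R}^{-}_{\mathcal{G}}(c))$: the former is the in-reachability set of a cell of $\mathcal{Q}$ (a subset of $\mathcal{C}_{\mathcal{Q}}$), while the latter is the image under $\bp$ of a subset of $\mathcal{C}_{\mathcal{G}}$ (a set of colors). Identifying cells of $\mathcal{Q}$ with colors of $\bp$ makes these comparable, and \cref{lemma:U_invariant_equiv} is precisely what guarantees they coincide; this is where $\mathcal{R}^{-}$-invariance, rather than the weaker $\mathcal{R}^{-}$-matching, is essential. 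An alternative route through \cref{lemma:strong_iff_finer_scc}, comparing the SCC partitions of $\mathcal{G}$ and $\mathcal{Q}$ directly, seems more cumbersome, so I would favor the in-reachability argument above.
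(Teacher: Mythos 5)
Your proof is correct and takes essentially the same route as the paper's: both reduce strongness of $\mathcal{A}/\bp$ to the implication $\mathcal{A}(c)=\mathcal{A}(d)\implies\mathcal{R}^{-}_{\mathcal{Q}}(\bp(c))=\mathcal{R}^{-}_{\mathcal{Q}}(\bp(d))$, use strongness of $\mathcal{A}$ in $\mathcal{G}$ to get $\mathcal{R}^{-}_{\mathcal{G}}(c)=\mathcal{R}^{-}_{\mathcal{G}}(d)$, apply $\bp$ to both sides, and invoke \cref{lemma:U_invariant_equiv} with $\mathcal{U}=\mathcal{R}^{-}$ to transfer the equality to the quotient. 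Your explicit lifting of quotient cells via surjectivity of $\bp$ and the remark distinguishing $\bp(\mathcal{R}^{-}_{\mathcal{G}}(c))$ from $\mathcal{R}^{-}_{\mathcal{Q}}(\bp(c))$ merely spell out what the paper leaves implicit.
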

\begin{proof}
	Firstly, note that $ \mathcal{A}/\bp $ being strong in $ \mathcal{Q} $, by definition, means that $ \mathcal{A}/\bp(\bp(c)) = \mathcal{A}/\bp(\bp(d)) $ implies $ \mathcal{R}_{\mathcal{Q}}^{-}(\bp(c)) = \mathcal{R}_{\mathcal{Q}}^{-}(\bp(d))$. This simplifies to $ \mathcal{A}(c) = \mathcal{A}(d) $ implies $ \mathcal{R}_{\mathcal{Q}}^{-}(\bp(c)) = \mathcal{R}_{\mathcal{Q}}^{-}(\bp(d))$. Assume $ \mathcal{A}(c) = \mathcal{A}(d) $. Then from $ \mathcal{A} $ being strong in $ \mathcal{G} $, we have $ \mathcal{R}_{\mathcal{G}}^{-}(c) = \mathcal{R}_{\mathcal{G}}^{-}(d)$, which implies $\bp( \mathcal{R}_{\mathcal{G}}^{-}(c)) =
	\bp( \mathcal{R}_{\mathcal{G}}^{-}(d))$. Since $ \bp $ is $ \mathcal{R}^{-} $-invariant, \cref{lemma:U_invariant_equiv} implies $ 
	\mathcal{R}_{\mathcal{Q}}^{-}(\bp (c)) 
	=
	\mathcal{R}_{\mathcal{Q}}^{-}(\bp (d)) 
	$.
\end{proof}
\begin{lemma}
	Consider a $ \mathcal{R}^{-} $-invariant balanced partition $ \bp \in \Lambda_{\mathcal{G}}$ on a network $ \mathcal{G} $ and its corresponding quotient network $ \mathcal{Q} = \mathcal{G}/\bp $. If a partition $ \mathcal{A} \geq \bp $ is non-weak in $ \mathcal{G} $, then $ \mathcal{A}/\bp $ is non-weak in $ \mathcal{Q} $.
	\hfill $ \square $
\end{lemma}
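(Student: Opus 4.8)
The plan is to mirror the structure of the immediately preceding strong-case lemma, replacing the equality of in-reachability sets by the non-emptiness of an intersection. Recall that a color $A$ of a partition is non-weak exactly when $\bigcap_{c\in A}\mathcal{R}^{-}(c)\neq\emptyset$, and that a partition is non-weak when every one of its colors is. So the goal reduces to showing that each color of $\mathcal{A}/\bp$ on $\mathcal{Q}$ has a common element in all the in-reachability sets of its cells.

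First I would unpack what a color of $\mathcal{A}/\bp$ on $\mathcal{Q}=\mathcal{G}/\bp$ looks like. Since $\mathcal{A}\geq\bp$, \cref{lemma:quotient_partition} gives $\mathcal{A}/\bp\circ\bp=\mathcal{A}$, so for any cell $c\in\mathcal{C}_{\mathcal{G}}$ one has $\mathcal{A}/\bp(\bp(c))=\mathcal{A}(c)$. Consequently the color of $\mathcal{A}/\bp$ containing $\bp(c)$ is precisely $\{\bp(c')\colon c'\in A\}$, where $A$ is the color of $c$ under $\mathcal{A}$. Thus $\mathcal{A}/\bp$ being non-weak in $\mathcal{Q}$ is equivalent to the statement that for every color $A$ of $\mathcal{A}$,
\begin{eqnarray*}
\bigcap_{c'\in A}\mathcal{R}_{\mathcal{Q}}^{-}(\bp(c'))\neq\emptyset.
\end{eqnarray*}

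Next I would produce the required common witness. Fix a color $A$ of $\mathcal{A}$. Because $\mathcal{A}$ is non-weak in $\mathcal{G}$, the color $A$ is non-weak, so I may pick a single cell $e\in\bigcap_{c'\in A}\mathcal{R}_{\mathcal{G}}^{-}(c')$; that is, $e\in\mathcal{R}_{\mathcal{G}}^{-}(c')$ for every $c'\in A$ simultaneously. Now apply $\mathcal{R}^{-}$-invariance of $\bp$ (the forward implication of \cref{defi:U_invariant} with $\mathcal{U}=\mathcal{R}^{-}$) to each $c'$: from $e\in\mathcal{R}_{\mathcal{G}}^{-}(c')$ we get $\bp(e)\in\mathcal{R}_{\mathcal{Q}}^{-}(\bp(c'))$. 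Since this holds for all $c'\in A$ with the \emph{same} image cell $\bp(e)$, it follows that $\bp(e)\in\bigcap_{c'\in A}\mathcal{R}_{\mathcal{Q}}^{-}(\bp(c'))$, so this intersection is non-empty. As $A$ was an arbitrary color of $\mathcal{A}$, every color of $\mathcal{A}/\bp$ is non-weak, and therefore $\mathcal{A}/\bp$ is non-weak in $\mathcal{Q}$.

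The argument is essentially bookkeeping, and no genuine obstacle arises. The one point that deserves care — and which I expect to be the only place a careless proof could slip — is that non-weakness is a property of the \emph{simultaneous} intersection over a whole color, not merely of pairs of cells; this is exactly why choosing a single witness $e$ valid for all of $A$ at once (rather than pairwise witnesses) is essential, and why the image $\bp(e)$ then serves as a common element in $\mathcal{Q}$. It is worth emphasizing that only the forward implication of $\mathcal{R}^{-}$-invariance is used, so none of the edge-canceling phenomena discussed in \cref{exmp:spurious_partition_example} interfere: we only push reachability forward through the quotient map, never backward.
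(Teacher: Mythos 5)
Your proof is correct and follows essentially the same route as the paper's: your single witness $e$ pushed forward to $\bp(e)$ is exactly the pointwise form of the paper's set inclusion $\bp\left(\bigcap_{c\in A}\mathcal{R}_{\mathcal{G}}^{-}(c)\right)\subseteq\bigcap_{c\in A}\bp\left(\mathcal{R}_{\mathcal{G}}^{-}(c)\right)$, followed by $\mathcal{R}^{-}$-invariance to replace $\bp(\mathcal{R}_{\mathcal{G}}^{-}(c))$ by $\mathcal{R}_{\mathcal{Q}}^{-}(\bp(c))$. Your remark that only the forward inclusion of $\mathcal{R}^{-}$-invariance is needed is accurate (the paper invokes the equality of \cref{lemma:U_invariant_equiv}, but the converse inclusion holds for any balanced partition by \cref{lemma:U_invariant_converse}), so this is a cosmetic rather than substantive difference.
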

\begin{proof}
	Assume $ \mathcal{A} $ is non-weak in $ \mathcal{G} $. Then for every color $ A\in\mathcal{A} $, we have $ \bigcap_{c\in A} \mathcal{R}_{\mathcal{G}}^{-}(c) \neq \emptyset $. Then $ \bp\left(\bigcap_{c\in A} 	\mathcal{R}_{\mathcal{G}}^{-}(c)\right) \neq \emptyset $. Note that $ \bigcap_{c\in A} \bp (\mathcal{R}_{\mathcal{G}}^{-}(c)) \supseteq
	\bp\left(\bigcap_{c\in A} 	\mathcal{R}_{\mathcal{G}}^{-}(c)\right) $, therefore $ \bigcap_{c\in A} 	\bp (\mathcal{R}_{\mathcal{G}}^{-}(c)) \neq \emptyset $. Since $ \bp $ is $ \mathcal{R}^{-} $-invariant, \cref{lemma:U_invariant_equiv} implies $ \bigcap_{c\in A} \mathcal{R}_{\mathcal{Q}}^{-}(\bp(c)) \neq \emptyset $. This can written as $ \bigcap_{\bp(c)\in A/\bp } \mathcal{R}_{\mathcal{Q}}^{-}(\bp(c)) \neq \emptyset $, so $ \mathcal{A}/\bp $ is non-weak in $ \mathcal{Q} $.
\end{proof}
These results are summarized in the left hand side of \tref{table:quotient_bp_r_invariant}, where, as before, $ S $, $ R $ and $ W $ denote the partition classifications of strong, rooted and weak, respectively. The right hand side is easily seen to be equivalent to the left one.
\begin{table}[h]
	\caption{Relation between partitions and their quotients over a $ \mathcal{R}^{-} $-invariant partition.}
	\begin{center}
		\begin{tabular}{cc}
			\multicolumn{2}{c}{$ \mathcal{A},\mathcal{G} \to \mathcal{A}/\bp,\mathcal{Q} $}     \\ \hline
			\multicolumn{1}{c|}{S} & S \\
			\multicolumn{1}{c|}{R} & S/R \\
			\multicolumn{1}{c|}{W} & S/R/W
		\end{tabular}
		\qquad
		\begin{tabular}{cc}
			\multicolumn{2}{c}{$ \mathcal{A}/\bp,\mathcal{Q} \to \mathcal{A},\mathcal{G} $}     \\ \hline
			\multicolumn{1}{c|}{S} & S/R/W \\
			\multicolumn{1}{c|}{R} & R/W \\
			\multicolumn{1}{c|}{W} & W
		\end{tabular}
	\end{center}
	\label{table:quotient_bp_r_invariant}
\end{table}
For \tref{table:quotient_bp_r_invariant} to apply, we require the partition we quotient over ($ \bp $) to be $ \mathcal{R}^{-} $-invariant. We now present some examples that show that these results do not apply if this assumption is not satisfied.
\begin{exmp}
	Consider the network $ \mathcal{G} $ in \fref{fig:A_bp_not_invariant_R_to_W}, which is colored according to the balanced partition $\bp = \{\{1,2\},\{3\}\} $.
	Note that $ \mathcal{R}_{\mathcal{G}}^{-}(3) = \{1,2,3\} $. That is, gray cells have white and gray colors in their $ \mathcal{R}_{\mathcal{G}}^{-} $ neighborhoods.
	On the other hand, in the quotient network $ \mathcal{Q} $ in \fref{fig:A_bp_not_invariant_R_to_W_quotient}, $ \mathcal{R}_{\mathcal{Q}}^{-}(3) = \{3\} $. That is, the gray cell has only the gray color in its $ \mathcal{R}_{\mathcal{Q}}^{-} $ neighborhood. Therefore $ \bp $ is not $\mathcal{R}^{-} $-invariant. Consider now the partition $ \mathcal{A} = \{\{1,2,3\}\} $. 
	Although this partition is rooted in $ \mathcal{G} $, its quotient $ \mathcal{A}/\bp = \{\{12,3\}\}$ is weak in $ \mathcal{Q} $.
	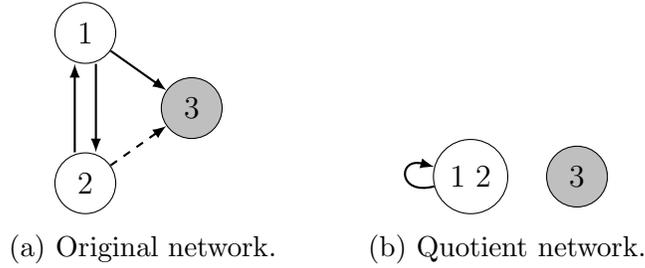
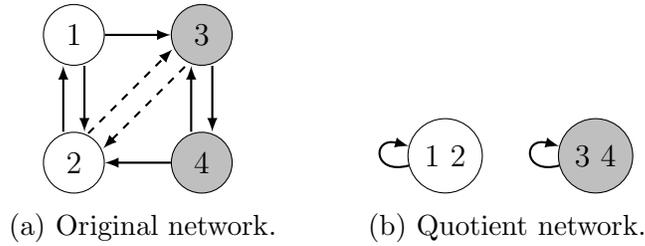
\begin{figure}[h]
		\centering
		\begin{subfigure}[t]{0.3\textwidth}
			\centering
			\begin{tikzpicture}[
node1/.style = {circle,minimum size=23,draw},
node2/.style = {circle,minimum size=23,draw,fill=white!75!black},
node3/.style = {circle,minimum size=23,draw,fill=white!50!black},
noderect/.style = {rectangle,minimum size=20,draw},
edge1/.style = {>=latex,thick},
edgedashed/.style = {>=latex,thick,dashed},
edge2/.style = {>=latex,thick,blue},
edge3/.style = {>=latex,thick,red}
]
\node[node1] at (0,2)(n1){1};
\node[node1] at (0,0)(n2){2};
\node[node2] at ({sqrt(2)},1)(n3){3};

\DoubleLine{n1}{n2}{->,edge1}{}{<-,edge1}{}
\draw [->,edge1](n1) -- (n3);
\draw [->,edgedashed](n2) -- (n3);

%\DoubleLine{n1}{n3}{<-,edgedashed}{}{->,edge1}{}

%\draw [->,edge1](n3) -- (n4);
%\draw [->,edge1](n4) -- (n1);

%\draw [->,edge1](n2) -- (n3);
%\draw [->,edge1](n3) -- (n1);

%\node (minus123) at ($(n1)!0.4!(n2) + (0,0.3)$) {(-)};

%\DoubleLine{n1}{n2}{<-,edge1}{}{->,edge1}{}
%\DoubleLine{n1}{n3}{<-,edge1}{}{->,edge1}{}
%\DoubleLine{n2}{n3}{<-,edge1}{}{->,edge1}{}

\end{tikzpicture} 
			\caption{Original network.}
			\label{fig:A_bp_not_invariant_R_to_W}
		\end{subfigure}
		\begin{subfigure}[t]{0.3\textwidth}
			\centering
			\begin{tikzpicture}[
node1/.style = {circle,minimum size=23,draw},
node2/.style = {circle,minimum size=23,draw,fill=white!75!black},
node3/.style = {circle,minimum size=23,draw,fill=white!50!black},
noderect/.style = {rectangle,minimum size=20,draw},
edge1/.style = {>=latex,thick},
edgedashed/.style = {>=latex,thick,dashed},
edge2/.style = {>=latex,thick,blue},
edge3/.style = {>=latex,thick,red}
]
\node[node1] at (0,0)(n12){1 2};
\node[node2] at ({sqrt(2)},0)(n3){3};

\draw [->,edge1] (n12) edge[loop left,looseness=5] (n12);

%\draw [->,edge1](n1) -- (n23);
%\draw [->,edge1](n23) -- (n4);
%\draw [->,edge1](n4) -- (n1);
%\DoubleLine{n1}{n2}{->,edgedashed}{}{->,edge1}{}

%\node (minus123) at ($(n1)!0.4!(n2) + (0,0.3)$) {(-)};

%\DoubleLine{n1}{n2}{<-,edge1}{}{->,edge1}{}
%\DoubleLine{n1}{n3}{<-,edge1}{}{->,edge1}{}
%\DoubleLine{n2}{n3}{<-,edge1}{}{->,edge1}{}

\end{tikzpicture} 
			\caption{Quotient network.}
			\label{fig:A_bp_not_invariant_R_to_W_quotient}
		\end{subfigure}
		\caption{Example of a quotient over a partition that is not $\mathcal{R}^{-} $-invariant.}
		\label{fig:A_bp_not_invariant_R_to_W_exmp}	
	\end{figure}	
	\hfill $ \square $
\end{exmp}
\begin{exmp}
	Consider the network $ \mathcal{G} $ in \fref{fig:A_bp_not_invariant_S_to_W}, which is colored according to the balanced partition $\bp = \{\{1,2\},\{3,4\}\} $. Note that $ \mathcal{G} $ consists of a single SCC. Therefore each cell has white and gray colors in its $ \mathcal{R}_{\mathcal{G}}^{-} $ neighborhood.	On the other hand, in the quotient network $ \mathcal{Q} $ in \fref{fig:A_bp_not_invariant_S_to_W_quotient}, $ \mathcal{R}_{\mathcal{Q}}^{-}(12) = \{12\} $. That is, the white cell has only the white color in its $ \mathcal{R}_{\mathcal{Q}}^{-} $ neighborhood. Therefore $ \bp $ is not $\mathcal{R}^{-} $-invariant. Consider now the partition $ \mathcal{A} = \{\{1,2,3,4\}\} $. 
	Although this partition is strong in $ \mathcal{G} $, its quotient $ \mathcal{A}/\bp = \{\{12,34\}\}$ is weak in $ \mathcal{Q} $.
	\begin{figure}[h]
		\centering
		\begin{subfigure}[t]{0.3\textwidth}
			\centering
			\begin{tikzpicture}[
node1/.style = {circle,minimum size=23,draw},
node2/.style = {circle,minimum size=23,draw,fill=white!75!black},
node3/.style = {circle,minimum size=23,draw,fill=white!50!black},
noderect/.style = {rectangle,minimum size=20,draw},
edge1/.style = {>=latex,thick},
edgedashed/.style = {>=latex,thick,dashed},
edge2/.style = {>=latex,thick,blue},
edge3/.style = {>=latex,thick,red}
]
\node[node1] at (0,0)(n1){1};
\node[node1] at (0,-1.7)(n2){2};
\node[node2] at (1.7,0)(n3){3};
\node[node2] at (1.7,-1.7)(n4){4};

\DoubleLine{n1}{n2}{->,edge1}{}{<-,edge1}{}
\DoubleLine{n3}{n4}{->,edge1}{}{<-,edge1}{}

\draw [->,edge1](n1) -- (n3);
\draw [->,edge1](n4) -- (n2);

\DoubleLine{n2}{n3}{->,edgedashed}{}{<-,edgedashed}{}

%\draw [->,edge1](n2) -- (n3);
%\draw [->,edge1](n3) -- (n1);

%\node (minus123) at ($(n1)!0.4!(n2) + (0,0.3)$) {(-)};

%\DoubleLine{n1}{n2}{<-,edge1}{}{->,edge1}{}
%\DoubleLine{n1}{n3}{<-,edge1}{}{->,edge1}{}
%\DoubleLine{n2}{n3}{<-,edge1}{}{->,edge1}{}

\end{tikzpicture} 
			\caption{Original network.}
			\label{fig:A_bp_not_invariant_S_to_W}
		\end{subfigure}
		\begin{subfigure}[t]{0.3\textwidth}
			\centering
			\begin{tikzpicture}[
node1/.style = {circle,minimum size=23,draw},
node2/.style = {circle,minimum size=23,draw,fill=white!75!black},
node3/.style = {circle,minimum size=23,draw,fill=white!50!black},
noderect/.style = {rectangle,minimum size=20,draw},
edge1/.style = {>=latex,thick},
edgedashed/.style = {>=latex,thick,dashed},
edge2/.style = {>=latex,thick,blue},
edge3/.style = {>=latex,thick,red}
]
\node[node1] at (0,0)(n12){1 2};
\node[node2] at (2,0)(n34){3 4};

\draw [->,edge1] (n12) edge[loop left,looseness=5] (n12);
\draw [->,edge1] (n34) edge[loop left,looseness=5] (n34);

%\draw [->,edge1](n1) -- (n23);
%\draw [->,edge1](n23) -- (n4);
%\draw [->,edge1](n4) -- (n1);
%\DoubleLine{n1}{n2}{->,edgedashed}{}{->,edge1}{}

%\node (minus123) at ($(n1)!0.4!(n2) + (0,0.3)$) {(-)};

%\DoubleLine{n1}{n2}{<-,edge1}{}{->,edge1}{}
%\DoubleLine{n1}{n3}{<-,edge1}{}{->,edge1}{}
%\DoubleLine{n2}{n3}{<-,edge1}{}{->,edge1}{}

\end{tikzpicture} 
			\caption{Quotient network.}
			\label{fig:A_bp_not_invariant_S_to_W_quotient}
		\end{subfigure}
		\caption{Example of a quotient over a partition that is not $\mathcal{R}^{-} $-invariant.}	
		\label{fig:A_bp_not_invariant_S_to_W_exmp}	
	\end{figure}	
	\hfill $ \square $
\end{exmp}
We now present examples where \tref{table:quotient_bp_r_invariant} does indeed apply.
\begin{exmp}
	\label{exmp:quotient_lattice_example}
	Consider the network in \fref{fig:quotient_net_example_main} and its respective lattice of balanced partitions $ \Lambda_{\mathcal{G}} $ in \fref{fig:quotient_lattice_example_main}. We define the quotient networks $ \mathcal{Q}_1 := \mathcal{G}/\bp_1 $, $ \mathcal{Q}_2 := \mathcal{G}/\bp_2 $ over the balanced partitions $ \bp_1 = \{\{1,2\},\{3\},\{4\}\} $ and $ \bp_2 = \{\{1\},\{2\},\{3,4\}\} $, respectively. Both $ \bp_1 $ and $ \bp_2 $ are $\mathcal{R}^{-} $-invariant, therefore \tref{table:quotient_bp_r_invariant} applies.\\
	The set of partitions in $ \Lambda_{\mathcal{G}} $ that are coarser than $ \bp_1 $ are $ \{\{1,2\},\{3\},\{4\}\} $ ($ \bp_1 $ itself) and $ \{\{1,2\},\{3,4\}\} $, which are both weak. In the lattice $ \Lambda_{\mathcal{Q}_1} $ these two partitions correspond to $ \bot_{\mathcal{Q}_1} $ and $ \{\{12\},\{3,4\}\} $, which are strong and rooted, respectively.\\
	The set of partitions in $ \Lambda_{\mathcal{G}} $ that are coarser than $ \bp_2 $ are $ \{\{1\},\{2\},\{3,4\}\} $ ($ \bp_2 $ itself) and $ \{\{1,2\},\{3,4\}\} $, which are rooted and weak respectively. In the lattice $ \Lambda_{\mathcal{Q}_2} $ these two partitions correspond to $\bot_{\mathcal{Q}_2} $ and $ \{\{1,2\},\{34\}\} $, which are strong and weak respectively.
	\begin{figure}[h]
		\centering
		\begin{subfigure}[t]{0.5\textwidth}
			\centering
			\begin{tikzpicture}[
node1/.style = {circle,minimum size=23,draw},
node2/.style = {circle,minimum size=23,draw,fill=white!75!black},
node3/.style = {circle,minimum size=23,draw,fill=white!50!black},
noderect/.style = {rectangle,minimum size=20,draw},
edge1/.style = {>=latex,thick},
edge2/.style = {>=latex,thick,blue},
edge3/.style = {>=latex,thick,red}
]
\node[node1] at (0,1.5)(n1){1};
\node[node1] at (0,0)(n2){2};
\node[node1] at (1.5,1.5)(n3){3};
\node[node1] at (1.5,0)(n4){4};

\draw [->,edge1](n1) -- (n3);
\draw [->,edge1](n1) -- (n4);

\draw [->,edge1](n2) -- (n3);
\draw [->,edge1](n2) -- (n4);

%\node (minus123) at ($(n1)!0.4!(n2) + (0,0.3)$) {(-)};

%\DoubleLine{n1}{n2}{<-,edge1}{}{->,edge1}{}
%\DoubleLine{n1}{n3}{<-,edge1}{}{->,edge1}{}
%\DoubleLine{n2}{n3}{<-,edge1}{}{->,edge1}{}

\end{tikzpicture}
			\caption{Network $ \mathcal{G} $.}
			\label{fig:quotient_net_example_main}
		\end{subfigure}
		\begin{subfigure}[t]{0.4\textwidth}
			\centering
			\begin{tikzpicture}[
node1/.style = {circle,minimum size=23,draw},
node2/.style = {circle,minimum size=23,draw,fill=white!75!black},
node3/.style = {circle,minimum size=23,draw,fill=white!50!black},
pweak/.style = {rectangle,minimum size=20,draw,draw,fill=white!50!black},
proot/.style = {rectangle,minimum size=20,draw,draw,fill=white!75!black},
pstrong/.style = {rectangle,minimum size=20,draw},
noderect/.style = {rectangle,minimum size=20,draw},
edge1/.style = {>=latex,thick},
edgedash/.style = {>=latex,thick,dashed},
edge2/.style = {>=latex,thick,blue},
edge3/.style = {>=latex,thick,red}
]

\def\layer{1.3}
\def\del{0.75}

\node[pstrong] at (0,0) (bot){$ \bot_{\mathcal{G}} $};

\node[pweak] at ({-\del},{\layer} ) (12){$ 12 $};
\node[proot] at ({\del},{\layer} ) (34){$ 34 $};

\node[pweak] at (0,{2*\layer} ) (12_34){$ 12/34 $};

\draw [-,edge1](bot) -- (12);
\draw [-,edge1](bot) -- (34);

\draw [-,edge1](12) -- (12_34);
\draw [-,edge1](34) -- (12_34);

\end{tikzpicture} 
			\caption{Lattice $ \Lambda_{\mathcal{G}} $.}
			\label{fig:quotient_lattice_example_main}
		\end{subfigure}
		
		\begin{subfigure}[t]{0.5\textwidth}
			\centering
			\begin{tikzpicture}[
node1/.style = {circle,minimum size=23,draw},
node2/.style = {circle,minimum size=23,draw,fill=white!75!black},
node3/.style = {circle,minimum size=23,draw,fill=white!50!black},
noderect/.style = {rectangle,minimum size=20,draw},
edge1/.style = {>=latex,thick},
edge2/.style = {>=latex,thick,blue},
edge3/.style = {>=latex,thick,red}
]
\node[node1] at (0,{1.5/2})(n12){12};
\node[node1] at (1.5,1.5)(n3){3};
\node[node1] at (1.5,0)(n4){4};

\draw [->,edge1](n12) -- (n3);
\draw [->,edge1](n12) -- (n4);

\node (dois12_3) at ($(n12)!0.4!(n3) + (0,0.3)$) {2};
\node (dois12_4) at ($(n12)!0.4!(n4) + (0,-0.3)$) {2};

%\node (minus123) at ($(n1)!0.4!(n2) + (0,0.3)$) {(-)};

%\DoubleLine{n1}{n2}{<-,edge1}{}{->,edge1}{}
%\DoubleLine{n1}{n3}{<-,edge1}{}{->,edge1}{}
%\DoubleLine{n2}{n3}{<-,edge1}{}{->,edge1}{}

\end{tikzpicture}
			\caption{Network $ \mathcal{Q}_1:=\mathcal{G}/\{\{1,2\},\{3\},\{4\}\} $.}
			\label{fig:quotient_net_example_Q1}
		\end{subfigure}
		\begin{subfigure}[t]{0.4\textwidth}
			\centering
			\begin{tikzpicture}[
node1/.style = {circle,minimum size=23,draw},
node2/.style = {circle,minimum size=23,draw,fill=white!75!black},
node3/.style = {circle,minimum size=23,draw,fill=white!50!black},
pweak/.style = {rectangle,minimum size=20,draw,draw,fill=white!50!black},
proot/.style = {rectangle,minimum size=20,draw,draw,fill=white!75!black},
pstrong/.style = {rectangle,minimum size=20,draw},
noderect/.style = {rectangle,minimum size=20,draw},
edge1/.style = {>=latex,thick},
edgedash/.style = {>=latex,thick,dashed},
edge2/.style = {>=latex,thick,blue},
edge3/.style = {>=latex,thick,red}
]

\def\layer{1.3}
\def\del{0.75}

\node[pstrong] at (0,0) (bot){$ \bot_{\mathcal{Q}_1} $};
\node[proot] at (0,{\layer} ) (12_34){$ (12)/34 $};

\draw [-,edge1](bot) -- (12_34);

\end{tikzpicture} 
			\caption{Lattice $ \Lambda_{\mathcal{Q}_1} $.}
			\label{fig:quotient_lattice_example_Q1}
		\end{subfigure}
		
		\begin{subfigure}[t]{0.5\textwidth}
			\centering
			\begin{tikzpicture}[
node1/.style = {circle,minimum size=23,draw},
node2/.style = {circle,minimum size=23,draw,fill=white!75!black},
node3/.style = {circle,minimum size=23,draw,fill=white!50!black},
noderect/.style = {rectangle,minimum size=20,draw},
edge1/.style = {>=latex,thick},
edge2/.style = {>=latex,thick,blue},
edge3/.style = {>=latex,thick,red}
]
\node[node1] at (0,1.5)(n1){1};
\node[node1] at (0,0)(n2){2};
\node[node1] at (1.5,{1.5/2})(n34){34};

\draw [->,edge1](n1) -- (n34);
\draw [->,edge1](n2) -- (n34);

%\node (minus123) at ($(n1)!0.4!(n2) + (0,0.3)$) {(-)};

%\DoubleLine{n1}{n2}{<-,edge1}{}{->,edge1}{}
%\DoubleLine{n1}{n3}{<-,edge1}{}{->,edge1}{}
%\DoubleLine{n2}{n3}{<-,edge1}{}{->,edge1}{}

\end{tikzpicture}
			\caption{Network $ \mathcal{Q}_2:=\mathcal{G}/\{\{1\},\{2\},\{3,4\}\} $.}
			\label{fig:quotient_net_example_Q2}
		\end{subfigure}
		\begin{subfigure}[t]{0.4\textwidth}
			\centering
			\begin{tikzpicture}[
node1/.style = {circle,minimum size=23,draw},
node2/.style = {circle,minimum size=23,draw,fill=white!75!black},
node3/.style = {circle,minimum size=23,draw,fill=white!50!black},
pweak/.style = {rectangle,minimum size=20,draw,draw,fill=white!50!black},
proot/.style = {rectangle,minimum size=20,draw,draw,fill=white!75!black},
pstrong/.style = {rectangle,minimum size=20,draw},
noderect/.style = {rectangle,minimum size=20,draw},
edge1/.style = {>=latex,thick},
edgedash/.style = {>=latex,thick,dashed},
edge2/.style = {>=latex,thick,blue},
edge3/.style = {>=latex,thick,red}
]

\def\layer{1.3}
\def\del{0.75}

\node[pstrong] at (0,0) (bot){$ \bot_{\mathcal{Q}_2} $};
\node[pweak] at (0,{\layer} ) (12_34){$ 12/(34) $};

\draw [-,edge1](bot) -- (12_34);

\end{tikzpicture} 
			\caption{Lattice $ \Lambda_{\mathcal{Q}_2} $.}
			\label{fig:quotient_lattice_example_Q2}
		\end{subfigure}
		\caption{Lattices of balanced partitions of a network and its quotients.}
		\label{fig:quotient_lattice_example}
	\end{figure}
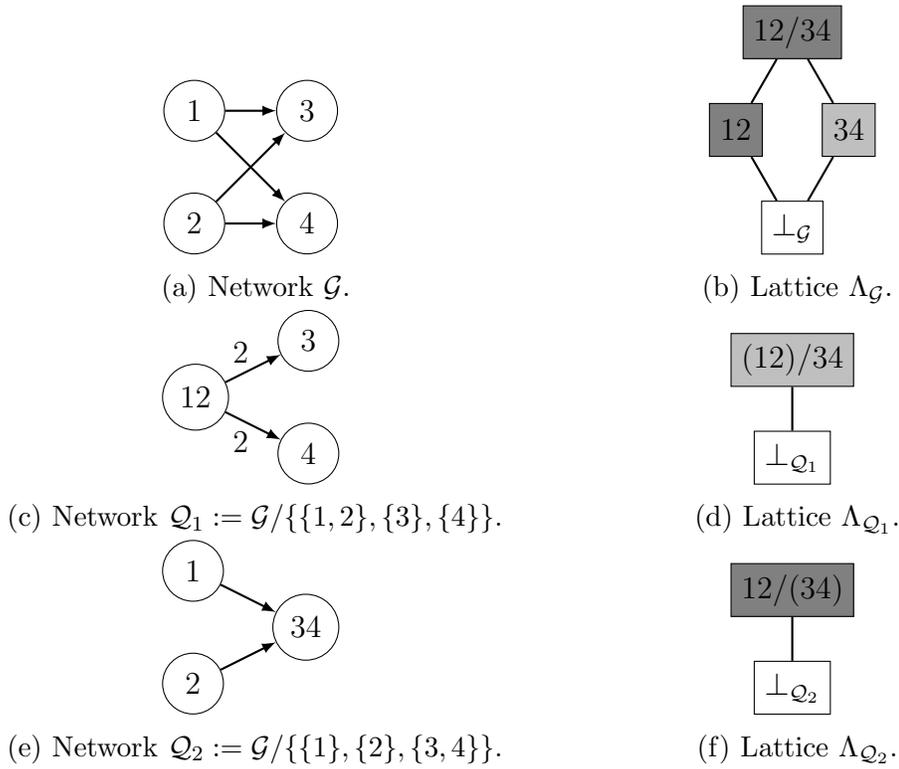
	\hfill $ \square $
\end{exmp}
We can always quotient a network over the trivial partition. That is, $ \mathcal{Q} := \mathcal{G}/\bot_{\mathcal{G}} $. Consider we encode $ \bot_{\mathcal{G}} $ through the identity mapping.
In such case we have $ \mathcal{G}=\mathcal{Q} $ and $  L_{F_{\mathcal{Q}}} = L_{F_{\mathcal{G}}} /\bot_{\mathcal{G}}  = L_{F_{\mathcal{G}}}$. Therefore every partition in $ L_{F_{\mathcal{G}}} $ maps to itself in $ L_{F_{\mathcal{Q}}} $. This implies the cases $ S \to S$, $ R \to R $ and $ W \to W $ in the left side of \tref{table:quotient_bp_r_invariant}.\\
On the other hand, for the case $ \mathcal{Q} := \mathcal{G}/\bp $ and $  L_{F_{\mathcal{Q}}} = L_{F_{\mathcal{G}}} / \bp$, for any $ \bp \in \Lambda_{\mathcal{G}} $ we have that $ \bp/\bp = \bot_{\mathcal{Q}}$. Since $ \bot_{\mathcal{Q}} $ is always strong in $ \mathcal{Q} $, this covers the cases $ S/R/W \to S $ in the left side of \tref{table:quotient_bp_r_invariant}. This means that most of the cases of \tref{table:quotient_bp_r_invariant} were forced. The remaining case $ W \to R $, was illustrated in \cref{exmp:quotient_lattice_example}. That is, the interest of this result lies in the fact that it excludes most of the non-forced cases.
\section{Conclusion}
In this paper we study the connection between the dynamics of a cell and its different types of  cumulative in-neighborhoods.\\
We first study the structure of general lattices of synchronism, which is the set of partitions that represent equality-based synchrony patterns that are invariant under a given subset of admissible functions.\\
We then analyze in a qualitative way the relation between the colors of a partition and the connectivity structure of the network.\\
This motivates the classification scheme developed in this work, which categorizes partitions into strong, rooted and weak. This scheme can be applied to generic partitions, without any assumptions such as being balanced, exo-balanced or any similar properties.\\
We study how these categories behave under the partition join operation $ (\vee) $, which corresponds to the intersection of synchrony patterns. We can think of these classes such that strong is a ``stronger'' property than rooted, and rooted is a ``stronger'' property than weak. Then under relatively tame assumptions ($ \mathcal{R}^{-} $-matched), the class of the output partition of a join is the weakest class in the input partitions.\\
We also relate the class of a partition $ \mathcal{A} $ in a network $ \mathcal{G} $ to its corresponding partition $ \mathcal{A}/\bp $ in the quotient network $ \mathcal{Q} := \mathcal{G}/\bp $. Then under certain assumptions ($ \bp $ being $ \mathcal{R}^{-} $-invariant), the class of $ \mathcal{A}/\bp $ is stronger or the same as the $ \mathcal{A} $.
\\
Multiple examples illustrating our classification scheme are provided.
\ack
This work was supported in part by the FCT Project RELIABLE (PTDC/EEI-AUT/3522/2020), funded by FCT/MCTES and in part by the National Science Foundation under Grant No.~ECCS-2029985. The work of P. Sequeira was supported by a Ph.D. Scholarship, grant SFRH/BD/119835/2016 from Fundação para a Ciência e a Tecnologia (FCT), Portugal (POCH program).
\section*{References}
\bibliographystyle{unsrt}
\bibliography{references}

\end{document}